  \xdef\Sequence{\protect\expandafter\noexpand\csname Sequence \endcsname}
\gdef\csname Sequence \endcsname#1{\left\langle%
     \:{
     \mathcode`\|32768\let|\SetVert
     #1}\:\right\rangle}
\def\SetVert{\egroup\;\middle|\;\bgroup}
\renewcommand\restriction{\mathbin\upharpoonright}	% by default in amssymb it's mathrel
\newcommand\embeds{\hookrightarrow}
\DeclareMathOperator\tp{tp}
\DeclareMathOperator\LST{LST}
\DeclareMathOperator\cl{cl}
\DeclareMathOperator\I{I}	% which version to use?  I or \dot{I} ?
\DeclareMathOperator\dom{dom}	% used here as in $\AECK^{3, \dom}_\lambda$ where it's not really an operator
\DeclareMathOperator\rank{rank}
\newcommand\id{\textup{id}}
\newcommand\sat{\textup{sat}}
\newcommand\na{\textup{na}}
\newcommand\bs{\textup{bs}}
\newcommand\uq{\textup{uq}}
\newcommand\univ{\textup{univ}}
\newcommand\FO{\mathbf{FO}}
\newcommand\unif{\textup{unif}}
\newcommand\ns{\textup{ns}}
\newcommand\AECK{\mathbf K}
\newcommand\leqK{\preceq_\AECK}
\newcommand\lessK{\prec_\AECK}
\newcommand\lessKuniv{\prec_\AECK^\univ}
\newcommand\leqbs{\preceq_\bs}
\newcommand\esm{\prec_\FO}
\newbox\noforkbox \newdimen\forklinewidth
\noforkbox\hbox{\lower 2pt\box1\lower 2pt\box0\relax}
\def\unionstick{\mathop{\copy\noforkbox}\limits}
\def\nonfork_#1{\unionstick_{\textstyle #1}}
\newbox\doesforkbox
\doesforkbox\hbox{\lower 2pt\box1 \lower 2pt\box2\lower2pt\box0\relax}
\def\nunionstick{\mathop{\copy\doesforkbox}\limits}
\def\fork_#1{\nunionstick_{\textstyle #1}}
\newcommand\dnf\unionstick
\newcommand\dnsplit{\dnf\nolimits_{\ns}}
\newtheorem{theorem}{Theorem}[section]
\newtheorem{lemma}[theorem]{Lemma}
\newtheorem{corollary}[theorem]{Corollary}
\newtheorem{fact}[theorem]{Fact}
\newtheorem{facts}[theorem]{Facts}
\newtheorem{claim}{Claim}[theorem]
\theoremstyle{definition}
\newtheorem{definition}[theorem]{Definition}
\newtheorem{example}[theorem]{Example}
\newtheorem{examples}[theorem]{Examples}
\newtheorem{notation}[theorem]{Notation}
\theoremstyle{remark}
\newtheorem{remark}[theorem]{Remark}
\newtheorem{remarks}[theorem]{Remarks}
\newcounter{condition}	% to sequence the conditions in a single numbered list spanning several enumerate environments
\title[Density of uniqueness triples from the diamond axiom]%
	{Density of uniqueness triples \\ from the diamond axiom}
\author{Ari Meir Brodsky}
\author{Adi Jarden}
\address{Department of Mathematics\\
	Ariel University\\
	Ariel 4070000 \\
%(the University always uses just 40700, but this is what the local post office told be when I asked for the 7-digit postal code)
	Israel}
\email{\href{mailto:aribr@ariel.ac.il}{aribr@ariel.ac.il}}
\email{\href{mailto:jardena@ariel.ac.il}{jardena@ariel.ac.il}}
\thanks{This research was carried out with the assistance of the Center for Absorption in Science, Ministry of Aliyah and Integration, State of Israel.}
\subjclass[2010]{Primary
	03C48%		Abstract elementary classes and related topics
	%03E02%		Partition relations
	%03E05%		Other combinatorial set theory
	; Secondary
	03C55,		% Set-theoretic model theory
	03E35,		% Consistency and independence results
	03E65,		% Other hypotheses and axioms
	03E75%,		% Applications of set theory
	%05C05,		% Trees
}
\keywords{Abstract elementary class, non-forking relation, pre-$\lambda$-frame, density of uniqueness triples, 
diamond axiom, 
trivial $\lambda$-frame, $*$-domination triple, isomorphic amalgamations, equivalence of amalgamations,
universal model, saturated model, homogeneous model,
representation of a model, non-splitting relation.
%UPDATE THIS LIST!!!
}
\begin{document}
\begin{abstract}
%EILAT VERSION (by Ari B.):  

We work with a \emph{pre-$\lambda$-frame}, % $\mathfrak s$, 
which is an abstract elementary class (AEC) endowed with a collection of basic types and a non-forking relation 
satisfying certain natural properties with respect to models of cardinality $\lambda$.

We investigate the density of \emph{uniqueness triples} in a given pre-$\lambda$-frame $\mathfrak s$, 
that is, under what circumstances every basic triple admits a non-forking extension that is a uniqueness triple.
Prior results % of Shelah 
in this direction
required %weak diamond %some form of $\diamondsuit$ 
%at two consecutive cardinals,
strong hypotheses on $\mathfrak s$. %the pre-$\lambda$-frame. % (in particular, that it be an \emph{almost good $\lambda$-frame}),
%as well as
%a constraint on the number of models of size $\lambda^{++}$.

Our main result is an improvement, in that 
we assume far fewer hypotheses on $\mathfrak s$.
In particular, we do not require $\mathfrak s$ to satisfy the extension, uniqueness, stability, or symmetry properties, 
or any form of local character, %and we require only basic almost stability rather than stability in $\lambda$,
though we do impose the amalgamation and stability properties in $\lambda^+$,
and we do assume $\diamondsuit(\lambda^+)$.

%both model-theoretically and set-theoretically:
%Model-theoretically, 
%we remove the constraint on the number of models of cadinality $\lambda^{++}$.
%Furthermore,
%we assume far fewer hypotheses on the pre-$\lambda$-frame $\mathfrak s$
%(in particular, we do not require $\mathfrak s$ to satisfy the extension, uniqueness, or symmetry properties, or any form of local character),
%though we do impose the amalgamation and stability properties on $\AECK_{\lambda^+}$.
%Set-theoretically, we assume only $\diamondsuit(\lambda^+)$, rather than weak diamond on two cardinals.

As a corollary, by applying our main result to the \emph{trivial $\lambda$-frame},
it follows that in any AEC $\AECK$ satisfying modest hypotheses on $\AECK_\lambda$ and $\AECK_{\lambda^+}$, 
the set of \emph{$*$-domination triples} in $\AECK_\lambda$ is dense among the non-algebraic triples.
We also apply our main result to the non-splitting relation, obtaining the density of uniqueness triples from very few hypotheses.

%We show that assuming the diamond axiom $\diamondsuit(\lambda^+)$, any basic type admits a non-forking extension that has a \emph{uniqueness triple}.

\end{abstract}

\maketitle

%\tableofcontents

\section{Introduction}
%\section{Why are uniqueness triples important?}

In Shelah's two-volume series \emph{Classification Theory for Abstract Elementary Classes} \cite{Sh:h, Sh:i},
he invented a version of domination, called uniqueness triples.
%Most of the results in \cite{Sh:i} depend on the existence of uniqueness triples in a fixed cardinality $\lambda$. 
By results of Shelah and others,
%\todo{Clarify, expand, cite references, and include other applications.}
the existence 
%\todo{Existence or density??  Need to distinguish carefully between the two concepts,
%and clarify the connection between them.}
of uniqueness triples in a good $\lambda$-frame allows us to obtain:
\begin{itemize}
\item %Uniqueness triples allow us to move from a good $\lambda$-frame to 
the existence of a pre-$({\leq}\lambda,\lambda)$-frame
\cite{Sh:600} and~\cite[\S5]{JrSh:875};

\item %Uniqueness triples allow us to move from a good $\lambda$-frame to 
the existence of a good $\lambda^+$-frame,
which in turn implies 
%and thus to deduce 
the existence of models of cardinality $\lambda^{+++}$
\cite[\S8]{Sh:600} and~\cite{JrSh:875};

\item the existence of primeness triples \cite[\S4]{Sh:705};

\item a non-forking relation on sets \cite[\S5]{Sh:705} and~\cite{JrSitton};

\item orthogonality on $\lambda$ \cite[\S6]{Sh:705};

\item the amalgamation property in $\AECK_{\lambda^+}$,
in the presence of tameness \cite{Jr-Tameness}.
\todo{Check \dots With a restriction of the original $\leqK$ relation?}

%\item Lots of other applications in Shelah's book!
\end{itemize}

%\section{Prior results}

%\begin{notation}[{\cite[Definition~1.0.15]{JrSh:875}}]
%\todo{Which version to use?  See footnote.
%%$\I$ or $\dot I$?
%%And is it $\I(\AECK, \lambda)$ or $\I(\lambda, \AECK)$?
%}
%$\I(\lambda, \AECK)$ denotes the number of isomorphism types of models in $\AECK_\lambda$.%
%\footnote{This is variously denoted 
%$\dot{I}(\lambda, \AECK)$ in~\cite[Definition~0.2(2)]{Sh:838},
%$I(\lambda, \AECK)$ in~\cite[Definition~1.0.15]{JrSh:875},
%and
%$I(\AECK, \lambda)$ in~\cite[Notation~4.17]{Baldwin-Categoricity}.}
%\end{notation}

The last chapter of~\cite{Sh:i} presents a complicated proof of the following Theorem, 
which provides the density of uniqueness triples in an almost-good $\lambda$-frame,
assuming weak diamond at the two successive cardinals $\lambda^+$ and $\lambda^{++}$
and a constraint on the number of models of cardinality $\lambda^{++}$:
%\todo{Fix description to match the combined reference!}

\newpage

\begin{theorem}[{\cite[Theorem~4.32(1)]{Sh:838} and \cite[Corollary~7.12]{JrSh:966}}]\label{Shelah-thm}
% THEOREM 4.28 of Shelah 838
Suppose that:%
\footnote{Here, 
$\I(\lambda, \AECK)$ denotes the number of isomorphism types of models in $\AECK_\lambda$,
which is variously denoted 
$\dot{I}(\lambda, \AECK)$ in~\cite[Definition~0.2(2)]{Sh:838},
$I(\lambda, \AECK)$ in~\cite[Definition~1.0.15]{JrSh:875},
and
$I(\AECK, \lambda)$ in~\cite[Notation~4.17]{Baldwin-Categoricity}.
The definition of $\mu_\unif$ and some of its properties can be found in~\cite[Section~5]{JrSh:966}.}
\begin{enumerate}
\item $2^\lambda < 2^{\lambda^+} < 2^{\lambda^{++}}$;
%\item the weak diamond ideal on $\lambda^+$ is not $\lambda^{++}$-saturated;
% ABOVE CAN BE OMITTED BY THEOREM 4.32
\item $\mathfrak s$ is an almost good or a semi-good $\lambda$-frame;
\item $\I (\lambda^{++}, \AECK) < \mu_\unif(\lambda^{++}, 2^{\lambda^+}) \sim 2^{\lambda^{++}}$.
%\todo{See 5.3, 5.4, and 5.5 of \cite{JrSh:966} for definition and properties of $\mu_\unif$.  
%%Expand this section with many more details, including explicit references to \cite{Sh:i}.
%%Also, see whether there are any relevant results from \cite{Marcos} that extend this theorem,
%%or similar results from \cite{MR3729331}.
%}
\end{enumerate}
Then every basic triple %in $\AECK^{3,bs}_\lambda$ 
has a non-forking extension that is %can be extended to 
a uniqueness triple.
\end{theorem}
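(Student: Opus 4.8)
The plan is to establish the contrapositive by a weak-diamond coding argument: from the failure of density one manufactures enough pairwise non-isomorphic models of cardinality $\lambda^{++}$ to violate hypothesis~(3). So suppose density of uniqueness triples fails, and fix a basic triple $(M,a,N)$ none of whose non-forking extensions is a uniqueness triple. Unwinding the definition of a uniqueness triple, every non-forking extension $(M',a,N')$ with $M \leqK M' \in \AECK_\lambda$ admits two amalgamations of $N'$ over $M'$ that are \emph{inequivalent} over $M' \cup N'$. This inequivalent pair is the branching datum that the construction will exploit at every step.

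First I would build a tree of models indexed by $2^{<\lambda^{++}}$. Along each node I grow a $\leqK$-increasing continuous chain of members of $\AECK_\lambda$, using the branching datum at each successor coordinate to split into two inequivalent one-step extensions according to whether the next bit is $0$ or $1$. Taking continuous unions first through each chain, producing building blocks in $\AECK_{\lambda^+}$, and then assembling these along a further chain of length $\lambda^{++}$, each branch $\eta \in 2^{\lambda^{++}}$ yields a model $M_\eta \in \AECK_{\lambda^{++}}$. That the chains pass through their limit stages while preserving both the frame axioms and the recorded inequivalence is exactly what the almost-good or semi-good hypothesis on $\mathfrak s$, together with amalgamation and stability in $\lambda^+$, is there to guarantee.

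The heart of the argument is to separate isomorphism types, that is, to show that for a large family of branches the models $M_\eta$ are pairwise non-isomorphic. Here I invoke the weak diamond: at the $\lambda^+$ level (from $2^\lambda < 2^{\lambda^+}$) to control the building blocks, and at the $\lambda^{++}$ level (from $2^{\lambda^+} < 2^{\lambda^{++}}$) to separate the assembled models. A $\diamondsuit$-style prediction guesses, on a club of stages, a potential isomorphism between two candidate initial segments, and at the next branching coordinate the inequivalence of the two amalgamations lets me diagonalize so that no guessed isomorphism survives past that stage. The precise number of non-isomorphic models this diagonalization yields is what the cardinal function $\mu_\unif(\lambda^{++}, 2^{\lambda^+})$ measures, and under the present cardinal arithmetic one has $\mu_\unif(\lambda^{++}, 2^{\lambda^+}) \sim 2^{\lambda^{++}}$.

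I expect the diagonalization to be the main obstacle. The difficulty is that an isomorphism between two completed models of size $\lambda^{++}$ need not respect the filtration along which the branching was arranged, so the prediction must be set up to catch \emph{every} potential isomorphism on a closed unbounded set of stages, and one must transfer the local inequivalence of amalgamations into a global failure of isomorphism precisely at such a caught stage. Granting this, the construction delivers at least $\mu_\unif(\lambda^{++}, 2^{\lambda^+}) \sim 2^{\lambda^{++}}$ pairwise non-isomorphic models, whence $\I(\lambda^{++}, \AECK) \geq \mu_\unif(\lambda^{++}, 2^{\lambda^+})$, contradicting hypothesis~(3). Therefore density of uniqueness triples holds.
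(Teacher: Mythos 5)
You should first be aware that the paper does not prove this theorem at all: it is quoted in the introduction as background, attributed to \cite[Theorem~4.32(1)]{Sh:838} and \cite[Corollary~7.12]{JrSh:966}, and the authors explicitly describe the original proof as ``complicated,'' occupying the last chapter of \cite{Sh:i}. So there is no in-paper proof to compare against; your proposal can only be measured against the known Shelah-style argument. On that score, your high-level strategy is the right one: the actual proof is indeed a non-structure argument in the contrapositive, showing that failure of density of uniqueness triples forces $\I(\lambda^{++}, \AECK) \geq \mu_\unif(\lambda^{++}, 2^{\lambda^+})$, with the inequivalent amalgamations supplied by non-uniqueness serving as the branching data in a tree construction of models of cardinality $\lambda^{++}$.

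The genuine gap is in your use of the prediction principle. From $2^\lambda < 2^{\lambda^+}$ one does not get $\diamondsuit(\lambda^+)$; one gets only the weak diamond $\Phi_{\lambda^+}$ of Devlin--Shelah, and similarly at $\lambda^{++}$. Weak diamond does not hand you a guessed object (such as ``a potential isomorphism between two candidate initial segments'') that you can then diagonalize against stage by stage, as your sketch assumes; it only guarantees that for every colouring of initial segments by two colours there is a predictor that is correct stationarily often, and even this holds only after the construction is organized so that the relevant information is coded into a binary choice at each level. This is precisely why the genuine proof needs the delicate apparatus you defer: the function $\mu_\unif(\lambda^{++}, 2^{\lambda^+})$ is defined to measure how many pairwise non-isomorphic models a weak-diamond-style uniformization failure can produce, and the passage from local inequivalence of two $\lambda$-amalgamations to global non-isomorphism of completed models of size $\lambda^{++}$ (across isomorphisms that need not respect any fixed filtration) is the technical heart that your outline names as ``the main obstacle'' but does not resolve. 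In short: correct skeleton, but the step where weak diamond is actually invoked is carried out as if full diamond held, and that substitution is exactly the part that cannot be waved through.
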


%Subsequently, several results have appeared showing that the density of uniqueness triples follows
%from various hypotheses, either weaker than or incomparable to those of Theorem~\ref{Shelah-thm}.
%\todo{Most of this paragraph is redundant!}
%In~\cite{JrSh:966}, Jarden and Shelah were able to weaken the stability hypotheses 
%on the good $\lambda$-frame in Theorem~\ref{Shelah-thm},
%showing that the result holds in a semi-good $\lambda$-frame.
%In~\cite[Claim~7.12]{Sh:734},
%Shelah gives an implicit proof 
%of the existence of uniqueness triples in a good $\lambda$-frame,
%assuming $\diamondsuit(\lambda^+)$ \todo{It's weak diamond} and categoricity.
 
%Sebastien Vasey has informed us about an implicit proof by Shelah
%of the existence of uniqueness triples from $\diamondsuit(\lambda^+)$ in a special case
%\cite[Chapter~4, Claim~7.12]{Sh:i}.

%\todo{Combine and verify the various items here.}
In~\cite[Fact~11.3 and Appendix~E]{Vasey14},
extracting and clarifying a result of Shelah implicit in \cite[$\odot_4$ in the proof of Claim~7.12]{Sh:734},
Vasey obtains the existence of uniqueness triples in a good $\lambda$-frame from weak diamond at $\lambda^+$,
assuming categoricity in $\lambda$ as well as amalgamation and stability in $\lambda^+$.
While the categoricity hypothesis can be removed if the goal is to obtain density of uniqueness triples
(rather than their existence in every basic type),%
\footnote{For the connection between density and existence of uniqueness triples,
see~\cite[Proposition~4.1.12]{JrSh:875}.}
and some of the axioms of the good $\lambda$-frame are not used in Vasey's proof,
the proof appears to make essential use of the extension and stability axioms on the pre-$\lambda$-frame.%
\footnote{\cite[Fact~6.4]{Vasey31} appeals to superstability in order to obtain the required axioms.}

The main objective of this paper is to obtain %present a %relatively simple proof of 
the density of uniqueness triples with as few constraints as possible on the pre-$\lambda$-frame $\mathfrak s$
% with minimal additional hypotheses
(Theorem~\ref{main-thm}).
%we assume far fewer hypotheses on the pre-$\lambda$-frame $\mathfrak s$
In particular, we do not require $\mathfrak s$ to satisfy the extension, uniqueness, stability, or symmetry properties, 
or any form of local character, %and we require only basic almost stability rather than stability in $\lambda$,
though we do impose the amalgamation and stability properties in $\lambda^+$,
and we do assume $\diamondsuit(\lambda^+)$.

%%We assume $\diamondsuit(\lambda^+)$ in place of weak diamond in two cardinals, and remove the constraint on the number of models of cadinality $\lambda^{++}$.
%Our main result differs from Shelah's theorem quoted above (Theorem~\ref{Shelah-thm}) in two respects:
%\begin{itemize}
%\item
%%In addition to the simplicity of the argument,
%%our main result is an improvement, both model-theoretically and set-theoretically:
%Model-theoretically, 
%we remove the constraint on the number of models of cadinality $\lambda^{++}$.
%Furthermore,
%we assume far fewer hypotheses on the pre-$\lambda$-frame $\mathfrak s$
%(in particular, we do not require $\mathfrak s$ to satisfy the extension, uniqueness, or symmetry properties, or any form of local character),
%though we do impose the amalgamation and stability properties on $\AECK_{\lambda^+}$.
%\item
%Set-theoretically, we assume only $\diamondsuit(\lambda^+)$, rather than weak diamond on two cardinals.
%\end{itemize}

As a corollary, by applying our main result to the \emph{trivial $\lambda$-frame},
it follows that in any AEC $\AECK$ satisfying modest hypotheses on $\AECK_\lambda$ and $\AECK_{\lambda^+}$, 
the set of \emph{$*$-domination triples} in $\AECK_\lambda$ is dense among the non-algebraic triples
(Corollary~\ref{*domination-triples-dense}).
In another corollary, we obtain the density of uniqueness triples with respect to the \emph{non-splitting relation},
assuming minimal hypotheses (Corollary~\ref{non-splitting-cor}).

%DISCUSS CONNECTION BETWEEN DENSITY AND EXISTENCE, and make sure usage is correct throughout the Introduction!

%ADD REFERENCE TO SECTION~\ref{section:splitting} MATERIAL ABOUT SPLITTING!!

\subsection{Outline of this paper}
%\todo[color=red]{Still have to fix the references here.}
The reader who is already familiar with abstract elementary classes (AEC), non-forking frames, and uniqueness triples
may begin reading from Section~\ref{section:diamond-H-kappa}. \todo{Maybe from Section \ref{section:uniqueness}?}
%If you are familiar the AECs, non-forking frames and uniqueness triples then you have to read from Section 11 only. But 
In order to make the paper almost self-contained, we include some preliminaries in the early sections. 
However, these sections do in fact include some very easy new results. 
In Section~\ref{section:AEC} we present basic properties of AECs. %\todo[color=red]{What does that mean?}
In Section~\ref{section:isomorphic} we define the notion of isomorphic amalgamations. 
In Section~\ref{section:equivalence} we define the notion of equivalent amalgamations,
as a preliminary for the definition of a uniqueness triple. 
In Sections \ref{section:frames} through \ref{section:continuity}, 
we present the material about non-forking frames that is needed for the main results of the paper. 
In Section~\ref{section:universal+} we discuss universal, saturated and homogeneous models. 
In Section~\ref{section:uniqueness} we define uniqueness triples and give observations relating to this definition. 
In Section~\ref{section:diamond-H-kappa} we present an important version of the diamond principle, 
that was formulated by Assaf Rinot and the first author. 
In Section~\ref{section:density} we prove our main result. 
%In Sections \ref{section:trivial+*domination} and~\ref{section:splitting}, we give applications of our main result, 
In Section~\ref{section:trivial+*domination} we give an application of our main result, where the existence of a non-forking frame is not assumed,
and in Section~\ref{section:splitting} we apply our main result to the non-splitting relation.  
%\todo{CLEAN THIS UP!}
In Sections~\ref{section:representations} and~\ref{section:from-top} we give variants of the main result.      
  
%\subsection{Taken from ASL abstract; can we incorporate this?}

%ASL UDINE VERSION (by Adi J.):
%Several applications of set-theoretical principles in abstract elementary classes (AECs in short, a branch of model theory) were discovered. 

%Here we present a simple proof for the existence of uniqueness triples in $\lambda$. 
%We assume $\diamondsuit(\lambda^+)$ in place of weak diamond in two cardinals, and remove the constraint on the number of models of cadinality $\lambda^{++}$.

\section{Notation and conventions}

Throughout this paper, $\AECK$ denotes an abstract elementary class (AEC, see Definition~\ref{AEC-def} below),
and $\lambda$ denotes an infinite cardinal.
We use $\leqK$ for the (reflexive) \emph{strong submodel} relation on $\AECK$,
so that $A \lessK B$ is reserved to mean ``$A \leqK B$ and $A \neq B$'' (cf.~\cite[Definition~1.0.14]{JrSh:875}).
The term \emph{embedding} will always mean $\AECK$-embedding.
$\AECK_\lambda$ refers to the class of models in $\AECK$ with cardinality $\lambda$,
accompanied by the corresponding restriction of the relation $\leqK$,
and $\AECK_{>\lambda}$, $\AECK_{\geq\lambda}$ have similar meanings.
Models labelled with the letter $N$ will generally be in $\AECK_{>\lambda}$,
\todo{Ensure this is true.}
while models labelled $A, B, \dots, M$ will generally be in $\AECK_\lambda$.

%Throughout this paper,
%$\AECK$ denotes an abstract elementary class (AEC) with relation $\leqK$
%(see \cite[Definition~1.0.3]{JrSh:875} but compare notation from \cite{Marcos} and others)
%and $\lambda$ denotes an infinite cardinal.
We consider $S^\na$ to be a function with domain $\AECK$
that associates to every model $A \in \AECK$ the collection $S^\na(A)$ of \emph{non-algebraic types} over $A$.

\section{Abstract elementary classes}
\label{section:AEC}

The definitions and 
basic facts relating to abstract elementary classes may be found in \cite{Baldwin-Categoricity}, \cite{Sh:h}, %\cite{Sh:i},
 \cite{JrSh:875}, and~\cite{Grossberg}.

%We recall the definition of an abstract elementary class
%\cite[Definition~4.1]{Baldwin-Categoricity}:  %, \cite[Definition~1.0.3]{JrSh:875}:

\begin{definition}[e.g.~{\cite[Definition~4.1]{Baldwin-Categoricity}}]\label{AEC-def}
\todo{Refer to this definition as needed.}
Suppose $\tau$ is a fixed vocabulary.
An \emph{abstract elementary class} is a class $\AECK$ of $\tau$-models, % models for $\tau$, 
with an associated binary relation $\leqK$ on $\AECK$ (the \emph{strong-submodel relation}),
satisfying the following properties:
\begin{enumerate}
\item %$\AECK$ and $\leqK$ 
Respecting isomorphisms:
%\item $\AECK$ and $\leqK$ are closed under isomorphisms, 
%that is,
For all $\tau$-models $A$, $B$, and $C$, and every isomorphism $\varphi : B \cong C$:
\begin{enumerate}
\item If $B \in \AECK$ then $C \in \AECK$.
\item If $A, B \in \AECK$ satisfy $A \leqK B$,
then $\varphi[A] \leqK C$.
\end{enumerate}
\item $\leqK$ is a reflexive and transitive binary relation %partial order 
that refines the submodel relation $\subseteq$
%\item $\leqK$ is a reflexive and transitive binary relation that is included in the submodel relation
(that is, $A \leqK B$ implies $A \subseteq B$ for all $A, B \in \AECK$).
\item %Continuity:
If $\delta>0$ and $\langle A_\alpha \mid \alpha<\delta \rangle$ is a $\leqK$-increasing sequence of models in $\AECK$, then:
\begin{enumerate}
\item
$\bigcup_{\alpha<\delta} A_\alpha \in \AECK$.

\item 
$A_\beta \leqK \bigcup_{\alpha<\delta} A_\alpha$
for every $\beta<\delta$.
\end{enumerate}

\item Smoothness:
If $\delta>0$, $\langle A_\alpha \mid \alpha\leq\delta \rangle$ is a $\leqK$-increasing, continuous sequence of models in $\AECK$, 
$N \in \AECK$,
and $A_\alpha \leqK N$ for all $\alpha<\delta$, then $A_\delta \leqK N$.
\item Coherence:
For all $A, B, C \in \AECK$,
if $A \subseteq B \subseteq C$, $A \leqK C$, and $B \leqK C$, 
then $A \leqK B $.
\item L\"owenheim-Skolem-Tarski:
There exists a smallest 
%the first 
infinite cardinal $\lambda$ 
(called the \emph{L\"owenheim-Skolem-Tarski number for $\AECK$}, or $\LST(\AECK)$)
such that $\lambda \geq \left|\tau\right|$ and for every model $N \in \AECK$ and every subset $Z$ of the universe of $N$,
there exists some $M \in \AECK$ such that $M \leqK N$, $|M| \leq \max\{\lambda, \left|Z\right| \}$,
and $M$ contains every element of $Z$.
\end{enumerate}
\end{definition}

\begin{definition}[{cf.~\cite[Definition~1.0.20(1)]{JrSh:875}}]\label{def-amalgamation}
%\todo[color=green]{Moved this here on Sept.~6}
Suppose $A, B, C, D \in \AECK$, and
$g_B : A \embeds B$, $g_C : A \embeds C$, $f_B : B \embeds D$, and $f_C : C \embeds D$ are embeddings
such that $f_B \circ g_B = f_C \circ g_C$.
Then we say that $(f_B, f_C, D)$ is an \emph{amalgamation of $B$ and $C$ over $(A, g_B, g_C)$}
and $D$ is an \emph{amalgam of $B$ and $C$ over $(A, g_B, g_C)$}.

When $g_B = g_C = \id_{A}$, we simply say that the amalgamation is \emph{over $A$}.
\todo{Should ``over $A$'' mean, in addition, that $f_B \restriction A = f_C \restriction A = \id_{A}$?}

When $D \in \AECK_\lambda$, we call the amalgamation a \emph{$\lambda$-amalgamation}.
\end{definition}

The following observation will be particularly useful in our applications 
(see Definitions \ref{def-predicted-amalg-variant} and~\ref{def-predicted-amalg}):%\todo{Maybe others?}

\begin{fact}\label{predicted-is-amalgamation}
%\todo[color=green]{Moved this here on Sept.~6}
Suppose %$\alpha<\lambda^+$ is some ordinal,
$A, B, N^\bullet \in \AECK$ are models such that $A \lessK N^\bullet$,
and $g : A \embeds B$ is an embedding.
If $\Omega : B \embeds N^\bullet$ is an embedding %from $B$ into $N^\bullet$ 
such that $\Omega \circ g = \id_{A}$,
then
%\begin{enumerate}
%\item $(\Omega, \id_N, N^\bullet)$ is an amalgamation of $B$ and $N$ over $(A, g, \id_{A})$;
%and
%\item 
for every $M \in \AECK$ satisfying $A \lessK M \lessK N^\bullet$,
$(\Omega, \id_M, N^\bullet)$ is an amalgamation of $B$ and $M$ over $(A, g, \id_{A})$.
%\end{enumerate}
\end{fact}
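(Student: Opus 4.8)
The plan is to verify directly that the given data satisfy the defining conditions of an amalgamation in Definition~\ref{def-amalgamation}, under the correspondence $(A,B,C,D) = (A,B,M,N^\bullet)$ together with the assignments $g_B = g$, $g_C = \id_A$, $f_B = \Omega$, and $f_C = \id_M$. There are exactly three things to check: that the four models lie in $\AECK$, that the four maps are genuine embeddings, and that the commutativity square closes. Since the conclusion is nothing more than an instance of the definition, the whole argument is a matter of matching notation.

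The first point is immediate, as $A, B, M, N^\bullet \in \AECK$ by hypothesis. For the second point, $g$ and $\Omega$ are embeddings by assumption; the map $\id_A : A \embeds M$ is the inclusion, which is a legitimate embedding precisely because $A \lessK M$ yields $A \leqK M$; and likewise $\id_M : M \embeds N^\bullet$ is the inclusion embedding, valid because $M \lessK N^\bullet$ yields $M \leqK N^\bullet$.

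The only remaining step is the commutativity condition $f_B \circ g_B = f_C \circ g_C$, which in the present notation reads $\Omega \circ g = \id_M \circ \id_A$ as maps $A \embeds N^\bullet$. The left-hand side equals the inclusion $A \embeds N^\bullet$ by the hypothesis $\Omega \circ g = \id_A$, while the right-hand side is the composite of the two inclusions $A \embeds M \embeds N^\bullet$, which is again the inclusion $A \embeds N^\bullet$. Hence the two sides coincide, and all conditions of Definition~\ref{def-amalgamation} are satisfied, so $(\Omega, \id_M, N^\bullet)$ is indeed an amalgamation of $B$ and $M$ over $(A, g, \id_A)$.

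I do not anticipate any genuine obstacle: the content is purely bookkeeping, and no structural property of AECs beyond the transitivity of $\leqK$ (implicit in composing the inclusions) is required. The single point demanding care is the overloaded symbol $\id_A$, which denotes the inclusion of $A$ into whichever larger model is relevant; in particular, the hypothesis $\Omega \circ g = \id_A$ must be read as the assertion that this composite is the inclusion of $A$ into $N^\bullet$, and it is exactly this reading that makes the commutativity square close.
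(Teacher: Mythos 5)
Your proof is correct and takes essentially the same approach as the paper: the paper records this statement as a Fact with no written proof, treating it as an immediate unwinding of Definition~\ref{def-amalgamation}, and your direct verification (matching $(g_B, g_C, f_B, f_C) = (g, \id_A, \Omega, \id_M)$ and closing the square via $\Omega \circ g = \id_A$) is exactly that intended argument. The one subtlety you flag — that $\id_A$ must be read as the inclusion of $A$ into the relevant larger model, so that the commutativity condition compares two maps into $N^\bullet$ — is indeed the only point requiring care, and you handle it correctly.
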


%\section{Applying the L\"owenheim-Skolem-Tarski number}

\begin{remark}\label{LST-remark}
If $N \in \AECK$ is any model and $Z$ is any subset of the universe of $N$, where $\left|Z\right| = \lambda$,
%\todo{Figure out where to put this remark, and delete the redundant explanations in the proofs.}
the hypothesis $\LST(\AECK) \leq\lambda$ (such as in Definition~\ref{def-pre-frame}(1) of a pre-$\lambda$-frame, below)
implies the existence of some model $M \in \AECK_\lambda$
such that $M \leqK N$ and $M$ contains every element of $Z$.
%that includes $Z$.
If $Z$  happens to include the universe of some model $A \in \AECK$, where $A \leqK N$,
then $A$ is necessarily a submodel of $M$, so that the coherence property of the AEC 
(Definition~\ref{AEC-def}(5))
%\cite[Definition~4.1(A4)]{Baldwin-Categoricity}, \cite[Definition~1.0.3(1)(e)]{JrSh:875}
guarantees that in fact $A \leqK M$.
\end{remark}

%\section{Realizing types}

Recall that for a model $A \in \AECK$, $S(A)$ denotes the collection of all types over~$A$.

\begin{definition}
Suppose $A, B \in \AECK$ are models satisfying $A \leqK B$.
\begin{enumerate}
\item For any $p \in S(A)$ and $b \in B$, we say that \emph{$b$ realizes $p$ in $B$} 
%\todo[color=green]{Corrected these definitions on Sept.~6}
%\todo{Is there a better term for this?  Ensure this is used consistently and then remove redundant explanations.}
if $p = \tp(b/A; B)$.
\item For any $p \in S(A)$, we say that \emph{$B$ realizes $p$} if
there is some $b \in B$ that realizes $p$ in $B$. %$p = \tp(b/A; B)$.
\item For any $P \subseteq S(A)$, we say that \emph{$B$ realizes $P$} if
$B$ realizes every type in $P$.%
\footnote{Clauses (2) and~(3) of this definition appear in Definition~1.24 in the \emph{arXiv} version of~\cite{JrSh:875}, but do not appear in the published version.}
\item \cite[Definition~1.0.25]{JrSh:875}
We say that \emph{$B$ is full over $A$} if
$B$ realizes $S(A)$.
\end{enumerate}
\end{definition}

\begin{fact}\label{full-gives-bounded}
For all $A, B \in \AECK$ with $A \leqK B$,
if $B$ is full over $A$, then $\left|S(A)\right| \leq \left|B\right|$.
\end{fact}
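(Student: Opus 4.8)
The plan is to exhibit an injection from $S(A)$ into the universe of $B$; the bound $\left|S(A)\right| \leq \left|B\right|$ then follows immediately. The hypothesis that $B$ is full over $A$ means precisely that $B$ realizes every type in $S(A)$, so for each $p \in S(A)$ I have, by definition, at least one witness $b \in B$ with $p = \tp(b/A; B)$. Using the axiom of choice, I would fix for each $p \in S(A)$ a single such witness $b_p \in B$, thereby defining a function $f : S(A) \to B$ by $f(p) = b_p$.

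The key step is to verify that $f$ is injective, and here the whole argument turns on the observation that a fixed element of $B$ realizes exactly one type over $A$. Indeed, suppose $f(p) = f(q)$ for some $p, q \in S(A)$, and write $b$ for this common value. Then $b$ realizes $p$ in $B$ and $b$ realizes $q$ in $B$, which by definition gives $p = \tp(b/A; B)$ and $q = \tp(b/A; B)$; hence $p = q$. Thus distinct types are sent to distinct witnesses, and $f$ is injective.

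Since $f$ is an injection from $S(A)$ into the universe of $B$, we conclude $\left|S(A)\right| \leq \left|B\right|$, as desired.

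I do not anticipate any genuine obstacle: the statement is essentially an unwinding of the definitions of \emph{full} and of \emph{type}. The only point that deserves a moment of care is the injectivity, where one must remember that $\tp(b/A; B)$ denotes a \emph{single} type attached to $b$, so that two types realized by the same element necessarily coincide. The appeal to choice is the standard one for producing a system of witnesses, and requires no further comment.
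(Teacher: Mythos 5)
Your proof is correct: the paper states this as a \emph{Fact} without proof, and your argument — choosing a witness $b_p \in B$ for each $p \in S(A)$ and noting injectivity because a single element $b$ determines the unique type $\tp(b/A;B)$ — is exactly the standard unwinding of the definitions that the paper takes for granted.
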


\begin{lemma}\label{embedding-realizes-type}
Suppose $A, B, C \in \AECK$ are models such that $A \leqK B$ and $A \leqK C$,
$f : B \embeds C$ is an embedding such that $f \restriction A = \id_A$,
and $P \subseteq S(A)$.
If $B$ realizes $P$, then so does $C$.
\end{lemma}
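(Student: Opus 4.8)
The plan is to argue one type at a time. Since ``$C$ realizes $P$'' means that $C$ realizes every type in $P$, it suffices to fix an arbitrary $p \in P$ and show that $C$ realizes $p$, under the assumption that $B$ does.

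So fix $p \in P$. Because $B$ realizes $p$, there is some element $b \in B$ with $\tp(b/A; B) = p$. The natural candidate to witness $p$ in $C$ is the image $f(b)$, so the crux of the argument is the claim that $\tp(f(b)/A; C) = \tp(b/A; B)$; once this is verified, $c := f(b) \in C$ realizes $p$ in $C$, as required.

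To prove this claim I would invoke the definition of equality of types directly. The two triples $(A, b, B)$ and $(A, f(b), C)$ represent the same type over $A$ as soon as they admit a common amalgam over $A$ in which the two distinguished elements are identified. Such an amalgam is immediate here: take $C$ itself, together with the embeddings $f : B \embeds C$ and $\id_C : C \embeds C$. Both restrict to the identity on $A$---the former by hypothesis, since $f \restriction A = \id_A$, and the latter trivially, since $A \leqK C$ gives $A \subseteq C$---and they carry $b$ and $f(b)$, respectively, to the single element $f(b) = \id_C(f(b))$ of $C$. This is a direct equivalence of the two triples, which in particular yields equality of the corresponding types.

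I do not anticipate any real obstacle: the entire content is the standard invariance of types under embeddings that fix the base pointwise. The only point deserving care is to match the precise formulation of type equality in force---that two triples determine the same type exactly when they are joined by a suitable base-fixing amalgam---and to check that both $f$ and $\id_C$ genuinely fix $A$ and agree on the chosen witness. With the single-type case settled, the conclusion for all of $P$ is immediate.
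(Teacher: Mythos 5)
Your proposal is correct and follows essentially the same route as the paper: fix $p \in P$, take a witness $b \in B$ with $p = \tp(b/A;B)$, and observe that $f(b)$ realizes $p$ in $C$. The only difference is that the paper simply asserts $\tp(f(b)/A;C) = \tp(b/A;B)$ as standard invariance, whereas you unpack it by exhibiting the amalgam $(f, \id_C, C)$ over $A$ identifying $b$ with $f(b)$ --- a correct justification that works even without assuming the amalgamation property.
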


\begin{proof}
Suppose $B$ realizes $P$, and consider arbitrary $p \in P$.
Since $B$ realizes $p$, we fix $b \in B$ such that $p = \tp(b/A; B)$.
But then $p = \tp(f(b)/A; C)$, showing that $C$ realizes $p$, and completing the proof.
\end{proof}

\begin{lemma}\label{get-ambient-lambda}
\todo{Refer to this Lemma where appropriate.}
Suppose $\LST(\AECK) \leq\lambda$, $A \in \AECK_\lambda$, and $p \in S(A)$.
Then there is some model $B \in \AECK_\lambda$ that realizes $p$.
\end{lemma}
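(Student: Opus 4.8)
The plan is to start from the very definition of a Galois type: since $p \in S(A)$, there exist a model $C \in \AECK$ with $A \leqK C$ and an element $b \in C$ such that $p = \tp(b/A; C)$. The model $C$ may be arbitrarily large, so the whole task is to replace it by a submodel of cardinality exactly $\lambda$ that still contains both $A$ and a realization of $p$.

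First I would apply the L\"owenheim--Skolem--Tarski property. Set $Z$ to be the union of the universe of $A$ together with $\{b\}$; then $\left|Z\right| = \lambda$, because $A \in \AECK_\lambda$. Since $\LST(\AECK) \leq \lambda$, there is some $B \in \AECK$ with $B \leqK C$, $\left|B\right| \leq \max\{\lambda, \left|Z\right|\} = \lambda$, and such that $B$ contains every element of $Z$. In particular $A$ is a submodel of $B$ and $b \in B$. Invoking Remark~\ref{LST-remark} (which packages the coherence axiom), I upgrade ``$A$ is a submodel of $B$'' to $A \leqK B$. Moreover $\left|A\right| = \lambda$ and $A \subseteq B$ force $\left|B\right| \geq \lambda$, so in fact $\left|B\right| = \lambda$ and $B \in \AECK_\lambda$.

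It remains to check that $B$ actually realizes $p$, i.e.\ that $\tp(b/A; B) = \tp(b/A; C) = p$. This is exactly the invariance of Galois types under strong embeddings: the identity map $B \embeds C$ is an embedding fixing $A$ pointwise, and under such a map the type of $b$ computed in $B$ coincides with the type of $b$ computed in $C$ --- the same principle used in the proof of Lemma~\ref{embedding-realizes-type}. Hence $b \in B$ witnesses that $B$ realizes $p$, completing the argument.

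The only delicate point is this last equality of types; everything else is a routine cardinality count together with the standard LST-plus-coherence manipulation. If the ambient framework defines $S(A)$ via a concrete equivalence relation on triples rather than taking invariance of types for granted, I would instead observe that the inclusion $B \leqK C$ directly witnesses the equivalence of the triples $(b, A, B)$ and $(b, A, C)$, giving $\tp(b/A; B) = \tp(b/A; C)$.
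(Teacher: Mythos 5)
Your proof is correct and is essentially identical to the paper's: both fix an ambient model $C$ with $b \in C$ realizing $p$, shrink to $B \in \AECK_\lambda$ containing $A \cup \{b\}$ via the L\"owenheim--Skolem--Tarski property plus coherence (Remark~\ref{LST-remark}), and observe that $\tp(b/A;B) = \tp(b/A;C) = p$ since $B \leqK C$. The only difference is that you spell out the justification of this last type equality, which the paper dismisses as clear.
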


\begin{proof}
As $p \in S(A)$, we can fix some ambient model $C \in \AECK$ with $A \leqK C$ and some $b \in C$ that realizes $p$ in $C$.
As $A \in \AECK_\lambda$,
by $\LST(\AECK) \leq\lambda$ and Remark~\ref{LST-remark}
we can fix a model 
$B \in \AECK_\lambda$ containing $b$ and every point of $A$, and such that $A \leqK B \leqK C$.
It is clear that $p = \tp(b/A; C) = \tp(b/A; B)$,
so that in fact $B$ realizes $p$.
\end{proof}

%\section{Stability and almost stability}

\begin{lemma}\label{count-algebraic}
Suppose $A \in \AECK$.  Then
\[
\left|S(A)\right| = %\max \left\{ \left| A \right|, \left|S^\na(A)\right| \right\}.
\left| A \right| + \left|S^\na(A)\right|.
\]
In particular, if $A \in \AECK_\lambda$, then for every cardinal $\mu\geq\lambda$,
\[
\left|S(A)\right| \leq \mu \iff
\left|S^\na(A)\right| \leq \mu.
\]

\end{lemma}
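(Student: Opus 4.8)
The plan is to partition $S(A)$ into its algebraic and non-algebraic types, count the algebraic ones exactly, and then read off both assertions by cardinal arithmetic. Write $S^{\textup{alg}}(A) := S(A) \setminus S^\na(A)$ for the collection of \emph{algebraic} types over $A$, namely those realized by some element of $A$. By the very definition of $S^\na$ this is a partition, so $\left|S(A)\right| = \left|S^{\textup{alg}}(A)\right| + \left|S^\na(A)\right|$ as cardinals, and everything reduces to establishing $\left|S^{\textup{alg}}(A)\right| = \left|A\right|$.

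To that end I would consider the map $\Phi : A \to S(A)$ given by $\Phi(a) = \tp(a/A; A)$, which is well defined since $A \leqK A$ by reflexivity. Its image is exactly $S^{\textup{alg}}(A)$: each $\Phi(a)$ is realized by $a \in A$ and hence algebraic, while conversely an algebraic type is by definition realized by some $a \in A$ and so equals $\Phi(a)$. For injectivity, suppose $a, a' \in A$ with $\Phi(a) = \Phi(a')$. Unwinding the definition of equality of types produces a model $C$ and embeddings $f, g : A \embeds C$ that agree on $A$ and satisfy $f(a) = g(a')$; since $a \in A$ this gives $g(a) = f(a) = g(a')$, whence $a = a'$ because $g$ is injective. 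Thus $\Phi$ is a bijection onto $S^{\textup{alg}}(A)$, yielding $\left|S^{\textup{alg}}(A)\right| = \left|A\right|$ and hence the displayed equation.

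For the ``in particular'' clause, fix $A \in \AECK_\lambda$, so that $\left|A\right| = \lambda$, and a cardinal $\mu \geq \lambda$; note $\mu$ is infinite because $\lambda$ is. If $\left|S(A)\right| \leq \mu$ then $\left|S^\na(A)\right| \leq \left|S(A)\right| \leq \mu$, since $S^\na(A) \subseteq S(A)$. Conversely, if $\left|S^\na(A)\right| \leq \mu$ then the first part gives $\left|S(A)\right| = \lambda + \left|S^\na(A)\right| \leq \mu + \mu = \mu$, using $\lambda \leq \mu$ and the infinitude of $\mu$. This establishes the biconditional.

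I expect the injectivity of $\Phi$ to be the only genuinely delicate step, since it is the one place where the precise definition of equality of types is invoked; the remainder is elementary bookkeeping and cardinal arithmetic. If the ambient notion of type is built from the transitive closure of the atomic equal-type relation rather than from the atomic relation alone, one should additionally check that a chain of identifications beginning and ending at elements of $A$ cannot move off $a$, which again follows from injectivity of the embeddings involved.
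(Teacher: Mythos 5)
Your proof is correct and follows essentially the same route as the paper's: partition $S(A)$ into algebraic and non-algebraic types, observe that the algebraic types are in one-to-one correspondence with the points of $A$, and finish by cardinal arithmetic using that $\mu\geq\lambda$ is infinite. The only difference is that you spell out the bijection $a \mapsto \tp(a/A;A)$ and its injectivity (including the transitive-closure caveat), which the paper simply asserts.
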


\begin{proof}
%Fix $A \in \AECK_\lambda$.
We have $S^\na(A) \subseteq S(A)$,
and the difference $S(A) \setminus S^\na(A)$ consists of the set of algebraic types over $A$,
which is in one-to-one correspondence with the set of points in $A$.

The ``In particular'' then follows from the fact that $\lambda$ is an infinite cardinal.
\end{proof}

\begin{definition}[cf.~{\cite[Definition~8.19(2)]{Baldwin-Categoricity}}]
%\todo{I think this section is not needed,
%as our current proof of the Main Theorem does not require the use of a universal model.}
%Suppose that $\AECK$ is an AEC, and let $\mu$ be some infinite cardinal.
For an infinite cardinal $\mu$,
%\todo[color=red]{Updated on Sept.~7}
the class $\AECK_\mu$ satisfies:
\begin{enumerate}
\item
\emph{stability} if
$|S^\na (M)| \leq\mu$ for every $M \in \AECK_\mu$.
\item
\emph{almost stability} if
$|S^\na (M)| \leq\mu^+$ for every $M \in \AECK_\mu$.
\end{enumerate}
\end{definition}

\begin{corollary}
Suppose $\mu$ is some infinite cardinal.
\begin{enumerate}
\item
The class $\AECK_\mu$ satisfies stability iff
$|S(M)| =\mu$ for every $M \in \AECK_\mu$.
\item
The class $\AECK_\mu$ satisfies almost stability iff
$|S(M)| \leq\mu^+$ for every $M \in \AECK_\mu$.
\end{enumerate}
\end{corollary}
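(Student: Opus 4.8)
The plan is to derive both equivalences directly from Lemma~\ref{count-algebraic}. Fix an infinite cardinal $\mu$ and consider an arbitrary $M \in \AECK_\mu$, so that $|M| = \mu$. Applying the ``In particular'' clause of Lemma~\ref{count-algebraic} with $A = M$ and the choice $\mu$ itself (for part~(1)) or $\mu^+$ (for part~(2)) as the comparison cardinal immediately reduces every statement about $S(M)$ to the corresponding statement about $S^\na(M)$, which is precisely what the stability and almost-stability definitions control. The only subtlety is to track the gap between ``$=\mu$'' and ``$\leq\mu$'' in part~(1), which I address below.

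For part~(1), I would argue as follows. Suppose first that $\AECK_\mu$ satisfies stability, so $|S^\na(M)| \leq \mu$ for every $M \in \AECK_\mu$. Lemma~\ref{count-algebraic} gives $|S(M)| = |M| + |S^\na(M)| = \mu + |S^\na(M)|$, and since $|S^\na(M)| \leq \mu$ this equals $\mu$; the lower bound $|S(M)| \geq |M| = \mu$ is automatic because the algebraic types alone number $|M| = \mu$. Hence $|S(M)| = \mu$. Conversely, if $|S(M)| = \mu$ for every $M \in \AECK_\mu$, then since $S^\na(M) \subseteq S(M)$ we get $|S^\na(M)| \leq \mu$, which is exactly stability. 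Note the equality (rather than mere inequality) on the right side is forced precisely because the algebraic types supply $\mu$ types on their own.

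For part~(2), the argument is even cleaner. If $\AECK_\mu$ satisfies almost stability, then $|S^\na(M)| \leq \mu^+$ for all $M \in \AECK_\mu$, and applying the ``In particular'' clause of Lemma~\ref{count-algebraic} with the comparison cardinal $\mu^+ \geq \mu$ (here I use that $M \in \AECK_\mu$ gives $|M| = \mu$, so the lemma's hypothesis $A \in \AECK_\lambda$ applies with $\lambda = \mu$) yields $|S(M)| \leq \mu^+$. The converse is immediate from $S^\na(M) \subseteq S(M)$. I do not expect any genuine obstacle here: the entire content of this corollary is already packaged in Lemma~\ref{count-algebraic}, and the proof is a matter of unwinding the definitions and invoking the biconditional in that lemma at the two relevant cardinals. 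The one point demanding a moment's care is the exact-equality versus inequality distinction in part~(1), handled by observing that the $|M|$ summand contributes exactly $\mu$.
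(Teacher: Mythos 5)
Your proof is correct and takes exactly the route the paper does: the paper's entire proof is ``This follows immediately from Lemma~\ref{count-algebraic},'' and your argument is simply that lemma unwound, with the right attention to the equality-versus-inequality point in part~(1) (the algebraic types force $|S(M)| \geq |M| = \mu$, so the upper bound from stability upgrades to equality).
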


\begin{proof}
This follows immediately from Lemma~\ref{count-algebraic}.
%
%For any $M \in \AECK_\mu$:
%$S^\na(M) \subseteq S(M)$,
%and the difference $S(M) \setminus S^\na(M)$ consists of the set of algebraic types over $M$,
%whose cardinality is equal to that of~$M$.
\end{proof}

\begin{fact}\label{bounded-card-of-types}
For every $A \in \AECK$, if $\left|A\right| \geq\LST(\AECK)$, then $\left|S(A)\right| \leq 2^{\left|A\right|}$.
\end{fact}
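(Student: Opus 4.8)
The plan is to bound the number of Galois types over $A$ by counting the pairs $(B,b)$ that realize them, after cutting down to representatives living on a single fixed underlying set. Throughout, set $\mu = \left|A\right|$, so that by hypothesis $\LST(\AECK) \le \mu$; in particular $\left|\tau\right| \le \mu$.

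First I would apply Lemma~\ref{get-ambient-lambda} with $\lambda = \mu$: since $A \in \AECK_\mu$ and $\LST(\AECK) \le \mu$, every type $p \in S(A)$ has the form $p = \tp(b/A;B)$ for some $B \in \AECK_\mu$ with $A \leqK B$ and some $b \in B$. Thus the realization map $(B,b) \mapsto \tp(b/A;B)$, as $(B,b)$ ranges over all such pairs, is onto $S(A)$, and it suffices to bound the number of pairs up to an equivalence preserving the type.

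The subtlety is that $\AECK$ may be a proper class, so I would reduce to representatives on a fixed set. Fix a set $U$ with $A \subseteq U$, $\left|U\right| = \mu$, and $\left|U \setminus A\right| = \mu$ (for instance, identify $A$ with $A \times \{0\}$ inside $U = A \times \{0,1\}$). Any $B \in \AECK_\mu$ with $A \leqK B$ has $\left|B \setminus A\right| \le \mu$, so there is an isomorphism $h$ fixing $A$ pointwise onto a $\tau$-structure $B'$ whose universe is a subset of $U$ containing $A$; the isomorphism-respecting axiom (Definition~\ref{AEC-def}(1)) gives $A = h[A] \leqK B'$, and since Galois types are invariant under isomorphisms fixing the base, $\tp(b/A;B) = \tp(h(b)/A;B')$. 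Hence every type in $S(A)$ is already realized by a pair $(B',b')$ whose underlying structure lives on a subset of $U$.

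Finally I would count these pairs. The number of candidate universes is at most $2^\mu$, and on a fixed universe of size $\le \mu$ the number of interpretations of the $\le \mu$ symbols of $\tau$ is at most $(2^\mu)^\mu$, which for infinite $\mu$ equals $2^\mu$ (using $\mu^\mu = 2^\mu$ to count relation and function symbols); together with the at most $\mu$ choices of the distinguished point $b'$ this yields at most $2^\mu$ pairs. Surjectivity of the realization map then gives $\left|S(A)\right| \le 2^\mu = 2^{\left|A\right|}$. The one place demanding care is the vocabulary count in this last step, where one must keep track of arities and the bound $\left|\tau\right| \le \LST(\AECK) \le \mu$; the reduction to $U$ via isomorphism-invariance of types is the only conceptual input, and everything else is a direct consequence of Lemma~\ref{get-ambient-lambda}.
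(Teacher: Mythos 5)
Your proof is correct. Note that the paper itself states this as a \emph{Fact} with no proof supplied (it is a standard, well-known bound), so there is no in-paper argument to compare against; your write-up supplies exactly the standard argument one would give. The three ingredients are all sound: Lemma~\ref{get-ambient-lambda} (with $\lambda=\mu=\left|A\right|$) shows every $p\in S(A)$ is of the form $\tp(b/A;B)$ with $B\in\AECK_\mu$, $A\leqK B$; the isomorphism axiom of Definition~\ref{AEC-def}(1) together with invariance of Galois types under isomorphisms fixing the base (which holds even without amalgamation, since an isomorphism over $A$ witnesses atomic equivalence of the two triples) lets you relocate each such $B$ onto a subset of a fixed set $U\supseteq A$ of size $\mu$; and the counting $2^\mu$ universes $\times\ 2^\mu$ interpretations of the at most $\mu$ symbols of $\tau$ (using $\left|\tau\right|\leq\LST(\AECK)\leq\mu$ and $\mu^\mu=2^\mu$) $\times\ \mu$ choices of the distinguished point yields $2^\mu$ pairs, whence $\left|S(A)\right|\leq 2^{\left|A\right|}$ by surjectivity of the realization map.
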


\begin{corollary}\label{almost-stability-from-CH}
Suppose $\mu$ is an infinite cardinal such that $\mu \geq \LST(\AECK)$ and $2^\mu = \mu^+$.
Then $\AECK_\mu$ satisfies almost stability.
\end{corollary}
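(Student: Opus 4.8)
The plan is to reduce the claim to the two results immediately preceding it, so that essentially no new work is required. By definition, $\AECK_\mu$ satisfies almost stability precisely when $|S^\na(M)| \leq \mu^+$ for every $M \in \AECK_\mu$, so I would fix an arbitrary such $M$ and aim to bound $|S^\na(M)|$ by $\mu^+$.

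First I would invoke Fact~\ref{bounded-card-of-types}. Since $M \in \AECK_\mu$ satisfies $|M| = \mu \geq \LST(\AECK)$, that fact yields $|S(M)| \leq 2^{|M|} = 2^\mu$. The hypothesis $2^\mu = \mu^+$ then collapses this bound to $|S(M)| \leq \mu^+$.

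Finally I would transfer the bound from $S(M)$ to $S^\na(M)$ using the ``in particular'' clause of Lemma~\ref{count-algebraic}, applied with $A = M$, the cardinal $\lambda = \mu$, and the test cardinal $\mu^+ \geq \mu$: that clause asserts $|S(M)| \leq \mu^+ \iff |S^\na(M)| \leq \mu^+$. Alternatively one may simply observe $S^\na(M) \subseteq S(M)$. Either way we obtain $|S^\na(M)| \leq \mu^+$, and since $M \in \AECK_\mu$ was arbitrary, this is exactly almost stability of $\AECK_\mu$. I do not expect any genuine obstacle: the mathematical content is entirely carried by Fact~\ref{bounded-card-of-types} and Lemma~\ref{count-algebraic}, and the sole hypothesis doing real work is the instance $2^\mu = \mu^+$ of the continuum hypothesis at $\mu$, which is precisely what converts the coarse $2^{|M|}$ estimate into the desired $\mu^+$ estimate.
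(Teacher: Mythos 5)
Your proof is correct and follows exactly the route the paper intends: the corollary is stated immediately after Fact~\ref{bounded-card-of-types} precisely because it follows from it via $|S(M)| \leq 2^\mu = \mu^+$ together with the trivial bound $|S^\na(M)| \leq |S(M)|$ (or, equivalently, the ``in particular'' clause of Lemma~\ref{count-algebraic}). There is nothing to add; the paper itself omits the proof as immediate.
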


%\section{Representations}

\begin{definition}[cf.~{\cite[Definition~1.0.29]{JrSh:875}}]
Suppose $N \in \AECK_{\lambda^+}$.
A \emph{representation of $N$} is a $\leqK$-increasing, continuous sequence
$\langle A_\alpha \mid \alpha<\lambda^+ \rangle$ of models in $\AECK_\lambda$
whose union is $N$.%
\footnote{A representation is sometimes called a \emph{filtration}.
Note, however, that our definition (adapted from~\cite{JrSh:875}) is more restrictive than the one given in~\cite[Exercise~4.7]{Baldwin-Categoricity},
as we require both that the length of the sequence be $\lambda^+$ (though allowing repetitions)
and that every model in the sequence have cardinality $\lambda$.}
\end{definition}

\begin{fact}\label{representation-exists}
Suppose $\LST(\AECK) \leq\lambda$.  Then:
\begin{enumerate}
\item Every $N \in \AECK_{\lambda^+}$ has a representation.
\item If $N \in \AECK_{\lambda^+}$ and $A \in \AECK_\lambda$ are such that $A \lessK N$,
then there exists a representation $\langle A_\alpha \mid \alpha<\lambda^+ \rangle$ of $N$
such that $A_0 = A$.
\end{enumerate}
\end{fact}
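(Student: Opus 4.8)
The plan is to prove part~(2) directly by a transfinite recursion of length $\lambda^+$, and then to derive part~(1) as an easy consequence. For part~(1), first observe that by $\LST(\AECK)\leq\lambda$ together with Remark~\ref{LST-remark}, choosing any subset $Z$ of the universe of $N$ with $|Z|=\lambda$ yields a model $A\in\AECK_\lambda$ with $A\leqK N$; since $|A|=\lambda<\lambda^+=|N|$ we have $A\neq N$, hence $A\lessK N$, so part~(2) applied to this $A$ furnishes a representation of $N$ and thereby proves part~(1). It therefore suffices to establish part~(2).

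To prove part~(2), I would fix an enumeration $\langle n_\beta \mid \beta<\lambda^+\rangle$ of the universe of $N$ (possible since $|N|=\lambda^+$) and construct $\langle A_\alpha \mid \alpha<\lambda^+\rangle$ by recursion on $\alpha$, maintaining the invariants that $A_\alpha\in\AECK_\lambda$, that $A_\alpha\leqK N$, that the sequence built so far is $\leqK$-increasing and continuous, and that $n_\beta\in A_{\beta+1}$ for every $\beta$. At stage $0$ set $A_0=A$; since $A\lessK N$ gives $A\leqK N$ and $A\in\AECK_\lambda$, the invariants hold. At a successor stage $\alpha=\beta+1$, apply $\LST(\AECK)\leq\lambda$ and Remark~\ref{LST-remark} to the set $Z=A_\beta\cup\{n_\beta\}$, which has cardinality $\lambda$, to obtain $A_\alpha\in\AECK_\lambda$ with $A_\alpha\leqK N$ and $Z\subseteq A_\alpha$; since $Z$ contains the entire universe of $A_\beta$ and $A_\beta\leqK N$, the coherence clause invoked in Remark~\ref{LST-remark} guarantees $A_\beta\leqK A_\alpha$, and $n_\beta\in A_\alpha$ by construction. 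At a limit stage $\alpha$, continuity forces $A_\alpha=\bigcup_{\beta<\alpha}A_\beta$; then $A_\alpha\in\AECK$ with $A_\beta\leqK A_\alpha$ for all $\beta<\alpha$ by Definition~\ref{AEC-def}(3), its cardinality is $\lambda$ because it is a union of $|\alpha|\leq\lambda$ models of size $\lambda$ each containing $A_0$, and $A_\alpha\leqK N$ follows from the smoothness clause Definition~\ref{AEC-def}(4) applied to the continuous increasing sequence $\langle A_\gamma\mid\gamma\leq\alpha\rangle$ together with $A_\gamma\leqK N$ for every $\gamma<\alpha$.

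Finally I would verify that $\langle A_\alpha\mid\alpha<\lambda^+\rangle$ is a representation of $N$ with $A_0=A$. It is $\leqK$-increasing and continuous by construction, each $A_\alpha$ lies in $\AECK_\lambda$, and $A_0=A$. For the union, $\bigcup_{\alpha<\lambda^+}A_\alpha\subseteq N$ because each $A_\alpha\leqK N$ forces $A_\alpha\subseteq N$, while conversely every $n_\beta$ lies in $A_{\beta+1}$, so the union contains every point of $N$; hence the union equals $N$, as required.

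The construction is a routine transfinite recursion; the only points requiring genuine care are the propagation of the relation $A_\alpha\leqK N$ up the recursion and the maintenance of cardinality exactly $\lambda$. The former is handled at successors by coherence (through Remark~\ref{LST-remark}) and at limits by smoothness, while the latter is secured by including all of $A_\beta$ in the set $Z$ at successor stages (so $|Z|=\lambda$) and by the cardinal arithmetic $|\alpha|\cdot\lambda=\lambda$ at limits. I expect the main pitfall, rather than any deep obstacle, to be ensuring that the full universe of $A_\beta$ is fed into $Z$ at each successor, so that coherence applies and the resulting chain is genuinely $\leqK$-increasing and not merely $\subseteq$-increasing.
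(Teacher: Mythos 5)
Your proof is correct. The paper states this result as a \emph{Fact} with no proof supplied (it is a standard result about AECs), and your argument --- transfinite recursion of length $\lambda^+$ over an enumeration of $N$, using $\LST(\AECK)\leq\lambda$ together with coherence (via Remark~\ref{LST-remark}) at successor stages and smoothness at limit stages, with part~(1) reduced to part~(2) --- is precisely the standard argument this Fact implicitly relies on.
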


The following purely set-theoretic fact will be useful in our applications:

\begin{fact}\label{equal-on-club}
Suppose that $\langle X_\alpha \mid \alpha\leq\lambda^+ \rangle$ and $\langle Y_\alpha \mid \alpha\leq\lambda^+ \rangle$ 
are two $\subseteq$-increasing, continuous sequences of sets such that 
$\left|X_\alpha\right|, \left|Y_\alpha\right| \leq\lambda$ for every $\alpha<\lambda^+$,
and $X_{\lambda^+} = Y_{\lambda^+}$.%
\footnote{In applications, it will generally be the case that 
$\left|X_{\lambda^+}\right| = \left|Y_{\lambda^+}\right| = \lambda^+$,
but formally we do not require this as a hypothesis;
it is possible that $\left|X_{\lambda^+}\right|, \left|Y_{\lambda^+}\right| \leq\lambda$,
in which case the two sequences 
$\langle X_\alpha \mid \alpha<\lambda^+ \rangle$ and $\langle Y_\alpha \mid \alpha<\lambda^+ \rangle$ 
are eventually constant (below $\lambda^+$),
so that $X_\alpha = Y_\alpha$ for a cofinal set of ordinals $\alpha<\lambda^+$.}
Then the set
\[
\Set{ \alpha<\lambda^+ | X_\alpha = Y_\alpha }
\]
is a club subset of $\lambda^+$.

In particular,
if
$N \in \AECK_{\lambda^+}$,
and $\langle A_\alpha \mid \alpha<\lambda^+ \rangle$ and $\langle B_\alpha \mid \alpha<\lambda^+ \rangle$ 
are two representations of $N$,
then the set
\[
\Set{ \alpha<\lambda^+ | A_\alpha = B_\alpha }
\]
is a club subset of $\lambda^+$.
\end{fact}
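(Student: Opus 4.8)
The plan is to prove directly that
$C := \Set{ \alpha<\lambda^+ | X_\alpha = Y_\alpha }$
is a closed unbounded subset of $\lambda^+$, after which the ``In particular'' clause follows by a one-line reduction. Throughout I use that $\lambda$ is infinite, so that $\cf(\lambda^+) = \lambda^+ > \max\{\lambda,\omega\}$.

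\emph{Closedness.} Suppose $\beta<\lambda^+$ is a limit point of $C$, so that the set of $\alpha<\beta$ with $X_\alpha = Y_\alpha$ is cofinal in $\beta$. Since both sequences are $\subseteq$-increasing, each of the unions $\bigcup_{\alpha<\beta}X_\alpha$ and $\bigcup_{\alpha<\beta}Y_\alpha$ may be computed along this cofinal set of agreement indices; by continuity at $\beta$ this yields $X_\beta = Y_\beta$, so $\beta\in C$.

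\emph{Unboundedness.} Fix $\alpha_0<\lambda^+$. I would build, by recursion on $n<\omega$, a strictly increasing sequence $\alpha_0 < \beta_0 < \alpha_1 < \beta_1 < \cdots$ of ordinals below $\lambda^+$ with $X_{\alpha_n} \subseteq Y_{\beta_n} \subseteq X_{\alpha_{n+1}}$ for all $n$. Each step uses $X_{\lambda^+} = Y_{\lambda^+} = \bigcup_{\gamma<\lambda^+} Y_\gamma$ together with the cardinality bound: every one of the at most $\lambda$ elements of $X_{\alpha_n}$ lies in some $Y_\gamma$ with $\gamma<\lambda^+$, and since $\cf(\lambda^+) > \lambda$ the supremum of these indices stays below $\lambda^+$, so some $\beta_n > \alpha_n$ satisfies $X_{\alpha_n}\subseteq Y_{\beta_n}$; the symmetric argument produces $\alpha_{n+1}>\beta_n$ with $Y_{\beta_n}\subseteq X_{\alpha_{n+1}}$. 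Setting $\gamma := \sup_n \alpha_n = \sup_n \beta_n$, uncountable cofinality of $\lambda^+$ gives $\gamma<\lambda^+$, and continuity yields $X_\gamma = \bigcup_n X_{\alpha_n} = \bigcup_n Y_{\beta_n} = Y_\gamma$; thus $\gamma\in C$ and $\gamma\geq\alpha_0$, as required. The only delicate point is precisely this cofinality bookkeeping---confirming that the interleaving indices $\beta_n,\alpha_{n+1}$ and their supremum $\gamma$ all remain strictly below $\lambda^+$---which is exactly what $\cf(\lambda^+)=\lambda^+>\max\{\lambda,\omega\}$ guarantees.

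\emph{The ``In particular'' clause.} Given representations $\langle A_\alpha \mid \alpha<\lambda^+\rangle$ and $\langle B_\alpha \mid \alpha<\lambda^+\rangle$ of $N$, extend both to sequences indexed by $\alpha\leq\lambda^+$ by setting $A_{\lambda^+} = B_{\lambda^+} = N$. These remain $\subseteq$-increasing and continuous---continuity at $\lambda^+$ is precisely the statement that $N$ is the union of its representation---and each $A_\alpha, B_\alpha$ with $\alpha<\lambda^+$ has cardinality $\lambda$, so the cardinality hypothesis holds. The already-proved general statement then applies with $X_\alpha = A_\alpha$ and $Y_\alpha = B_\alpha$, and everything else reduces to routine set-theoretic manipulation.
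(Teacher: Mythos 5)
Your proof is correct. The paper gives no proof of its own here --- the statement is labelled a Fact and treated as a standard purely set-theoretic result --- and your argument is exactly the standard one the authors intend: closure follows from continuity computed along a cofinal set of agreement indices, unboundedness from the interleaving construction $X_{\alpha_n} \subseteq Y_{\beta_n} \subseteq X_{\alpha_{n+1}}$ together with $\cf(\lambda^+)=\lambda^+>\max\{\lambda,\omega\}$, and the ``In particular'' clause from appending $A_{\lambda^+}=B_{\lambda^+}=N$ to the two representations. The one point worth spelling out in the AEC application is that equality of universes yields equality of models: since each $A_\alpha$ and $B_\alpha$ is a submodel of $N$ (as $\leqK$ refines $\subseteq$), its $\tau$-structure is the restriction of that of $N$ to its universe, so the reduction you call routine is indeed sound.
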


\begin{theorem}[cf.~{\cite[Proposition~1.0.30]{JrSh:875}}]\label{image-equal-on-club}
Suppose that
\[
\Sequence{ (A_\alpha, B_\alpha, g_\alpha) | \alpha\leq\lambda^+ }
\]
is a sequence such that
\begin{enumerate}
\item $\left< A_\alpha \mid \alpha\leq\lambda^+ \right>$ and $\left< B_\alpha \mid \alpha\leq\lambda^+ \right>$
are $\leqK$-increasing, continuous sequences of models in $\AECK$,
where $A_\alpha, B_\alpha \in \AECK_\lambda$ for every $\alpha<\lambda^+$;
and
\item $\langle g_\alpha : A_\alpha \embeds B_\alpha \mid\allowbreak \alpha\leq\lambda^+ \rangle$
is a $\subseteq$-increasing, continuous sequence of embeddings.
\end{enumerate}
Then there is a club subset $C \subseteq \lambda^+$ such that for every $\alpha\in C$,
\[
g_\alpha[A_\alpha] = B_\alpha \cap g_{\alpha+1}[A_{\alpha+1}] = B_\alpha \cap g_{\lambda^+}[A_{\lambda^+}].
\]
\end{theorem}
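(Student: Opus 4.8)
The plan is to reduce the whole statement to the purely set-theoretic Fact~\ref{equal-on-club} by feeding it the correct pair of set-sequences, after first disposing of one automatic inclusion. Because $\langle g_\alpha \mid \alpha\leq\lambda^+\rangle$ is $\subseteq$-increasing we have $g_{\alpha+1}\restriction A_\alpha = g_\alpha$, so $g_\alpha[A_\alpha] = g_{\alpha+1}[A_\alpha] \subseteq g_{\alpha+1}[A_{\alpha+1}]$; since also $g_\alpha[A_\alpha]\subseteq B_\alpha$, this yields $g_\alpha[A_\alpha] \subseteq B_\alpha\cap g_{\alpha+1}[A_{\alpha+1}]$. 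Likewise $g_{\alpha+1}[A_{\alpha+1}]\subseteq g_{\lambda^+}[A_{\lambda^+}]$ gives $B_\alpha\cap g_{\alpha+1}[A_{\alpha+1}]\subseteq B_\alpha\cap g_{\lambda^+}[A_{\lambda^+}]$. Hence for \emph{every} $\alpha<\lambda^+$ there is the free chain of inclusions
\[
g_\alpha[A_\alpha] \subseteq B_\alpha\cap g_{\alpha+1}[A_{\alpha+1}] \subseteq B_\alpha\cap g_{\lambda^+}[A_{\lambda^+}],
\]
so it suffices to find a club on which the two outer sets coincide; the middle equality is then squeezed for free, collapsing the required triple equality to a single one.

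To produce that club I would set $X_\alpha := g_\alpha[A_\alpha]$ and $Y_\alpha := B_\alpha\cap g_{\lambda^+}[A_{\lambda^+}]$ for all $\alpha\leq\lambda^+$, and verify the hypotheses of Fact~\ref{equal-on-club}. Both sequences are $\subseteq$-increasing, and since $g_\alpha$ is injective with $A_\alpha \in \AECK_\lambda$ we get $\left|X_\alpha\right| = \left|A_\alpha\right| = \lambda$, while $\left|Y_\alpha\right| \leq \left|B_\alpha\right| = \lambda$, for every $\alpha<\lambda^+$. Continuity of $\langle Y_\alpha\rangle$ is immediate from continuity of $\langle B_\alpha\rangle$ together with distributivity of intersection over unions. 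The one clause that needs a short argument is continuity of $\langle X_\alpha\rangle$: at a limit $\delta$ the continuity hypotheses give $A_\delta = \bigcup_{\alpha<\delta}A_\alpha$ and $g_\delta = \bigcup_{\alpha<\delta}g_\alpha$, and a direct element-chase (every $a\in A_\delta$ lies in some $A_\alpha$ with $g_\delta(a)=g_\alpha(a)$, and conversely) shows $g_\delta[A_\delta] = \bigcup_{\alpha<\delta} g_\alpha[A_\alpha]$.

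The crux is the top-level equality $X_{\lambda^+} = Y_{\lambda^+}$ demanded by Fact~\ref{equal-on-club}. Here $Y_{\lambda^+} = B_{\lambda^+}\cap g_{\lambda^+}[A_{\lambda^+}]$, and since $g_{\lambda^+}:A_{\lambda^+}\embeds B_{\lambda^+}$ forces $g_{\lambda^+}[A_{\lambda^+}]\subseteq B_{\lambda^+}$, the intersection is redundant and $Y_{\lambda^+} = g_{\lambda^+}[A_{\lambda^+}] = X_{\lambda^+}$. With all hypotheses checked, Fact~\ref{equal-on-club} delivers a club $C\subseteq\lambda^+$ on which $g_\alpha[A_\alpha] = B_\alpha\cap g_{\lambda^+}[A_{\lambda^+}]$; combining this with the inclusion chain above makes all three sets equal on $C$, as desired. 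I expect no genuine obstacle beyond the bookkeeping around continuity of the image sequence and the observation that the triple equality reduces to a single one via the free inclusions, with the real set-theoretic content entirely delegated to Fact~\ref{equal-on-club}.
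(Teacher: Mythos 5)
Your proposal is correct and takes essentially the same route as the paper's own proof: the paper also notes the free inclusions $g_\alpha[A_\alpha] \subseteq B_\alpha \cap g_{\alpha+1}[A_{\alpha+1}] \subseteq B_\alpha \cap g_{\lambda^+}[A_{\lambda^+}]$ and then applies Fact~\ref{equal-on-club} to the sequences $\langle g_\alpha[A_\alpha] \mid \alpha\leq\lambda^+ \rangle$ and $\langle B_\alpha \cap g_{\lambda^+}[A_{\lambda^+}] \mid \alpha\leq\lambda^+ \rangle$. The only difference is that you spell out the verification of that Fact's hypotheses (continuity of the image sequence, cardinality bounds, agreement at level $\lambda^+$), which the paper leaves implicit.
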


\begin{proof}
Write $N := g_{\lambda^+}[A_{\lambda^+}]$.
We clearly have 
$g_\alpha[A_\alpha] \subseteq B_\alpha \cap g_{\alpha+1}[A_{\alpha+1}] \subseteq B_\alpha \cap N$
for every $\alpha<\lambda^+$.
Thus we are looking for a club subset of ordinals $\alpha<\lambda^+$ such that 
$B_\alpha \cap N \subseteq g_\alpha[A_\alpha]$.
Now, the two sequences 
%It is clear that
$\langle g_\alpha[A_\alpha] \mid \alpha\leq\lambda^+ \rangle$ and
$\langle B_\alpha \cap N \mid\allowbreak \alpha\leq\lambda^+ \rangle$ 
%are two sequences 
satisfy the hypotheses of Fact~\ref{equal-on-club},%
\footnote{It is not necessarily the case that $B_\alpha \cap N \in \AECK$ for all $\alpha<\lambda^+$,
which is why we need the more general set-theoretic formulation of Fact~\ref{equal-on-club}.}
so that the required conclusion follows from that Fact.
%
%It is clear that
%$\left< g_\alpha[A_\alpha] \mid \alpha\leq\lambda^+ \right>$ 
%is a $\leqK$-increasing, continuous sequences of models in $\AECK$.
%Write $N := g_{\lambda^+}[A_{\lambda^+}]$,
%and consider two possibilities:
%\begin{itemize}
%\item
%Suppose $\left|N\right| = \lambda$.
%Then the sequence 
%$\left< g_\alpha[A_\alpha] \mid \alpha<\lambda^+ \right>$ 
%is eventually constant,
%so that in this case the desired inclusion holds for cofinally many ordinals $\alpha<\lambda^+$.
%
%\item Otherwise,
%$\left|N\right| = \lambda^+$.
%Then 
%$\left< g_\alpha[A_\alpha] \mid \alpha<\lambda^+ \right>$ and
%$\left< B_\alpha \cap N \mid \alpha<\lambda^+ \right>$ 
%are two representations of $N$.
%\todo{I don't think we know that $B_\alpha \cap N \in \AECK$,
%but probably this doesn't matter.
%Need to correct Fact \ref{repr-equal-on-club} to clarify.}
%Thus there is a club set of ordinals $\alpha<\lambda^+$ for which $g_\alpha[A_\alpha] = B_\alpha \cap N$,
%as sought.
%\qedhere
%\end{itemize}
\end{proof}

\begin{lemma}\label{restriction}
Suppose that:
\begin{enumerate}
\item $\delta$ is some ordinal;

\item $\left< C_\alpha \mid \alpha\leq\delta \right>$ and $\left< B_\alpha \mid \alpha\leq\delta \right>$
are $\leqK$-increasing, continuous sequences of models in $\AECK$;

\item $\langle g_\alpha : C_\alpha \embeds B_\alpha \mid\allowbreak \alpha\leq\delta \rangle$
is a $\subseteq$-increasing, continuous sequence of embeddings;
%\item $C_\alpha, B_\alpha \in \AECK_\lambda$ if $\alpha<\lambda^+$; 
and
\item %\label{b-in-difference} 
$b$ is some element of $B_0 \setminus g_\delta[C_\delta]$.
\end{enumerate}

For every ordinal $\alpha\leq\delta$,
write $q_\alpha := \tp(b/g_\alpha[C_\alpha]; B_\alpha)$.

Then for all $\alpha\leq\beta\leq\delta$, $q_\alpha \in S^\na(g_\alpha[C_\alpha])$ and
$q_\beta \restriction g_\alpha[C_\alpha] = q_\alpha$.
\end{lemma}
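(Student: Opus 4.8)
The plan is to treat the two assertions separately and to reduce both to elementary bookkeeping about the image models $g_\alpha[C_\alpha]$ together with the standard invariance of a Galois type under enlargement of the ambient model. First I would record the monotonicity of the images. Since $\langle g_\alpha \mid \alpha\leq\delta \rangle$ is $\subseteq$-increasing, for $\alpha\leq\beta$ the map $g_\beta$ extends $g_\alpha$, so $g_\alpha[C_\alpha] = g_\beta[C_\alpha]$; and since $\langle C_\alpha \mid \alpha\leq\delta \rangle$ is $\leqK$-increasing, hence $\subseteq$-increasing by Definition~\ref{AEC-def}(2), we have $C_\alpha \leqK C_\beta$, so applying the embedding $g_\beta$ yields $g_\alpha[C_\alpha] = g_\beta[C_\alpha] \leqK g_\beta[C_\beta]$. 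Thus $\langle g_\alpha[C_\alpha] \mid \alpha\leq\delta \rangle$ is a $\leqK$-increasing chain and $g_\alpha[C_\alpha] \subseteq g_\delta[C_\delta]$ for every $\alpha\leq\delta$. Moreover, each $g_\alpha$ being an $\AECK$-embedding gives $g_\alpha[C_\alpha] \leqK B_\alpha$, and $b \in B_0 \leqK B_\alpha$ guarantees $b \in B_\alpha$, so each $q_\alpha = \tp(b/g_\alpha[C_\alpha]; B_\alpha)$ is a genuine type over the model $g_\alpha[C_\alpha]$.

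For the non-algebraicity of $q_\alpha$, I would simply combine $g_\alpha[C_\alpha] \subseteq g_\delta[C_\delta]$ with the hypothesis $b \in B_0 \setminus g_\delta[C_\delta]$ to conclude $b \notin g_\alpha[C_\alpha]$; since a Galois type $\tp(b/M;N)$ is algebraic exactly when $b \in M$, this shows $q_\alpha \in S^\na(g_\alpha[C_\alpha])$.

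For the restriction identity with $\alpha\leq\beta$, I would compute both sides over the common base $g_\alpha[C_\alpha]$. By the definition of restriction and the chain $g_\alpha[C_\alpha] \leqK g_\beta[C_\beta]$ established above, $q_\beta \restriction g_\alpha[C_\alpha] = \tp(b/g_\alpha[C_\alpha]; B_\beta)$. On the other hand $q_\alpha = \tp(b/g_\alpha[C_\alpha]; B_\alpha)$, and since $g_\alpha[C_\alpha] \leqK B_\alpha \leqK B_\beta$ with $b \in B_\alpha$, the Galois type of $b$ over $g_\alpha[C_\alpha]$ is unchanged when the ambient model is enlarged from $B_\alpha$ to $B_\beta$, witnessed by taking the inclusion $B_\alpha \leqK B_\beta$ as the amalgam, which fixes $g_\alpha[C_\alpha]$ pointwise. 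Hence $\tp(b/g_\alpha[C_\alpha]; B_\alpha) = \tp(b/g_\alpha[C_\alpha]; B_\beta)$, giving $q_\beta \restriction g_\alpha[C_\alpha] = q_\alpha$.

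The argument uses only the increasing structure, not continuity, and I expect no genuine obstacle. The one point requiring care is the bookkeeping that the image models sit coherently as strong submodels of one another and of the $B_\alpha$, so that both the restriction operation and the ambient-enlargement invariance of Galois types apply directly; once that monotonicity is in place, both conclusions are immediate.
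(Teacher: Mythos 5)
Your proposal is correct and follows essentially the same route as the paper's proof: both establish $g_\alpha[C_\alpha] = g_\beta[C_\alpha] \leqK g_\beta[C_\beta]$ from the $\subseteq$-increasing embeddings, get non-algebraicity from $g_\alpha[C_\alpha] \subseteq g_\delta[C_\delta]$ together with $b \notin g_\delta[C_\delta]$, and prove the restriction identity by the same chain $q_\beta \restriction g_\alpha[C_\alpha] = \tp(b/g_\alpha[C_\alpha]; B_\beta) = \tp(b/g_\alpha[C_\alpha]; B_\alpha)$, using invariance of the type under enlarging the ambient model from $B_\alpha$ to $B_\beta$. Your additional remark that continuity of the sequences is never used is also accurate.
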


\begin{proof}
First, it follows from the hypotheses that
$g_\alpha[C_\alpha] \lessK B_\alpha$ and $b \in B_\alpha \setminus g_\alpha[C_\alpha]$ for every $\alpha\leq\delta$.
Thus $q_\alpha \in S^\na(g_\alpha[C_\alpha])$ for every $\alpha\leq\delta$.

Fix $\alpha\leq\beta\leq\delta$.
Since $g_\alpha[C_\alpha] = g_\beta[C_\alpha] \leqK g_\beta[C_\beta]$ and $B_\alpha \leqK B_\beta$,
we have
\begin{align*}
q_\beta \restriction g_\alpha[C_\alpha] &= \tp(b/g_\beta[C_\beta]; B_\beta) \restriction g_\alpha[C_\alpha] \\
&= \tp(b/g_\alpha[C_\alpha]; B_\beta) \\
&= \tp(b/g_\alpha[C_\alpha]; B_\alpha) = q_\alpha.
\qedhere
\end{align*}
\end{proof}

\section{Isomorphic amalgamations}
\label{section:isomorphic}

Although the notion of equivalence of amalgamations (to be defined in Section~\ref{section:equivalence} below)
turns out to be the more useful one in our context,
we begin here by exploring the more natural notion of isomorphic amalgamations.
This will allow us to present two interesting connections between isomorphic and equivalent amalgamations
(Facts \ref{E=isom+extensions} and~\ref{isom+equiv} below)
that do not require the amalgamation property.

%At a first glance it looks more natural to study isomorphic amalgamations,
%so that in Section~\ref{section:isomorphic} we define the notion of isomorphic amalgamations. 
%Currently the notion of equivalent amalgamations is more useful. 

\begin{definition}
Suppose $A, B, C \in \AECK$, and
$g_B : A \embeds B$ and $g_C : A \embeds C$ are embeddings.
Two amalgamations $(f^D_B, f^D_C, D)$ and $(f^F_B, f^F_C, F)$ of $B$ and $C$ over $(A, g_B, g_C)$
are \emph{isomorphic amalgamations} if there is an isomorphism $\varphi : D \cong F$ such that
$f^F_B = \varphi \circ f^D_B$ and $f^F_C = \varphi \circ f^D_C$.
\end{definition}

\begin{fact}
If $(f_B, f_C, D)$ is an amalgamation of $B$ and $C$ over $(A, g_B, g_C)$
and $\varphi : D \cong F$ is any isomorphism,
then $(\varphi \circ f_B, \varphi \circ f_C, F)$ is also amalgamation of $B$ and $C$ over $(A, g_B, g_C)$
and the two amalgamations are isomorphic.
\end{fact}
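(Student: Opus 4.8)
The plan is to verify directly that the new triple satisfies every clause of Definition~\ref{def-amalgamation}, and then to exhibit the witnessing isomorphism for the ``isomorphic amalgamations'' claim.

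First I would confirm that $(\varphi \circ f_B, \varphi \circ f_C, F)$ is an amalgamation of $B$ and $C$ over $(A, g_B, g_C)$. Since $D \in \AECK$ and $\varphi : D \cong F$ is an isomorphism, the respecting-isomorphisms axiom (Definition~\ref{AEC-def}(1)(a)) gives $F \in \AECK$. An isomorphism is in particular an embedding, and a composite of embeddings is an embedding, so $\varphi \circ f_B : B \embeds F$ and $\varphi \circ f_C : C \embeds F$ are indeed embeddings. For the commutation condition I would compute, using associativity of composition together with the commutation condition $f_B \circ g_B = f_C \circ g_C$ of the given amalgamation,
\[
(\varphi \circ f_B) \circ g_B = \varphi \circ (f_B \circ g_B) = \varphi \circ (f_C \circ g_C) = (\varphi \circ f_C) \circ g_C,
\]
which is exactly what Definition~\ref{def-amalgamation} demands. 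This establishes the first assertion.

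For the second assertion, that the two amalgamations are isomorphic, I would take $\varphi$ itself as the witnessing isomorphism in the definition of isomorphic amalgamations: placing $(f_B, f_C, D)$ in the role of the first amalgamation and $(\varphi \circ f_B, \varphi \circ f_C, F)$ in the role of the second, the two required equalities become $\varphi \circ f_B = \varphi \circ f_B$ and $\varphi \circ f_C = \varphi \circ f_C$, which hold trivially.

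There is essentially no obstacle here: the only non-formal ingredient is the respecting-isomorphisms axiom, which is what places $F$ in $\AECK$ (so that the target of the maps is again a model of the class); everything else is routine bookkeeping with composition of maps.
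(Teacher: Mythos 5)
Your proof is correct: the paper states this as a Fact with no written proof, precisely because the intended argument is the routine verification you give — membership of $F$ in $\AECK$ via the respecting-isomorphisms axiom, closure of embeddings under composition with an isomorphism, associativity for the commutation condition, and $\varphi$ itself witnessing that the two amalgamations are isomorphic. Nothing is missing, and your approach is the canonical one.
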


\begin{fact}\label{get-amalg-with-id}
Every amalgamation $(f_B, f_C, G)$ of $B$ and $C$ over $(A, g_B, g_C)$
is isomorphic to an amalgamation of the form $(\id_{B}, f^D_C, D)$
and also to one of the form $(f^F_B, \id_{C}, F)$.
\end{fact}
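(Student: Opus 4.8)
The plan is to reduce both assertions to a single \emph{renaming} (transport-of-structure) construction, after which the statement drops out of the preceding Fact. Concretely, it suffices to produce a model $D \in \AECK$ together with an isomorphism $\varphi : G \cong D$ satisfying $\varphi \circ f_B = \id_B$. Indeed, once such a $\varphi$ is available, the preceding Fact immediately gives that $(\varphi \circ f_B, \varphi \circ f_C, D)$ is an amalgamation of $B$ and $C$ over $(A, g_B, g_C)$ isomorphic to $(f_B, f_C, G)$; setting $f^D_C := \varphi \circ f_C$ and using $\varphi \circ f_B = \id_B$, this is precisely an amalgamation of the first desired form. The second form $(f^F_B, \id_C, F)$ is then obtained by the symmetric construction, interchanging the roles of $B$ and $C$ (and of $f_B$ and $f_C$).

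To build $\varphi$, I would use that $f_B : B \embeds G$ is an embedding, so that $f_B$ restricts to an isomorphism $B \cong f_B[B]$ with $f_B[B] \leqK G$. The idea is to relabel the elements of $G$, sending the image $f_B[B]$ back onto $B$ via $f_B^{-1}$ while keeping the remaining elements disjoint from $B$. Concretely, I would fix a set $E$ disjoint from the universe of $B$ with $|E| = |G \setminus f_B[B]|$, together with a bijection $\psi : G \setminus f_B[B] \to E$, and then define a bijection from the universe of $G$ onto $B \cup E$ that agrees with $f_B^{-1}$ on $f_B[B]$ and with $\psi$ elsewhere. Transporting the $\tau$-structure of $G$ across this bijection produces a $\tau$-model $D$ (with universe $B \cup E$) and, by construction, an isomorphism $\varphi : G \cong D$.

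It then remains to verify the required properties from the AEC axioms. Since $G \in \AECK$ and $\varphi : G \cong D$, clause~(1a) of Definition~\ref{AEC-def} gives $D \in \AECK$. By construction $\varphi(f_B(b)) = b$ for every $b \in B$, so $\varphi \circ f_B = \id_B$; and since $f_B[B] \leqK G$, clause~(1b) of Definition~\ref{AEC-def} applied to $\varphi$ yields $\varphi[f_B[B]] = B \leqK D$, so that $\id_B : B \embeds D$ is a legitimate embedding. This is exactly what was needed to invoke the preceding Fact.

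I do not expect a substantive obstacle here; the only real care is the set-theoretic bookkeeping — ensuring the relabelling map is a genuine bijection (which is why $E$ is taken disjoint from $B$) and confirming, via the isomorphism axioms, both that $D \in \AECK$ and that $B \leqK D$. Everything else is a direct appeal to the preceding Fact, so the whole argument is a routine transport-of-structure followed by one citation.
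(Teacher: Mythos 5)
Your proposal is correct, and it is exactly the routine transport-of-structure argument the paper has in mind: the paper states this as a \emph{Fact} without proof, relying on the immediately preceding Fact (that composing an amalgamation with any isomorphism of the amalgam yields an isomorphic amalgamation), which is precisely the reduction you carry out. Your relabelling of $G$ onto $B \cup E$ via $f_B^{-1}$ and a disjoint copy $E$, together with the appeal to clauses (1a) and (1b) of Definition~\ref{AEC-def}, correctly fills in the omitted details, so there is nothing to add.
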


Thus, %after asserting that a particular amalgamation $(f_B, f_C, D)$ exists,
when fixing a particular amalgamation $(f_B, f_C, D)$ of $B$ and $C$ over $(A, g_B, g_C)$,
we may always assume either that $f_B = \id_{B}$ or that $f_C = \id_{C}$.

\begin{fact}\label{AP-versions}
%Let $\AECK$ be an AEC and let $\lambda$ be some infinite cardinal.
The following are equivalent:
\begin{enumerate}
\item For all $A, B, C \in \AECK_\lambda$ with $A \leqK B$ and $A \leqK C$,
there exists an amalgamation $(f_B, f_C, D)$ of $B$ and $C$ over $A$;
\item For all $A, B, C \in \AECK_\lambda$, and embeddings
$g_B : A \embeds B$ and $g_C : A \embeds C$,
there exists an amalgamation $(f_B, f_C, D)$ of $B$ and $C$ over $(A, g_B, g_C)$;
\item For all $A, B, C \in \AECK_\lambda$, and embeddings
$g_B : A \embeds B$ and $g_C : A \embeds C$,
there exists an amalgamation $(\id_{B}, f_C, D)$ of $B$ and $C$ over $(A, g_B, g_C)$;
\item For all $A, B, C \in \AECK_\lambda$, and embeddings
$g_B : A \embeds B$ and $g_C : A \embeds C$,
there exists an amalgamation $(f_B, \id_{C}, D)$ of $B$ and $C$ over $(A, g_B, g_C)$.
\end{enumerate}
Furthermore, if $\LST(\AECK) \leq\lambda$, then we may assume (in each case) that $D \in \AECK_\lambda$.
\end{fact}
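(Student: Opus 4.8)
The plan is to treat statement~(2) as the central version and prove that each of (1), (3), and~(4) is equivalent to it; three of the six implications are then immediate. Indeed, $(2)\Rightarrow(1)$ holds because, whenever $A \leqK B$ and $A \leqK C$, the inclusions are embeddings and statement~(1) is precisely the instance of~(2) with $g_B = g_C = \id_A$. Likewise $(3)\Rightarrow(2)$ and $(4)\Rightarrow(2)$ are trivial, since an amalgamation of the special form $(\id_B, f_C, D)$ (respectively $(f_B, \id_C, D)$) is in particular of the general form $(f_B, f_C, D)$ demanded by~(2).

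For $(2)\Rightarrow(3)$ and $(2)\Rightarrow(4)$ I would invoke Fact~\ref{get-amalg-with-id}. Given $A, B, C \in \AECK_\lambda$ and embeddings $g_B : A \embeds B$, $g_C : A \embeds C$, statement~(2) furnishes some amalgamation $(f_B, f_C, G)$ of $B$ and $C$ over $(A, g_B, g_C)$. By Fact~\ref{get-amalg-with-id} this amalgamation is isomorphic to one of the form $(\id_B, f^D_C, D)$ and also to one of the form $(f^F_B, \id_C, F)$; since isomorphic amalgamations share the same base $(A, g_B, g_C)$, these witness~(3) and~(4) respectively.

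The one substantive direction is $(1)\Rightarrow(2)$, which I expect to be the main obstacle, as it requires passing from abstract embeddings to genuine strong submodels by a renaming argument. Given $g_B : A \embeds B$ and $g_C : A \embeds C$, I would replace $B$ and $C$ by isomorphic copies $B^*$ and $C^*$ chosen so that $A$ literally sits inside each as a strong submodel: concretely, pick an isomorphism $\sigma_B : B \cong B^*$ with $\sigma_B \circ g_B = \id_A$ (renaming the elements of $g_B[A]$ back to $A$ via $g_B^{-1}$ and sending the remaining elements of $B$ to fresh points), and similarly $\sigma_C : C \cong C^*$ with $\sigma_C \circ g_C = \id_A$. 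Since $\AECK$ respects isomorphisms and $g_B[A] \leqK B$, we obtain $B^* \in \AECK_\lambda$ with $A = \sigma_B[g_B[A]] \leqK B^*$, and likewise $A \leqK C^*$. Applying~(1) to $A, B^*, C^*$ yields an amalgamation $(h_{B^*}, h_{C^*}, D)$ over $A$, so that $h_{B^*} \restriction A = h_{C^*} \restriction A$. Setting $f_B := h_{B^*} \circ \sigma_B$ and $f_C := h_{C^*} \circ \sigma_C$, a direct check gives $f_B \circ g_B = h_{B^*} \restriction A = h_{C^*} \restriction A = f_C \circ g_C$, so $(f_B, f_C, D)$ is the required amalgamation over $(A, g_B, g_C)$.

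Finally, for the ``Furthermore'' clause, assume $\LST(\AECK) \leq \lambda$ and let $(f_B, f_C, D)$ be any amalgamation produced above. The set $Z := f_B[B] \cup f_C[C]$ has cardinality $\lambda$, so Remark~\ref{LST-remark} yields a model $M \in \AECK_\lambda$ with $M \leqK D$ and $Z \subseteq M$. As $f_B[B] \leqK D$ and $f_C[C] \leqK D$ both lie inside $M \leqK D$, coherence (Definition~\ref{AEC-def}(5)) gives $f_B[B] \leqK M$ and $f_C[C] \leqK M$, so $f_B : B \embeds M$ and $f_C : C \embeds M$ remain embeddings while the compatibility $f_B \circ g_B = f_C \circ g_C$ is unaffected. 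Hence $(f_B, f_C, M)$ is a $\lambda$-amalgamation, and the same replacement of $D$ by such an $M$ applies verbatim to the special forms appearing in~(3) and~(4).
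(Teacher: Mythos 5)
Your proof is correct. The paper states this result as a \emph{Fact} and offers no proof of its own (it is treated as routine), so there is nothing to diverge from; your argument --- the trivial implications, Fact~\ref{get-amalg-with-id} for the special forms in (3) and~(4), the renaming isomorphisms $\sigma_B, \sigma_C$ with $\sigma_B \circ g_B = \id_A$ for $(1)\Rightarrow(2)$, and Remark~\ref{LST-remark} plus coherence for the ``Furthermore'' clause --- is exactly the standard verification the paper leaves implicit, and every step (in particular that $A = \sigma_B[g_B[A]] \leqK B^*$ via the respecting-isomorphisms axiom, and that $f_B[B] \leqK M$ follows from coherence) checks out against the paper's definitions.
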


Thus any one of the equivalent conditions in Fact~\ref{AP-versions} (with the additional requirement that $D \in \AECK_\lambda$)
may be taken as the definition of the \emph{amalgamation property}
for $\AECK_\lambda$ (cf.~\cite[Definition~1.0.20(1)]{JrSh:875}).

\begin{remark}\label{extending-embeddings}
\todo{Refer to this remark when used!}
Consider an amalgamation $(f_B, f_C, D)$ of $B$ and $C$ over $(A, g_B, g_C)$.
In the particular case where $g_C = \id_{A}$ and $f_B = \id_{B}$,
the requirement $\id_{B} \circ g_B = f_C \circ \id_{A}$ can be expressed as $g_B \subseteq f_C$ or as $f_C \restriction A = g_B$.
Similarly, in the particular case where $g_B = \id_{A}$ and $f_C = \id_{C}$,
the requirement $f_B \circ \id_{A} = \id_{C} \circ g_C$ can be expressed as $g_C \subseteq f_B$ or as $f_B \restriction A = g_C$.
\end{remark}

\section{Equivalence of amalgamations, extended}
\label{section:equivalence}

\begin{definition}[{cf.~\cite[Definition~4.1.2]{JrSh:875}}]\label{def-E}
\todo{Improve / streamline notation and presentation in this section.}
Suppose:
\begin{enumerate}
\item $A, B, C, D, F \in \AECK$, and
$g_B : A \embeds B$ and $g_C : A \embeds C$ are embeddings;
\item $(f^D_B, f^D_C, D)$ and $(f^F_B, f^F_C, F)$ are two amalgamations of $B$ and $C$ over $(A, g_B, g_C)$.
\end{enumerate}
Then we write $(f^D_B, f^D_C, D) \mathrel{E} (f^F_B, f^F_C, F)$
\todo{Is $E$ alone enough, without a subscript?
Maybe $E^*$, to indicate that it may not be an equiv relation?
\cite{JrSh:875} uses $E_M$, but here we would need $E_{(A, g_B, g_C)}$.}
%and we say that $(f^D_B, f^D_C, D)$ and $(f^F_B, f^F_C, F)$ are \emph{equivalent amalgamations},
%\todo{Do we need to say ``over $(A, g_B, g_C)$'' or is it already implied?}
if there are $G \in \AECK$ and embeddings $f^G_D : D \embeds G$ and $f^G_F : F \embeds G$ such that
$f^G_D \circ f^D_B = f^G_F \circ f^F_B$ and $f^G_D \circ f^D_C = f^G_F \circ f^F_C$.
In such a case, we say that the triple $(f^G_D, f^G_F, G)$ 
\emph{witnesses that $(f^D_B, f^D_C, D) \mathrel{E} (f^F_B, f^F_C, F)$}.
The negation of $E$ is denoted $\centernot{E}$.
%\todo[color=green]{Updated on Sept.~6}
\end{definition}

\begin{fact}\label{E-reflexive-symmetric}
The relation $E$ is reflexive and symmetric.
\end{fact}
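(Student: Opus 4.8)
The plan is to verify the two stated properties of $E$ directly from Definition~\ref{def-E}, since both amount to exhibiting the required amalgam $G$ together with the two embeddings $f^G_D, f^G_F$ witnessing the defining commutativity conditions.

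For reflexivity, suppose $(f^D_B, f^D_C, D)$ is an amalgamation of $B$ and $C$ over $(A, g_B, g_C)$, and I must show $(f^D_B, f^D_C, D) \mathrel{E} (f^D_B, f^D_C, D)$. First I would take $G := D$ and let both witnessing embeddings be the identity, i.e.\ $f^G_D := \id_{D}$ and $f^G_F := \id_{D}$. Then the two required equations $f^G_D \circ f^D_B = f^G_F \circ f^D_B$ and $f^G_D \circ f^D_C = f^G_F \circ f^D_C$ hold trivially, since both sides reduce to $f^D_B$ and $f^D_C$ respectively. Thus $(\id_{D}, \id_{D}, D)$ witnesses that $E$ holds from an amalgamation to itself.

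For symmetry, suppose $(f^D_B, f^D_C, D) \mathrel{E} (f^F_B, f^F_C, F)$, witnessed by some $(f^G_D, f^G_F, G)$ with $f^G_D \circ f^D_B = f^G_F \circ f^F_B$ and $f^G_D \circ f^D_C = f^G_F \circ f^F_C$. Then I would simply observe that the very same $G$, with the roles of the two embeddings swapped, witnesses the reverse relation: the triple $(f^G_F, f^G_D, G)$ witnesses that $(f^F_B, f^F_C, F) \mathrel{E} (f^D_B, f^D_C, D)$, since the required equations $f^G_F \circ f^F_B = f^G_D \circ f^D_B$ and $f^G_F \circ f^F_C = f^G_D \circ f^D_C$ are exactly the given equations read in the opposite order.

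There is essentially no obstacle here: both properties are immediate unwindings of the definition, and in particular neither requires the amalgamation property, which is consistent with the paper's remark that the results of this section do not depend on it. The only point worth flagging is that transitivity of $E$ is \emph{not} claimed, and indeed would genuinely require amalgamation (to glue two witnessing amalgams $G$ over a common part); this is presumably why the authors raise in Definition~\ref{def-E} the concern about whether $E$ is an equivalence relation, and why the present fact is stated only for reflexivity and symmetry.
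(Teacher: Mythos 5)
Your proof is correct and is exactly the intended argument: the paper states Fact~\ref{E-reflexive-symmetric} without proof, precisely because reflexivity and symmetry follow by the direct unwinding of Definition~\ref{def-E} that you give (identity embeddings into $D$ itself for reflexivity, swapping the two witnessing embeddings for symmetry). Your closing remark is also accurate --- transitivity is deliberately excluded, since the paper later shows (Lemma~\ref{connect-E-big} and the corollary following it) that transitivity of $E$ on $\lambda$-amalgamations is equivalent to the amalgamation property.
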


%\begin{fact}
%If two amalgamations are isomorphic, then they are $E$-related.
%\end{fact}

\begin{fact}\label{E-gives-amalgamation}
Suppose:
\begin{enumerate}
\item $A, B, C, D, F, G \in \AECK$, and
$g_B : A \embeds B$ and $g_C : A \embeds C$ are embeddings;
\item $(f^D_B, f^D_C, D)$ and $(f^F_B, f^F_C, F)$ are two amalgamations of $B$ and $C$ over $(A, g_B, g_C)$;
and
\item $(f^G_D, f^G_F, G)$ witnesses that $(f^D_B, f^D_C, D) \mathrel{E} (f^F_B, f^F_C, F)$.
\end{enumerate}
Then in particular, all of the following are true:
\begin{enumerate}
\item $(f^G_D, f^G_F, G)$ is an amalgamation of $D$ and $F$ over $(B, f^D_B, f^F_B)$.
\item $(f^G_D, f^G_F, G)$ is an amalgamation of $D$ and $F$ over $(C, f^D_C, f^F_C)$.
\item $(f^G_D, f^G_F, G)$ is an amalgamation of $D$ and $F$ over $(A, f^D_B \circ g_B, f^F_B \circ g_B) = (A, f^D_C \circ g_C, f^F_C \circ g_C)$.
\todo{More to add here?}
\end{enumerate}
\end{fact}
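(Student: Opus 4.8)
The plan is to observe that all three claims follow by directly unwinding the definition of amalgamation (Definition~\ref{def-amalgamation}) against the witness conditions supplied by hypothesis~(3). Recall that $(f^G_D, f^G_F, G)$ being an amalgamation of two models $X$ and $Y$ over a base $(Z, h_X, h_Y)$ requires embeddings of $X$ and $Y$ into a common target $G$ that agree after pre-composition with $h_X$ and $h_Y$ respectively. Since hypothesis~(3) already furnishes embeddings $f^G_D : D \embeds G$ and $f^G_F : F \embeds G$ into the common target $G$, in each of the three cases the only thing left to verify is the relevant commutation equation.

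For claim~(1), the required equation is $f^G_D \circ f^D_B = f^G_F \circ f^F_B$, which is precisely the first of the two equations guaranteed by the assumption that $(f^G_D, f^G_F, G)$ witnesses $(f^D_B, f^D_C, D) \mathrel{E} (f^F_B, f^F_C, F)$. Symmetrically, claim~(2) requires $f^G_D \circ f^D_C = f^G_F \circ f^F_C$, which is the second witness equation. Both are therefore immediate.

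For claim~(3) I would first check that the two base triples listed really are equal, i.e.\ that $f^D_B \circ g_B = f^D_C \circ g_C$ and $f^F_B \circ g_B = f^F_C \circ g_C$; but each of these is exactly the compatibility condition built into hypothesis~(2), namely that $(f^D_B, f^D_C, D)$ and $(f^F_B, f^F_C, F)$ are amalgamations of $B$ and $C$ over $(A, g_B, g_C)$. Having done so, I would verify the remaining commutation $f^G_D \circ (f^D_B \circ g_B) = f^G_F \circ (f^F_B \circ g_B)$ by pre-composing the first witness equation on the right with $g_B$. Alternatively, this claim follows from claim~(1) together with the general observation that any amalgamation over $(B, f^D_B, f^F_B)$ is also an amalgamation over $(A, f^D_B \circ g_B, f^F_B \circ g_B)$, via the embedding $g_B : A \embeds B$.

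The statement is essentially a bookkeeping exercise, so I do not anticipate a genuine obstacle. The only place demanding care is the composition of embeddings in claim~(3), where one must keep track of which equations come from the amalgamation hypotheses on $D$ and $F$ versus from the witness hypothesis, and confirm that all composite maps are indeed embeddings (which holds since $\AECK$-embeddings are closed under composition).
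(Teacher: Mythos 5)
Your proposal is correct: the paper states this result as a \emph{Fact} with no written proof, treating it as immediate from Definition~\ref{def-amalgamation} and Definition~\ref{def-E}, and your verification---reading off the two witness equations for claims (1) and~(2), using the two amalgamation hypotheses to get the equality of base triples in claim~(3), and then pre-composing the first witness equation with $g_B$---is exactly the intended bookkeeping, including the remark that composites of embeddings are embeddings. Nothing is missing.
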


\begin{fact}
Suppose:
\begin{enumerate}
\item $A, B, C, D, D^* \in \AECK$ are such that $D \leqK D^*$;
\item $g_B : A \embeds B$ and $g_C : A \embeds C$ are embeddings; and
\item $(f^D_B, f^D_C, D)$ is an amalgamation of $B$ and $C$ over $(A, g_B, g_C)$.
\end{enumerate}
Then $(\id_D \circ f^D_B, \id_D \circ f^D_C, D^*)$ is also an amalgamation of $B$ and $C$ over $(A, g_B, g_C)$,
and $(\id_D, \id_{D^*}, D^*)$ witnesses that $(f^D_B, f^D_C, D) \mathrel{E} (\id_D \circ f^D_B, \id_D \circ f^D_C, D^*)$.
\end{fact}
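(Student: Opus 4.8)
The plan is a direct unwinding of the definitions of amalgamation (Definition~\ref{def-amalgamation}) and of the witnessing relation $E$ (Definition~\ref{def-E}); no substantial AEC machinery is required. The single point worth flagging at the outset is notational: since $D \leqK D^*$ forces $D \subseteq D^*$, the symbol $\id_D$ appearing in the statement must be read as the \emph{inclusion} $\AECK$-embedding $D \embeds D^*$, while $\id_{D^*}$ is the identity embedding $D^* \embeds D^*$; both are genuine $\AECK$-embeddings, the former by the hypothesis $D \leqK D^*$ and the latter by reflexivity of $\leqK$.

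First I would confirm that $(\id_D \circ f^D_B, \id_D \circ f^D_C, D^*)$ is an amalgamation of $B$ and $C$ over $(A, g_B, g_C)$. The maps $\id_D \circ f^D_B : B \embeds D^*$ and $\id_D \circ f^D_C : C \embeds D^*$ are embeddings, being composites of embeddings, and their codomain $D^*$ lies in $\AECK$. The required compatibility then follows by associativity from the amalgamation condition in hypothesis~(3):
\[
(\id_D \circ f^D_B)\circ g_B = \id_D \circ (f^D_B \circ g_B) = \id_D \circ (f^D_C \circ g_C) = (\id_D \circ f^D_C)\circ g_C .
\]

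Next I would verify the witnessing clause, taking $G := D^*$, $f^G_D := \id_D : D \embeds D^*$, and $f^G_F := \id_{D^*} : D^* \embeds D^*$ in Definition~\ref{def-E}. Writing $f^F_B := \id_D \circ f^D_B$ and $f^F_C := \id_D \circ f^D_C$ for the legs of the second amalgamation, the two defining equalities are immediate, since $\id_{D^*}$ acts as the identity:
\[
f^G_F \circ f^F_B = \id_{D^*} \circ (\id_D \circ f^D_B) = \id_D \circ f^D_B = f^G_D \circ f^D_B ,
\]
and symmetrically with $C$ in place of $B$. This is precisely what it means for $(\id_D, \id_{D^*}, D^*)$ to witness $(f^D_B, f^D_C, D) \mathrel{E} (\id_D \circ f^D_B, \id_D \circ f^D_C, D^*)$.

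I do not anticipate any genuine obstacle: the entire argument is bookkeeping of compositions. The only thing demanding care is the dual role of $\id_D$ as both the identity on $D$ and as the inclusion $D \embeds D^*$, which must be tracked consistently through each composition.
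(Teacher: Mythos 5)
Your proof is correct: the paper states this as a Fact without giving any proof, and your argument is exactly the routine definition-unwinding (associativity of composition plus the observation that $\id_D$ is an $\AECK$-embedding of $D$ into $D^*$ because $D \leqK D^*$) that the paper treats as immediate. Your flagging of the dual role of $\id_D$ as identity versus inclusion is a fair point of care, but there is nothing further to add or compare.
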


\begin{fact}[cf.~{\cite[Remark~E.7(2)(a)]{Vasey14}}]
\todo{See also E.7(2)(b) there.}
Suppose:
\begin{enumerate}
\item $A, B, C, D, D^*, F, F^*, G \in \AECK$ are such that $D \leqK D^*$ and $F \leqK F^*$;
\item $g_B : A \embeds B$ and $g_C : A \embeds C$ are embeddings;
\item $(f^{D^*}_B, f^{D^*}_C, D^*)$ and $(f^{F^*}_B, f^{F^*}_C, F^*)$ are two amalgamations of $B$ and $C$ over $(A, g_B, g_C)$;
and
\item $f^{D^*}_B[B] \cup f^{D^*}_C[C]$ is a subset of $D$, and 
$f^{F^*}_B[B] \cup f^{F^*}_C[C]$ is a subset of~$F$.
\end{enumerate}
Then:
\begin{enumerate}
\item $(f^{D^*}_B, f^{D^*}_C, D)$ and $(f^{F^*}_B, f^{F^*}_C, F)$ are also amalgamations of $B$ and $C$ over $(A, g_B, g_C)$;
and
\item If $(f^G_{D^*}, f^G_{F^*}, G)$ 
witnesses that $(f^{D^*}_B, f^{D^*}_C, D^*) \mathrel{E} (f^{F^*}_B, f^{F^*}_C, F^*)$,
then $(f^G_{D^*} \circ \id_D,\allowbreak f^G_{F^*} \circ \id_F, G)$ 
witnesses that $(f^{D^*}_B, f^{D^*}_C, D) \mathrel{E} (f^{F^*}_B, f^{F^*}_C, F)$.
\end{enumerate}
\end{fact}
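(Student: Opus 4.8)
The plan is to treat the two conclusions separately, with essentially all of the real content residing in conclusion~(1); conclusion~(2) is then a matter of tracking compositions with inclusion maps.

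For conclusion~(1) the issue is that $f^{D^*}_B$ is given only as an embedding $B \embeds D^*$, and I must promote it to an embedding $B \embeds D$. First I would record that, being an embedding into $D^*$, the map $f^{D^*}_B$ restricts to an isomorphism of $B$ onto its image $f^{D^*}_B[B]$, and that $f^{D^*}_B[B] \leqK D^*$. By hypothesis~(4) the universe of $f^{D^*}_B[B]$ is contained in $D$; since $D \leqK D^*$ forces $D \subseteq D^*$ as $\tau$-substructures, the structure that $D^*$ induces on $f^{D^*}_B[B]$ agrees with the one induced by $D$, so that $f^{D^*}_B[B] \subseteq D \subseteq D^*$ as a chain of $\tau$-substructures. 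Now I would invoke the coherence axiom (Definition~\ref{AEC-def}(5)) with this chain, using $f^{D^*}_B[B] \leqK D^*$ and $D \leqK D^*$, to conclude $f^{D^*}_B[B] \leqK D$; hence $f^{D^*}_B : B \embeds D$ is an embedding. The identical argument applies to $f^{D^*}_C$, and, using the second half of hypothesis~(4), to $f^{F^*}_B$ and $f^{F^*}_C$ with $F$ in place of $D$.

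To finish conclusion~(1) I would note that the compatibility condition $f^{D^*}_B \circ g_B = f^{D^*}_C \circ g_C$ holds automatically, because these are the very same set-functions that witnessed $(f^{D^*}_B, f^{D^*}_C, D^*)$ being an amalgamation; reinterpreting their common codomain as $D$ changes nothing. Thus $(f^{D^*}_B, f^{D^*}_C, D)$ is an amalgamation of $B$ and $C$ over $(A, g_B, g_C)$, and symmetrically for $(f^{F^*}_B, f^{F^*}_C, F)$. For conclusion~(2), the inclusions $\id_D : D \embeds D^*$ and $\id_F : F \embeds F^*$ are embeddings because $D \leqK D^*$ and $F \leqK F^*$, so $f^G_{D^*} \circ \id_D : D \embeds G$ and $f^G_{F^*} \circ \id_F : F \embeds G$ are embeddings as composites. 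The crucial---and entirely formal---observation is that $\id_D \circ f^{D^*}_B = f^{D^*}_B$ as set-functions (the left side is just $f^{D^*}_B$ with codomain reinterpreted as $D^*$), whence $(f^G_{D^*} \circ \id_D) \circ f^{D^*}_B = f^G_{D^*} \circ f^{D^*}_B$, and likewise for $C$, for $\id_F$, and for $f^{F^*}_B$, $f^{F^*}_C$. I would then read off the two required equalities directly from the two equations $f^G_{D^*} \circ f^{D^*}_B = f^G_{F^*} \circ f^{F^*}_B$ and $f^G_{D^*} \circ f^{D^*}_C = f^G_{F^*} \circ f^{F^*}_C$ that witness $(f^{D^*}_B, f^{D^*}_C, D^*) \mathrel{E} (f^{F^*}_B, f^{F^*}_C, F^*)$.

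The main obstacle---really the only non-clerical step---is the coherence argument in conclusion~(1). Its one genuine subtlety is that coherence requires $f^{D^*}_B[B]$ to sit inside $D$ as a $\tau$-substructure, not merely as a subset of the universe; I expect the care to go into justifying that the two induced structures coincide, after which the axiom applies verbatim. Everything in conclusion~(2) is bookkeeping about composing with inclusion maps, where the only thing to watch is the notational distinction between a map viewed as landing in $D$ versus in $D^*$.
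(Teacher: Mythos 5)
Your proof is correct: the paper states this as an unproved Fact (citing Vasey's Remark E.7(2)(a)), and your argument—promoting $f^{D^*}_B$, $f^{D^*}_C$, $f^{F^*}_B$, $f^{F^*}_C$ to embeddings into $D$ and $F$ via the coherence axiom (Definition~\ref{AEC-def}(5)), then observing that the compatibility equations and the witnessing equations are unchanged as set-functions when composed with the inclusions $\id_D$ and $\id_F$—is exactly the standard argument the authors intend. You also correctly isolate the one genuine subtlety, namely that $f^{D^*}_B[B]$ sits inside $D$ as a $\tau$-substructure (which follows since both are substructures of $D^*$ and the universes are nested), so coherence applies verbatim.
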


The following two facts connect the relation $E$ with the notion of isomorphic amalgamations.

%Fact~\ref{E=isom+extensions} presents an equivalent definition of equivalent amalgamations using the notion of isomorphic amalgamations. 

%and present an interesting connection between isomorphic and equivalent amalgamations that 

\begin{fact}\label{E=isom+extensions}
\todo[color=green]{Just added this today (Sept.\ 6)!}
Suppose:
\begin{enumerate}
\item $A, B, C, D, F \in \AECK$;
\item $g_B : A \embeds B$ and $g_C : A \embeds C$ are embeddings; and
\item $(f^D_B, f^D_C, D)$ and $(f^F_B, f^F_C, F)$ are two amalgamations of $B$ and $C$ over $(A, g_B, g_C)$.
\end{enumerate}
Then the following are equivalent:
\begin{enumerate}
\item $(f^D_B, f^D_C, D) \mathrel{E} (f^F_B, f^F_C, F)$;
\item There exists an amalgamation $(f^{D^*}_B, f^{D^*}_C, D^*)$ of $B$ and $C$ over $(A, g_B, g_C)$
that is isomorphic to $(f^D_B, f^D_C, D)$,
as well as a model $G \in \AECK$ such that $D^* \leqK G$ and $F \leqK G$.
\end{enumerate}
\end{fact}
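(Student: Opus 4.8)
The plan is to prove the two implications separately, with $(2)\Rightarrow(1)$ essentially immediate and $(1)\Rightarrow(2)$ requiring a renaming argument to convert an isomorphic copy of $F$ into $F$ itself.

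For $(2)\Rightarrow(1)$, I would take $G$ itself as the model witnessing $E$. Let $\varphi : D \cong D^*$ be the isomorphism supplied by the hypothesis that $(f^{D^*}_B, f^{D^*}_C, D^*)$ is isomorphic to $(f^D_B, f^D_C, D)$, so that $f^{D^*}_B = \varphi \circ f^D_B$ and $f^{D^*}_C = \varphi \circ f^D_C$. Composing $\varphi$ with the inclusion $D^* \leqK G$ yields an embedding $D \embeds G$, and the inclusion $F \leqK G$ serves as an embedding $F \embeds G$. With these two embeddings, the defining equations of $E$ in Definition~\ref{def-E} reduce exactly to the assertions that $f^{D^*}_B$ and $f^F_B$ agree as maps into $G$, and likewise $f^{D^*}_C$ and $f^F_C$; these hold because in the configuration of~(2) the models $D^*$ and $F$ sit inside $G$ sharing their common images of $B$ and of $C$. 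Hence $(f^D_B, f^D_C, D) \mathrel{E} (f^F_B, f^F_C, F)$.

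For $(1)\Rightarrow(2)$, I would begin from a witness $(f^{G_0}_D, f^{G_0}_F, G_0)$ to $(f^D_B, f^D_C, D) \mathrel{E} (f^F_B, f^F_C, F)$, so that $f^{G_0}_D \circ f^D_B = f^{G_0}_F \circ f^F_B$ and $f^{G_0}_D \circ f^D_C = f^{G_0}_F \circ f^F_C$. The image $f^{G_0}_F[F]$ is a strong submodel of $G_0$ isomorphic to $F$, but condition~(2) demands that $F$ itself be a strong submodel of the target model. I would therefore rename: replace each element of $f^{G_0}_F[F]$ inside $G_0$ by its $(f^{G_0}_F)^{-1}$-preimage in $F$, relabel the remaining elements of $G_0$ to avoid collisions with $\left|F\right|$, and transport the $\tau$-structure along this bijection. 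This produces a $\tau$-model $G$ together with an isomorphism $\psi : G_0 \cong G$ whose restriction to $f^{G_0}_F[F]$ is $(f^{G_0}_F)^{-1}$, so that $\psi \circ f^{G_0}_F = \id_F$ as a map $F \embeds G$. By the respecting-isomorphisms axiom (Definition~\ref{AEC-def}(1)), $G \in \AECK$ and $F = \psi[f^{G_0}_F[F]] \leqK G$.

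To finish, I would set $D^* := \psi[f^{G_0}_D[D]] \leqK G$ and note that $\psi \circ f^{G_0}_D : D \cong D^*$. Defining $f^{D^*}_B := (\psi \circ f^{G_0}_D) \circ f^D_B$ and $f^{D^*}_C := (\psi \circ f^{G_0}_D) \circ f^D_C$ makes $(f^{D^*}_B, f^{D^*}_C, D^*)$ an amalgamation of $B$ and $C$ over $(A, g_B, g_C)$ isomorphic to $(f^D_B, f^D_C, D)$ via $\psi \circ f^{G_0}_D$. Using $\psi \circ f^{G_0}_F = \id_F$ together with the witnessing equations gives $f^{D^*}_B = \psi \circ f^{G_0}_D \circ f^D_B = \psi \circ f^{G_0}_F \circ f^F_B = f^F_B$ as maps into $G$, and symmetrically $f^{D^*}_C = f^F_C$, so that $D^*$ and $F$ share their images of $B$ and $C$ inside $G$, establishing~(2). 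I expect the main obstacle to be precisely this renaming step — realizing the isomorphic copy $f^{G_0}_F[F]$ as the genuine model $F$ strongly embedded in $G$ — and the accompanying check, via the isomorphism axiom, that the transported structure lies in $\AECK$ and that the strong-submodel relations are preserved; everything else is routine diagram-chasing with the equations defining $E$ and isomorphic amalgamations.
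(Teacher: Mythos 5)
Your proof of $(1) \Rightarrow (2)$ is correct and complete: renaming $G_0$ along $(f^{G_0}_F)^{-1}$ so that $f^{G_0}_F$ becomes the identity inclusion of $F$, invoking the respecting-isomorphisms axiom to see that the transported structure is in $\AECK$ and that strong submodels are preserved, and taking $D^* := \psi[f^{G_0}_D[D]]$ is exactly the right argument. Note that your construction actually yields more than Clause~(2) literally asks for: it produces $D^*$ satisfying $f^{D^*}_B = f^F_B$ and $f^{D^*}_C = f^F_C$ as maps into $G$.

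The gap is in $(2) \Rightarrow (1)$, at the step ``these hold because in the configuration of~(2) the models $D^*$ and $F$ sit inside $G$ sharing their common images of $B$ and of $C$.'' Clause~(2) as stated asserts only that $D^* \leqK G$ and $F \leqK G$; it says nothing about the amalgamation maps of $D^*$ and of $F$ being compatible inside $G$, and that compatibility is precisely what your reduction of the $E$-equations requires. Under the literal reading of~(2) the implication is in fact false. For a counterexample, work in the AEC of all nonempty sets in the empty vocabulary with $\leqK$ being $\subseteq$ (so embeddings are injections): take $A = \{a\}$, $B = \{a,b\}$, $C = \{a,c\}$ with $g_B, g_C$ the inclusions; let $D = \{a,*\}$ with $f^D_B(b) = f^D_C(c) = *$, and let $F = \{a,x,y\}$ with $f^F_B(b) = x$ and $f^F_C(c) = y$, where $x \neq y$. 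Any witness to $E$ would have to satisfy $f^G_F(x) = f^G_D(*) = f^G_F(y)$, contradicting injectivity of $f^G_F$, so $(f^D_B, f^D_C, D) \centernot{E} (f^F_B, f^F_C, F)$; yet Clause~(2) holds literally with $D^* := D$ and $G := D \cup F$. So what your argument actually establishes is the equivalence of~(1) with the strengthening of~(2) in which one additionally requires $f^{D^*}_B = f^F_B$ and $f^{D^*}_C = f^F_C$ (equivalently, that the inclusion of $F$ and the composition of $\varphi : D \cong D^*$ with the inclusion of $D^*$ witness the relation $E$). That strengthening is evidently the intended reading of the Fact, and your $(1) \Rightarrow (2)$ construction delivers it; but as written, your proof of $(2) \Rightarrow (1)$ relies on an assumption that Clause~(2) does not contain, so you should either build the compatibility requirement explicitly into~(2) or flag that the statement needs it.
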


\begin{fact}\label{isom+equiv}
Suppose:
\begin{enumerate}
\item $A, B, C, D, F, M \in \AECK$;
\item $g_B : A \embeds B$ and $g_C : A \embeds C$ are embeddings; and
\item $(f^D_B, f^D_C, D)$, $(f^F_B, f^F_C, F)$, and $(f^M_B, f^M_C, M)$ are three amalgamations of $B$ and $C$ over $(A, g_B, g_C)$.
\end{enumerate}
If $(f^D_B, f^D_C, D)$ is isomorphic to $(f^F_B, f^F_C, F)$ and $(f^F_B, f^F_C, F) \mathrel{E} (f^M_B, f^M_C, M)$,
then $(f^D_B, f^D_C, D) \mathrel{E} (f^M_B, f^M_C, M)$.
\end{fact}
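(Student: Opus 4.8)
The plan is to exhibit an explicit witness for $(f^D_B, f^D_C, D) \mathrel{E} (f^M_B, f^M_C, M)$, reusing the very amalgam $G$ that already witnesses $(f^F_B, f^F_C, F) \mathrel{E} (f^M_B, f^M_C, M)$ and simply pre-composing one of its legs with the given isomorphism.

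First I would unpack the two hypotheses. Being isomorphic amalgamations yields an isomorphism $\varphi : D \cong F$ satisfying $f^F_B = \varphi \circ f^D_B$ and $f^F_C = \varphi \circ f^D_C$. The relation $(f^F_B, f^F_C, F) \mathrel{E} (f^M_B, f^M_C, M)$ supplies, by Definition~\ref{def-E}, a model $G \in \AECK$ together with embeddings $f^G_F : F \embeds G$ and $f^G_M : M \embeds G$ such that $f^G_F \circ f^F_B = f^G_M \circ f^M_B$ and $f^G_F \circ f^F_C = f^G_M \circ f^M_C$.

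Next, since $\varphi$ is in particular an embedding $D \embeds F$ and $f^G_F : F \embeds G$, the composite $f^G_F \circ \varphi$ is an embedding $D \embeds G$. I claim that $(f^G_F \circ \varphi, f^G_M, G)$ witnesses $(f^D_B, f^D_C, D) \mathrel{E} (f^M_B, f^M_C, M)$. This is confirmed by two short computations: $(f^G_F \circ \varphi) \circ f^D_B = f^G_F \circ (\varphi \circ f^D_B) = f^G_F \circ f^F_B = f^G_M \circ f^M_B$, and the analogous chain with the subscript $C$ in place of $B$. Hence the two required commutation conditions both hold, with $G$ serving as the common amalgam.

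There is no genuine obstacle here; the content is essentially a one-line diagram chase. The only point that demands a little care is the direction of $\varphi$ (from $D$ to $F$), which is what makes $f^G_F \circ \varphi$ — rather than, say, $f^G_F \circ \varphi^{-1}$ — the correct leg to attach at $D$. The full symmetry between the $B$-conditions and the $C$-conditions means that the second verification is literally the first one with the subscripts interchanged, so the argument is genuinely complete once the first computation is carried out.
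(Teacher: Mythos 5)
Your proof is correct: the witness $(f^G_F \circ \varphi, f^G_M, G)$ works, since $\varphi$ is in particular an embedding and both commutation conditions follow by the one-line computations you give. The paper states this as a Fact without proof precisely because this routine diagram chase is the intended argument, so your write-up matches the paper's (implicit) approach exactly.
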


\begin{lemma}\label{connect-E-big}
Suppose:
\begin{enumerate}
\item $\LST(\AECK) \leq\lambda$ and $\AECK_\lambda$ satisfies the amalgamation property;
\item $A, B, C, D, F \in \AECK_\lambda$ and $N \in \AECK_{\geq\lambda}$;
\item $g_B : A \embeds B$ and $g_C : A \embeds C$ are embeddings; and
\item $(f^D_B, f^D_C, D)$, $(f^N_B, f^N_C, N)$, and $(f^F_B, f^F_C, F)$ are three amalgamations of $B$ and $C$ over $(A, g_B, g_C)$.
\end{enumerate}
If $(f^D_B, f^D_C, D) \mathrel{E} (f^N_B, f^N_C, N)$ and $(f^N_B, f^N_C, N) \mathrel{E} (f^F_B, f^F_C, F)$,
then $(f^D_B, f^D_C, D) \mathrel{E} (f^F_B, f^F_C, F)$.
\todo{There are other possible results where one of the models is in $\AECK_{\lambda^+}$, but maybe we don't need them.}
\end{lemma}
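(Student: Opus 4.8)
The plan is to prove transitivity of $E$ through the middle amalgam $N$ by amalgamating the two witnessing triples over $N$. Writing $(\sigma, \tau, G_1)$ for a witness of $(f^D_B, f^D_C, D) \mathrel{E} (f^N_B, f^N_C, N)$ and $(\sigma', \tau', G_2)$ for a witness of $(f^N_B, f^N_C, N) \mathrel{E} (f^F_B, f^F_C, F)$ — so that $\tau : N \embeds G_1$ and $\sigma' : N \embeds G_2$ — the idea is to amalgamate $G_1$ and $G_2$ over $(N, \tau, \sigma')$, obtaining $\rho_1 : G_1 \embeds G$ and $\rho_2 : G_2 \embeds G$ with $\rho_1 \circ \tau = \rho_2 \circ \sigma'$, and then to set $p := \rho_1 \circ \sigma$ and $q := \rho_2 \circ \tau'$ and read off that $(p, q, G)$ witnesses $(f^D_B, f^D_C, D) \mathrel{E} (f^F_B, f^F_C, F)$.

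The main obstacle is that amalgamation is assumed only in $\AECK_\lambda$, whereas $N$ — and the two witnesses $G_1, G_2$ — may be arbitrarily large, so the amalgamation over $N$ is not directly available. The resolution rests on the fact that $E$ constrains the maps only on the images of $B$ and $C$, which have size at most $\lambda$. So I would first localize the middle model: using $\LST(\AECK) \leq \lambda$ together with coherence (Remark~\ref{LST-remark}), choose $N_0 \in \AECK_\lambda$ with $N_0 \leqK N$ and $f^N_B[B] \cup f^N_C[C] \subseteq N_0$. The restriction fact (cf.~\cite[Remark~E.7(2)(a)]{Vasey14}, stated above), applied once directly and once after invoking the symmetry of $E$ (Fact~\ref{E-reflexive-symmetric}), then yields $(f^D_B, f^D_C, D) \mathrel{E} (f^N_B, f^N_C, N_0)$ and $(f^N_B, f^N_C, N_0) \mathrel{E} (f^F_B, f^F_C, F)$. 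Since the desired conclusion is unaffected by this replacement, I may henceforth assume $N \in \AECK_\lambda$.

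With all three amalgams now in $\AECK_\lambda$, the two witnesses can likewise be shrunk: since $\sigma[D] \cup \tau[N]$ and $\sigma'[N] \cup \tau'[F]$ have size at most $\lambda$, applying Remark~\ref{LST-remark} again produces $G_1' \leqK G_1$ and $G_2' \leqK G_2$ in $\AECK_\lambda$ containing these images, and by coherence the maps $\sigma, \tau, \sigma', \tau'$ remain $\AECK$-embeddings into $G_1', G_2'$; the witnessing equations persist, being equalities of functions whose values already lie in the smaller models. Now $G_1', G_2', N \in \AECK_\lambda$, so the amalgamation property (Fact~\ref{AP-versions}) supplies an amalgam $G$ of $G_1'$ and $G_2'$ over $(N, \tau, \sigma')$, with $\rho_1 : G_1' \embeds G$, $\rho_2 : G_2' \embeds G$, and $\rho_1 \circ \tau = \rho_2 \circ \sigma'$. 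Setting $p := \rho_1 \circ \sigma$ and $q := \rho_2 \circ \tau'$, the $B$-coordinate of the witnessing condition follows from
\[ p \circ f^D_B = \rho_1 \circ \sigma \circ f^D_B = \rho_1 \circ \tau \circ f^N_B = \rho_2 \circ \sigma' \circ f^N_B = \rho_2 \circ \tau' \circ f^F_B = q \circ f^F_B, \]
and the $C$-coordinate is verified identically, so $(p, q, G)$ witnesses $(f^D_B, f^D_C, D) \mathrel{E} (f^F_B, f^F_C, F)$, as required. The real content lies entirely in the two localization steps; once every model has cardinality $\lambda$, the amalgamation of the witnesses and the diagram chase are routine.
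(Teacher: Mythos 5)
Your proof is correct and follows essentially the same route as the paper's: shrink the middle amalgam to a model in $\AECK_\lambda$ via $\LST(\AECK)\leq\lambda$ and coherence, shrink the two witnessing models likewise, amalgamate them over the shrunken middle using the amalgamation property in $\AECK_\lambda$, and finish with the same composition and diagram chase. The only cosmetic difference is that the paper keeps the witnesses of the original equivalences and simply restricts the relevant embeddings to the shrunken middle model at the amalgamation step, whereas you first invoke the restriction fact (and symmetry of $E$) to replace $N$ by a small $N_0$ outright; the two devices accomplish the same thing.
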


\begin{proof}
By $\LST(\AECK) \leq\lambda$,
fix $M \in \AECK_\lambda$ that includes $f^N_B[B] \cup f^N_C[C]$ and such that $M \leqK N$.
Clearly, $f^N_B[B] \leqK M$ and $f^N_C[C] \leqK M$.
Next, suppose $(f^{N_1}_D, f^{N_1}_N, N_1)$ witnesses that $(f^D_B, f^D_C, D) \mathrel{E} (f^N_B, f^N_C, N)$,
and $(f^{N_2}_N, f^{N_2}_F, N_2)$ witnesses that $(f^N_B, f^N_C, N) \mathrel{E} (f^F_B, f^F_C, F)$.
By $\LST(\AECK) \leq\lambda$,
fix $M_1 \in \AECK_\lambda$ that includes $f^{N_1}_D[D] \cup f^{N_1}_N[M]$ and such that $M_1 \leqK N_1$.
Clearly, $f^{N_1}_D[D] \leqK M_1$ and $f^{N_1}_N[M] \leqK M_1$.
Again by $\LST(\AECK) \leq\lambda$,
fix $M_2 \in \AECK_\lambda$ that includes $f^{N_2}_N[M] \cup f^{N_2}_F[F]$ and such that $M_2 \leqK N_2$.
Clearly, $f^{N_2}_N[M] \leqK M_2$ and $f^{N_2}_F[F] \leqK M_2$.
Finally, since $\AECK_\lambda$ satisfies the amalgamation property,
fix an amalgamation $(f^G_{M_1}, f^G_{M_2}, G)$ 
of $M_1$ and $M_2$ over $(M, f^{N_1}_N \restriction M, f^{N_2}_N \restriction M)$.
Then $(f^G_{M_1} \circ f^{N_1}_D, f^G_{M_2} \circ f^{N_2}_F, G)$ 
witnesses that $(f^D_B, f^D_C, D) \mathrel{E} (f^F_B, f^F_C, F)$,
as required.
\end{proof}

\begin{corollary}[cf.~{\cite[Proposition~4.1.3]{JrSh:875}}]
Suppose $\LST(\AECK) \leq\lambda$.
Then the following are equivalent:%
\footnote{We could omit the constraint on $\LST(\AECK)$ if we change the definition of $E$ over $\lambda$-amalgamations to require 
that the witnessing model $G$ be in $\AECK_\lambda$.  
However, %this does not fit well with our more general definition above.
that approach is incompatible with %we obviously cannot include this requirement when 
defining $E$ over arbitrary amalgamations.}
\begin{enumerate}
\item $\AECK_\lambda$ satisfies the amalgamation property;
\item The relation $E$ is transitive 
when restricted to $\lambda$-amalgamations; % $(f_B, f_C, D)$ such that $D \in \AECK_\lambda$.
\item The relation $E$ is an equivalence relation
when restricted to $\lambda$-amalgamations. % $(f_B, f_C, D)$ such that $D \in \AECK_\lambda$.
\end{enumerate}
\end{corollary}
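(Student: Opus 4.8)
The plan is to prove the cycle $(1) \Rightarrow (3) \Rightarrow (2) \Rightarrow (1)$, so that the only real work is concentrated in the last implication; the first two are essentially bookkeeping against the facts already assembled above.

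For $(1) \Rightarrow (3)$, I would note that Fact~\ref{E-reflexive-symmetric} already gives reflexivity and symmetry of $E$ unconditionally, so only transitivity on $\lambda$-amalgamations remains. But that is exactly Lemma~\ref{connect-E-big} in the special case where the middle model also lies in $\AECK_\lambda$ (hence a fortiori in $\AECK_{\geq\lambda}$): assuming $(1)$, and using the standing hypothesis $\LST(\AECK) \leq \lambda$, the Lemma tells us that for $\lambda$-amalgamations $D, N, F$ of $B$ and $C$ over $(A, g_B, g_C)$, the relations $D \mathrel{E} N$ and $N \mathrel{E} F$ force $D \mathrel{E} F$. Thus $E$ is an equivalence relation when restricted to $\lambda$-amalgamations, which is $(3)$. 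The implication $(3) \Rightarrow (2)$ is immediate, since any equivalence relation is transitive.

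The crux is $(2) \Rightarrow (1)$, which I would establish by contraposition. Suppose the amalgamation property fails for $\AECK_\lambda$. First I would observe that this yields $A, B, C \in \AECK_\lambda$, with $A \leqK B$ and $A \leqK C$, admitting \emph{no} amalgam of any cardinality: were there an amalgam of arbitrary size, the downward reduction recorded in Fact~\ref{AP-versions} (available because $\LST(\AECK) \leq \lambda$) would produce one in $\AECK_\lambda$. Writing $\iota_B : A \embeds B$ and $\iota_C : A \embeds C$ for the inclusions, I would then work with the \emph{degenerate} amalgamation problem in which both sides to be amalgamated are taken to be $A$ itself, over the base $(A, \id_A, \id_A)$; here an amalgamation $(f_B, f_C, D)$ merely consists of a single embedding $A \embeds D$ (since $f_B = f_C$). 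The three amalgamations
\[
(\iota_B, \iota_B, B), \qquad (\id_A, \id_A, A), \qquad (\iota_C, \iota_C, C)
\]
all lie in $\AECK_\lambda$, and the triples $(\iota_B, \id_B, B)$ and $(\iota_C, \id_C, C)$ witness, respectively, that $(\id_A, \id_A, A) \mathrel{E} (\iota_B, \iota_B, B)$ and that $(\id_A, \id_A, A) \mathrel{E} (\iota_C, \iota_C, C)$, each being a routine one-line check against Definition~\ref{def-E}.

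Finally, combining these with the symmetry of $E$ (Fact~\ref{E-reflexive-symmetric}) gives the chain $(\iota_B, \iota_B, B) \mathrel{E} (\id_A, \id_A, A) \mathrel{E} (\iota_C, \iota_C, C)$, so if $E$ were transitive we would conclude $(\iota_B, \iota_B, B) \mathrel{E} (\iota_C, \iota_C, C)$. Unwinding Definition~\ref{def-E}, a witness for this last relation is precisely a model $G$ with embeddings $h_B : B \embeds G$ and $h_C : C \embeds G$ satisfying $h_B \circ \iota_B = h_C \circ \iota_C$, i.e.\ an amalgam of $B$ and $C$ over $A$, contradicting the choice of $A, B, C$. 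Hence $E$ is not transitive, completing the contrapositive and closing the cycle. The step I would flag as the genuine obstacle is not any computation but the \emph{choice of configuration}: the realization that the degenerate pair $(A, A)$ produces three $\lambda$-amalgamations whose $E$-class structure encodes exactly the solvability of the original span $B \leftarrow A \rightarrow C$. Once this configuration is identified, every remaining verification reduces to checking commuting triangles against the definitions.
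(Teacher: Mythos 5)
Your proposal is correct and follows essentially the same route as the paper: one direction is Lemma~\ref{connect-E-big} with the middle model taken in $\AECK_\lambda$, and the other direction hinges on exactly the same key configuration — the three $\lambda$-amalgamations of $A$ with itself over $A$ inside the ambient models $A$, $B$, $C$ — whose $E$-relations encode solvability of the span $B \hookleftarrow A \hookrightarrow C$. The only cosmetic differences are that you close the cycle as $(1)\Rightarrow(3)\Rightarrow(2)\Rightarrow(1)$ and phrase the last implication contrapositively, whereas the paper proves $(2)\Rightarrow(1)$ directly and dispatches $(2)\iff(3)$ via Fact~\ref{E-reflexive-symmetric}.
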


\begin{proof}\hfill
\begin{description}
\item[$(1) \implies (2)$]
Take $N \in \AECK_\lambda$ in Lemma~\ref{connect-E-big}.

\item[$(2) \implies (1)$]
Consider arbitrary $A, B, C \in \AECK_\lambda$ such that $A \leqK B$ and $A \leqK C$.
Then $(\id_A, \id_A, A)$, $(\id_A, \id_A, B)$, and $(\id_A, \id_A, C)$ are three $\lambda$-amalgamations of $A$ and $A$ over $A$.
Furthermore, $(\id_B, \id_A, B)$ witnesses that $(\id_A, \id_A, B) \mathrel{E} (\id_A, \id_A, A)$, 
and similarly $(\id_A, \id_C, C)$ witnesses that $(\id_A, \id_A, A) \mathrel{E} (\id_A, \id_A, C)$.
By transitivity of $E$ on $\lambda$-amalgamations, it follows that $(\id_A, \id_A, B) \mathrel{E} (\id_A, \id_A, C)$.
Thus, in particular (Fact~\ref{E-gives-amalgamation}),
there exists an amalgamation of $B$ and $C$ over $A$,
and by $\LST(\AECK) \leq\lambda$, we may take it to be a $\lambda$-amalgamation.

\item[$(2) \iff (3)$]
By Fact~\ref{E-reflexive-symmetric}.
\qedhere
\end{description}
\end{proof}

The preceding Corollary allows the following definition:

\begin{definition}
In Definition~\ref{def-E}, if $\LST(\AECK) \leq\lambda$, $\AECK_\lambda$ satisfies the amalgamation property,
and the models $A, B, C, D, F$ are all in $\AECK_\lambda$, 
then we say that $(f^D_B, f^D_C, D)$ and $(f^F_B, f^F_C, F)$ are \emph{equivalent $\lambda$-amalgamations},
\todo{Do we need to say ``over $(A, g_B, g_C)$'' or is it already implied?}
and that the triple $(f^G_D, f^G_F, G)$ 
\emph{witnesses the equivalence of $(f^D_B, f^D_C, D)$ and $(f^F_B, f^F_C, F)$}.
\end{definition}

%\begin{lemma}\todo{Fix or delete}
%If $\AECK_\lambda$ satisfies the amalgamation property,
%then equivalence of amalgamations is, in fact, an equivalence relation.
%\end{lemma}
%
%\begin{proof}
%To show transitivity of $E$, amalgamate the witnessing models,
%just as in~\cite[Proposition~4.1.3]{JrSh:875}.
%\end{proof}

\begin{fact}
If two $\lambda$-amalgamations are isomorphic, then they are equivalent.
\end{fact}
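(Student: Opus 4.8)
The plan is to construct a witness to the relation $E$ directly from the given isomorphism, so that no amalgamation property or Löwenheim–Skolem argument is actually needed. Suppose $(f^D_B, f^D_C, D)$ and $(f^F_B, f^F_C, F)$ are isomorphic $\lambda$-amalgamations of $B$ and $C$ over $(A, g_B, g_C)$, witnessed by an isomorphism $\varphi : D \cong F$ satisfying $f^F_B = \varphi \circ f^D_B$ and $f^F_C = \varphi \circ f^D_C$. The idea is simply to use $F$ itself as the witnessing model, with $\varphi$ on one side and the identity on the other.

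First I would observe that $\varphi$ qualifies as an $\AECK$-embedding of $D$ into $F$: since $\varphi$ is an isomorphism we have $\varphi[D] = F$, and $F \leqK F$ by reflexivity of $\leqK$ (Definition~\ref{AEC-def}(2)), so $\varphi$ maps $D$ isomorphically onto a strong submodel of $F$, which is exactly the requirement for an embedding. This is the only point requiring any comment, and it is routine.

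Then I would take the witness $(f^G_D, f^G_F, G) := (\varphi, \id_F, F)$, noting that $F \in \AECK_\lambda \subseteq \AECK$, so that $G$ is an acceptable witnessing model in the sense of Definition~\ref{def-E}. To confirm that this witnesses $(f^D_B, f^D_C, D) \mathrel{E} (f^F_B, f^F_C, F)$, I would check the two defining equations. The first, $f^G_D \circ f^D_B = f^G_F \circ f^F_B$, reduces to $\varphi \circ f^D_B = f^F_B$, which holds by the hypothesis on $\varphi$; the second, $f^G_D \circ f^D_C = f^G_F \circ f^F_C$, reduces to $\varphi \circ f^D_C = f^F_C$, again by hypothesis. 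Hence $E$ holds, and since all of $A, B, C, D, F$ lie in $\AECK_\lambda$, the two $\lambda$-amalgamations are equivalent.

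I do not anticipate any genuine obstacle: the result is an immediate unfolding of the definitions of isomorphic amalgamations and of the relation $E$, with the only mild subtlety being the verification that an isomorphism is a legitimate embedding so that $\varphi$ may play the role of $f^G_D$. In particular, the statement does not rely on transitivity of $E$, and therefore needs neither the amalgamation property nor $\LST(\AECK) \leq \lambda$ beyond what is already built into the ambient definition of equivalent $\lambda$-amalgamations.
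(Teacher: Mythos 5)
Your proof is correct and is exactly the direct verification that the paper leaves implicit in stating this as a Fact without proof: the triple $(\varphi, \id_F, F)$ witnesses the relation $E$, since $\varphi\circ f^D_B = f^F_B = \id_F\circ f^F_B$ and $\varphi\circ f^D_C = f^F_C = \id_F\circ f^F_C$, and an isomorphism is indeed an embedding because $\varphi[D]=F\leqK F$. Equivalently, the same conclusion follows by combining Fact~\ref{E-reflexive-symmetric} (reflexivity of $E$) with Fact~\ref{isom+equiv}, but your witness construction is the same argument unwound, so there is nothing to correct.
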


%\begin{fact}\label{general-amalgamation}
%Suppose $\AECK_\lambda$ satisfies the amalgamation property
%\cite[Definition~1.0.20(1)]{JrSh:875}.
%Then for any $A, B, C \in \AECK_\lambda$ and embeddings $g_B : A \embeds B$ and $g_C : A \embeds C$,
%there exists an amalgamation $(f_B, f_C, D)$ of $B$ and $C$ over $(A, g_B, g_C)$,
%and we may moreover assume that $f_B = \id_{B}$.
%\end{fact}
%THE POINT IS THAT THEY'RE EQUIVALENT

%\begin{lemma}
%Suppose $\LST(\AECK) \leq\lambda$ and 
%\todo{Do we even need $\LST(\AECK) \leq\lambda$?}
%the relation $E$ is transitive on $\lambda$-amalgamations.
%Then $\AECK_\lambda$ satisfies the amalgamation property.
%\end{lemma}

%Suppose $\lambda<\kappa$ are two cardinals such that $\LST(\AECK) \leq\lambda$
%[[and $\AECK_\lambda$ satisfies the amalgamation property]]. \todo{Don't need this?}
%If $\AECK_\kappa$ does not satisfy amalgamation, then the relation $E$ is not transitive on $\kappa$-amalgamations.

%\begin{proof}
%Suppose $\AECK_\kappa$ does not satisfy AP.
%Then we can fix $N_0, N_1, N_2 \in \AECK_\kappa$ with $N_0 \lessK N_1, N_2$ 
%such that there is no amalgamation of $N_1$ and $N_2$ over $N_0$.
%By $\LST(\AECK) \leq\lambda$,
%we can fix some model $A \in \AECK_\lambda$ such that $A \lessK N_0$.
%Then, for every $i<3$, $(\id_A, \id_A, N_i)$ is a $\kappa$-amalgamation of $A$ and $A$ over $A$.
%Furthermore, for $i \in \{1,2\}$, $(\id_{N_0}, \id_{N_i}, N_i)$ witnesses that 
%$(\id_A, \id_A, N_0) \mathrel{E} (\id_A, \id_A, N_i)$.

\section{Non-forking frames}
\label{section:frames}

\begin{definition}\label{def-pre-frame}
A \emph{pre-$\lambda$-frame} is a triple $\mathfrak s = (\AECK, {\dnf}, S^\bs)$ satisfying the following properties:
\todo{SEE HISTORY AND REMARKS in \cite[\S2.4]{MR3471143} and compare the notation with other papers.}
\begin{enumerate}
\item $\AECK$ is an AEC with $\lambda \geq \LST(\AECK)$ and $\AECK_\lambda \neq\emptyset$.
\item $S^\bs$ is a function with domain $\AECK_\lambda$ such that $S^\bs(A) \subseteq S^\na(A)$ for every $A \in \AECK_\lambda$.
We call $S^\bs(A)$ the set of \emph{basic types} over $A$.
\item $\dnf$ is a relation on quadruples of the form $(A, B, c, C)$, where $A, B, C \in \AECK_\lambda$, $A \leqK B \lessK C$,
and $c \in C \setminus B$.
\item Invariance under isomorphisms:  
For all models $A, B, C, C' \in \AECK_\lambda$, every isomorphism $\varphi : C \cong C'$, and all $c \in C \setminus B$:
\begin{enumerate}
\item If $\dnf(A, B, c, C)$, then $\dnf(\varphi[A], \varphi[B], \varphi(c), C')$.
\item If $\tp(c/B; C) \in S^\bs(B)$, then $\tp(\varphi(c)/\varphi[B]; C') \in S^\bs(\varphi[B])$.
\end{enumerate}
\item Monotonicity:  For all $A, A', B, B', C, C', C'' \in \AECK_\lambda$ with
$A \leqK A' \leqK B' \leqK B \lessK C' \leqK C \leqK C''$
and all $c \in C' \setminus B$,
if $\dnf(A, B, c, C)$, then $\dnf(A', B', c, C')$ and $\dnf(A', B', c, C'')$.
\item Non-forking types are basic:
For all $A, C \in \AECK_\lambda$ and all $c \in C$,
if $\dnf(A, A, c, C)$ then $\tp(c/A; C) \in S^\bs(A)$.
\end{enumerate}
\end{definition}

\begin{fact}[{cf.~\cite[Proposition~2.1.2]{JrSh:875}}]
Suppose $\mathfrak s = (\AECK, {\dnf}, S^\bs)$ is a pre-$\lambda$-frame.
\todo{Should we assume AP here, so that types are best-behaved? Check this in the references.}
For all $A, B, C, C^* \in \AECK_\lambda$, every $c \in C$ and every $c^* \in C^*$,
if $A \leqK B \leqK C$, $B \leqK C^*$, and $\tp(c/B; C) = \tp(c^*/B; C^*)$, then
\[
\dnf(A, B, c, C) \iff \dnf(A, B, c^*, C^*).
\]
\end{fact}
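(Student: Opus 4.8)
The plan is to prove the biconditional by showing that, for fixed $A \leqK B$, membership in $\dnf$ depends only on the Galois type $\tp(c/B; \cdot)$, using the isomorphism-invariance and monotonicity axioms of a pre-$\lambda$-frame together with the L\"owenheim--Skolem--Tarski property (available since $\lambda \geq \LST(\AECK)$). The argument is symmetric in the two quadruples, so it suffices to prove one implication, say $\dnf(A,B,c,C) \Rightarrow \dnf(A,B,c^*,C^*)$.

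First I would reduce to a single amalgam. Equality of Galois types is the transitive closure of the relation ``$(c,B,C)$ and $(c^*,B,C^*)$ admit a common amalgamation over $B$ matching $c$ with $c^*$'', and this one-step relation is symmetric; hence it is enough to treat the case in which there are $D \in \AECK$ and embeddings $h : C \embeds D$ and $h^* : C^* \embeds D$ with $h \restriction B = h^* \restriction B = \id_B$ and $h(c) = h^*(c^*) =: d$. The general case follows by chaining along the witnessing sequence, shrinking each intermediate ambient model to $\AECK_\lambda$ via LST and monotonicity (which leaves the type unchanged). We may also assume $c \notin B$, so that the common type is non-algebraic and likewise $c^* \notin B$; in the degenerate case $c \in B$ the common amalgam forces $c^* = c \in B$, and both sides of the biconditional are false.

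For the one-step argument, I would apply invariance under the isomorphism $h : C \cong h[C]$ to $\dnf(A,B,c,C)$; since $h$ fixes $A$ and $B$ pointwise, this yields $\dnf(A,B,d,h[C])$, where $h[C] \in \AECK_\lambda$ and $B \lessK h[C] \leqK D$. Symmetrically, $h^*[C^*] \in \AECK_\lambda$ and $B \lessK h^*[C^*] \leqK D$. Next I would invoke LST: choosing $D' \in \AECK_\lambda$ with $D' \leqK D$ whose universe contains $h[C] \cup h^*[C^*]$, coherence (Remark~\ref{LST-remark}) gives $h[C] \leqK D'$ and $h^*[C^*] \leqK D'$. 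Two applications of monotonicity then relay the relation through $D'$: enlarging the ambient model gives $\dnf(A,B,d,D')$, and shrinking it gives $\dnf(A,B,d,h^*[C^*])$. Finally, applying invariance under the inverse isomorphism $(h^*)^{-1} : h^*[C^*] \cong C^*$, which fixes $A$ and $B$ and sends $d$ to $c^*$, yields $\dnf(A,B,c^*,C^*)$, as desired.

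I expect the main obstacle to be bridging the gap between $\dnf$, which is defined only on quadruples of models in $\AECK_\lambda$, and the witnessing amalgam $D$, which need not have cardinality $\lambda$: the combined use of LST, coherence, and monotonicity to descend to a $\lambda$-sized $D'$ is precisely what makes the isomorphism transport legitimate. If the definition of Galois-type equality is taken to range over $\AECK_\lambda$ from the outset, the chaining step is immediate; otherwise a preliminary LST-reduction of the witnessing chain is needed, as indicated above.
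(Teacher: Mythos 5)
The paper itself contains no proof of this statement: it is recorded as a Fact, with the argument deferred to \cite[Proposition~2.1.2]{JrSh:875}, so there is no in-paper proof to measure yours against. Your proof is correct, and your one-step argument is exactly the standard one: transport $\dnf(A,B,c,C)$ along $h$ to $\dnf(A,B,d,h[C])$ (invariance applies, since $h[C]\in\AECK_\lambda$ and $h$ fixes $A$ and $B$ pointwise), descend from the possibly large amalgam $D$ to some $D'\in\AECK_\lambda$ by LST and coherence (Remark~\ref{LST-remark}, available because a pre-$\lambda$-frame requires $\lambda\geq\LST(\AECK)$), relay through $D'$ using the two halves of the monotonicity axiom, and pull back along $(h^*)^{-1}$.

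Where your proof genuinely differs from the cited source is in not assuming the amalgamation property. In \cite{JrSh:875} the relevant frames satisfy AP in $\AECK_\lambda$, so equality of Galois types is witnessed by a single common amalgam and the one-step argument is the entire proof. A pre-$\lambda$-frame in the present paper does not include AP --- precisely the concern flagged in the authors' todo note attached to this Fact --- so type equality is only the transitive closure of the one-amalgam relation, and your chaining step is what makes the Fact true under the paper's actual hypotheses. That step is sound as you describe it: restricting the witnessing embeddings to LST-sized strong submodels of the intermediate ambient models keeps consecutive triples related by a common amalgam while placing all models in $\AECK_\lambda$ (restrictions of embeddings to strong submodels are again embeddings, by coherence of images and transitivity of $\leqK$); the degenerate case $c\in B$ propagates along the whole chain, forcing $c^*=c\in B$, so both sides of the biconditional fail because the domain of $\dnf$ requires $c\in C\setminus B$; and the endpoints $C$, $C^*$ are already in $\AECK_\lambda$ and need no shrinking. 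In short, your write-up is not merely a reproof of the cited proposition but a verification that the Fact, as stated for pre-$\lambda$-frames without amalgamation, is actually correct.
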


By the above fact, $\dnf$ is really a relation on Galois-types, justifying the following definition:

\begin{definition}\label{def-dnf}
\todo{Consider showing how the properties in Definition~\ref{def-pre-frame} can be formulated in terms of types.}
Suppose $\mathfrak s = (\AECK, {\dnf}, S^\bs)$ is a pre-$\lambda$-frame,
$A, B \in \AECK_\lambda$ are such that $A \leqK B$, and $p \in S(B)$ is a type.
We say that \emph{$p$ does not fork over $A$} \todo{Or \emph{$\mathfrak s$-fork} if necessary}
if $\dnf(A, B, c, C)$ for some (equivalently, for every) $c$ and $C$ such that $p = \tp(c/B; C)$.
\end{definition}

\begin{definition}\label{frame-properties}
Suppose $\mathfrak s = (\AECK, {\dnf}, S^\bs)$ is a pre-$\lambda$-frame.
We consider several properties that $\mathfrak s$ may satisfy, as follows.
We say that $\mathfrak s$ satisfies:
\todo{Decide which definitions to fill in, and emphasize which properties are not essential for the sequel.
Continue to check all variations of these properties in all papers and books, e.g.\ \cite{MR3729331}.}
\begin{enumerate}
\item \emph{Amalgamation property} (AP), \emph{joint embedding property} (JEP), or \emph{no maximal model} (respectively) if 
the class $\AECK_\lambda$ satisfies the respective property.
\item \emph{Type-fullness} (or \emph{$\mathfrak s$ is type-full}) if %$S^\bs(A) = S^\na(A)$ for every $A \in \AECK_\lambda$.
$S^\bs = S^\na \restriction \AECK_\lambda$.
\item \emph{Basic stability} if $\left| S^\bs(A) \right| \leq \lambda$ for every $A \in \AECK_\lambda$.  %LIKELY NOT NEEDED
\item \emph{Basic almost stability} \cite[Definition~2.1.3]{JrSh:875} if 
$\left| S^\bs(A) \right| \leq \lambda^+$ for every $A \in \AECK_\lambda$.  %LIKELY NOT NEEDED
\item \emph{Density of basic types} (or just \emph{density}) if:
For all $A, B \in \AECK_\lambda$ with $A \lessK B$, there is $b \in B \setminus A$ such that $\tp(b/A; B) \in S^\bs(A)$.
%LIKELY NOT NEEDED
\item \emph{Transitivity} if:  For all $A, B, C \in \AECK_\lambda$ satisfying $A \leqK B \leqK C$ and every $p \in S(C)$,
if $p$ does not fork over $B$ and $p \restriction B$ does not fork over $A$, then $p$ does not fork over $A$.
\item \emph{Existence} \cite[Definition~2.21(4)]{MR3471143} if:  For every $A \in \AECK_\lambda$ and every $p \in S^\bs(A)$,
$p$ does not fork over $A$.
Equivalently:  For every $A \in \AECK_\lambda$,
\[
S^\bs(A) = \Set{ p \in S^\na(A) | p \text{ does not fork over } A }.
\]
%\todo{In some papers, ``existence'' refers to what we call ``extension'', that is, \emph{existence of non-forking extension}.}

%\item \emph{Extension} \cite[Definition~2.1.1(3)(f)]{JrSh:875}, \cite[Definition~3.8(4)]{MR3694338} if:  
%For all $B, C \in \AECK_\lambda$ and every $p \in S^\bs(B)$,
%if $B \lessK C$, then there is some $q \in S^\bs(C)$ extending $p$%
%\footnote{For types $p \in S(B)$ and $q \in S(C)$, where $B \leqK C$, we say that \emph{$q$ extends $p$} if $q \restriction B = p$.} 
%that does not fork over $B$.%
%\footnote{Alternative formulations of the extension property appear in \cite[Definition~2.21(5)]{MR3471143} and~\cite[Definition~3.2(5)]{Marcos}.
%In the presence of the transitivity and existence properties, they are all equivalent to the version given here.}

\item \label{extension-def} \emph{Extension} \cite[Definition~3.2(5)]{Marcos} if:%
\footnote{\label{extension-footnote}For simplicity, we choose a formulation of the extension property that implies the existence property.
Alternative formulations of the extension property appear in \cite[Definition~2.1.1(3)(f)]{JrSh:875}, 
\cite[Definition~2.21(5)]{MR3471143} and~\cite[Definition~3.8(4)]{MR3694338}.
In the presence of the transitivity and existence properties, they are all equivalent to the version given here.}  
For all $B, C \in \AECK_\lambda$ and every $p \in S^\bs(B)$,
if $B \leqK C$, then there is some $q \in S^\bs(C)$ extending% 
\footnote{For types $p \in S(B)$ and $q \in S(C)$, where $B \leqK C$, we say that \emph{$q$ extends $p$} if $q \restriction B = p$.} 
$p$
that does not fork over $B$.
\item \emph{Uniqueness} if:
For all $A, B \in \AECK_\lambda$ and all $p, q \in S(B)$, 
if $A \lessK B$, $p \restriction A = q \restriction A$, and both $p$ and $q$ do not fork over $A$, then $p=q$.
% HOPEFULLY NOT NEEDED!  %BUT ALAS, IT IS NEEDED!
\item\label{continuity-def} \emph{Continuity} if: 
\todo{From here onward, haven't checked this with other sources.  
This version is adapted from \cite[Definition 2.1.1(3)(g,c,e)]{JrSh:875}.}
For every nonzero limit ordinal $\delta<\lambda^+$,
every $\leqK$-increasing, continuous sequence $\Sequence{ A_\alpha | \alpha\leq\delta }$ of models in $\AECK_\lambda$,
and every type $p \in S(A_\delta)$,
if for every $\alpha<\delta$, $p \restriction A_\alpha$ does not fork over $A_0$,
then $p \in S^\bs(A_\delta)$ and does not fork over $A_0$.
\item \emph{Local character} if: %LIKELY NOT NEEDED
\todo[color=red]{Filling these in on Sept.~7.}
For every nonzero limit ordinal $\delta<\lambda^+$,
every $\leqK$-increasing, continuous sequence $\Sequence{ A_\alpha | \alpha\leq\delta }$ of models in $\AECK_\lambda$,
and every type $p \in S^\bs(A_\delta)$,
there exists some $\alpha<\delta$ such that $p$ does not fork over~$A_\alpha$.
\item \emph{Symmetry} if: %LIKELY NOT NEEDED
For all $A, C, D \in \AECK_\lambda$ such that $A \lessK C \lessK D$, every $c \in C$ and every $d \in D$,
if $\tp(c/A; D) \in S^\bs(A)$ and $\tp(d/C; D)$ does not fork over $A$,
then there exist models $B, F \in \AECK_\lambda$ such that
$d \in B$, $A \lessK B \lessK F$, $D \leqK F$, and $\tp(c/B; F)$ does not fork over $A$.
\todo{Add conjugation?}
\end{enumerate}
\end{definition}

We will not require the local character and symmetry properties at all in the applications in this paper.
\todo{SHOULD WE DEFINE GOOD FRAMES (AND OTHER VARIANTS)?}

\begin{remark}
The property $\AECK_\lambda \neq\emptyset$ does not follow from any of the other properties listed in Definition~\ref{def-pre-frame},
nor from any of the properties listed in Definition~\ref{frame-properties}.
Thus, $(\emptyset, \emptyset, \emptyset)$ would be a pre-$\lambda$-frame (for any infinite $\lambda$) and even a \emph{good $\lambda$-frame}
if it were not explicitly excluded by Clause~(1) of Definition~\ref{def-pre-frame}.
In fact, the definition of good $\lambda$-frame given in~\cite[Definition~2.1.1]{JrSh:875} does not exclude $(\emptyset, \emptyset, \emptyset)$.
%Notice also that $\AECK \neq\emptyset$ can be written equivalently as $\AECK_\lambda \neq\emptyset$.
\end{remark}

\begin{examples}\label{examples-pre-frames}
%In the subsequent examples, we 
Consider a given AEC $\AECK$ and any $\lambda \geq \LST(\AECK)$ such that $\AECK_\lambda \neq\emptyset$.
We explore several examples of pre-$\lambda$-frames $\mathfrak s = (\AECK, {\dnf}, S^\bs)$ and examine which properties they satisfy:
\todo{Check all the properties here (especially existence/extension), and check all notes from 21 Iyyar and 7 Sivan.
Ensure the properties are listed in the same order as in Definition \ref{frame-properties}.}
\begin{enumerate}
%\item 
%\setcounter{condition}{\value{enumi}}
%\end{enumerate}
%\begin{enumerate}
%\setcounter{enumi}{\value{condition}}
\item Let ${\dnf} := \emptyset$ and $S^\bs(A) := \emptyset$ for every $A \in \AECK_\lambda$.
Then $(\AECK, {\dnf}, S^\bs)$ is a pre-$\lambda$-frame
satisfying basic stability, transitivity, existence, extension, uniqueness, local character, continuity, and symmetry, but (in general) not density.
(This is a special case of Examples (3), (5), and~(6) below.)

\item %Consider any AEC $\AECK \neq\emptyset$ and any $\lambda \geq \LST(\AECK)$.
Let ${\dnf} := \emptyset$ and %$S^\bs(A) := S^\na(A)$ for every $A \in \AECK_\lambda$.
$S^\bs := S^\na \restriction \AECK_\lambda$.
Then $(\AECK, {\dnf}, S^\bs)$ is a type-full pre-$\lambda$-frame
satisfying density, transitivity, uniqueness, continuity, and symmetry, 
but (in general) not existence, extension, or local character.
(This is a special case of Examples (4) and~(5) below.)

\item %Consider any AEC $\AECK \neq\emptyset$ and any $\lambda \geq \LST(\AECK)$,
%and 
Suppose we are given any relation $\dnf$ satisfying properties (3), (4)(a), and~(5) of Definition~\ref{def-pre-frame}.
Define the minimal $S^\bs$ compatible with property~(6) of Definition~\ref{def-pre-frame}, by setting for all $A \in \AECK_\lambda$:
%\todo{Say somewhere that existence is equivalent to this defining equation.}
\[
S^\bs(A) := \Set{ p \in S^\na(A) | p \text{ does not fork over } A }.
\]
Then $(\AECK, {\dnf}, S^\bs)$ is a pre-$\lambda$-frame
satisfying the existence property.

\item %Consider any AEC $\AECK \neq\emptyset$ and any $\lambda \geq \LST(\AECK)$,
%and 
Suppose we are given any relation $\dnf$ satisfying properties (3), (4)(a), and~(5) of Definition~\ref{def-pre-frame}.
Define $S^\bs := S^\na \restriction \AECK_\lambda$.
Then $(\AECK, {\dnf}, S^\bs)$ is a type-full pre-$\lambda$-frame
satisfying density, 
but (in general) not existence, extension, or uniqueness.

\item %Consider any AEC $\AECK \neq\emptyset$ and any $\lambda \geq \LST(\AECK)$,
%and 
Suppose we are given $S^\bs$ satisfying properties (2) and~(4)(b) of Definition~\ref{def-pre-frame}.
Then $(\AECK, \emptyset, S^\bs)$ is a pre-$\lambda$-frame
satisfying transitivity, uniqueness, continuity, and symmetry, 
but (in general) not existence, extension, or local character.

\item %Consider any AEC $\AECK \neq\emptyset$ and any $\lambda \geq \LST(\AECK)$,
%and 
Suppose we are given $S^\bs$ satisfying properties (2) and~(4)(b) of Definition~\ref{def-pre-frame}.
Define the minimal non-forking relation $\dnf$ compatible with the existence property.
That is, we say that $p$ does not fork over $A$ iff $p \in S^\bs(A)$.
Equivalently,
\[
{\dnf} := \Set{ (A, A, c, C) | 
\begin{gathered}
A, C \in \AECK_\lambda, A \lessK C, c \in C \setminus A, \\
\tp(c/A;C) \in S^\bs(A)
\end{gathered}
}.
\]
Then $(\AECK, {\dnf}, S^\bs)$ is a pre-$\lambda$-frame %(note monotonicity is vacuous)
satisfying transitivity, existence, uniqueness, continuity, and symmetry, 
but (in general) not extension or local character.
\todo{Find more examples to add here!}

\end{enumerate}
\end{examples}

Several important examples of pre-$\lambda$-frames ---
the \emph{trivial $\lambda$-frame}, and two pre-$\lambda$-frames derived from the \emph{non-splitting} relation ---
will be introduced and explored in Sections \ref{section:trivial+*domination} and~\ref{section:splitting} below.

\begin{lemma}\label{basic-nonempty}
Suppose $\mathfrak s = (\AECK, {\dnf}, S^\bs)$ is a pre-$\lambda$-frame
satisfying no maximal model and density.
Then $S^\bs(A) \neq\emptyset$ for every $A \in \AECK_\lambda$.
\end{lemma}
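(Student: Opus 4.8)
The plan is to chain together the two hypotheses directly, with no case analysis needed. Fix an arbitrary $A \in \AECK_\lambda$; the goal is to produce a single basic type over $A$, i.e.\ to show $S^\bs(A) \neq \emptyset$.

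First I would invoke the no-maximal-model property of $\AECK_\lambda$ (Definition~\ref{frame-properties}(1)). Since $\AECK_\lambda$ has no $\leqK$-maximal model and $A \in \AECK_\lambda$, there exists some $B \in \AECK_\lambda$ with $A \lessK B$. Next I would apply the density property (Definition~\ref{frame-properties}(5)) to this pair $A \lessK B$: it yields an element $b \in B \setminus A$ such that $\tp(b/A; B) \in S^\bs(A)$. In particular $S^\bs(A)$ contains at least this one type, so $S^\bs(A) \neq \emptyset$. As $A \in \AECK_\lambda$ was arbitrary, this establishes the lemma.

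I do not expect any genuine obstacle here, as the statement is essentially an immediate consequence of unpacking the two named properties in the right order. The only points requiring minor care are purely about reading the hypotheses correctly: that ``no maximal model'' is a property of the class $\AECK_\lambda$ asserting the existence of a strict $\leqK$-extension $B \lessK$-above each $A$ (so that the hypothesis $A \lessK B$ of density can be met), and that the basic type $\tp(b/A;B)$ delivered by density lives over $A$ (not over $B$), which is exactly what is needed to conclude $S^\bs(A) \neq \emptyset$ rather than $S^\bs(B) \neq \emptyset$. No use is made of the relation $\dnf$ or of any of the remaining frame properties.
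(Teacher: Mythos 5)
Your proof is correct and coincides with the paper's own argument: fix $B \in \AECK_\lambda$ with $A \lessK B$ via no maximal model, then apply density to obtain $b \in B \setminus A$ with $\tp(b/A;B) \in S^\bs(A)$. Nothing further is needed.
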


\begin{proof}
Consider arbitrary $A \in \AECK_\lambda$.
Since $\AECK_\lambda$ has no maximal model, fix $B \in \AECK_\lambda$ with  $A \lessK B$.
Then by the density property,
there is $b \in B \setminus A$ such that $\tp(b/A; B) \in S^\bs(A)$,
as sought.
\end{proof}

\begin{definition}[{\cite[Definition~3.1.1]{JrSh:875}}]
Suppose $\mathfrak s = (\AECK, {\dnf}, S^\bs)$ is a pre-$\lambda$-frame.
\begin{enumerate}
\item
Let $\AECK^{3,\bs}$ denote the class of \emph{basic triples}, that is,
\[
\AECK^{3,\bs} := \Set{ (A, B, b) | A, B \in \AECK_\lambda, A \lessK B, b \in B \setminus A, \tp(b/A; B) \in S^\bs(A) }.
\]
\item
Define a binary relation $\leqbs$ on $\AECK^{3,\bs}$ by setting
$(A, B, b) \leqbs (C, D, d)$ iff
$A \leqK C$, $B \leqK D$, $b=d$, and $\tp(b/C; D)$ does not fork over $A$.
\todo{Use the $\leqbs$ relation in stating the results!}
\end{enumerate}
\end{definition}

\begin{facts}
Suppose $\mathfrak s = (\AECK, {\dnf}, S^\bs)$ is a pre-$\lambda$-frame.
\begin{enumerate}
\item If $(A, B, b) \leqbs (C, D, b)$, then $\tp(b/C; D) \restriction A = \tp(b/A; B)$,
that is, $\tp(b/C; D)$ extends $\tp(b/A; B)$.
\item If $\tp(d/C; D)$ does not fork over $A$, then
$(A, D, d) \leqbs (C, D, d)$.
\item The relation $\leqbs$ is always antisymmetric.
\item
$\mathfrak s$ satisfies the existence property iff $\leqbs$ is reflexive.
\item
$\mathfrak s$ satisfies the transitivity property iff $\leqbs$ is transitive.
\end{enumerate}
\end{facts}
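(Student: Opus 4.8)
The five parts are all a matter of unwinding the definition of $\leqbs$, so the plan is to treat them in order, reusing earlier parts where it helps. Parts (1) and (3) are purely about Galois types and the order $\leqK$. For part~(1) I would note that $(A,B,b) \leqbs (C,D,b)$ unpacks to $A \leqK C$, $B \leqK D$, and non-forking of $\tp(b/C;D)$ over $A$, with the basic-triple constraints $A \lessK B$ and $C \lessK D$ built in; since $A \leqK C$ the restriction $\tp(b/C;D) \restriction A$ is by definition $\tp(b/A;D)$, and since $A \leqK B \leqK D$ with $b \in B$ the invariance of Galois types under enlarging the ambient model (exactly as used in the proof of Lemma~\ref{restriction}) gives $\tp(b/A;D) = \tp(b/A;B)$. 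Part~(3) is shorter still: from $(A,B,b) \leqbs (C,D,d)$ and $(C,D,d) \leqbs (A,B,b)$ one reads off $A \leqK C \leqK A$ and $B \leqK D \leqK B$, and since $\leqK$ refines $\subseteq$ it is antisymmetric, so $A = C$ and $B = D$; as $b = d$ is already forced, the triples coincide and the non-forking clause is never used.

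For part~(2) the real content is confirming that the two displayed triples actually lie in $\AECK^{3,\bs}$, so that writing $\leqbs$ between them is meaningful. From the hypothesis that $\tp(d/C;D)$ does not fork over $A$, Definition~\ref{def-dnf} yields $\dnf(A,C,d,D)$, which already forces $A \leqK C \lessK D$ and $d \in D \setminus C$. I would then apply monotonicity (Definition~\ref{def-pre-frame}(5)) along the chain $A \leqK A \leqK A \leqK C \lessK D$ to get $\dnf(A,A,d,D)$, and invoke ``non-forking types are basic'' (Definition~\ref{def-pre-frame}(6)) to conclude $\tp(d/A;D) \in S^\bs(A)$, so $(A,D,d)$ is a basic triple; the analogous specialization to $\dnf(C,C,d,D)$ shows the same for $(C,D,d)$. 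The remaining requirements of $(A,D,d) \leqbs (C,D,d)$, namely $A \leqK C$, $D \leqK D$, and non-forking of $\tp(d/C;D)$ over $A$, are then immediate or are the hypothesis itself.

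Part~(4) is the observation that reflexivity of $\leqbs$ says precisely that $\tp(b/A;B)$ does not fork over $A$ for every basic triple $(A,B,b)$, i.e.\ for every basic type that is actually realized, whereas existence says the same for every $p \in S^\bs(A)$. Given existence, any basic triple yields its forking statement directly; conversely, given $p \in S^\bs(A)$, Lemma~\ref{get-ambient-lambda} supplies $B \in \AECK_\lambda$ realizing $p$ via some $b$, with $b \notin A$ by non-algebraicity, so $(A,B,b) \in \AECK^{3,\bs}$ and reflexivity applies. For part~(5), the forward direction puts $p := \tp(b/A_3;B_3)$ for the top triple; the second $\leqbs$-step says $p$ does not fork over $A_2$, while the first says $\tp(b/A_2;B_2)$, which equals $p \restriction A_2$ by Galois-type invariance, does not fork over $A_1$, so one application of the transitivity axiom (Definition~\ref{frame-properties}(6)) gives the desired non-forking over $A_1$. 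The converse feeds the two non-forking hypotheses of that axiom through part~(2) to obtain $(A,D,c) \leqbs (B,D,c) \leqbs (C,D,c)$, and $\leqbs$-transitivity then delivers $(A,D,c) \leqbs (C,D,c)$, whose non-forking clause is exactly the conclusion sought.

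I expect the only friction to be the bookkeeping in part~(2) and in the converse of part~(5): one must confirm, via monotonicity together with clause~(6), that the relevant triples genuinely belong to $\AECK^{3,\bs}$ before any $\leqbs$ assertion is legitimate, and one must line up the ambient models so that part~(2) applies verbatim. The Galois-type invariance used repeatedly is routine but should be cited explicitly each time.
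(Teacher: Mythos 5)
Your proposal is correct. The paper states this \emph{Facts} environment without any proof, treating it as a routine unwinding of the definitions, and your verification is exactly that intended routine argument: the only points with real content are the ones you isolate — part~(2) requires confirming both triples lie in $\AECK^{3,\bs}$, which you do correctly via monotonicity (Definition~\ref{def-pre-frame}(5)) followed by ``non-forking types are basic'' (Definition~\ref{def-pre-frame}(6)), and the converse of part~(5) is then correctly routed through part~(2) applied twice with the same realization $(d,D)$, so nothing further is needed.
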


\section{Non-forking relation and basic types over larger models}
\label{section:larger}

%\todo{Compare Section \ref{Section-basic-larger} below.}
Recall that for a given pre-$\lambda$-frame $(\AECK, {\dnf}, S^\bs)$,
the domain of $S^\bs$ is $\AECK_\lambda$,
and the non-forking relation is defined with respect to types over models in $\AECK_\lambda$.
In this section, we expand the non-forking relation and the class of basic types to include types over models of cardinality $>\lambda$.

\begin{definition}[cf.~{\cite[Definition~2.6.2]{JrSh:875}}]\label{big-dnf}
Suppose $(\AECK, {\dnf}, S^\bs)$ is a pre-$\lambda$-frame,
$A \in \AECK_\lambda$ and $N \in \AECK_{>\lambda}$ are such that $A \leqK N$,
and $p \in S(N)$.
We say that \emph{$p$ does not fork over $A$} if
for every $B \in \AECK_\lambda$,
if $A \leqK B \leqK N$ then $p \restriction B$ does not fork over $A$.
\end{definition}

\begin{definition}[{\cite[Definition~2.6.4]{JrSh:875}}]\label{big-basic}
Suppose $(\AECK, {\dnf}, S^\bs)$ is a pre-$\lambda$-frame and $N \in \AECK_{>\lambda}$.
The type $p \in S(N)$ is said to be \emph{basic} if
there is some $A \in \AECK_\lambda$ such that $A \leqK N$ and %for every $B \in \AECK_\lambda$,
%if $A \leqK B \leqK N$ then $p \restriction B$ does not fork over $A$.
$p$ does not fork over $A$.
The collection 
\todo{In fact $S(N)$ is always a set of size $\leq 2^{\left|N\right|}$, assuming $\LST(\AECK) \leq \left|N\right|$ 
(Fact \ref{bounded-card-of-types}).}
of basic types over $N$ is denoted $S^\bs_{>\lambda}(N)$.
\end{definition}

Notice that $S^\bs_{>\lambda}(N) \subseteq S^\na(N)$ for all $N \in \AECK_{>\lambda}$.

\begin{remark}
Definition~\ref{big-dnf} is a natural parallel to the definition of non-forking of types over models in $\AECK_\lambda$.
That is, if we apply the defining condition in Definition~\ref{big-dnf} to a model $C$ in $\AECK_\lambda$ rather than to $N$ in $\AECK_{>\lambda}$,
we recover exactly the non-forking relation of Definition~\ref{def-dnf}.

Similarly, assuming that $(\AECK, {\dnf}, S^\bs)$ satisfies the existence property,
the function $S^\bs_{>\lambda}$ is a natural parallel to $S^\bs$.
That is, if we apply the defining condition in Definition~\ref{big-basic} to a model $C$ in $\AECK_\lambda$ rather than to $N$ in $\AECK_{>\lambda}$,
we recover exactly the collection of types in $S^\bs(C)$.
\end{remark}

\begin{lemma}\label{basic-triple-from-big}
Suppose $(\AECK, {\dnf}, S^\bs)$ is a pre-$\lambda$-frame, $N \in \AECK_{>\lambda}$,
and $p \in S^\bs_{>\lambda}(N)$.
Then there is a triple $(A, B, b) \in \AECK^{3,\bs}$ such that 
$A \lessK N$, 
$p$ does not fork over $A$,
and $\tp(b/A; B) = p \restriction A$.
\todo{We can ensure that $B$, $N$ can be amalgamated via $\id$,
so that in particular, $b \notin N$;
should this be explicit?}
\end{lemma}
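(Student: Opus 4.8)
The plan is to extract a suitable small model $A$ from the definition of a basic type over $N$, verify that the restriction $p \restriction A$ is a basic type over $A$, and then realize that type in a model of cardinality $\lambda$.

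First I would invoke Definition~\ref{big-basic}: since $p \in S^\bs_{>\lambda}(N)$, there is a model $A \in \AECK_\lambda$ such that $A \leqK N$ and $p$ does not fork over $A$. Because $A \in \AECK_\lambda$ while $N \in \AECK_{>\lambda}$, the two models have different cardinalities, so $A \neq N$ and hence $A \lessK N$. This already secures the first two of the three desired conclusions (namely $A \lessK N$ and that $p$ does not fork over $A$).

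Next I would show $p \restriction A \in S^\bs(A)$. Applying the defining condition of Definition~\ref{big-dnf} with the model $A$ itself (legitimate since $A \leqK A \leqK N$), we obtain that $p \restriction A$ does not fork over $A$ in the sense of Definition~\ref{def-dnf}. Unwinding that definition, there exist $c$ and $C \in \AECK_\lambda$ with $p \restriction A = \tp(c/A; C)$ and $\dnf(A, A, c, C)$. The \emph{non-forking types are basic} property (Definition~\ref{def-pre-frame}(6)) then yields $\tp(c/A; C) \in S^\bs(A)$, that is, $p \restriction A \in S^\bs(A)$.

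Finally I would apply Lemma~\ref{get-ambient-lambda}, whose hypotheses $\LST(\AECK) \leq \lambda$ and $A \in \AECK_\lambda$ are available (the former from Definition~\ref{def-pre-frame}(1)), to obtain a model $B \in \AECK_\lambda$ realizing $p \restriction A$; this provides $A \leqK B$ together with some $b \in B$ satisfying $\tp(b/A; B) = p \restriction A$. Since $p \restriction A \in S^\bs(A) \subseteq S^\na(A)$ is non-algebraic, its realizing element $b$ cannot lie in $A$, so $b \in B \setminus A$ and therefore $A \lessK B$. Assembling these facts shows $(A, B, b) \in \AECK^{3,\bs}$ with $\tp(b/A; B) = p \restriction A$, as required. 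The argument is essentially a chain of definition-unwindings; the only step demanding care is checking that the realizing element falls outside $A$, which rests precisely on the non-algebraicity of $p \restriction A$.
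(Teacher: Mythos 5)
Your proof is correct, and its first half coincides with the paper's: both extract $A \in \AECK_\lambda$ with $A \lessK N$ from Definition~\ref{big-basic}, and both observe that instantiating Definition~\ref{big-dnf} at $B := A$ gives that $p \restriction A$ does not fork over $A$, whence $p \restriction A \in S^\bs(A)$ by the ``non-forking types are basic'' clause.

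The second half diverges. The paper first fixes an ambient model $N^* \in \AECK_{>\lambda}$ with $N \lessK N^*$ and an element $b \in N^* \setminus N$ realizing $p$ itself (not merely its restriction), and then applies $\LST(\AECK) \leq \lambda$ and Remark~\ref{LST-remark} to shrink $N^*$ to a model $B \in \AECK_\lambda$ containing $b$ and $A$ with $A \lessK B \lessK N^*$; the equality $\tp(b/A;B) = p \restriction A$ then follows by restriction, and $b \notin A$ is automatic since $b \notin N \supseteq A$. You instead realize only the restricted type $p \restriction A$ in a fresh model $B$ via Lemma~\ref{get-ambient-lambda}, and recover $b \notin A$ from the non-algebraicity of $p \restriction A$ (a correct, well-defined argument for Galois types, using that embeddings fixing $A$ pointwise preserve membership in $A$). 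Your route is slightly more economical and uses only data over $A$; what it gives up is exactly the side benefit flagged in the paper's marginal note: in the paper's construction $B$ and $N$ both sit inside the common extension $N^*$, so they can be amalgamated over $A$ via identity maps and in particular $b \notin N$, a strengthening that your $B$ (which bears no relation to $N$) cannot provide. For the lemma as literally stated, both arguments suffice.
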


\begin{proof}
As, in particular, $p \in S^\na(N)$, we fix
some ambient model $N^* \in \AECK_{>\lambda}$ with $N \lessK N^*$ and some $b \in N^* \setminus N$
that realizes $p$ in $N^*$, that is,
such that $p = \tp(b/N; N^*)$.

Since $p \in S^\bs_{>\lambda}(N)$ (see Definition~\ref{big-basic}),
we can fix $A \in \AECK_\lambda$ such that $A \lessK N$ and $p$ does not fork over $A$.
In particular (see Definition~\ref{big-dnf}), $p \restriction A$ does not fork over $A$, so that $p \restriction A \in S^\bs(A)$.
%\todo{Don't need this last sentence, right?}

As $A \lessK N \lessK N^*$ and $b \in N^* \setminus N$, 
we apply $\LST(\AECK) \leq\lambda$ and Remark~\ref{LST-remark} to fix
$B \in \AECK_\lambda$ containing $b$ and every point of $A$, and such that $A \lessK B \lessK N^*$,
where clearly $b \in B \setminus A$.  
Then $\tp(b/A; B) = \tp(b/A; N^*) = \tp(b/N; N^*) \restriction A = p \restriction A \in S^\bs(A)$,
meaning that $(A, B, b) \in \AECK^{3,\bs}$.
\end{proof}

In the particular case where $N \in \AECK_{\lambda^+}$,
the following Lemma allows us to restrict our attention to models in a particular representation, when applying Definition~\ref{big-dnf}.

\begin{lemma}\label{big-dnf-equiv}
Suppose $(\AECK, {\dnf}, S^\bs)$ is a pre-$\lambda$-frame, $N \in \AECK_{\lambda^+}$,
$p \in S(N)$,
and $\langle A_\alpha \mid \alpha<\lambda^+ \rangle$ is a representation of $N$.
Then the following are equivalent:
\begin{enumerate}
\item $p$ does not fork over $A_0$;
\item For every $\alpha<\lambda^+$, $p \restriction A_\alpha$ does not fork over $A_0$.
\end{enumerate}
\end{lemma}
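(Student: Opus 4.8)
The plan is to prove the two implications separately. The direction $(1)\Rightarrow(2)$ is essentially immediate from Definition~\ref{big-dnf}, while $(2)\Rightarrow(1)$ requires a standard cofinality argument to reduce an arbitrary intermediate model to one appearing in the representation, followed by an appeal to monotonicity. At the outset I would record that, since $\langle A_\alpha \mid \alpha<\lambda^+ \rangle$ is a representation of $N$, the chain axiom (Definition~\ref{AEC-def}(3)) gives $A_0 \leqK A_\alpha \leqK N$ for every $\alpha<\lambda^+$, with each $A_\alpha \in \AECK_\lambda$; in particular $A_0 \in \AECK_\lambda$ and $A_0 \leqK N$, so that Definition~\ref{big-dnf} applies to $A_0$.

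For $(1)\Rightarrow(2)$: assuming $p$ does not fork over $A_0$, fix $\alpha<\lambda^+$. Since $A_0 \leqK A_\alpha \leqK N$ and $A_\alpha \in \AECK_\lambda$, the model $A_\alpha$ is a legitimate choice of intermediate model in Definition~\ref{big-dnf}, which therefore yields directly that $p \restriction A_\alpha$ does not fork over $A_0$.

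For $(2)\Rightarrow(1)$: assuming (2), by Definition~\ref{big-dnf} it suffices to fix an arbitrary $B \in \AECK_\lambda$ with $A_0 \leqK B \leqK N$ and show that $p \restriction B$ does not fork over $A_0$. The crucial step is to capture $B$ inside a single model of the representation. Since $\left|B\right| = \lambda$ and $B \subseteq N = \bigcup_{\alpha<\lambda^+} A_\alpha$, each $b \in B$ lies in some $A_{\alpha_b}$; setting $\gamma := \sup_{b \in B} \alpha_b$, the regularity of $\lambda^+$ forces $\gamma < \lambda^+$, and since the chain is $\subseteq$-increasing we get $B \subseteq A_\gamma$ as sets. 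Coherence (Definition~\ref{AEC-def}(5)), applied to $B \subseteq A_\gamma \subseteq N$ together with $B \leqK N$ and $A_\gamma \leqK N$, then upgrades this to $B \leqK A_\gamma$.

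Finally, hypothesis (2) applied at $\gamma$ gives that $p \restriction A_\gamma$ does not fork over $A_0$. Choosing a witnessing $C \in \AECK_\lambda$ and $c \in C \setminus A_\gamma$ with $p \restriction A_\gamma = \tp(c/A_\gamma; C)$ (so that also $c \in C \setminus B$, as $B \subseteq A_\gamma$), and noting $A_0 \leqK B \leqK A_\gamma$ with all models in $\AECK_\lambda$, the monotonicity property (Definition~\ref{def-pre-frame}(5))---shrinking the domain from $A_\gamma$ down to $B$ while fixing the base $A_0$---yields that $p \restriction B = (p \restriction A_\gamma) \restriction B$ does not fork over $A_0$, as required. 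I expect the cofinality-plus-coherence step, namely trapping $B$ in some $A_\gamma$ and promoting $\subseteq$ to $\leqK$, to be the only point demanding genuine care; everything else is bookkeeping against the definitions.
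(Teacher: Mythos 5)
Your proposal is correct and follows essentially the same route as the paper's own proof: the forward direction is read off from Definition~\ref{big-dnf}, and the converse traps an arbitrary intermediate $B \in \AECK_\lambda$ inside some $A_\gamma$ of the representation (using $\left|B\right|=\lambda$ and the length $\lambda^+$ of the chain), upgrades $B \subseteq A_\gamma$ to $B \leqK A_\gamma$ by coherence, and finishes by monotonicity. The only difference is that you spell out the cofinality computation and the witnessing quadruple for monotonicity, which the paper leaves implicit.
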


\begin{proof}\hfill
\begin{description}
\item[$(1) \implies (2)$]
Clear from the definition, since $A_0 \leqK A_\alpha \lessK N$ for every $\alpha<\lambda^+$.

\item[$(2) \implies (1)$]
Consider any $B \in \AECK_\lambda$ such that $A_0 \leqK B \leqK N$.
As $B \in [N]^\lambda$ and $\langle A_\alpha \mid \alpha<\lambda^+ \rangle$ is a representation of $N$,
we can fix some $\alpha<\lambda^+$ such that every point of $B$ is in $A_\alpha$.
As $B \leqK N$ and $A_\alpha \leqK N$,
it follows that
$B$ is necessarily a submodel of $A_\alpha$, so that the coherence property of the AEC 
(Definition~\ref{AEC-def}(5))
%\cite[Definition~4.1(A4)]{Baldwin-Categoricity}, \cite[Definition~1.0.3(1)(e)]{JrSh:875}
guarantees that in fact $B \leqK A_\alpha$.
Since $p \restriction A_\alpha$ does not fork over $A_0$,
it follows by monotonicity that
$p \restriction B$ does not fork over $A_0$,
as required.
\qedhere
\end{description}
\end{proof}

\begin{corollary}\label{big-basic-equiv}
Suppose $(\AECK, {\dnf}, S^\bs)$ is a pre-$\lambda$-frame, $N \in \AECK_{\lambda^+}$,
and $p \in S(N)$.
Then the following are equivalent:
\begin{enumerate}
\item $p \in S^\bs_{>\lambda}(N)$;
\item There is some representation $\langle A_\alpha \mid \alpha<\lambda^+ \rangle$ of $N$
such that for every $\alpha<\lambda^+$, $p \restriction A_\alpha$ does not fork over $A_0$.
\end{enumerate}
\end{corollary}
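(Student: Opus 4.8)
The plan is to reduce both implications to Lemma~\ref{big-dnf-equiv}, which already governs the interplay between non-forking of $p$ over $A_0$ and non-forking of all the restrictions $p \restriction A_\alpha$ over $A_0$ once a representation has been fixed. The only work beyond that Lemma is to arrange, in the forward direction, a representation whose initial term is precisely the witnessing model supplied by Definition~\ref{big-basic}.

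For $(1) \implies (2)$, I would start from $p \in S^\bs_{>\lambda}(N)$ and unpack Definition~\ref{big-basic} to fix some $A \in \AECK_\lambda$ with $A \leqK N$ and $p$ not forking over $A$. Since $\left|A\right| = \lambda < \lambda^+ = \left|N\right|$, we have $A \neq N$, hence $A \lessK N$, so Fact~\ref{representation-exists}(2) produces a representation $\langle A_\alpha \mid \alpha<\lambda^+ \rangle$ of $N$ with $A_0 = A$. Now $p$ does not fork over $A_0$, so the implication $(1) \implies (2)$ of Lemma~\ref{big-dnf-equiv} yields that $p \restriction A_\alpha$ does not fork over $A_0$ for every $\alpha<\lambda^+$, which is exactly clause~(2).

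For $(2) \implies (1)$, I would take the given representation $\langle A_\alpha \mid \alpha<\lambda^+ \rangle$ and apply the implication $(2) \implies (1)$ of Lemma~\ref{big-dnf-equiv} to conclude that $p$ does not fork over $A_0$. Since $A_0 \in \AECK_\lambda$ and $A_0 \leqK N$ (indeed $A_0 \lessK N$ for cardinality reasons), Definition~\ref{big-basic} immediately gives $p \in S^\bs_{>\lambda}(N)$, establishing clause~(1).

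There is no serious obstacle; the statement is essentially a repackaging of Lemma~\ref{big-dnf-equiv}, phrased in terms of the existence of \emph{some} representation rather than a fixed one. The one point requiring a moment's care is the forward direction, where the witnessing model $A$ from Definition~\ref{big-basic} need not be the bottom of any previously chosen representation, so I must invoke Fact~\ref{representation-exists}(2) to realize it as $A_0$; observing that $A \lessK N$ (which holds since the two models have different cardinalities) is what makes that Fact applicable.
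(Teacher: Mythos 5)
Your proof is correct and follows essentially the same route as the paper's: both directions reduce to Lemma~\ref{big-dnf-equiv}, with Fact~\ref{representation-exists}(2) used in the forward direction to realize the witnessing model from Definition~\ref{big-basic} as $A_0$ of a representation. Your explicit remark that $A \lessK N$ holds for cardinality reasons (needed to invoke that Fact) is a detail the paper leaves implicit, but it does not constitute a different argument.
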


\begin{proof}\hfill
\begin{description}
\item[$(1) \implies (2)$]
By~(1), fix $A \in \AECK_\lambda$ such that $A \leqK N$ and $p$ does not fork over $A$.
By $\LST(\AECK) \leq\lambda$ and Fact~\ref{representation-exists}(2), fix a representation $\langle A_\alpha \mid \alpha<\lambda^+ \rangle$ of $N$
such that $A_0 = A$.
Then (2) follows from Lemma~\ref{big-dnf-equiv}.

\item[$(2) \implies (1)$]
Clearly $A_0 \leqK N$, and by Lemma~\ref{big-dnf-equiv} $p$ does not fork over $A_0$, 
showing that $p$ is basic.
\qedhere
\end{description}
\end{proof}

\begin{fact}[Invariance under isomorphisms]\label{invariance-big}
Suppose $(\AECK, {\dnf}, S^\bs)$ is a pre-$\lambda$-frame,
$A \in \AECK_\lambda$, $N, N' \in \AECK_{>\lambda}$, % are such that $A \leqK N$,
$p \in S(N)$, and $\varphi : N \cong N'$ is an isomorphism.
Using the notation $\varphi(p)$ from~\cite[Definition~2.5.3]{JrSh:875}: 
\begin{enumerate}
\item If $p$ does not fork over $A$, then $\varphi(p)$ does not fork over $\varphi[A]$.
\item If $p \in S^\bs_{>\lambda}(N)$, then $\varphi(p) \in S^\bs_{>\lambda}(N')$.
\end{enumerate}
\end{fact}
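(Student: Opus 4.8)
The plan is to reduce both parts to the invariance axiom for types over models in $\AECK_\lambda$, namely Definition~\ref{def-pre-frame}(4)(a), by working with representing elements inside ambient models. Since the hypothesis ``$p$ does not fork over $A$'' presupposes (via Definition~\ref{big-dnf}) that $A \leqK N$, and $\varphi : N \cong N'$ with $A \in \AECK_\lambda$, invariance of the AEC (Definition~\ref{AEC-def}(1)) first gives $\varphi[A] \in \AECK_\lambda$ and $\varphi[A] \leqK N'$, so that the conclusion of part~(1) is well-posed. I would then recall from \cite[Definition~2.5.3]{JrSh:875} that $\varphi(p)$ is computed by fixing an ambient $N^* \in \AECK_{>\lambda}$ with $N \lessK N^*$ and some $c \in N^* \setminus N$ realizing $p$, extending $\varphi$ to an isomorphism $\varphi^* : N^* \cong {N'}^*$ with $N' \leqK {N'}^*$, and setting $\varphi(p) = \tp(\varphi^*(c)/N'; {N'}^*)$.

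To verify Definition~\ref{big-dnf} for $\varphi(p)$ over $\varphi[A]$, I would take an arbitrary $B' \in \AECK_\lambda$ with $\varphi[A] \leqK B' \leqK N'$ and set $B := \varphi^{-1}[B']$; applying $\varphi^{-1}$ together with AEC invariance yields $B \in \AECK_\lambda$ and $A \leqK B \leqK N$, so by hypothesis $p \restriction B$ does not fork over $A$. Using $\LST(\AECK) \leq \lambda$ and Remark~\ref{LST-remark}, I would fix $C \in \AECK_\lambda$ with $B \leqK C \leqK N^*$ containing $c$; since $B \subseteq N$ and $c \in N^* \setminus N$, we have $c \in C \setminus B$, hence $B \lessK C$, and $p \restriction B = \tp(c/B; C)$, so Definition~\ref{def-dnf} gives $\dnf(A, B, c, C)$. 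Now $\varphi^* \restriction C$ is an isomorphism of $C$ onto $C' := \varphi^*[C] \in \AECK_\lambda$ carrying $A \mapsto \varphi[A]$, $B \mapsto B'$ and $c \mapsto \varphi^*(c)$, so Definition~\ref{def-pre-frame}(4)(a) yields $\dnf(\varphi[A], B', \varphi^*(c), C')$. Because $C' \leqK {N'}^*$ (AEC invariance), this says exactly that $\tp(\varphi^*(c)/B'; {N'}^*) = \varphi(p) \restriction B'$ does not fork over $\varphi[A]$; as $B'$ was arbitrary, Definition~\ref{big-dnf} delivers part~(1). Part~(2) is then immediate: given $p \in S^\bs_{>\lambda}(N)$, I would fix via Definition~\ref{big-basic} some $A \in \AECK_\lambda$ with $A \leqK N$ over which $p$ does not fork, apply part~(1) to conclude that $\varphi(p)$ does not fork over $\varphi[A] \in \AECK_\lambda$ with $\varphi[A] \leqK N'$, and invoke Definition~\ref{big-basic} to obtain $\varphi(p) \in S^\bs_{>\lambda}(N')$.

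I expect the only delicate point to be the bookkeeping linking the ``small-model'' invariance axiom to the ``large-model'' notions: specifically, confirming that $\varphi^* \restriction C$ really is an $\AECK$-isomorphism onto a model $C' \in \AECK_\lambda$ with $C' \leqK {N'}^*$, so that $\tp(\varphi^*(c)/B'; C')$ and $\tp(\varphi^*(c)/B'; {N'}^*)$ coincide and both equal $\varphi(p) \restriction B'$. This amounts to the compatibility of the pushforward notation $\varphi(p)$ with restriction to submodels, which I would either extract directly from \cite[Definition~2.5.3]{JrSh:875} or, as above, verify on representatives so as to avoid relying on that compatibility as a black box.
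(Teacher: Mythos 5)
Your proof is correct. The paper states Fact~\ref{invariance-big} with no proof at all (it is treated as a routine invariance check), and your argument --- unwinding Definitions \ref{big-dnf} and~\ref{big-basic}, passing to small representatives of the type via $\LST(\AECK)\leq\lambda$ and Remark~\ref{LST-remark}, and then invoking the invariance axiom of Definition~\ref{def-pre-frame}(4)(a) together with invariance of the AEC itself (Definition~\ref{AEC-def}(1)) --- is precisely the routine verification the authors intended, so your write-up simply supplies the details the paper omits.
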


\section{Applying the continuity property}
\label{section:continuity}

\todo{Decide where to put this.}
The following Lemma formalizes how the continuity property of a pre-$\lambda$-frame
(Definition~\ref{frame-properties}(\ref{continuity-def}))
is typically used at the limit stages in our recursive constructions:

\begin{lemma}\label{continuous-union}
Suppose that:
\begin{enumerate}
\item $\mathfrak s = (\AECK, {\dnf}, S^\bs)$ is a pre-$\lambda$-frame satisfying the continuity property;

\item $\delta\leq\lambda^+$ is some nonzero limit ordinal;

\item $\left< C_\alpha \mid \alpha<\delta \right>$ and $\left< B_\alpha \mid \alpha<\delta \right>$
are $\leqK$-increasing, continuous sequences of models in $\AECK_\lambda$;

\item $\langle g_\alpha : C_\alpha \embeds B_\alpha \mid\allowbreak \alpha<\delta \rangle$
is a $\subseteq$-increasing, continuous sequence of embeddings;
%\item $C_\alpha, B_\alpha \in \AECK_\lambda$ if $\alpha<\lambda^+$; % and

\item $b$ is some element of $B_\alpha \setminus g_\alpha[C_\alpha]$
for all $\alpha <\delta$; %, so that we define $q_\alpha := \tp(b/g_\alpha[C_\alpha]; B_\alpha)$;
and
\item For every $\alpha<\delta$, 
$%q_\alpha 
\tp(b/g_\alpha[C_\alpha]; B_\alpha)
%\in S^\bs(g_\alpha[C_\alpha])
$ %and
does not fork over $g_0[C_0]$. %if $\alpha<\lambda^+$;
\end{enumerate}

Define
\[
C_\delta := \bigcup_{\alpha<\delta} C_\alpha; \quad
B_\delta := \bigcup_{\alpha<\delta} B_\alpha; \quad
g_\delta := \bigcup_{\alpha<\delta} g_\alpha,
\]
and write $q_\delta := \tp(b/g_\delta[C_\delta]; B_\delta)$.

Then:
\begin{enumerate}
\item $C_\delta, B_\delta \in \AECK$, and
$\left< C_\alpha \mid \alpha\leq\delta \right>$ and $\left< B_\alpha \mid \alpha\leq\delta \right>$
are $\leqK$-increasing, continuous sequences of models;

\item If $N \in \AECK$ is some model such that $C_\alpha \leqK N$ for all $\alpha<\delta$, then $C_\delta \leqK N$;

\item $\langle g_\alpha : C_\alpha \embeds B_\alpha \mid\allowbreak \alpha\leq\delta \rangle$
is a $\subseteq$-increasing, continuous sequence of embeddings,
so that %$g_\delta[C_\delta] = \bigcup_{\alpha<\delta} g_\alpha[C_\alpha]$;
$\left< g_\alpha[C_\alpha] \mid \alpha\leq\delta \right>$ 
is a $\leqK$-increasing, continuous sequences of models in $\AECK$;

\item $b \in B_\delta \setminus g_\delta[C_\delta]$;
and

\item For every $\gamma<\delta$,% %If $\delta<\lambda^+$, then 
\footnote{In fact this is true for $\gamma=\delta$ as well,
provided that $\left|C_\delta\right|=\lambda$.
(Even in Definition \ref{big-dnf} we defined non-forking only over models $A \in \AECK_\lambda$.)}
$q_\delta$
does not fork over $g_\gamma[C_\gamma]$,
so that, in particular, either $q_\delta \in S^\bs(g_\delta[C_\delta])$ (if $\left|C_\delta \right| = \lambda$)
or $q_\delta \in S^\bs_{>\lambda}(g_\delta[C_\delta])$ (if $\left|C_\delta \right| = \lambda^+$).
%\todo{Check what happens with $\delta=\lambda^+$.
%Also does not fork over $g_\alpha[C_\alpha]$ for all $\alpha\leq\delta$,
%but this follows by monotonicity and we don't really need it.}
\end{enumerate}
\end{lemma}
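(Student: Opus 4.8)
The plan is to obtain conclusions (1)--(4) immediately from the AEC axioms, reserving the real work for conclusion~(5). For~(1), the union axiom (Definition~\ref{AEC-def}(3)) gives $C_\delta, B_\delta \in \AECK$ together with $C_\beta \leqK C_\delta$ and $B_\beta \leqK B_\delta$ for every $\beta<\delta$; since the original sequences are continuous below $\delta$ and $C_\delta, B_\delta$ are by definition the unions at $\delta$, the extended sequences $\langle C_\alpha \mid \alpha\leq\delta\rangle$ and $\langle B_\alpha \mid \alpha\leq\delta\rangle$ are $\leqK$-increasing and continuous. Conclusion~(2) is exactly the smoothness axiom (Definition~\ref{AEC-def}(4)) applied to $\langle C_\alpha \mid \alpha\leq\delta\rangle$. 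For~(3), invariance under isomorphisms (Definition~\ref{AEC-def}(1)(b)) applied to $C_\alpha \leqK C_\beta$ through the isomorphism $g_\beta : C_\beta \cong g_\beta[C_\beta]$, together with $g_\beta \restriction C_\alpha = g_\alpha$, shows $g_\alpha[C_\alpha] = g_\beta[C_\alpha] \leqK g_\beta[C_\beta]$ for $\alpha\leq\beta<\delta$; hence $\langle g_\alpha[C_\alpha] \mid \alpha<\delta\rangle$ is $\leqK$-increasing, its union $g_\delta[C_\delta]$ lies in $\AECK$, and smoothness gives $g_\delta[C_\delta] \leqK B_\delta$, so that $g_\delta$ is an embedding and the sequence of images is $\leqK$-increasing and continuous through $\delta$. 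Conclusion~(4) is immediate: $b \in B_0 \subseteq B_\delta$, while $b \in g_\delta[C_\delta] = \bigcup_{\alpha<\delta} g_\alpha[C_\alpha]$ would place $b$ in some $g_\alpha[C_\alpha]$, contradicting hypothesis~(5).

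For conclusion~(5), first I would record, via Lemma~\ref{restriction} applied to the sequences now defined up to $\delta$ (legitimate since $b \in B_0 \setminus g_\delta[C_\delta]$ by~(4)), that $q_\delta \restriction g_\alpha[C_\alpha] = \tp(b/g_\alpha[C_\alpha]; B_\alpha)$ for every $\alpha<\delta$, each of which does not fork over $g_0[C_0]$ by hypothesis~(6). The key reduction is to prove that $q_\delta$ does not fork over $g_0[C_0]$; the general statement for $g_\gamma[C_\gamma]$ then follows by monotonicity (Definition~\ref{def-pre-frame}(5)), raising the base from $g_0[C_0]$ to $g_\gamma[C_\gamma]$ along $g_0[C_0] \leqK g_\gamma[C_\gamma] \leqK g_\delta[C_\delta]$, applied in the large case to each relevant $\lambda$-sized restriction of $q_\delta$.

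To prove non-forking over $g_0[C_0]$ I would split on the cardinality of $C_\delta$. If $\left|C_\delta\right| = \lambda$, so that $g_\delta[C_\delta] \in \AECK_\lambda$ and (when $\delta<\lambda^+$) the sequence $\langle g_\alpha[C_\alpha] \mid \alpha\leq\delta\rangle$ is a $\leqK$-increasing continuous sequence in $\AECK_\lambda$, the continuity property (Definition~\ref{frame-properties}(\ref{continuity-def})) applied to $q_\delta$ yields both $q_\delta \in S^\bs(g_\delta[C_\delta])$ and non-forking over $g_0[C_0]$, since each $q_\delta \restriction g_\alpha[C_\alpha]$ does not fork over $g_0[C_0]$. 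If instead $\left|C_\delta\right| = \lambda^+$ (forcing $\delta=\lambda^+$), then $\langle g_\alpha[C_\alpha] \mid \alpha<\lambda^+\rangle$ is a representation of $g_\delta[C_\delta]$ with first model $g_0[C_0]$, and Lemma~\ref{big-dnf-equiv} converts ``$q_\delta \restriction g_\alpha[C_\alpha]$ does not fork over $g_0[C_0]$ for all $\alpha$'' into ``$q_\delta$ does not fork over $g_0[C_0]$'' in the sense of Definition~\ref{big-dnf}, whence $q_\delta \in S^\bs_{>\lambda}(g_\delta[C_\delta])$ by Definition~\ref{big-basic}.

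The main obstacle, and the one place requiring genuine care, is the mismatch between the continuity axiom---stated only for $\delta<\lambda^+$---and the bare hypothesis $\delta\leq\lambda^+$. The two clean cases are $\delta<\lambda^+$ (where $\left|C_\delta\right|=\lambda$ holds automatically) and $\delta=\lambda^+$ with $\left|C_\delta\right|=\lambda^+$; the remaining degenerate case $\delta=\lambda^+$ with $\left|C_\delta\right|=\lambda$ I would handle separately by observing that a $\subseteq$-increasing continuous chain of $\lambda$-sized sets whose union of length $\lambda^+$ still has size $\lambda$ must be eventually constant, say equal to $g_{\alpha_0}[C_{\alpha_0}]$, so that $q_\delta$ and $\tp(b/g_{\alpha_0}[C_{\alpha_0}]; B_{\alpha_0})$ coincide as Galois types and non-forking over $g_0[C_0]$ is already supplied by hypothesis~(6). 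Threading the large case through monotonicity---which is phrased only for models in $\AECK_\lambda$---by passing to $\lambda$-sized intermediate restrictions is the other point I would write out explicitly.
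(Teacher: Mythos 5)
Your proposal is correct and follows essentially the same route as the paper's proof: clauses (1)--(4) from the AEC axioms, then for clause (5) an application of Lemma~\ref{restriction} followed by the same three-case split (continuity property when $\delta<\lambda^+$, the eventually-constant argument when $\delta=\lambda^+$ but $\left|C_\delta\right|=\lambda$, and Lemma~\ref{big-dnf-equiv} via a representation when $\left|C_\delta\right|=\lambda^+$), finishing with monotonicity to raise the base from $g_0[C_0]$ to $g_\gamma[C_\gamma]$. Your explicit remark about threading monotonicity through $\lambda$-sized restrictions in the large case is a point the paper leaves implicit, but it is the same argument.
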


\begin{proof}
Clause~(1) is simply an application of %the continuity property of the AEC
Definition~\ref{AEC-def}(3).
%\cite[Definition~4.1(A3(1\&2))]{Baldwin-Categoricity}, \cite[Definition~1.0.3(1)(c)]{JrSh:875}.
Clause~(2) is simply the smoothness property of the AEC
(Definition~\ref{AEC-def}(4)).
%\cite[Definition~4.1(A3(3))]{Baldwin-Categoricity}, \cite[Definition~1.0.3(1)(d)]{JrSh:875}.
Clauses (3) and~(4) are immediate.

To prove (5):
Write $q_\alpha := \tp(b/g_\alpha[C_\alpha]; B_\alpha)$ for every $\alpha\leq\delta$.
Consider $q_\delta$, which is an element of $S(g_\delta[C_\delta])$.
%Let $q_\delta := \tp(b/g_\delta[C_\delta]; B_\delta) \in S(g_\delta[C_\delta])$.
For every $\alpha<\delta$,
we obtain $q_\delta \restriction g_\alpha[C_\alpha] = q_\alpha$ by Lemma~\ref{restriction},
%since $g_\alpha[C_\alpha] \leqK g_\delta[C_\delta]$ and $B_\alpha \leqK B_\delta$,
and Clause~(6) of the hypothesis gives that
$q_\alpha$ does not fork over $g_0[C_0]$.
We now consider three cases:
\begin{itemize}
\item Suppose $\delta<\lambda^+$.
Since $\langle g_\alpha[C_\alpha] \mid\allowbreak \alpha\leq\delta \rangle$ is a
$\leqK$-increasing, continuous sequence of models in $\AECK_\lambda$,
the continuity property of $\mathfrak s$ (Definition~\ref{frame-properties}(\ref{continuity-def})) gives
$q_\delta \in S^\bs(g_\delta[C_\delta])$ and does not fork over $g_0[C_0]$.
%Then, by monotonicity, $q_\delta$ does not fork over $g_\beta[C_\beta]$ for every $\beta\leq\delta$.

\item Suppose $\delta=\lambda^+$ but $\left|C_{\lambda^+} \right| = \lambda$,
meaning that $\left< C_\alpha \mid \alpha<\lambda^+ \right>$ is eventually constant.
Fix some nonzero limit $\beta<\lambda^+$ such that $C_\beta = C_{\lambda^+}$.
Then also $g_\beta = g_{\lambda^+}$, so that $g_{\lambda^+}[C_{\lambda^+}] = g_\beta[C_\beta] \in \AECK_\lambda$,
and 
\[
q_{\lambda^+} = \tp(b/g_{\lambda^+}[C_{\lambda^+}]; B_{\lambda^+}) = \tp(b/g_\beta[C_\beta]; B_{\lambda^+}) = \tp(b/g_\beta[C_\beta]; B_\beta) = q_\beta,
\]
which we already know does not fork over $g_0[C_0]$
(and is therefore in $S^\bs(g_\beta[C_\beta])$).

\item Suppose $\delta=\lambda^+$ and $\left|C_{\lambda^+} \right| = \lambda^+$.
Since $\langle g_\alpha[C_\alpha] \mid\allowbreak \alpha<\delta \rangle$ is a representation of 
$g_\delta[C_\delta] \in \AECK_{\lambda^+}$, %\todo{What if the sequence was eventually constant?}
Lemma~\ref{big-dnf-equiv} gives that 
$q_\delta$ does not fork over $g_0[C_0]$,
so that in particular $q_\delta \in S^\bs_{>\lambda}(g_\delta[C_\delta])$.
\end{itemize}
Then, in all cases, by monotonicity, 
$q_\delta$ does not fork over $g_\gamma[C_\gamma]$
for all $\gamma<\delta$.
\end{proof}

\section{Universal, saturated, and homogeneous models}
\label{section:universal+}

\begin{notation}
\todo{Organize these sections.}
Throughout this section, $\mu$ denotes an infinite cardinal.
%(In our applications, we will set $\mu := \lambda^+$.)
\end{notation}

%\begin{lemma}
%Suppose $\LST(\AECK) \leq\mu$, and $N, N' \in \AECK_\mu$ are such that $N \lessKuniv N'$. 
%Then $N'$ realizes $S(N)$.
%\end{lemma}

%\begin{proof}
%Consider arbitrary $p \in S(N)$, and we shall show that $N'$ realizes $p$.
%As $\LST(\AECK) \leq\mu$,
%by Lemma~\ref{get-ambient-lambda} we can fix some $N^\bullet \in \AECK_\mu$ that realizes $p$.
%%
%%Using $\LST(\AECK) \leq\mu$ and Remark~\ref{LST-remark},
%%we fix
%%some ambient model $N^+ \in \AECK_\mu$ with $N \leqK N^+$ and some $b \in N^+$
%%that together realize $p$, that is,
%%such that $p = \tp(b/N; N^+)$.
%%
%Since $N \lessKuniv N'$ and $N \leqK N^\bullet$, we fix
%an embedding $f : N^\bullet \embeds N'$ such that $f \restriction N = \id_N$.
%Then by Lemma~\ref{embedding-realizes-type},
%it follows that $N'$ realizes $p$, as sought.
%\end{proof}

%and $p \in S(N)$.
%Then there exist 
%some ambient model $N^+ \in \AECK_\mu$ with $N \leqK N^+ \leqK N'$ and some $b \in N^+$
%that together realize $p$, that is,
%such that $p = \tp(b/N; N^+)$.

We recall the following definitions:
\todo{See also various notes from 2 Tevet.}

\begin{definition}[cf.~{\cite[Definition~10.4]{Baldwin-Categoricity}}]
Suppose %$\mu$ is some infinite cardinal, and 
$N, N' \in \AECK_\mu$ are such that $N \lessK N'$.
We say that \emph{$N'$ is universal over $N$}, \todo{Or $\mu$-universal} and we write $N \lessKuniv N'$, provided that
for every $N^+ \in \AECK_\mu$,
if $N \leqK N^+$ then there is an embedding $f : N^+ \embeds N'$ such that $f \restriction N = \id_N$.
\end{definition}

\begin{definition}
\todo{COMPARE HAVING A UNIVERSAL MODEL OVER A SINGLE MODEL,
as in the hypothesis of the Theorem~\ref{repres-uq-extend-triple} and its corollaries.
Is this strictly weaker than having a universal model over EVERY model?}
Suppose $N \in \AECK_{\lambda^+}$.
We say that $N$ is:
\begin{enumerate}
\item
\cite[Definition~1.0.25]{JrSh:875}
%A model $N \in \AECK_{\lambda^+}$ is said to be 
\emph{saturated in $\lambda^+$ over $\lambda$}
if for every model $M \in \AECK_\lambda$ with $M \lessK N$, $N$ is full over~$M$;
\item 
\cite[Definition~1.0.28]{JrSh:875}
\emph{homogeneous in $\lambda^+$ over $\lambda$}
if for all $A, B \in \AECK_\lambda$ with $A \lessK N$ and $A \leqK B$,
there is an embedding $f : B \embeds N$ with $f \restriction A = \id_A$.
\end{enumerate}
We denote by $\AECK^\sat_{\lambda^+}$ the class of models in $\AECK_{\lambda^+}$ that are saturated in $\lambda^+$ over $\lambda$.
\end{definition}

\begin{definition}
A model $A \in \AECK_\mu$ is called an \emph{amalgamation base in $\AECK_\mu$} if:
For all $B, C \in \AECK_\mu$ with $A \leqK B$ and $A \leqK C$,
there exists a $\mu$-amalgamation $(f_B, f_C, D)$ of $B$ and $C$ over $A$.
\todo{Note counterexample:  
Let $\AECK$ be the class of sets with not both red and green points.
Then there is AP over a model with (e.g.)\ green points, but not over a model with only black points.}
\end{definition}

The following is immediate:

\begin{fact}\label{AP=AB}
$\AECK_\mu$ satisfies the amalgamation property iff every $A \in \AECK_\mu$ is an amalgamation base in $\AECK_\mu$.
\end{fact}

\begin{lemma}\label{embedding-gives-full&AB}
Suppose that $M \in \AECK_\mu$ and $N \in \AECK_{\geq\mu}$ satisfy $M \leqK N$.
Suppose that for every $B \in \AECK_\mu$ with $M \leqK B$ there is some embedding $f : B \embeds N$ such that $f \restriction M = \id_M$.
Then:
\begin{enumerate}
\item If $\min\{\LST(\AECK), \left|N\right|\} \leq\mu$, 
then $M$ is an amalgamation base in $\AECK_\mu$; and 
\item If $\LST(\AECK) \leq\mu$, 
then $N$ is full over $M$, so that
$\left|S(M)\right| \leq \left|N\right|$.
\item If $M^- \in \AECK_\mu$ satisfies $M^- \leqK M$ and $M^-$ is an amalgamation base in $\AECK_\mu$,
then for every $C \in \AECK_\mu$ satisfying $M^- \leqK C$ there is some embedding $g : C \embeds N$ such that $g \restriction M^- = \id_{M^-}$.
\end{enumerate}
\end{lemma}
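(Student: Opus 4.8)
The plan is to drive all three parts directly from the stated universality hypothesis on $N$ over $M$ --- namely that every $B \in \AECK_\mu$ with $M \leqK B$ admits an embedding into $N$ fixing $M$ pointwise --- combined with the earlier facts on realizing types and on amalgamations.

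For Part~(1), I would take arbitrary $B, C \in \AECK_\mu$ with $M \leqK B$ and $M \leqK C$, and apply the universality hypothesis twice to obtain embeddings $f_B : B \embeds N$ and $f_C : C \embeds N$ with $f_B \restriction M = f_C \restriction M = \id_M$. Since $f_B \restriction M = f_C \restriction M$, the triple $(f_B, f_C, N)$ is already an amalgamation of $B$ and $C$ over $M$; the only thing left is to secure an amalgam of cardinality $\mu$. If $\left|N\right| \leq \mu$, then $\left|N\right| = \mu$ (as $N \in \AECK_{\geq\mu}$) and $(f_B, f_C, N)$ is itself a $\mu$-amalgamation. Otherwise $\LST(\AECK) \leq\mu$, and I would invoke Remark~\ref{LST-remark} to find $D \in \AECK_\mu$ with $D \leqK N$ containing $f_B[B] \cup f_C[C]$ (a set of cardinality exactly $\mu$, since it includes $M$); coherence then upgrades $f_B[B], f_C[C] \subseteq D$ to $f_B[B], f_C[C] \leqK D$, so that $(f_B, f_C, D)$ is the desired $\mu$-amalgamation over $M$.

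For Part~(2), I would show that $N$ realizes every $p \in S(M)$. Given such $p$, Lemma~\ref{get-ambient-lambda} (applied with $\mu$ in place of $\lambda$) produces some $B \in \AECK_\mu$ with $M \leqK B$ that realizes $p$; the universality hypothesis yields $f : B \embeds N$ with $f \restriction M = \id_M$, and Lemma~\ref{embedding-realizes-type} transfers the realization of $p$ from $B$ to $N$. Hence $N$ is full over $M$, and the bound $\left|S(M)\right| \leq \left|N\right|$ follows at once from Fact~\ref{full-gives-bounded}.

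For Part~(3), given $C \in \AECK_\mu$ with $M^- \leqK C$, I would first amalgamate $M$ and $C$ over $M^-$, which is possible because $M^-$ is an amalgamation base in $\AECK_\mu$; by Fact~\ref{get-amalg-with-id} the amalgam may be taken in the form $(\id_M, f_C, D)$ with $D \in \AECK_\mu$, so that $M \leqK D$ and $f_C \restriction M^- = \id_{M^-}$. Since now $M \leqK D \in \AECK_\mu$, the universality hypothesis furnishes $h : D \embeds N$ with $h \restriction M = \id_M$, and I would set $g := h \circ f_C : C \embeds N$. For $x \in M^-$ we then have $g(x) = h(f_C(x)) = h(x) = x$, using $f_C \restriction M^- = \id_{M^-}$ together with $M^- \subseteq M$ and $h \restriction M = \id_M$; thus $g \restriction M^- = \id_{M^-}$, as required. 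The one place calling for care is precisely this last step: one must route through an intermediate amalgam $D$ that extends $M$ (not merely $M^-$) so that the universality hypothesis can be applied, and then check that composing with $f_C$ preserves the identity on the smaller model $M^-$. Everything else is routine bookkeeping with the cited facts.
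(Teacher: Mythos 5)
Your proposal is correct and follows essentially the same route as the paper's own proof: part~(1) via the two embeddings into $N$ plus Remark~\ref{LST-remark} when $\left|N\right| > \mu$, part~(2) via Lemma~\ref{get-ambient-lambda}, Lemma~\ref{embedding-realizes-type}, and Fact~\ref{full-gives-bounded}, and part~(3) by first amalgamating $C$ and $M$ over $M^-$ (with the identity on $M$) and then composing with an embedding of the amalgam into $N$ fixing $M$. The only cosmetic difference is that the paper writes the amalgamation in part~(3) as $(h,\id_M,B)$ rather than $(\id_M,f_C,D)$, which is the same construction with the roles of the two embeddings relabelled.
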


\begin{proof}\hfill
\begin{enumerate}
\item 
To see that $M$ is an amalgamation base in $\AECK_\mu$,
consider arbitrary $B, C \in \AECK_\mu$ with $M \leqK B$ and $M \leqK C$.
By the hypothesis,
%As $M \lessKuniv N$,
we can fix embeddings $f_B : B \embeds N$ and $f_C : C \embeds N$ such that $f_B \restriction M = f_C \restriction M = \id_M$.
Then $(f_B, f_C, N)$ is an amalgamation of $B$ and $C$ over $M$.
If $\left|N\right| = \mu$, then this amalgamation is already a $\mu$-amalgamation.
Otherwise, 
apply $\LST(\AECK) \leq\mu$ and Remark~\ref{LST-remark} to fix
$D \in \AECK_\mu$ containing every point of $f_B[B] \cup f_C[C]$, and such that $f_B[B] \leqK D \lessK N$ and $f_C[C] \leqK D$.
Then $(f_B, f_C, D)$ is a $\mu$-amalgamation of $B$ and $C$ over $M$.

\item 
Consider arbitrary $p \in S(M)$, and we shall show that $N$ realizes $p$.
As $\LST(\AECK) \leq\mu$,
by Lemma~\ref{get-ambient-lambda} we can fix some $B \in \AECK_\mu$ that realizes $p$.
%
%Using $\LST(\AECK) \leq\mu$ and Remark~\ref{LST-remark},
%we fix
%some ambient model $N^+ \in \AECK_\mu$ with $N \leqK N^+$ and some $b \in N^+$
%that together realize $p$, that is,
%such that $p = \tp(b/N; N^+)$.
%
Since %$M \lessKuniv N$ and 
$M \leqK B$, 
by the hypothesis
%As $M \lessKuniv N$,
we fix
an embedding $f : B \embeds N$ such that $f \restriction M = \id_M$.
Then by Lemma~\ref{embedding-realizes-type},
it follows that $N$ realizes $p$. %, as sought.

Thus $N$ realizes $S(M)$, and it follows from fact~\ref{full-gives-bounded} that
$\left|S(M)\right| \leq \left|N\right|$.

\item
Fix $M^- \in \AECK_\mu$ such that $M^- \leqK M$ and $M^-$ is an amalgamation base in $\AECK_\mu$,
and consider any $C \in \AECK_\mu$ such that $M^- \leqK C$.
Since $M^-$ is an amalgamation base in $\AECK_\mu$,
we can fix a $\mu$-amalgamation $(h, \id_M, B)$
of $C$ and $M$ over $M^-$.
In particular, $B \in \AECK_\mu$ and $M \leqK B$,
so that by the hypothesis %$M \lessKuniv N$
there is an embedding $f : B \embeds N$ such that $f \restriction M = \id_M$.
Setting $g := f \circ h$,
it is clear that $g : C \embeds N$ is an embedding such that $g \restriction M^- = \id_{M^-}$,
%showing that $M^- \lessKuniv N$, 
as sought.
\qedhere
\end{enumerate}
\end{proof}

\begin{corollary}\label{univ->AB+stab}
Suppose $N, N' \in \AECK_\mu$ are models such that $N \lessKuniv N'$.
Then:
\begin{enumerate}
\item $N$ is an amalgamation base in $\AECK_\mu$; and 
%\todo{Check everything in this section from here on.}
\item If $\LST(\AECK) \leq\mu$, 
then $N'$ realizes $S(N)$, so that
$\left|S(N)\right| \leq \mu$.
\end{enumerate}
\end{corollary}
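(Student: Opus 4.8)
The plan is to derive both clauses directly from Lemma~\ref{embedding-gives-full&AB}, applied with the corollary's $N$ playing the role of the base model and $N'$ playing the role of the large ambient model. The only subtlety is a clash of names: in Lemma~\ref{embedding-gives-full&AB} the base model is called $M$ and the ambient model is called $N$, whereas in this Corollary the base is $N$ and the larger model is $N'$. Keeping track of this renaming, I would set $M := N$ and take $N'$ to be the ambient model of the lemma.

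First I would verify the hypotheses of Lemma~\ref{embedding-gives-full&AB}. We have $N \in \AECK_\mu$ and $N' \in \AECK_\mu \subseteq \AECK_{\geq\mu}$, and since $N \lessKuniv N'$ implies in particular $N \lessK N'$, we certainly have $N \leqK N'$. The crucial observation is that the embedding hypothesis of the lemma---that for every $B \in \AECK_\mu$ with $N \leqK B$ there is an embedding $f : B \embeds N'$ with $f \restriction N = \id_N$---is precisely the defining property of $N \lessKuniv N'$.

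With the hypotheses in place, Clause~(1) follows from Lemma~\ref{embedding-gives-full&AB}(1): its hypothesis $\min\{\LST(\AECK), \left|N'\right|\} \leq\mu$ holds automatically, since $N' \in \AECK_\mu$ gives $\left|N'\right| = \mu$, so no assumption on $\LST(\AECK)$ is needed, and we conclude that $N$ is an amalgamation base in $\AECK_\mu$. For Clause~(2), assuming now $\LST(\AECK) \leq\mu$, I would apply Lemma~\ref{embedding-gives-full&AB}(2) to conclude that $N'$ is full over $N$---that is, $N'$ realizes $S(N)$---and that $\left|S(N)\right| \leq \left|N'\right| = \mu$, as required. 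Since the whole argument reduces to a single invocation of the earlier lemma, there is no substantive obstacle; the only point demanding care is the book-keeping of the renaming and the remark that $\left|N'\right| = \mu$ renders the $\min$-hypothesis of Clause~(1) trivially true.
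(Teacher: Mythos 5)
Your proposal is correct and is exactly the argument the paper intends: the corollary is stated immediately after Lemma~\ref{embedding-gives-full&AB} precisely because $N \lessKuniv N'$ is, by definition, the embedding hypothesis of that lemma with $M := N$ and ambient model $N'$. Your observations that $\left|N'\right| = \mu$ makes the $\min$-hypothesis of Clause~(1) automatic (so no $\LST$ assumption is needed there) and that Clause~(2) is just the lemma's Clause~(2) under the renaming are exactly the right bookkeeping.
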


%\begin{proof}\hfill
%\begin{enumerate}
%\item 
%To see that $N$ is an amalgamation base in $\AECK_\mu$,
%consider arbitrary $B, C \in \AECK_\mu$ with $N \leqK B$ and $N \leqK C$.
%As $N \lessKuniv N'$,
%we can fix embeddings $f_B : B \embeds N'$ and $f_C : C \embeds N'$ such that $f_B \restriction N = f_C \restriction N = \id_N$.
%Then $(f_B, f_C, N')$ is a $\mu$-amalgamation of $B$ and $C$ over $N$.
%
%\item 
%Consider arbitrary $p \in S(N)$, and we shall show that $N'$ realizes $p$.
%As $\LST(\AECK) \leq\mu$,
%by Lemma~\ref{get-ambient-lambda} we can fix some $N^\bullet \in \AECK_\mu$ that realizes $p$.
%%
%%Using $\LST(\AECK) \leq\mu$ and Remark~\ref{LST-remark},
%%we fix
%%some ambient model $N^+ \in \AECK_\mu$ with $N \leqK N^+$ and some $b \in N^+$
%%that together realize $p$, that is,
%%such that $p = \tp(b/N; N^+)$.
%%
%Since $N \lessKuniv N'$ and $N \leqK N^\bullet$, we fix
%an embedding $f : N^\bullet \embeds N'$ such that $f \restriction N = \id_N$.
%Then by Lemma~\ref{embedding-realizes-type},
%it follows that $N'$ realizes $p$. %, as sought.
%
%Thus $N'$ realizes $S(N)$, and it follows that
%$\left|S(N)\right| \leq \left|N'\right| \leq \mu$.
%\qedhere
%\end{enumerate}
%\end{proof}

%(In other words:  A model that is not an amalgamation base cannot have a universal model over it.)

\begin{corollary}\label{homogeneous->saturated}
Suppose $\LST(\AECK) \leq\lambda$, and
%Suppose that %$M \in \AECK_\lambda$ and 
$N \in \AECK_{\lambda^+}$ %satisfy $M \lessK N$ 
%and $N$ 
is homogeneous in $\lambda^+$ over $\lambda$.
Then:
\begin{enumerate}
\item Every $M \in \AECK_\lambda$ satisfying $M \lessK N$ is an amalgamation base in $\AECK_\lambda$; and 
\item %If $\LST(\AECK) \leq\lambda$, 
%then 
$N \in \AECK^\sat_{\lambda^+}$,  %$N$ is saturated in $\lambda^+$ over $\lambda$, 
%\todo{Ensure this appears after definition of $\AECK^\sat_{\lambda^+}$}
%$N$ realizes $S(M)$, 
so that in particular,
$\left|S(M)\right| \leq \lambda^+$ for every $M \in \AECK_\lambda$ satisfying $M \lessK N$.
\todo{See also Fact \ref{saturated=homogeneous}.}
\end{enumerate}
\end{corollary}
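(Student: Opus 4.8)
The plan is to reduce both clauses to Lemma~\ref{embedding-gives-full&AB}, applied with $\mu := \lambda$. The key observation is that the homogeneity hypothesis on $N$ supplies \emph{exactly} the embedding hypothesis of that Lemma, for every relevant $\lambda$-sized submodel of $N$. So the whole argument is a matter of recognizing this and checking that the cardinality side-conditions hold.

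First I would fix an arbitrary $M \in \AECK_\lambda$ with $M \lessK N$. Taking $A := M$ in the definition of homogeneity in $\lambda^+$ over $\lambda$, I observe that for every $B \in \AECK_\lambda$ with $M \leqK B$ there is an embedding $f : B \embeds N$ with $f \restriction M = \id_M$. Since $M \in \AECK_\lambda = \AECK_\mu$, $N \in \AECK_{\lambda^+} \subseteq \AECK_{\geq\mu}$, and $M \leqK N$, this is precisely the standing hypothesis of Lemma~\ref{embedding-gives-full&AB} with $\mu = \lambda$. With this matching in hand, both clauses fall out directly.

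For Clause~(1), since $\LST(\AECK) \leq \lambda = \mu$ we have $\min\{\LST(\AECK), |N|\} \leq \LST(\AECK) \leq \mu$, so Lemma~\ref{embedding-gives-full&AB}(1) yields that $M$ is an amalgamation base in $\AECK_\lambda$; as $M$ was an arbitrary model in $\AECK_\lambda$ with $M \lessK N$, this establishes Clause~(1). For Clause~(2), again using $\LST(\AECK) \leq \lambda = \mu$, Lemma~\ref{embedding-gives-full&AB}(2) gives that $N$ is full over $M$ and that $|S(M)| \leq |N| = \lambda^+$. Since this holds for every $M \in \AECK_\lambda$ with $M \lessK N$, the definition of saturation in $\lambda^+$ over $\lambda$ immediately yields $N \in \AECK^\sat_{\lambda^+}$, together with the stated bound $|S(M)| \leq \lambda^+$.

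I do not anticipate a genuine obstacle here: the entire content is the identification of homogeneity with the hypothesis of Lemma~\ref{embedding-gives-full&AB}. The only point requiring any care is verifying the two cardinality side-conditions, namely $\min\{\LST(\AECK),|N|\} \leq \mu$ for Clause~(1) and $\LST(\AECK) \leq \mu$ for Clause~(2); both follow at once from the standing hypothesis $\LST(\AECK) \leq \lambda$ upon taking $\mu = \lambda$.
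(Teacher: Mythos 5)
Your proof is correct and is essentially the paper's own (implicit) argument: the Corollary is stated immediately after Lemma~\ref{embedding-gives-full&AB} precisely because homogeneity of $N$ supplies, for each $M \in \AECK_\lambda$ with $M \lessK N$, the embedding hypothesis of that Lemma with $\mu = \lambda$, and both clauses then follow exactly as you describe.
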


\begin{corollary}\label{smaller-univ-from-AB}
Suppose $N^-, N, N' \in \AECK_\mu$ are models such that $N^- \leqK N \lessKuniv N'$,
and $N^-$ is an amalgamation base in $\AECK_\mu$.
Then $N^- \lessKuniv N'$.
\todo{This is what we will use to remove the extension axiom!}
\end{corollary}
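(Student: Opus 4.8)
The plan is to recognize this Corollary as essentially a repackaging of Lemma~\ref{embedding-gives-full&AB}(3), once the definitions are unwound with the right dictionary. First I would unfold $N \lessKuniv N'$: this says exactly that $N \leqK N'$ and that for every $N^+ \in \AECK_\mu$ with $N \leqK N^+$ there is an embedding $f : N^+ \embeds N'$ with $f \restriction N = \id_N$. But this is precisely the standing hypothesis of Lemma~\ref{embedding-gives-full&AB}, applied with the Lemma's ``$M$'' taken to be $N$ and its ``$N$'' taken to be $N'$ (noting that $N' \in \AECK_\mu \subseteq \AECK_{\geq\mu}$, so the cardinality hypothesis is satisfied).

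With the hypotheses of that Lemma in place, I would invoke its conclusion~(3), taking the Lemma's ``$M^-$'' to be our $N^-$. Since $N^- \in \AECK_\mu$ satisfies $N^- \leqK N$ and is an amalgamation base in $\AECK_\mu$, conclusion~(3) delivers exactly what we need: for every $C \in \AECK_\mu$ with $N^- \leqK C$, there is an embedding $g : C \embeds N'$ such that $g \restriction N^- = \id_{N^-}$. This is precisely the universality clause in the definition of $N^- \lessKuniv N'$.

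It then remains only to verify the strict inequality $N^- \lessK N'$. From $N^- \leqK N$ and $N \lessK N'$ we obtain $N^- \leqK N'$ by transitivity of $\leqK$. Were we to have $N^- = N'$, then combining $N' = N^- \leqK N$ with $N \leqK N'$ would force $N = N'$ by the antisymmetry of $\leqK$ (which holds because $\leqK$ refines $\subseteq$), contradicting $N \lessK N'$. Hence $N^- \neq N'$, so $N^- \lessK N'$, and together with the universality clause this establishes $N^- \lessKuniv N'$.

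The only point requiring any care is correctly matching the variable names between the definition of ``universal over'' and the statement of Lemma~\ref{embedding-gives-full&AB}(3); once that correspondence is fixed, there is no genuine obstacle, which is exactly why the result is stated as a corollary rather than proved from scratch.
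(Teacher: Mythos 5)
Your proof is correct and follows exactly the route the paper intends: the corollary is stated without explicit proof as a consequence of Lemma~\ref{embedding-gives-full&AB}(3), and your dictionary (the Lemma's $M := N$, its $N := N'$, its $M^- := N^-$) is precisely the intended instantiation. The only detail the paper leaves implicit is the strictness $N^- \lessK N'$, which you verify correctly via transitivity and the antisymmetry of $\leqK$ (coming from the fact that $\leqK$ refines the submodel relation).
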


%\begin{proof}
%Consider any $N^+ \in \AECK_\mu$ such that $N^- \leqK N^+$.
%Since $N^-$ is an amalgamation base in $\AECK_\mu$,
%we can fix a $\mu$-amalgamation $(h, \id_N, N^*)$
%of $N^+$ and $N$ over $N^-$.
%In particular, $N^* \in \AECK_\mu$ and $N \leqK N^*$,
%so that by $N \lessKuniv N'$
%there is an embedding $f : N^* \embeds N'$ such that $f \restriction N = \id_N$.
%Setting $g := f \circ h$,
%it is clear that $g : N^+ \embeds N'$ is an embedding such that $g \restriction N^- = \id_{N^-}$,
%showing that $N^- \lessKuniv N'$, 
%as sought.
%\end{proof}

%Thus we will assume AP on $\lambda^+$ in order to get a version without the extension axiom,
%namely Theorem~\ref{main-thm}.
%Adi says this is very important (more important than the other version)
%because there are applications where extension fails,
%but it's no big deal to assume AP on an additional cardinal.
%(Furthermore, we're implicitly using AP on $\lambda^+$ anyway in order to get the universal model.)

%\section{%Amalgamation, stability, and 
%Universal models}

\begin{lemma}\label{univ-embedding}
Suppose %$\mu$ is some infinite cardinal, 
$N, N', N^\bullet \in \AECK_\mu$ are models such that $N \lessKuniv N'$, 
and $g : N \embeds N^\bullet$ is an embedding.
Then there is an embedding $h : N^\bullet \embeds N'$ such that $h \circ g = \id_N$.
\end{lemma}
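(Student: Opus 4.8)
The plan is to reduce to the definition of $\lessKuniv$ by transporting $N^\bullet$ along an isomorphism so that $N$ becomes a genuine strong submodel (via the identity inclusion), and then applying universality directly. The one subtle point — which I expect to be the main obstacle — is that universality of $N'$ over $N$ is phrased only for models $N^+$ with $N \leqK N^+$ (the identity inclusion), whereas here $N$ sits inside $N^\bullet$ only through the possibly-nontrivial embedding $g$. Everything else is routine bookkeeping once this mismatch is resolved.

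To bridge the gap, I would first record that $g : N \cong g[N]$ and $g[N] \leqK N^\bullet$, so that $g^{-1} : g[N] \to N$ is an isomorphism onto $N$. By renaming the elements of $N^\bullet$ lying outside $g[N]$ and identifying $g[N]$ with $N$ via $g^{-1}$, I would manufacture a $\tau$-structure $N^+$ together with an isomorphism $\varphi : N^\bullet \cong N^+$ that extends $g^{-1}$. By construction $N \subseteq N^+$ and $\varphi \circ g = \id_N$.

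Next I would check that $N^+$ genuinely lands in $\AECK_\mu$ with $N \leqK N^+$. Since $N^\bullet \in \AECK$ and $\varphi : N^\bullet \cong N^+$, the isomorphism-respecting property (Definition~\ref{AEC-def}(1)(a)) gives $N^+ \in \AECK$, while $\left|N^+\right| = \left|N^\bullet\right| = \mu$ places it in $\AECK_\mu$. Moreover, because $g[N] \leqK N^\bullet$ and $\varphi[g[N]] = N$, the same property (Definition~\ref{AEC-def}(1)(b)) upgrades the inclusion $N \subseteq N^+$ to $N \leqK N^+$, which is exactly the hypothesis needed to invoke universality.

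Finally I would apply the definition of $N \lessKuniv N'$ to the model $N^+ \in \AECK_\mu$ satisfying $N \leqK N^+$, obtaining an embedding $f : N^+ \embeds N'$ with $f \restriction N = \id_N$. Setting $h := f \circ \varphi$, the composition of an isomorphism followed by an embedding is again an embedding $h : N^\bullet \embeds N'$, and for every $a \in N$ we have $h(g(a)) = f(\varphi(g(a))) = f(a) = a$, using $\varphi \circ g = \id_N$ and $f \restriction N = \id_N$. Hence $h \circ g = \id_N$, completing the argument.
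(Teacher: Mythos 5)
Your proof is correct and is essentially the paper's own argument: the paper likewise fixes $N^+ \in \AECK_\mu$ with $N \leqK N^+$ and an isomorphism between $N^+$ and $N^\bullet$ extending $g$ (your $\varphi$ is just the inverse of the paper's), applies universality to get $f : N^+ \embeds N'$ with $f \restriction N = \id_N$, and composes. The only difference is that the paper asserts the existence of $N^+$ and the isomorphism without comment, whereas you carry out the renaming construction and verify membership in $\AECK_\mu$ and $N \leqK N^+$ via Definition~\ref{AEC-def}(1); that extra detail is harmless and correct.
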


\begin{proof}
%If $g$ happens to be an isomorphism (onto $N^\bullet$), then let $h := g^{-1}$ and we are done.
%Thus from now on we assume that $g[N] \lessK N^\bullet$.
Fix a model $N^+ \in \AECK_\mu$ with $N \leqK N^+$ and an isomorphism $\varphi : N^+ \cong N^\bullet$ extending $g$.
Since $N \lessKuniv N'$, 
we can fix an embedding $f : N^+ \embeds N'$ with $f \restriction N = \id_N$.
Then $h := f \circ \varphi^{-1}$ is as sought.
\end{proof}

\begin{fact}\label{stab+AB->universal}
\todo{Does this need $\LST$?  Or any other hypothesis?
See my handwritten notes in notebook from 2 Tevet for the proof,
but it also uses some bookkeeping.}
Suppose that $\AECK_\mu$ satisfies stability, and $N \in \AECK_\mu$ is an amalgamation base in $\AECK_\mu$.
Then there is some $N' \in \AECK_\mu$ such that $N \lessKuniv N'$.
\end{fact}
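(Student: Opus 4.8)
The plan is to construct $N'$ as the union of a $\leqK$-increasing, continuous chain of length $\mu$ of models in $\AECK_\mu$ starting from $N$, where at each successor stage we amalgamate into the chain (over $N$) realizers of all the types that still need to be caught, and then to verify universality directly. First I would record the quantitative consequence of the hypotheses: since $N \in \AECK_\mu$ and $\AECK_\mu$ satisfies stability, Lemma~\ref{count-algebraic} gives $\left|S(N)\right| = \mu$, and more generally $\left|S(M)\right| = \mu$ for every $M \in \AECK_\mu$. This is exactly what will keep every model appearing in the construction of cardinality $\mu$ rather than allowing it to blow up.

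For the recursion I would set $N_0 := N$, take unions at limit stages (so that Definition~\ref{AEC-def}(3) keeps us inside $\AECK_\mu$ and preserves the chain being $\leqK$-increasing and continuous), and at a successor stage pass from $N_i$ to an $N_{i+1} \in \AECK_\mu$ with $N_i \leqK N_{i+1}$ realizing the type dictated by a bookkeeping schedule. The basic move at a successor stage is: given a type to be realized, produce a witnessing one-point extension by Lemma~\ref{get-ambient-lambda}, amalgamate it with the current model \emph{over $N$} --- this is the one place where the hypothesis that $N$ is an amalgamation base is used --- and then, via Fact~\ref{get-amalg-with-id} together with the L\"owenheim--Skolem--Tarski property (Remark~\ref{LST-remark}), trim the amalgam down to a model of cardinality $\mu$ that still contains $N_i$ as a $\leqK$-submodel along with the image of the realizer. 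Because only $\mu$-many tasks arise (by the count above) and the chain has length $\mu$, a standard bookkeeping schedule can serve every task cofinally often.

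Finally, writing $N' := \bigcup_{i<\mu} N_i$, I would check that $N \lessKuniv N'$: given any $N^+ \in \AECK_\mu$ with $N \leqK N^+$, I would produce an embedding $f : N^+ \embeds N'$ fixing $N$ pointwise by assembling it along an increasing exhaustion of $N^+$, at each step extending the partial embedding by appealing to a type realized in the appropriate $N_{i+1}$. The coherence and smoothness axioms of the AEC (Definition~\ref{AEC-def}(4)--(5)) then patch the approximations into a single $\AECK$-embedding over $N$, and the existence of such an embedding for every $N^+$ is precisely universality, as codified by Lemma~\ref{embedding-gives-full&AB}.

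I expect the main obstacle to be exactly the organization that makes the union \emph{universal} rather than merely full over $N$. The delicate point is that amalgamation is guaranteed only over $N$ itself --- the amalgamation-base property need not propagate up the chain --- so every amalgamation in the construction must be taken over $N$, and the bookkeeping must be rich enough that the element-by-element extension of the embedding of an arbitrary $N^+$ never stalls. Reconciling ``only $N$ is an amalgamation base'' with ``$N'$ must absorb every $\mu$-sized extension of $N$'' is where the real work lies; Lemma~\ref{embedding-gives-full&AB}(3) and Lemma~\ref{univ-embedding} are the natural tools for descending and transporting embeddings so that this argument goes through.
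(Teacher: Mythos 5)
The paper itself never proves this statement: it is recorded as a \emph{Fact}, with a todo note deferring the argument to the authors' handwritten notes, so there is no in-paper proof to compare against and your proposal must stand on its own. It does not, and the gap is exactly the step you defer to ``bookkeeping.'' Since you (correctly) observe that every amalgamation in the construction must be taken over $N$ --- the only model known to be an amalgamation base --- the only types your chain can be guaranteed to realize are those in $S(N)$: to handle a type $p \in S(N_i)$ with $i>0$ at a later stage $j$, you would have to amalgamate a realizing extension of $N_i$ with $N_j$ \emph{over $N_i$}, which is precisely what the hypotheses do not give. (At stage $i$ you can realize one type over $N_i$ for free, by letting $N_{i+1}$ be a realizing extension of $N_i$, but $\mu$-many new types appear at every stage, so this cannot be scheduled.) Consequently your $N'$ is at best \emph{full} over $N$, and fullness over $N$ does not imply $N \lessKuniv N'$, even in the presence of full amalgamation: take $\AECK$ to be the class of models of the ($\omega$-stable, complete, first-order) theory of an equivalence relation with infinitely many infinite classes, let $N$ be countable, and let $N'$ add exactly one new point to each class of $N$ plus one new infinite class; then $N'$ realizes every type in $S(N)$, yet the extension of $N$ obtained by adding \emph{two} new points to a single class of $N$ admits no embedding into $N'$ over $N$.

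The same obstruction reappears, compounded, in your verification step. The partial embedding $f_i : N^+_i \embeds N'$ transfers $\tp(a_i/N^+_i; N^+)$ to a type over $f_i[N^+_i]$, a submodel of $N'$ of cardinality $\mu$ that is neither $N$ nor, in general, any model of your chain, so nothing in the construction realizes it; and even if a realizer $b \in N'$ existed, equality of Galois types over $f_i[N^+_i]$ only yields maps into some amalgam $D$ over $f_i[N^+_i]$ --- pulling these back into $N'$ itself so as to extend $f_i$ requires $f_i[N^+_i]$ to be an amalgamation base, which again you do not have. The two lemmas you invoke cannot close this circle, because both Lemma~\ref{embedding-gives-full&AB} and Lemma~\ref{univ-embedding} take (a form of) universality as their \emph{hypothesis}. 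This is exactly why the versions of this result in the literature assume either that every model of $\AECK_\mu$ is an amalgamation base (Grossberg--VanDieren; Baldwin's book), or, in frameworks without full amalgamation, that amalgamation bases are dense and suitably closed under unions of chains: the chain construction must realize types over the intermediate models $N_i$, not merely over $N$. Note also that in this paper the Fact is only ever applied (in Theorem~\ref{stab+amalg<->universal}) under full amalgamation in $\AECK_\mu$, where the standard argument does go through; a proof of the Fact under the bare hypothesis ``$N$ is an amalgamation base'' would need a genuinely new idea --- either a way to manufacture amalgamation bases above $N$, or a universality argument that never extends embeddings over models other than $N$ --- and your outline supplies neither.
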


%\begin{proof}
%See my handwritten notes in notebook from 2 Tevet.
%\todo{The proof uses some bookkeeping.}
%\end{proof}

%Stability implies that every amalgamation base in $\AECK_\mu$ has a universal model over it.
%(This is the crux of the following theorem:)

\begin{theorem}\label{stab+amalg<->universal}
%Suppose that $\AECK$ is an AEC, and let $\mu$ be some infinite cardinal.
Suppose $\LST(\AECK) \leq\mu$.
Then the following are equivalent:
%\todo{We probably don't need Theorems 2 and 3 any more; nor Definition 1.}
\begin{enumerate}
\item For every $N \in \AECK_{\mu}$ there is some $N' \in \AECK_{\mu}$ such that $N \lessKuniv N'$;
\item $\AECK_{\mu}$ satisfies amalgamation and stability.
\end{enumerate}
\end{theorem}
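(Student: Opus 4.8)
The plan is to prove both implications by directly assembling the facts already established in this section; the theorem is essentially a packaging of Corollary~\ref{univ->AB+stab}, Fact~\ref{AP=AB}, and Fact~\ref{stab+AB->universal}, so the work is mostly bookkeeping.

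For the implication $(1) \implies (2)$, I would fix an arbitrary $N \in \AECK_\mu$ and apply the hypothesis to obtain $N' \in \AECK_\mu$ with $N \lessKuniv N'$. Corollary~\ref{univ->AB+stab}(1) then tells me that $N$ is an amalgamation base in $\AECK_\mu$; since $N$ was arbitrary, every model of $\AECK_\mu$ is an amalgamation base, so Fact~\ref{AP=AB} yields that $\AECK_\mu$ satisfies the amalgamation property. For stability, I would invoke Corollary~\ref{univ->AB+stab}(2), using the standing hypothesis $\LST(\AECK) \leq \mu$, to conclude that $\left|S(N)\right| \leq \mu$ for every $N \in \AECK_\mu$. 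Since $S^\na(N) \subseteq S(N)$, this immediately gives $\left|S^\na(N)\right| \leq \mu$, which is precisely the stability of $\AECK_\mu$.

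For the converse $(2) \implies (1)$, I would take any $N \in \AECK_\mu$. Amalgamation together with Fact~\ref{AP=AB} shows that $N$ is an amalgamation base in $\AECK_\mu$. Then the stability of $\AECK_\mu$ together with the amalgamation-base property of $N$ lets me apply Fact~\ref{stab+AB->universal} directly to produce some $N' \in \AECK_\mu$ with $N \lessKuniv N'$, exactly as required.

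The only genuinely non-routine ingredient is Fact~\ref{stab+AB->universal}, which packages the standard union-of-chains construction (with bookkeeping over the $\leq \mu$ types) needed to build a universal extension over an amalgamation base under stability; once that fact is in hand, the equivalence reduces to chaining together the earlier corollaries. Accordingly I do not anticipate any real obstacle here beyond ensuring the stability hypothesis is deployed in the right form --- as a bound on the number of (non-algebraic, equivalently all) types over a single model of size $\mu$ --- and keeping track of which direction needs $\LST(\AECK) \leq \mu$ (only the forward direction, via Corollary~\ref{univ->AB+stab}(2)).
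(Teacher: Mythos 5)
Your proposal is correct and is essentially the paper's own proof: the paper's argument is literally ``Combine Corollary~\ref{univ->AB+stab} with Fact~\ref{stab+AB->universal},'' and your write-up fills in exactly the intended bookkeeping, including the obvious bridge through Fact~\ref{AP=AB} and the observation that $S^{\na}(N)\subseteq S(N)$ converts the type-count bound into stability.
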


\begin{proof}
Combine Corollary~\ref{univ->AB+stab} with Fact~\ref{stab+AB->universal}.
\todo{Clarify this.}
%THIS SHOULD BE A COMBINATION OF THE VARIOUS LEMMAS.
%See my handwritten notes in notebook from 2 Tevet.
\end{proof}

\begin{fact}\label{universal-saturated}
%Suppose that $\AECK$ is an AEC, and let $\lambda$ be some infinite cardinal.
Suppose that $\AECK_{\lambda^+}$ satisfies amalgamation and stability,
and $\AECK_\lambda$ satisfies amalgamation.
Then for every $N \in \AECK_{\lambda^+}$ there is some $N' \in \AECK^\sat_{\lambda^+}$
such that $N \lessKuniv N'$.
\todo{We don't need this result!}
\end{fact}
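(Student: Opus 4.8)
The plan is to realize $N'$ as the top of a continuous $\leqK$-increasing chain of universal extensions inside $\AECK_{\lambda^+}$. Since $\AECK_{\lambda^+}$ satisfies amalgamation and stability and $\LST(\AECK)\le\lambda^+$, Theorem~\ref{stab+amalg<->universal} supplies, over every model in $\AECK_{\lambda^+}$, a universal extension. I would therefore build recursively a sequence $\langle N_i \mid i\le\lambda^+ \rangle$ of models in $\AECK_{\lambda^+}$ with $N_0 = N$, taking unions at limit stages and choosing $N_{i+1}$ with $N_i \lessKuniv N_{i+1}$ at successors; the AEC union axioms and continuity keep each $N_i \in \AECK_{\lambda^+}$, and $\left|N_{\lambda^+}\right| = \lambda^+\cdot\lambda^+ = \lambda^+$. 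I then set $N' := N_{\lambda^+}$.

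Universality of $N'$ over $N$ is then immediate: given any $N^+ \in \AECK_{\lambda^+}$ with $N \leqK N^+$, the relation $N_0 \lessKuniv N_1$ gives an embedding $N^+ \embeds N_1$ fixing $N$, which composed with the inclusion $N_1 \leqK N'$ yields the desired embedding over $N$; together with $N = N_0 \lessK N_1 \leqK N'$ this shows $N \lessKuniv N'$.

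The substance is showing $N' \in \AECK^\sat_{\lambda^+}$. Fix $M \in \AECK_\lambda$ with $M \lessK N'$. Since $\left|M\right| = \lambda < \cf(\lambda^+)$ and the chain is continuous, the universe of $M$ lands in some $N_i$, whence coherence (Definition~\ref{AEC-def}(5)) gives $M \leqK N_i$. By Lemma~\ref{embedding-gives-full&AB}(2) applied with $\mu = \lambda$ and $N := N'$, to prove that $N'$ is full over $M$ it suffices to embed, over $M$, every $B \in \AECK_\lambda$ with $M \leqK B$ into $N'$. I would do this by first amalgamating $B$ with $N_i$ over $M$ to obtain a model $N_i^+ \in \AECK_{\lambda^+}$ with $N_i \leqK N_i^+$ and $B \leqK N_i^+$ over $M$, and then using $N_i \lessKuniv N_{i+1}$ to embed $N_i^+$ into $N_{i+1}$ via a map fixing $N_i$ (hence fixing $M$); restricting that map to $B$ gives an embedding $B \embeds N_{i+1} \leqK N'$ that is the identity on $M$. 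As fullness over every such $M$ is precisely saturation in $\lambda^+$ over $\lambda$, this establishes $N' \in \AECK^\sat_{\lambda^+}$.

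The main obstacle is the amalgamation of the $\lambda$-sized $B$ with the $\lambda^+$-sized $N_i$ over their common $\lambda$-sized submodel $M$, since amalgamation is assumed outright only in $\AECK_\lambda$ (and, separately, in $\AECK_{\lambda^+}$ over $\lambda^+$-sized bases). I would obtain $N_i^+$ by a standard directed construction from the amalgamation property in $\AECK_\lambda$: fix a representation $\langle A_\alpha \mid \alpha<\lambda^+ \rangle$ of $N_i$ with $A_0 = M$ (Fact~\ref{representation-exists}(2)), set $D_0 := B$, and recursively let $D_{\alpha+1}$ be an amalgam of $D_\alpha$ and $A_{\alpha+1}$ over $A_\alpha$ in $\AECK_\lambda$, arranged (via Fact~\ref{get-amalg-with-id}) so that $D_\alpha \leqK D_{\alpha+1}$ and $A_{\alpha+1} \leqK D_{\alpha+1}$, taking unions at limits. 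Then $N_i^+ := \bigcup_{\alpha<\lambda^+} D_\alpha$ has cardinality $\lambda^+$, contains $B = D_0$ as a strong submodel over $M$, and satisfies $N_i = \bigcup_{\alpha<\lambda^+} A_\alpha \leqK N_i^+$ by smoothness. Verifying this coherence and strong-submodel bookkeeping is the only genuine work; everything else reduces to the facts quoted above.
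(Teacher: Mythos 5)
The paper records this statement as a Fact without proof, and your overall route---iterating Theorem~\ref{stab+amalg<->universal} at $\mu=\lambda^+$ to get a continuous chain $\langle N_i \mid i\le\lambda^+\rangle$ of universal extensions, extracting universality of $N':=N_{\lambda^+}$ over $N$ from the first link $N_0\lessKuniv N_1$, and reducing saturation to Lemma~\ref{embedding-gives-full&AB}(2)---is the natural one. But the step you yourself flag as ``the only genuine work'' contains a real error: you cannot, in general, arrange \emph{both} $D_\alpha \leqK D_{\alpha+1}$ and $A_{\alpha+1} \leqK D_{\alpha+1}$. Fact~\ref{get-amalg-with-id} normalizes exactly \emph{one} of the two maps of an amalgamation to be the identity, never both, and this is not a cosmetic limitation: an amalgam may be forced to identify points of $D_\alpha\setminus A_\alpha$ with points of $A_{\alpha+1}\setminus A_\alpha$, in which case no model contains both $D_\alpha$ and $A_{\alpha+1}$ as literal strong submodels. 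The paper's own Example~\ref{trivial-no-extension} (models with at most one $U$-point) exhibits this: if $B$ has a $U$-point outside $M$ and $N_i$ has a $U$-point outside $M$, then at the first stage $\alpha$ where that point of $N_i$ enters $A_{\alpha+1}$, every amalgam of $D_\alpha$ and $A_{\alpha+1}$ over $A_\alpha$ must identify the two $U$-points, so the $D_{\alpha+1}$ you ask for does not exist. Consequently the recursion, and with it the conclusions $B \leqK N_i^+$ and $N_i \leqK N_i^+$, cannot be carried out as written.

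The repair is standard and preserves your skeleton: carry embeddings on the $N_i$-side instead of literal inclusions. Build $\langle D_\alpha \mid \alpha<\lambda^+\rangle$ $\leqK$-increasing continuous in $\AECK_\lambda$ with $D_0 = B$, together with a $\subseteq$-increasing continuous sequence of embeddings $g_\alpha : A_\alpha \embeds D_\alpha$ with $g_0 = \id_M$; at successors, amalgamate $D_\alpha$ and $A_{\alpha+1}$ over $(A_\alpha, g_\alpha, \id_{A_\alpha})$ using amalgamation in $\AECK_\lambda$, normalizing only the $D_\alpha$-side to the identity (which is exactly what Fact~\ref{get-amalg-with-id} provides) and letting $g_{\alpha+1}$ be the other map, which then extends $g_\alpha$. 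At the end, $g := \bigcup_{\alpha<\lambda^+} g_\alpha$ is an embedding of $N_i$ into $D := \bigcup_{\alpha<\lambda^+} D_\alpha \in \AECK_{\lambda^+}$ fixing $M$ pointwise, and instead of invoking containment you apply Lemma~\ref{univ-embedding} to $N_i \lessKuniv N_{i+1}$ and $g$: this yields $h : D \embeds N_{i+1}$ with $h \circ g = \id_{N_i}$, and then $h \restriction B : B \embeds N_{i+1} \leqK N'$ satisfies $h \restriction M = (h \circ g) \restriction M = \id_M$, which is the embedding you needed. With this replacement, the rest of your argument (the chain construction, universality of $N'$ over $N$, coherence to place $M$ inside some $N_i$, and the appeal to Lemma~\ref{embedding-gives-full&AB}(2), which---like Fact~\ref{representation-exists}---silently uses the standing assumption $\LST(\AECK)\le\lambda$) goes through.
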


%\section{Saturated and homogeneous models}

\begin{fact}[{\cite[Proposition~1.0.31]{JrSh:875}}]\label{saturated=homogeneous}
Suppose that $\AECK_\lambda$ satisfies the amalgamation property, and $\LST(\AECK) \leq\lambda$.
\todo{Check this proof to see whether there is anything interesting to say.  
Also relate to Lemma \ref{embedding-gives-full&AB} and see Corollary \ref{homogeneous->saturated}.}
Let $N \in \AECK_{\lambda^+}$.
Then $N \in \AECK^\sat_{\lambda^+}$ %is saturated in $\lambda^+$ over $\lambda$ 
iff $N$ is homogeneous in $\lambda^+$ over $\lambda$.
\end{fact}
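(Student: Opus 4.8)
The statement has two directions, and the amalgamation and \LST\ hypotheses enter quite differently in each.

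\emph{Homogeneous implies saturated.} This direction is essentially already packaged in Lemma~\ref{embedding-gives-full&AB}. Assume $N$ is homogeneous in $\lambda^+$ over $\lambda$, and fix any $M \in \AECK_\lambda$ with $M \lessK N$. Homogeneity says precisely that for every $B \in \AECK_\lambda$ with $M \leqK B$ there is an embedding $f : B \embeds N$ with $f \restriction M = \id_M$; this is exactly the hypothesis of Lemma~\ref{embedding-gives-full&AB} taken with $\mu = \lambda$ (legitimate since $\LST(\AECK) \leq\lambda$ and $N \in \AECK_{\geq\lambda}$). Clause~(2) of that Lemma then yields that $N$ is full over $M$. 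As $M$ was an arbitrary model in $\AECK_\lambda$ below $N$, we conclude $N \in \AECK^\sat_{\lambda^+}$. Note that this direction does not even require the amalgamation property.

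\emph{Saturated implies homogeneous.} Here the amalgamation property is essential. Fix $A, B \in \AECK_\lambda$ with $A \lessK N$ and $A \leqK B$; we must produce an embedding $B \embeds N$ fixing $A$ pointwise. The plan is a recursion of length $\lambda$ that builds a $\leqK$-increasing continuous resolution $\langle B_i \mid i \leq\lambda\rangle$ of $B$ over $A$ (so $B_0 = A$, each $B_i \in \AECK_\lambda$ is a $\leqK$-submodel of $B$ obtained via $\LST(\AECK) \leq\lambda$ and coherence, and $\bigcup_{i<\lambda} B_i = B$) together with a $\subseteq$-increasing continuous chain of embeddings $f_i : B_i \embeds N$, each fixing $A$, whose images $N_i := f_i[B_i]$ satisfy $N_i \lessK N$ (automatic, as $\left|N_i\right| = \lambda < \lambda^+$). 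Taking $f := \bigcup_{i<\lambda} f_i$ then gives the desired embedding: the base case is $f_0 = \id_A$, and at limit stages the union is again an embedding into $N$ because the images form a $\leqK$-increasing continuous chain whose union is $\leqK N$ by the smoothness axiom (Definition~\ref{AEC-def}(4)).

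The entire difficulty is concentrated in the successor step: given $f_i : B_i \cong N_i$ over $A$ with $N_i \lessK N$, extend it across a single $\leqK$-step $B_i \leqK B_{i+1}$ in $\AECK_\lambda$. Transporting the extension along the isomorphism $f_i$, it suffices to embed a single $\lambda$-sized extension $N_i \leqK C$ into $N$ over $N_i$, and here we must exploit that $N$ is saturated --- hence \emph{full over every} small submodel $\lessK N$, not merely over $N_i$. The mechanism is to realize Galois types one element at a time: for a point of $C$ outside $N_i$, its type over $N_i$ is realized by some $d \in N$ (fullness over $N_i$), and the equality of Galois types, combined with the amalgamation property in $\AECK_\lambda$ (under which, by Fact~\ref{AP=AB}, $N_i$ is an amalgamation base), lets us amalgamate so that the new point maps to $d$. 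The main obstacle is that a single $\leqK$-extension can introduce $\lambda$ new elements while fullness realizes only one type at a time, and naively realizing a type produces an amalgam that extends $N$ rather than sitting inside it. Overcoming this requires interleaving the forward realization of types with a pulling-in of the escaping hull-elements --- themselves realized in $N$ by fullness over the relevant intermediate submodels --- and invoking coherence (Definition~\ref{AEC-def}(5)) to keep each stage a genuine $\leqK$-submodel of $N$; smoothness then guarantees that the length-$\lambda$ union of this inner construction lands inside $N$. This careful bookkeeping, powered throughout by amalgamation and saturation over all small submodels, is the crux of the proof.
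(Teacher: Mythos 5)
Your first direction (homogeneous implies saturated) is correct and is exactly how the paper itself packages it: it is Corollary~\ref{homogeneous->saturated}, an application of Lemma~\ref{embedding-gives-full&AB}(2). (Note that the paper gives no proof of the Fact itself; it is quoted from \cite[Proposition~1.0.31]{JrSh:875}, so the comparison below is with that standard argument.) The saturated-implies-homogeneous direction, however, has a genuine gap. First, your outer recursion along a resolution $\langle B_i \mid i \leq \lambda \rangle$ of $B$ achieves no reduction: the successor step --- embed $B_{i+1}$ into $N$ over a copy of $B_i$, given $B_i \cong N_i \lessK N$ --- is literally an instance of the original problem (a $\lambda$-sized $\leqK$-extension of a small submodel of $N$ must be embedded into $N$ over that submodel), so the entire difficulty survives intact, as you yourself concede. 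Second, and decisively, the mechanism you sketch for that step does not work as described: realizing the type of one new point $c$ over $N_i$ by some $d \in N$ and amalgamating yields an embedding into an amalgam $D$ in which $c$ lands on $d$, but all remaining points of the image lie outside $N$, and every further realization-plus-amalgamation again produces material outside $N$. Your appeal to smoothness to conclude that ``the length-$\lambda$ union of this inner construction lands inside $N$'' is unjustified, because smoothness (Definition~\ref{AEC-def}(4)) concerns unions of chains of $\leqK$-submodels of $N$, and your intermediate amalgams are not submodels of $N$. Nothing in the described interleaving makes the images converge into $N$.

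The missing idea is to run the embeddings in the opposite direction and invert at the end. Enumerate $B \setminus A$ as $\{ b_i \mid i < \lambda \}$ and build increasing continuous chains $\langle M_i \mid i \leq \lambda \rangle$ with $M_0 = A$, $M_i \in \AECK_\lambda$, $M_i \lessK N$, and $\langle B_i \mid i \leq \lambda \rangle$ with $B_0 = B$, together with a $\subseteq$-increasing continuous chain of embeddings $f_i : M_i \embeds B_i$ with $f_0 = \id_A$. At stage $i$, pull the type $\tp(b_i / f_i[M_i]; B_i)$ back along the isomorphism $f_i : M_i \cong f_i[M_i]$ to a type over $M_i$; saturation of $N$ realizes it by some $d \in N$; use $\LST(\AECK)\leq\lambda$ and coherence to fix $M_{i+1} \lessK N$ in $\AECK_\lambda$ containing $M_i \cup \{d\}$; and use the amalgamation property in $\AECK_\lambda$ --- which is precisely what makes equality of Galois types witnessed by a common amalgam --- to extend $f_i$ to an embedding $f_{i+1} : M_{i+1} \embeds B_{i+1}$ into some $B_{i+1} \geq B_i$ with $f_{i+1}(d) = b_i$. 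At stage $\lambda$, smoothness gives $M_\lambda \leqK N$, every point of $B$ lies in $f_\lambda[M_\lambda]$, coherence upgrades $B \subseteq f_\lambda[M_\lambda] \leqK B_\lambda$ to $B \leqK f_\lambda[M_\lambda]$, and then $f_\lambda^{-1} \restriction B : B \embeds N$ is the desired embedding fixing $A$ pointwise. This terminal inversion is exactly the step that no forward-only construction into $N$ can replicate, and it is what your proposal lacks.
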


%NEED DEFINITION HERE, and show equivalence with homogeneity.

\begin{lemma}
Suppose that $\AECK_\lambda$ satisfies JEP, % and AP, 
and $\LST(\AECK) \leq\lambda$.
If there exists a model that is homogeneous in $\lambda^+$ over $\lambda$,
then for every $M \in \AECK_\lambda$ there exists some $N \in \AECK_{\lambda^+}$ such that
$M \lessK N$ and $N$ is homogeneous in $\lambda^+$ over $\lambda$.
\todo{And therefore saturated; see Corollary \ref{homogeneous->saturated}.
Don't need AP here.}
\end{lemma}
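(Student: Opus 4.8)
The plan is to use the single given homogeneous model $N_0 \in \AECK_{\lambda^+}$ as a universal target: I would first embed the arbitrary $M \in \AECK_\lambda$ into $N_0$, and then transport the structure of $N_0$ back along this embedding so that $M$ becomes a genuine strong submodel of an isomorphic copy. The conceptual heart of the argument is that \emph{JEP} together with the homogeneity of $N_0$ already suffices to embed any $M \in \AECK_\lambda$ into $N_0$, without invoking the amalgamation property.

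First I would fix an anchor inside $N_0$. Since $\LST(\AECK) \leq \lambda$ and $|N_0| = \lambda^+$, the L\"owenheim--Skolem--Tarski property (Definition~\ref{AEC-def}(6), via Remark~\ref{LST-remark}) gives some $M_0 \in \AECK_\lambda$ with $M_0 \leqK N_0$, and as $|M_0| = \lambda < \lambda^+ = |N_0|$ we in fact have $M_0 \lessK N_0$. Applying JEP to $M$ and $M_0$ yields $P \in \AECK_\lambda$ with embeddings $f : M \embeds P$ and $g : M_0 \embeds P$; replacing $P$ by an isomorphic copy via the isomorphism-respecting axiom (Definition~\ref{AEC-def}(1)), I may assume $g = \id_{M_0}$, so that $M_0 \leqK P$. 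Now $M_0 \lessK N_0$, $M_0 \leqK P$, and $M_0, P \in \AECK_\lambda$, so homogeneity of $N_0$ supplies an embedding $\phi : P \embeds N_0$ with $\phi \restriction M_0 = \id_{M_0}$. The composite $\psi := \phi \circ f : M \embeds N_0$ is then an embedding, and $\psi[M] = \phi[f[M]] \leqK N_0$ by the isomorphism-respecting property together with transitivity of $\leqK$ (from $f[M] \leqK P$ and $\phi[P] \leqK N_0$).

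Next I would convert this embedding into an honest extension. Since $\psi : M \cong \psi[M]$ with $\psi[M] \leqK N_0$, I would, after renaming the elements of $N_0 \setminus \psi[M]$ to be disjoint from $M$, transport the $\tau$-structure of $N_0$ along the bijection that agrees with $\psi$ on $M$ and with the identity elsewhere. This produces a $\tau$-model $N$ and an isomorphism $\Psi : N \cong N_0$ with $\Psi \restriction M = \psi$, in which $M$ sits as a $\tau$-submodel carrying exactly its original structure. By the isomorphism-respecting axiom, $N \in \AECK_{\lambda^+}$, and from $\psi[M] \leqK N_0$ I obtain $M = \Psi^{-1}[\psi[M]] \leqK N$; since $|M| = \lambda < \lambda^+ = |N|$, this gives $M \lessK N$. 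Finally, being homogeneous in $\lambda^+$ over $\lambda$ is defined purely through $\AECK_\lambda$, $\leqK$, $\lessK$, and embeddings, all of which are isomorphism-invariant, so $N \cong N_0$ is itself homogeneous, as required.

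The step I expect to require the most care is the transport of structure: one must secure disjointness by the renaming and then verify that the copy of $M$ inside $N$ is \emph{literally} the given model $M$ with its own $\tau$-structure, so that $M \lessK N$ is a bona fide extension of the given model rather than merely an isomorphic one. By contrast, the linkage of the arbitrary $M$ to the anchor $M_0 \lessK N_0$ through JEP, and its absorption into $N_0$ through homogeneity, is short once the anchor has been chosen.
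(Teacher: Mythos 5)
Your proposal is correct and follows essentially the same route as the paper's proof: fix a small anchor $A \lessK N^\bullet$ by LST, joint-embed $M$ with the anchor via JEP (normalizing so the anchor maps by the identity), absorb the result into $N^\bullet$ by homogeneity over the anchor, and then rename along the resulting embedding $M \embeds N^\bullet$ to obtain an isomorphic copy $N \cong N^\bullet$ with $M \lessK N$, noting that homogeneity is isomorphism-invariant. The only difference is one of exposition: you spell out the transport-of-structure step that the paper compresses into a single sentence.
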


\begin{proof}
Fix $N^\bullet \in \AECK_{\lambda^+}$ that is homogeneous in $\lambda^+$ over $\lambda$,
and let $M \in \AECK_\lambda$ be arbitrary.
By $\LST(\AECK) \leq\lambda$,
fix some model $A \in \AECK_\lambda$ such that $A \lessK N^\bullet$.
Using the JEP, we can fix a joint embedding $(g, \id_A, B)$ of $M$ and $A$.
That is, $B \in \AECK_\lambda$ satisfies $A \leqK B$, and $g : M \embeds B$ is an embedding.
%By AP and Fact~\ref{saturated=homogeneous}, 
%since $N^\bullet \in \AECK_{\lambda^+}$ is saturated in $\lambda^+$ over $\lambda$,
%it is also homogeneous in $\lambda^+$ over $\lambda$.
By homogeneity of $N$, we can fix an embedding $f : B \embeds N^\bullet$ such that $f \restriction A = \id_A$.
Considering the embedding $f \circ g : M \embeds N^\bullet$,
we fix some model $N \in \AECK_{\lambda^+}$ that is isomorphic to $N^\bullet$ and such that $M \lessK N$.  %, along with an isomorphism 
Then $N$ is as sought.
\end{proof}

However, the JEP is not needed for the following Theorem and its corollaries:

\begin{theorem}[cf.~{\cite[Theorem~2.5.8]{JrSh:875}}]\label{get-saturated}
Suppose $\mathfrak s = (\AECK, {\dnf}, S^\bs)$ is a pre-$\lambda$-frame%
\footnote{All we really need for this Theorem and its corollaries is $(\AECK, S^\bs)$ satisfying the stated hypotheses.
The non-forking relation $\dnf$ is irrelevant here, but can be added arbitrarily if desired; 
cf.\ Examples~\ref{examples-pre-frames}((5)\&(6)).}
satisfying 
amalgamation, no maximal model, basic almost stability, and density.
\todo{Verify that this is the optimal list of hypotheses!
Also notice that there are no requirements on $\AECK_{\lambda^+}$.}

Then for every $M \in \AECK_\lambda$ there exists some $N \in \AECK^\sat_{\lambda^+}$ such that
$M \lessK N$.  % and $N$ is saturated in $\lambda^+$ over $\lambda$.
\end{theorem}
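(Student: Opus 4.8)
The plan is to construct $N$ as the union of a $\leqK$-increasing, continuous chain $\langle N_\alpha \mid \alpha<\lambda^+\rangle$ of models in $\AECK_\lambda$ with $N_0 = M$, arranging along the way that $N$ realizes enough types to be saturated. The outer skeleton is routine: no maximal model lets us pass to proper extensions, the amalgamation property (via Fact~\ref{AP-versions}) lets us realize a prescribed type by amalgamating, the hypothesis $\LST(\AECK)\leq\lambda$ keeps every $N_\alpha$ in $\AECK_\lambda$, and the AEC axioms (Definition~\ref{AEC-def}(3),(4)) handle the limit stages and guarantee that the union satisfies $N\in\AECK_{\lambda^+}$ with $M = N_0 \lessK N$.

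The first key reduction is that it suffices to make $N$ \emph{full over each chain model} $N_\beta$. Indeed, suppose $A\lessK N$ with $A\in\AECK_\lambda$. Since $|A|=\lambda<\cf(\lambda^+)$ and the chain is continuous, every point of $A$ lies in some $N_\beta$, so by coherence (Definition~\ref{AEC-def}(5)) we get $A\leqK N_\beta$. Given any $p\in S(A)$, the amalgamation property shows, by amalgamating a model realizing $p$ with $N_\beta$ over $A$, that $p$ extends to some $q\in S(N_\beta)$; if $N$ is full over $N_\beta$ then $N$ realizes $q$, hence realizes $q\restriction A = p$. Thus fullness over every $N_\beta$ yields fullness over every $A\lessK N$, i.e.\ $N\in\AECK^\sat_{\lambda^+}$ (equivalently, by Fact~\ref{saturated=homogeneous}, that $N$ is homogeneous in $\lambda^+$ over $\lambda$, so that Corollary~\ref{homogeneous->saturated} applies).

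To make $N$ full over $N_\beta$, I would realize cofinally, by a bookkeeping function on $\lambda^+$, every type over every $N_\beta$. For this to be schedulable we need $|S(N_\beta)|\leq\lambda^+$: then the set of tasks $\Set{ (\beta,p) | \beta<\lambda^+,\ p\in S(N_\beta) }$ has size $\lambda^+$ and can be enumerated with cofinal repetitions, each stage $\alpha+1$ realizing one pending type $p\in S(N_\beta)$ (with $\beta\leq\alpha$) by fixing, via Lemma~\ref{get-ambient-lambda}, a model $C\in\AECK_\lambda$ realizing $p$ and amalgamating $C$ with $N_\alpha$ over $N_\beta$. Cofinal repetition then guarantees that every type over every $N_\beta$ is eventually realized below $N$.

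The main obstacle is precisely the bound $|S(N_\beta)|\leq\lambda^+$, i.e.\ almost stability, which must be extracted from the hypotheses of \emph{basic} almost stability together with density and amalgamation (and, by Lemma~\ref{count-algebraic}, it is enough to bound $|S^\na(N_\beta)|$). This is where the proof departs from the classical construction: in the presence of the extension property one would realize only basic types, whose number is bounded by $\lambda^+$, and propagate them upward; but the extension property is unavailable here, so density must take over its role. The plan is to show, for a fixed $A\in\AECK_\lambda$, that the $\lambda^+$-length union built by realizing basic types over intermediate models cofinally is \emph{$A$-universal}, namely that every $B\in\AECK_\lambda$ extending $A$ embeds into it over $A$; once this is established, Lemma~\ref{embedding-gives-full&AB}(2) immediately gives $|S(A)|\leq\lambda^+$. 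The intended route to $A$-universality is to resolve such a $B$ over $A$ into a chain of basic one-type extensions, obtained by repeated application of density, and to climb this resolution into the union, with the amalgamation property gluing successive steps and with the amalgamation-base reduction of Lemma~\ref{embedding-gives-full&AB}(3) reducing arbitrary submodels to the chain models. Verifying that density really does allow the resolution to be carried across the $\lambda$-sized steps, substituting for the missing extension property, is the delicate heart of the argument, and I expect this to be the step requiring the most care.
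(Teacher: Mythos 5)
Your outer skeleton (chain of length $\lambda^+$, bookkeeping, reduction of saturation to fullness/universality over the chain models via coherence and Fact~\ref{saturated=homogeneous}) is sound, and you correctly located the crux: proving, with density in place of the extension property, that the union obtained by realizing \emph{basic} types cofinally is universal over its $\lambda$-sized strong submodels. But the mechanism you sketch for that crux --- resolving $B$ over $A$ into a chain of one-point basic extensions and climbing the resolution \emph{forward} into the union $N$ --- has a genuine gap. At stage $i$ of the climb you hold an embedding $g_i : B_i \embeds N$ whose image is some $\lambda$-sized submodel of a chain model $A_\gamma$, and the next type you must realize is $g_i(\tp(b_i/B_i;B)) \in S^\bs(g_i[B_i])$: a basic type over the \emph{image} $g_i[B_i]$, not over $A_\gamma$. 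The bookkeeping schedules only basic types over the chain models themselves, and without the extension property a basic type over $g_i[B_i]$ cannot be transferred to a basic type over $A_\gamma$; so nothing guarantees it is ever realized in $N$. Moreover, even when some $c \in N$ does realize it, equality of Galois types only yields an amalgam of $B_{i+1}$ and a piece of $N$ in some \emph{extension} of $N$; extending $g_i$ so that it maps into $N$ itself is precisely an instance of the homogeneity you are trying to prove, so the climb is circular.

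The paper's proof resolves this by reversing the direction of the partial embeddings. It builds $g_\varepsilon : M_{\gamma_\varepsilon} \embeds D_\varepsilon$, mapping \emph{chain models into a growing chain of extensions} $D_\varepsilon$ of $D$ (growing the $D$-side needs only plain amalgamation, never homogeneity). Density is applied inside $D_\varepsilon$ relative to the image $g_\varepsilon[M_{\gamma_\varepsilon}]$, and the resulting basic type is pulled \emph{back} through the isomorphism $g_\varepsilon$, so it lands in $S^\bs(M_{\gamma_\varepsilon})$ --- i.e.\ in the scheduled list $p_{\gamma_\varepsilon,\beta}$ --- whence it is realized in a later chain model, and amalgamation extends the embedding so that its image strictly grows inside $D_\varepsilon$. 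Termination is forced not by exhausting $D$ in $\lambda$ steps but by contradiction with Theorem~\ref{image-equal-on-club}: a full $\lambda^+$-length run with strictly growing images would violate that club fact, so at some $\varepsilon<\lambda^+$ the image equals $D_\varepsilon \supseteq D$, and $g_\varepsilon^{-1}\restriction D$ is the desired embedding of $D$ into $N$ over $M_\alpha$. Note also that once this universality is established, the basic-type union is \emph{already} saturated by Corollary~\ref{homogeneous->saturated}, so your two-phase plan (first derive almost stability, then run a second construction realizing all types over the chain models) is superfluous: the paper's single construction, realizing only basic types, suffices.
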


\begin{proof}
Begin by fixing a bookkeeping function (bijection) $\psi : \lambda^+ \times \lambda^+ \leftrightarrow \lambda^+$
such that $\psi(\gamma,\beta) \geq\gamma$ for all $\gamma,\beta<\lambda^+$.
Let $M \in \AECK_\lambda$ be given.

We shall build, recursively over $\alpha\leq\lambda^+$,
sequences
\[
\Sequence{ M_\alpha | \alpha\leq\lambda^+ } \text{ and }
\Sequence{ \Sequence{ p_{\alpha,\beta} | \beta<\lambda^+ } | \alpha<\lambda^+ } 
\]
such that:
\begin{enumerate}
\item $\left< M_\alpha \mid \alpha\leq\lambda^+ \right>$ 
is a $\lessK$-increasing, continuous sequences of models,
with $M_0 = M$;
\setcounter{condition}{\value{enumi}}
\end{enumerate}
and satisfying the following for every $\alpha <\lambda^+$:
\begin{enumerate}
\setcounter{enumi}{\value{condition}}
\item $M_\alpha \in \AECK_\lambda$; 
\item $\Set{ p_{\alpha,\beta} | \beta<\lambda^+ } = S^\bs(M_\alpha)$;
\item $M_{\alpha+1}$ realizes $p_{\gamma,\beta}$, where $(\gamma,\beta) = \psi^{-1}(\alpha)$.
\end{enumerate}

We carry out the recursive construction as follows:
\begin{itemize}
\item For $\alpha=0$:
Fix $M_0 := M$.

\item For a nonzero limit ordinal $\alpha\leq\lambda^+$:
Define
\[
M_\alpha := \bigcup_{\eta<\alpha} M_\eta.
\]

\item For a successor ordinal $\alpha+1 < \lambda^+$, 
where 
$\Sequence{ M_\eta | \eta\leq\alpha }$ and 
$\langle \langle p_{\eta,\beta} \mid\allowbreak \beta<\lambda^+ \rangle \mid\allowbreak \eta<\alpha \rangle$ 
have already been constructed:

By basic almost stability, we have $\left| S^\bs(M_\alpha) \right| \leq\lambda^+$.
By density and no maximal model (see Lemma~\ref{basic-nonempty}),
we have $S^\bs(M_\alpha) \neq\emptyset$.
Thus we can enumerate $S^\bs(M_\alpha)$ as $\Set{ p_{\alpha,\beta} | \beta<\lambda^+ }$
(possibly with repetition).

Now, fix $(\gamma,\beta) := \psi^{-1}(\alpha)$.
We have $\gamma\leq\alpha$, so that $p_{\gamma,\beta} \in S^\bs(M_\gamma)$ is defined.
By $\LST(\AECK)\leq\lambda$ and Lemma~\ref{get-ambient-lambda},
we fix some $B \in \AECK_\lambda$ that realizes $p_{\gamma,\beta}$.
In particular, $M_\gamma \lessK B$.
As $\AECK_\lambda$ satisfies the amalgamation property, 
we can fix an amalgamation $(f, \id_{M_\alpha}, M_{\alpha+1})$ of $B$ and $M_\alpha$ over $M_\gamma$.
Then $M_{\alpha+1}$ also realizes%
\footnote{It is possible that $p_{\gamma,\beta}$ is already realized in $M_\eta$ for some $\eta\leq\alpha$,
but this does not matter.}
$p_{\gamma,\beta}$ (see Lemma~\ref{embedding-realizes-type}).
We have $M_\alpha \leqK M_{\alpha+1}$, but as $\AECK_\lambda$ has no maximal model,
we may expand $M_{\alpha+1}$ to ensure that $M_\alpha \lessK M_{\alpha+1}$.
\end{itemize}
This completes the recursive construction.

Write $N := M_{\lambda^+}$, so that
$N = \bigcup_{\alpha<\lambda^+} M_\alpha \in \AECK_{\lambda^+}$,
and in particular, $M \lessK N$.
It remains to show that $N \in \AECK^\sat_{\lambda^+}$.

\begin{claim}
For every $\alpha<\lambda^+$ and every $D \in \AECK_\lambda$ satisfying $M_\alpha \leqK D$,
%\todo{$M_\alpha \lessK D$ or $M_\alpha \leqK D$?}
there exists some embedding $h : D \embeds N$ such that $h \restriction M_\alpha = \id_{M_\alpha}$.
\end{claim}

\begin{proof}
Consider arbitrary $\alpha<\lambda^+$ and $D \in \AECK_\lambda$ satisfying $M_\alpha \leqK D$.
We shall attempt to build, %\todo{Will we always succeed?}
recursively over $\varepsilon\leq\lambda^+$,
a sequence %\todo{$\varepsilon\leq\lambda^+$ or $\varepsilon<\lambda^+$?}
\[
\Sequence{ (\gamma_\varepsilon, D_\varepsilon, g_\varepsilon) | \varepsilon\leq\lambda^+ }
\]
such that:
\begin{enumerate}
\item $\langle \gamma_\varepsilon \mid \varepsilon\leq\lambda^+ \rangle$ 
is a strictly increasing, continuous sequence of ordinals
$\leq\lambda^+$, with $\gamma_0 = \alpha$;

\item $\left< D_\varepsilon \mid \varepsilon\leq\lambda^+ \right>$ 
is a $\leqK$-increasing, continuous sequence of models,
with $D_0 = D$;

\item $\langle g_\varepsilon : M_{\gamma_\varepsilon} \embeds D_\varepsilon \mid\allowbreak \varepsilon\leq\lambda^+ \rangle$
is a $\subseteq$-increasing, continuous sequence of embeddings,
with $g_0 = \id_{M_\alpha}$;
\setcounter{condition}{\value{enumi}}
\end{enumerate}
and satisfying the following for all $\varepsilon <\lambda^+$:
\begin{enumerate}
\setcounter{enumi}{\value{condition}}
\item $\gamma_\varepsilon <\lambda^+$;
\item $D_\varepsilon \in \AECK_\lambda$; % if $\varepsilon<\lambda^+$; % 
and
\item $g_\varepsilon[M_{\gamma_\varepsilon}] \neq g_{\varepsilon+1}[M_{\gamma_{\varepsilon+1}}] \cap D_\varepsilon$.
\end{enumerate}

We carry out the recursive construction as follows:
\begin{itemize}
\item For $\varepsilon=0$:
Set $\gamma_0 = \alpha$, $D_0 = D$, and $g_0 = \id_{M_\alpha}$.

\item For a nonzero limit ordinal $\varepsilon\leq\lambda^+$:
Define
\[
\gamma_\varepsilon := \sup_{\eta<\varepsilon} \gamma_\eta; \quad
D_\varepsilon := \bigcup_{\eta<\varepsilon} D_\eta; \quad
g_\varepsilon := \bigcup_{\eta<\varepsilon} g_\eta.
\]

\item For a successor ordinal $\varepsilon+1 < \lambda^+$,
where $(\gamma_\varepsilon, D_\varepsilon, g_\varepsilon)$
has already been constructed,
we consider two cases:
\begin{itemize}
\item Suppose $g_\varepsilon[M_{\gamma_\varepsilon}] = D_\varepsilon$.
That is, $g_\varepsilon : M_{\gamma_\varepsilon} \cong D_\varepsilon$  is an isomorphism,
so that %In this case, 
$h := g_\varepsilon^{-1} \restriction D$ is an embedding from $D$ into $N$
such that $h \restriction M_\alpha = g_\varepsilon^{-1} \restriction M_\alpha = g_0^{-1} \restriction M_\alpha = \id_{M_\alpha}$.
Thus the Claim is proven in this case, and we terminate the recursive construction here.

\item
Otherwise, $g_\varepsilon[M_{\gamma_\varepsilon}] \lessK D_\varepsilon$.
By the density property, we can fix some $d \in D_\varepsilon \setminus g_\varepsilon[M_{\gamma_\varepsilon}]$ such that 
$\tp(d/g_\varepsilon[M_{\gamma_\varepsilon}]; D_\varepsilon) \in S^\bs(g_\varepsilon[M_{\gamma_\varepsilon}])$.
Fix a model $B \in \AECK_\lambda$ with $M_{\gamma_\varepsilon} \lessK B$ together with an isomorphism $\varphi : B \cong D_\varepsilon$ extending $g_\varepsilon$.
Let $b := \varphi^{-1}(d)$, so that $b \in B$.
Then by invariance of $S^\bs$ under the isomorphism $\varphi^{-1} : D_\varepsilon \cong B$,
it follows that 
\[
\tp(b/M_{\gamma_\varepsilon}; B) = 
\tp(\varphi^{-1}(d)/\varphi^{-1}[g_\varepsilon[M_{\gamma_\varepsilon}]]; B) 
\in S^\bs(\varphi^{-1}[g_\varepsilon[M_{\gamma_\varepsilon}]]) = S^\bs(M_{\gamma_\varepsilon}).
\]
Thus we can fix some $\beta<\lambda^+$ such that $\tp(b/M_{\gamma_\varepsilon}; B) = p_{\gamma_\varepsilon, \beta}$.
Let $\gamma_{\varepsilon+1} := \psi(\gamma_\varepsilon, \beta)+1$.
Then $\gamma_\varepsilon < \gamma_{\varepsilon+1} <\lambda^+$ and
$M_{\gamma_{\varepsilon+1}}$ realizes $p_{\gamma_\varepsilon, \beta}$.
Fix $x \in M_{\gamma_{\varepsilon+1}}$ such that 
$\tp(x/M_{\gamma_\varepsilon}; M_{\gamma_{\varepsilon+1}}) = p_{\gamma_\varepsilon, \beta}$.
Then $\tp(b/M_{\gamma_\varepsilon}; B) = \tp(x/M_{\gamma_\varepsilon}; M_{\gamma_{\varepsilon+1}})$.
Thus by the amalgamation property (so that types are ``best-behaved''),
we can fix a $\lambda$-amalgamation 
$(\id_B, f, C)$ of $B$ and $M_{\gamma_{\varepsilon+1}}$ over $M_{\gamma_\varepsilon}$
such that $f(x) = b$.
In particular, $f : M_{\gamma_{\varepsilon+1}} \embeds C$ is an embedding satisfying 
$f \restriction M_{\gamma_\varepsilon} = \id_{M_{\gamma_\varepsilon}}$.
Then, as $B \leqK C$ and $\varphi : B \cong D_\varepsilon$ is an isomorphism,
we fix some model $D_{\varepsilon+1} \in \AECK_\lambda$ such that $D_\varepsilon \leqK D_{\varepsilon+1}$
together with an isomorphism $\varphi^+ : C \cong D_{\varepsilon+1}$ extending $\varphi$.
Then $g_\varepsilon \subseteq \varphi \subseteq \varphi^+$. %, and also
%$f \restriction M_{\gamma_\varepsilon} = \id_{M_{\gamma_\varepsilon}}$.
Let $g_{\varepsilon+1} := \varphi^+ \circ f$.
Then $g_{\varepsilon+1} : M_{\gamma_{\varepsilon+1}} \embeds D_{\varepsilon+1}$ is an embedding
satisfying 
$g_{\varepsilon+1} \restriction M_{\gamma_\varepsilon} = \varphi^+ \circ f \restriction M_{\gamma_\varepsilon} = \varphi^+ \restriction M_{\gamma_\varepsilon} = g_\varepsilon$.
Furthermore, $g_{\varepsilon+1}(x) = \varphi^+(f(x)) = \varphi^+(b) = \varphi(b) = d$,
so that $d \in g_{\varepsilon+1}[M_{\gamma_{\varepsilon+1}}] \cap D_\varepsilon \setminus g_\varepsilon[M_{\gamma_\varepsilon}]$,
so that all the properties of the recursive construction are satisfied at stage $\varepsilon+1$ in this case.
\end{itemize}
\end{itemize}
%This completes the recursive construction.

If we have managed to complete the recursive construction,
then the sequence 
\[
\Sequence{ \left( M_{\gamma_\varepsilon}, D_\varepsilon, g_\varepsilon \right) | \varepsilon\leq\lambda^+ }
\]
violates the statement of Theorem~\ref{image-equal-on-club}.
This contradiction shows that we must have halted the recursive construction at some $\varepsilon<\lambda^+$,
and thus the Claim is proven.
\end{proof}

\begin{claim}
$N$ is homogeneous in $\lambda^+$ over $\lambda$.
\end{claim}

\begin{proof}
Consider arbitrary $A \in \AECK_\lambda$ satisfying $A \lessK N$.
As $A \in [N]^\lambda$ and $\langle M_\alpha \mid\allowbreak \alpha<\lambda^+ \rangle$ is a representation of $N$,
we can fix some $\alpha<\lambda^+$ such that every point of $A$ is in $M_\alpha$.
Since $A \lessK N$ and $M_\alpha \lessK N$,
it follows that
$A$ is necessarily a submodel of $M_\alpha$, so that the coherence property of the AEC 
(Definition~\ref{AEC-def}(5))
%\cite[Definition~4.1(A4)]{Baldwin-Categoricity}, \cite[Definition~1.0.3(1)(e)]{JrSh:875}
guarantees that in fact $A \leqK M_\alpha$.

By the amalgamation property in $\AECK_\lambda$, 
we have in particular (Fact~\ref{AP=AB}) that $A$ is an amalgamation base in $\AECK_\lambda$,
so that by the previous Claim together with Lemma~\ref{embedding-gives-full&AB}(3)
we obtain that for every 
$B \in \AECK_\lambda$ satisfying $A \leqK B$,
there exists some embedding $f : B \embeds N$ such that $f \restriction A = \id_{A}$.

As $N \in \AECK_{\lambda^+}$,
this shows that $N$ is homogeneous in $\lambda^+$ over $\lambda$.
\end{proof}

Thus, by $\LST(\AECK) \leq\lambda$ and Corollary~\ref{homogeneous->saturated},
$N \in \AECK^\sat_{\lambda^+}$, 
%$N$ is saturated in $\lambda^+$ over $\lambda$,
as sought.
\end{proof}

\begin{corollary}\label{equiv-basic-AS}
Suppose $\mathfrak s = (\AECK, {\dnf}, S^\bs)$ is a pre-$\lambda$-frame
satisfying 
amalgamation, no maximal model, and density.
Then the following are equivalent:
\begin{enumerate}
\item $\mathfrak s$ satisfies basic almost stability;
\item $\AECK_\lambda$ satisfies almost stability;
\item $\left|S(M)\right| \leq\lambda^+$ for every $M \in \AECK_\lambda$;
\item
For every $M \in \AECK_\lambda$ there exists some $N \in \AECK^\sat_{\lambda^+}$ such that
$M \lessK N$.  
\end{enumerate}
\end{corollary}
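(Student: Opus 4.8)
The plan is to prove the four conditions equivalent by establishing the cycle $(1) \implies (4) \implies (3) \implies (2) \implies (1)$. Each arrow simply chains together a result already available in the excerpt, so essentially all the work consists of recognizing which earlier statement to invoke; the only step carrying real mathematical content is $(1) \implies (4)$, whose substance has already been isolated in Theorem~\ref{get-saturated}.

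For $(1) \implies (4)$, I would observe that the standing hypotheses of this Corollary (amalgamation, no maximal model, and density), together with basic almost stability---which is exactly condition~(1)---are precisely the four hypotheses of Theorem~\ref{get-saturated}. That Theorem then delivers, for every $M \in \AECK_\lambda$, some $N \in \AECK^\sat_{\lambda^+}$ with $M \lessK N$, which is condition~(4).

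For $(4) \implies (3)$, fix an arbitrary $M \in \AECK_\lambda$ and use~(4) to obtain a model $N \in \AECK^\sat_{\lambda^+}$ with $M \lessK N$. By the definition of saturation in $\lambda^+$ over $\lambda$, the model $N$ is full over $M$; since $\left|N\right| = \lambda^+$, Fact~\ref{full-gives-bounded} yields $\left|S(M)\right| \leq \lambda^+$. As $M$ was arbitrary, this is condition~(3).

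For $(3) \implies (2)$, I would apply the ``In particular'' clause of Lemma~\ref{count-algebraic} with $\mu = \lambda^+$: for each $M \in \AECK_\lambda$ it gives $\left|S(M)\right| \leq \lambda^+ \iff \left|S^\na(M)\right| \leq \lambda^+$, so~(3) is just the type-counting reformulation of almost stability of $\AECK_\lambda$, namely~(2). Finally, $(2) \implies (1)$ is immediate from the pre-$\lambda$-frame axiom $S^\bs(A) \subseteq S^\na(A)$ (Definition~\ref{def-pre-frame}(2)): the bound $\left|S^\na(A)\right| \leq \lambda^+$ forces $\left|S^\bs(A)\right| \leq \lambda^+$ for every $A \in \AECK_\lambda$, which is basic almost stability. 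The main obstacle is therefore entirely absorbed into the citation of Theorem~\ref{get-saturated}, and the remaining three implications are routine applications of the saturation definition, Fact~\ref{full-gives-bounded}, and Lemma~\ref{count-algebraic}.
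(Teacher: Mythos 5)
Your proposal is correct and matches the paper's own proof essentially step for step: the same cycle of implications, with $(1)\implies(4)$ delegated to Theorem~\ref{get-saturated} and $(4)\implies(3)$ obtained from saturation (fullness) plus Fact~\ref{full-gives-bounded}. The only cosmetic difference is that for $(3)\implies(2)\implies(1)$ the paper simply cites the inclusions $S^\bs(A) \subseteq S^\na(A) \subseteq S(A)$, whereas you invoke Lemma~\ref{count-algebraic} for one of these steps---a slightly heavier tool for the same trivial fact.
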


\begin{proof}\hfill
\begin{description}
\item[$(3) \implies (2) \implies (1)$]
$S^\bs(A) \subseteq S^\na(A) \subseteq S(A)$ for every $A \in \AECK_\lambda$.

\item[$(1) \implies (4)$]
This is Theorem~\ref{get-saturated}.

\item[$(4) \implies (3)$]
Consider arbitrary $M \in \AECK_\lambda$.
By~(4), fix $N \in \AECK^\sat_{\lambda^+}$ such that $M \lessK N$.  
In particular, $N$ is full over $M$.
Thus, by Fact~\ref{full-gives-bounded},
$\left|S(M)\right| \leq \left|N\right| \leq\lambda^+$.
\qedhere
\end{description}
\end{proof}

\begin{corollary}
Suppose $\lambda\geq\LST(\AECK)$ is such that $\AECK_\lambda$ satisfies amalgamation and no maximal model.
If $\mathfrak s = (\AECK, {\dnf}, S^\bs)$ is a pre-$\lambda$-frame
satisfying density and basic almost stability,
then every pre-$\lambda$-frame that satisfies density must also satisfy basic almost stability.
\end{corollary}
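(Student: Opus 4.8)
The plan is to reduce everything to the previous Corollary (Corollary~\ref{equiv-basic-AS}), exploiting the fact that among its equivalent conditions, some are purely properties of the underlying AEC and make no mention of the frame's non-forking relation or basic types. The crucial observation is that \emph{almost stability of $\AECK_\lambda$} (condition~(2) there, equivalently the bound $\left|S(M)\right| \leq \lambda^+$ of condition~(3)) is frame-independent, whereas \emph{basic almost stability} (condition~(1)) depends on $S^\bs$. Corollary~\ref{equiv-basic-AS} is precisely the bridge that ties these together for any frame meeting its hypotheses.

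First I would apply Corollary~\ref{equiv-basic-AS} to the given pre-$\lambda$-frame $\mathfrak s$. Its hypotheses are met: by assumption $\AECK_\lambda$ satisfies amalgamation and no maximal model, and $\mathfrak s$ satisfies density. Since $\mathfrak s$ also satisfies basic almost stability---condition~(1) of that Corollary---the equivalence yields condition~(2), namely that $\AECK_\lambda$ satisfies almost stability. This conclusion is a statement about the AEC $\AECK$ alone; it refers neither to $\dnf$ nor to $S^\bs$, so once obtained it holds irrespective of the frame structure placed on $\AECK$.

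Next, let $\mathfrak t = (\AECK, \dnf', S'^\bs)$ be an arbitrary pre-$\lambda$-frame on the same $\AECK$ satisfying density. Because $\mathfrak t$ shares the underlying AEC, the class $\AECK_\lambda$ still satisfies amalgamation and no maximal model, and $\mathfrak t$ satisfies density by hypothesis; thus the hypotheses of Corollary~\ref{equiv-basic-AS} are again met, now for $\mathfrak t$. As we have already verified almost stability of $\AECK_\lambda$ (condition~(2), established via $\mathfrak s$), the equivalence now runs in the reverse direction to deliver condition~(1) for $\mathfrak t$: that is, $\mathfrak t$ satisfies basic almost stability, which is exactly the desired conclusion.

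I expect essentially no obstacle here, since all the substantive work is already packaged in Corollary~\ref{equiv-basic-AS} (and, behind it, in Theorem~\ref{get-saturated}). The only point demanding care is the conceptual one: recognizing that among the equivalent conditions, the ``bridge'' property---almost stability of $\AECK_\lambda$---is intrinsic to the AEC and can therefore be transported from the frame $\mathfrak s$ to any other frame $\mathfrak t$ on the same $\AECK$. Once this is noticed, the proof is a short two-step application of the equivalence, first forward (from $\mathfrak s$) and then backward (to $\mathfrak t$).
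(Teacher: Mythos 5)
Your proposal is correct and is exactly the paper's argument: the paper's proof reads, in its entirety, ``By Corollary~\ref{equiv-basic-AS}, since Clause~(2) there depends only on the class $\AECK_\lambda$ and not on the pre-$\lambda$-frame,'' which is precisely your two-step application (forward from $\mathfrak s$ to obtain almost stability of $\AECK_\lambda$, then backward to an arbitrary density-satisfying frame $\mathfrak t$). You have simply written out in full the frame-independence observation that the paper states in one line.
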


\begin{proof}
By Corollary~\ref{equiv-basic-AS}, since Clause~(2) there depends only on the class $\AECK_\lambda$ and not on the pre-$\lambda$-frame.
\end{proof}

\begin{corollary}
Suppose $\lambda\geq\LST(\AECK)$ is such that $\AECK_\lambda$ satisfies the amalgamation property.
Then the following are equivalent:
\todo{Maybe this should be in Section \ref{section:trivial+*domination} as it relies on results there.}
\begin{enumerate}
\item $\AECK_\lambda$ satisfies no maximal model and almost stability;
\item
For every $M \in \AECK_\lambda$ there exists some $N \in \AECK^\sat_{\lambda^+}$ such that
$M \lessK N$.  
\end{enumerate}
\end{corollary}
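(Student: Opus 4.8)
The plan is to reduce the equivalence to the already-established Corollary~\ref{equiv-basic-AS} for one direction and to argue the other direction directly. The key observation is that, although the present statement makes no mention of a non-forking frame, we are free to manufacture one: by Examples~\ref{examples-pre-frames}(2), the type-full triple $\mathfrak{s} = (\AECK, \emptyset, S^\na \restriction \AECK_\lambda)$ is a pre-$\lambda$-frame satisfying density, and for this particular $\mathfrak{s}$ the notion of \emph{basic} almost stability coincides verbatim with almost stability of $\AECK_\lambda$, since $S^\bs = S^\na \restriction \AECK_\lambda$.

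For the direction $(1) \implies (2)$, I would first dispose of the degenerate case $\AECK_\lambda = \emptyset$, in which both clauses hold vacuously. Assuming $\AECK_\lambda \neq \emptyset$, I would form the frame $\mathfrak{s}$ above. The hypotheses of Corollary~\ref{equiv-basic-AS} --- amalgamation, no maximal model, and density --- then all hold: amalgamation and no maximal model are exactly what clause~(1) and the standing hypothesis provide, and density comes for free from Examples~\ref{examples-pre-frames}(2). Since clause~(1) also gives almost stability of $\AECK_\lambda$, the frame $\mathfrak{s}$ satisfies basic almost stability, so condition~(1) of Corollary~\ref{equiv-basic-AS} is met. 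Its condition~(4) then yields, for every $M \in \AECK_\lambda$, a saturated $N \in \AECK^\sat_{\lambda^+}$ with $M \lessK N$, which is precisely clause~(2) here.

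For the direction $(2) \implies (1)$, I would argue directly without invoking a frame. To get no maximal model, given $M \in \AECK_\lambda$ I would use clause~(2) to fix $N \in \AECK^\sat_{\lambda^+}$ with $M \lessK N$; since $|N| = \lambda^+ > \lambda$, pick $a \in N \setminus M$ and apply $\LST(\AECK) \leq \lambda$ together with Remark~\ref{LST-remark} (with $Z = M \cup \{a\}$) to produce $M' \in \AECK_\lambda$ with $M \cup \{a\} \subseteq M' \leqK N$; coherence (Definition~\ref{AEC-def}(5)) gives $M \leqK M'$, and $a \in M' \setminus M$ forces $M \lessK M'$. To get almost stability, again fix $N \in \AECK^\sat_{\lambda^+}$ with $M \lessK N$; by definition of saturation $N$ is full over $M$, so Fact~\ref{full-gives-bounded} gives $|S(M)| \leq |N| = \lambda^+$, whence $|S^\na(M)| \leq \lambda^+$ by Lemma~\ref{count-algebraic}.

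The steps are all routine; the only genuine subtlety is the bookkeeping around the degenerate case $\AECK_\lambda = \emptyset$ (where the type-full frame cannot be formed, but both clauses are vacuous) and the realization that clause~(1)'s ``no maximal model'' supplies a hypothesis of Corollary~\ref{equiv-basic-AS} rather than being a consequence of it. The main conceptual move --- recognizing that the frame-free statement can be routed through the frame-based Corollary~\ref{equiv-basic-AS} by choosing the type-full frame --- is what makes the proof short, while the converse direction needs only the defining property of saturation plus \LST.
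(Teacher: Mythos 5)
Your proof is correct and follows essentially the same route as the paper's: dispose of the degenerate case $\AECK_\lambda = \emptyset$, manufacture a type-full pre-$\lambda$-frame satisfying density so that Corollary~\ref{equiv-basic-AS} applies for $(1)\implies(2)$, and prove no maximal model directly via $\LST(\AECK)\leq\lambda$ and coherence for $(2)\implies(1)$. The only deviations are cosmetic: the paper uses the trivial $\lambda$-frame (Definition~\ref{trivial-frame-def}) rather than the frame of Examples~\ref{examples-pre-frames}(2), and it obtains almost stability in the converse by citing Corollary~\ref{equiv-basic-AS}($(4)\implies(2)$) where you inline the fullness argument via Fact~\ref{full-gives-bounded}, which is precisely that corollary's own proof of $(4)\implies(3)\implies(2)$.
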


\begin{proof}
If $\AECK_\lambda = \emptyset$, then the equivalence is vacuously true.
Thus we assume $\AECK_\lambda \neq \emptyset$,
and let $\mathfrak s = (\AECK, {\dnf}, S^\bs)$ be the trivial $\lambda$-frame of $\AECK$ (see Definition~\ref{trivial-frame-def}).
Then by Fact~\ref{properties-of-trivial}, $\mathfrak s$ is a pre-$\lambda$-frame satisfying the density property.
\begin{description}
\item[$(1) \implies (2)$]
This implication now follows from Corollary~\ref{equiv-basic-AS}($(2) \implies (4)$).

\item[$(2) \implies (1)$]
First, consider arbitrary $M \in \AECK_\lambda$, and we will show that it is not maximal.
By~(2), fix $N \in \AECK_{\lambda^+}$ such that $M \lessK N$.  
Choose some point $b \in N \setminus M$.
Then we apply $\LST(\AECK) \leq\lambda$ and Remark~\ref{LST-remark} to fix
$B \in \AECK_\lambda$ containing $b$ and every point of $M$, and such that $M \lessK B \lessK N$,
where clearly $b \in B \setminus M$.  

Hence there is no maximal model in $\AECK_\lambda$.
Almost stability then follows from Corollary~\ref{equiv-basic-AS}($(4) \implies (2)$).
\qedhere
\end{description}
\end{proof}

\section{Uniqueness triples}
\label{section:uniqueness}

%CONSIDER DEFINING STRONG uniqueness and WEAK uniqueness!
%Strengthen the definition by saying there aren't two different extending types!

\begin{notation}
%\begin{enumerate}
%\item
%Given any pre-$\lambda$-frame $\mathfrak s = (\AECK, {\dnf}, S^\bs)$,
%let $\AECK^{3,\bs}$ denote the class of \emph{basic triples}, that is,
%the class of triples $(A, B, b)$ where $A, B \in \AECK_\lambda$, $A \lessK B$, $b \in B \setminus A$, and $\tp(b/A; B) \in S^\bs(A)$.
%\todo{Move this notation ($\AECK^{3,\bs}$) somewhere more general because we use it in the statement of the density theorem.}
%\item
Throughout this section, we
suppose that $\mathfrak s = (\AECK, {\dnf}, S^\bs)$ is a pre-$\lambda$-frame satisfying the amalgamation property (in $\lambda$),
so that Galois-types are ``best-behaved'' \cite[p.~64]{Baldwin-Categoricity} and equivalence of $\lambda$-amalgamations is well-defined (see Section~\ref{section:equivalence}).
%\end{enumerate}
\end{notation}

%Recall the concept of \emph{equivalent $\lambda$-amalgamations} from Section~\ref{section:equivalence}.

We begin this section with the following Lemma, showing that an extension of type can always be amalgamated with the given ambient model that realizes the restricted type:

\begin{lemma}\label{amalg-ext-with-ambient}
Suppose that 
$A, B, C \in \AECK_\lambda$ are such that $A \leqK B$ and $A \leqK C$,
$b \in B$, and $q \in S(C)$.
If $q \restriction A = \tp(b/A; B)$, then there exists a $\lambda$-amalgamation
$(f^F_B, \id_C, F)$ of $B$ and $C$ over $A$ such that $q = \tp(f^F_B(b)/C; F)$.
\end{lemma}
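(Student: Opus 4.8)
The plan is to realize the target type $q$ concretely, observe that its realizing element has the same type over $A$ as $b$ does, and then use the amalgamation property to glue $B$ onto the ambient model realizing $q$ in such a way that $b$ is sent precisely onto that realizing element. The key technical tools are Lemma~\ref{get-ambient-lambda} (to stay inside $\AECK_\lambda$), the best-behavedness of Galois-types under the amalgamation property, and the normalization afforded by Fact~\ref{get-amalg-with-id}.

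First I would apply Lemma~\ref{get-ambient-lambda} to fix some $C' \in \AECK_\lambda$ with $C \leqK C'$ and an element $c' \in C'$ realizing $q$, so that $q = \tp(c'/C; C')$. Restricting to $A$ and invoking the hypothesis $q \restriction A = \tp(b/A; B)$ gives $\tp(c'/A; C') = \tp(b/A; B)$. Thus $b \in B$ and $c' \in C'$ have the same Galois-type over $A$. Since $\AECK_\lambda$ satisfies the amalgamation property (so that Galois-types are best-behaved), this equality of types furnishes a $\lambda$-amalgamation $(h_B, h_{C'}, G)$ of $B$ and $C'$ over $A$, with $G \in \AECK_\lambda$, such that $h_B(b) = h_{C'}(c')$.

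Next I would normalize this amalgamation. By Fact~\ref{get-amalg-with-id}, $(h_B, h_{C'}, G)$ is isomorphic to an amalgamation of the form $(f^F_B, \id_{C'}, F)$ of $B$ and $C'$ over $A$, with $F \in \AECK_\lambda$; let $\varphi : G \cong F$ be the witnessing isomorphism, so that $f^F_B = \varphi \circ h_B$ and $\id_{C'} = \varphi \circ h_{C'}$. Tracing the identification through $\varphi$ yields $f^F_B(b) = \varphi(h_B(b)) = \varphi(h_{C'}(c')) = (\varphi \circ h_{C'})(c') = \id_{C'}(c') = c'$, and in particular $f^F_B \restriction A = \id_A$ since the amalgamation is over $A$ and $A \leqK C' \leqK F$.

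Finally, since $C \leqK C' \leqK F$, the triple $(f^F_B, \id_C, F)$ is a $\lambda$-amalgamation of $B$ and $C$ over $A$. By monotonicity of Galois-types under enlargement of the ambient model, $\tp(f^F_B(b)/C; F) = \tp(c'/C; F) = \tp(c'/C; C') = q$, which is exactly what is required. I expect the main obstacle to be purely the bookkeeping in the middle step: correctly extracting, from the equality of Galois-types over $A$, a single amalgamation that identifies $b$ with $c'$ (this is where best-behavedness is essential), and then checking that the $\id_{C'}$-normalization of Fact~\ref{get-amalg-with-id} still carries $b$ onto $c'$ rather than onto some other element.
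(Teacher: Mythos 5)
Your proposal is correct and follows essentially the same route as the paper's proof: realize $q$ in a $\lambda$-sized ambient model, note that the realizing element and $b$ have the same Galois-type over $A$, use the amalgamation property to identify them in a common amalgam normalized so that the ambient model sits inside via the identity, and then restrict to $C$. The only cosmetic difference is that the paper obtains the normalized amalgamation $(f^F_B,\id_D,F)$ with $f^F_B(b)=d$ in one step from type equality, whereas you first take an arbitrary identifying amalgamation and then invoke Fact~\ref{get-amalg-with-id}, tracing $b$ through the isomorphism — the same argument with the bookkeeping made explicit.
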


\begin{proof}
Fix an ambient model $D \in \AECK_\lambda$ and $d \in D$ that realizes $q$ in $D$,
that is, such that $C \leqK D$ and $q = \tp(d/C; D)$.
Then $\tp(b/A; B) = q \restriction A = \tp(d/A; D)$.
Thus, since $\AECK_\lambda$ satisfies the amalgamation property,
we can fix a $\lambda$-amalgamation $(f^F_B, \id_D, F)$ of $B$ and $D$ over $A$ such that $f^F_B(b) = d$.
In particular, $F \in \AECK_\lambda$ and $D \leqK F$.
Then $(f^F_B, \id_C, F)$ is a $\lambda$-amalgamation of $B$ and $C$ over $A$,
and $q = \tp(d/C; D) = \tp(d/C; F) = \tp(f^F_B(b)/C; F)$,
as required.
\end{proof}

Our applications of Lemma~\ref{amalg-ext-with-ambient} will typically be %via isomorphisms, 
%and 
in conjunction with the extension property, as follows:
\todo{Is this the correct place for this result?}

\begin{corollary}\label{nonfork-amalg-extension}
\todo{Should this be a Lemma?}
Suppose that $(A, B, b) \in \AECK^{3,\bs}$
and $C \in \AECK_\lambda$ are such that $A \leqK C$.
Suppose that $\mathfrak s$ satisfies the extension property. % extension beyond $\tp(b/A; B)$.
%\todo{We are already assuming amalgamation throughout this section, but formally, do we even need it here?}

Then there exists a $\lambda$-amalgamation 
$(\id_{B}, f^D_{C}, D)$ 
of $B$ and $C$ over $A$ such that 
$\tp(b/f^D_{C}[C]; D)$ does not fork over $A$.
\end{corollary}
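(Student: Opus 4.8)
The plan is to combine the extension property with Lemma~\ref{amalg-ext-with-ambient} and then relabel the resulting amalgamation so that $B$ is embedded by the identity. First I would set $p := \tp(b/A; B)$; since $(A, B, b) \in \AECK^{3,\bs}$, we have $p \in S^\bs(A)$. Because $A \leqK C$, the extension property (Definition~\ref{frame-properties}(\ref{extension-def})) applied to $p$ and the pair $A \leqK C$ produces a type $q \in S^\bs(C)$ that extends $p$ (so that $q \restriction A = \tp(b/A; B)$) and does not fork over $A$.

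With this $q$ in hand, I would invoke Lemma~\ref{amalg-ext-with-ambient} with the given $A, B, C$, the element $b \in B$, and the type $q \in S(C)$; its hypothesis $q \restriction A = \tp(b/A; B)$ is exactly what has just been arranged. The Lemma yields a $\lambda$-amalgamation $(f^F_B, \id_C, F)$ of $B$ and $C$ over $A$ with $q = \tp(f^F_B(b)/C; F)$. Since $q$ does not fork over $A$, the type $\tp(f^F_B(b)/C; F)$ does not fork over $A$ either.

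The remaining task, and the one subtle point, is that Lemma~\ref{amalg-ext-with-ambient} embeds $C$ by the identity and moves $B$, whereas the Corollary asks for the opposite convention, with $B$ embedded by the identity. To reconcile these, I would appeal to Fact~\ref{get-amalg-with-id}, which furnishes an amalgamation $(\id_B, f^D_C, D)$ of $B$ and $C$ over $A$ isomorphic to $(f^F_B, \id_C, F)$, say via an isomorphism $\varphi : F \cong D$ satisfying $\varphi \circ f^F_B = \id_B$ and $f^D_C = \varphi \restriction C$. The key bookkeeping is then to track the distinguished objects under $\varphi$: one has $\varphi(f^F_B(b)) = b$ and $\varphi[C] = f^D_C[C]$, and, because the amalgamation is over $A$ (so that $f^F_B \restriction A = \id_A$), one also obtains $\varphi \restriction A = \id_A$, hence $\varphi[A] = A$.

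Finally I would transport the non-forking statement across $\varphi$. By invariance of the non-forking relation under isomorphisms (Definition~\ref{def-pre-frame}(4)(a))---legitimate here since all the models involved lie in $\AECK_\lambda$---and since $\tp(f^F_B(b)/C; F)$ does not fork over $A$, its $\varphi$-image $\tp(b/f^D_C[C]; D)$ does not fork over $\varphi[A] = A$. Thus $(\id_B, f^D_C, D)$ is the desired $\lambda$-amalgamation, completing the proof. I expect the only real care to be needed in the relabeling step, namely verifying that the isomorphism $\varphi$ fixes $A$ pointwise, so that the non-forking conclusion lands over $A$ itself rather than over some isomorphic copy.
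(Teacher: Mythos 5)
Your proposal is correct and follows essentially the same route as the paper's own proof: extension property to get $q \in S^\bs(C)$, then Lemma~\ref{amalg-ext-with-ambient} to produce $(f^F_B, \id_C, F)$, then a relabeling so that $B$ sits inside the amalgam via the identity, and finally invariance of non-forking under the relabeling isomorphism (your $\varphi$ is exactly the paper's $\psi^{-1}$, and your check that $\varphi \restriction A = \id_A$ matches the paper's verification). The only cosmetic difference is that you invoke Fact~\ref{get-amalg-with-id} for the relabeling step, whereas the paper carries out that construction explicitly by fixing $D \geqK B$ and an isomorphism $\psi : D \cong F$ extending $f^F_B$.
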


\begin{remark}
In the conclusion of Corollary~\ref{nonfork-amalg-extension},
the fact that $(\id_{B}, f^D_{C}, D)$ 
is an amalgamation
of $B$ and $C$ over $A$ implies that
$f^D_{C} \restriction A = \id_A$,
so that in fact $A = f^D_{C}[A] \leqK f^D_{C}[C]$ and
$\tp(b/f^D_{C}[C]; D) \restriction A = \tp(b/A; D) = \tp(b/A; B)$,
meaning that $\tp(b/f^D_{C}[C]; D)$ extends $\tp(b/A; B)$.
\end{remark}

\begin{proof}[Proof of Corollary~\ref{nonfork-amalg-extension}]
Write $p := \tp(b/A; B)$, so that $p \in S^\bs(A)$.
By the extension property,
%Since $\mathfrak s$ satisfies extension beyond $p$,
we can find some $q \in S^\bs(C)$ extending $p$ that does not fork over $A$.
%(If $A=C$, then simply take $q := p$.)
%\todo{But this needs the EXISTENCE PROPERTY, unless we reformulate Extension to include it!}
Then, by Lemma~\ref{amalg-ext-with-ambient}, we fix
a $\lambda$-amalgamation
$(f^F_B, \id_C, F)$ of $B$ and $C$ over $A$ such that $q = \tp(f^F_B(b)/C; F)$.

Considering the embedding $f^F_{B} : B \embeds F$,
we fix a model $D \in \AECK_\lambda$ with $B \leqK D$ 
and an isomorphism $\psi : D \cong F$
extending $f^F_{B}$.
As $C \leqK F$, define $f^D_C := \psi^{-1} \restriction C$,
so that $f^D_C : C \embeds D$ is an embedding.
As $f^F_{B} \restriction A = \id_{A}$,
it follows that $f^D_C \restriction A = \psi^{-1} \restriction A = \id_A$,
so that 
$(\id_{B}, f^D_C, D)$ is a $\lambda$-amalgamation of 
$B$ and $C$ over $A$
(that is, in fact, isomorphic to $(f^F_{B}, \id_{C}, F)$).
Furthermore,
since $q = \tp(f^F_{B}(b)/C; F)$
does not fork over $A$,
it follows by invariance of the non-forking relation under the isomorphism $\psi^{-1}$ that
$\tp(b/f^D_C[C]; D) = 
\tp(\psi^{-1}(f^F_{B}(b))/\psi^{-1}[C]; \psi^{-1}[F])$
does not fork over $\psi^{-1}[A] = A$,
as sought.
\end{proof}

We will need a variant of Corollary~\ref{nonfork-amalg-extension} involving isomorphisms:

\begin{corollary}\label{nonfork-isom-extension}
\todo{Should this be a Lemma?}
Suppose that $(A, B, b) \in \AECK^{3,\bs}$, 
$A^*, C^* \in \AECK_\lambda$ are such that $A^* \leqK C^*$,
and $\varphi : A^* \cong A$ is an isomorphism.
Suppose that $\mathfrak s$ satisfies the extension property. % extension beyond $\tp(b/A; B)$.

Then there exists a $\lambda$-amalgamation 
$(\id_{B}, f^D_{C^*}, D)$ 
of $B$ and $C^*$ over $(A^*, \varphi, \id_{A^*})$ such that 
$\tp(b/f^D_{C^*}[C^*]; D)$ does not fork over $A$.

%ALTERNATIVE VERSION, PROBABLY DON'T NEED:
%Then there exists a $\lambda$-amalgamation
%$(f^{D^*}_B, \id_{C^*}, D^*)$ 
%of $B$ and $C^*$ over $(A^*, \varphi, \id_{A^*})$ such that 
%$\tp(f^{D^*}_B(b)/C^*; D^*)$
%does not fork over $A^*$.
%\todo{Decide which version to keep!}
\end{corollary}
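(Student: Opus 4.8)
The plan is to reduce this to Corollary~\ref{nonfork-amalg-extension} by transporting $C^*$ across the isomorphism $\varphi$. The obstruction to invoking Corollary~\ref{nonfork-amalg-extension} directly is that there one has $A \leqK C$, whereas here the model $C^*$ sits over the isomorphic copy $A^*$ rather than over $A$ itself. So first I would manufacture a genuine $\leqK$-extension of $A$ that is an isomorphic image of $C^*$.

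Concretely, using the invariance of $\AECK$ under isomorphisms (Definition~\ref{AEC-def}(1)), I would fix a model $C \in \AECK_\lambda$ together with an isomorphism $\psi : C^* \cong C$ extending $\varphi$; that is, $\psi \restriction A^* = \varphi$, so that $\psi[A^*] = A$. (This is the same relabelling maneuver used in the proof of Lemma~\ref{univ-embedding}: keep the points of $A$ fixed via $\varphi$ and send the points of $C^* \setminus A^*$ to fresh points.) Since $A^* \leqK C^*$, Definition~\ref{AEC-def}(1)(b) then guarantees $\psi[A^*] \leqK C$, i.e.\ $A \leqK C$.

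Now I would apply Corollary~\ref{nonfork-amalg-extension} to the basic triple $(A, B, b)$ and the model $C$ (using the extension property, which is assumed), obtaining a $\lambda$-amalgamation $(\id_{B}, f^D_{C}, D)$ of $B$ and $C$ over $A$ such that $\tp(b/f^D_{C}[C]; D)$ does not fork over $A$. Recall that being an amalgamation over $A$ forces $f^D_{C} \restriction A = \id_A$. I would then set $f^D_{C^*} := f^D_{C} \circ \psi$, an embedding $C^* \embeds D$. For each $a^* \in A^*$ we have $f^D_{C^*}(a^*) = f^D_{C}(\varphi(a^*)) = \varphi(a^*)$, since $\varphi(a^*) \in A$ and $f^D_{C}$ fixes $A$ pointwise; hence $f^D_{C^*} \restriction A^* = \varphi$, which is precisely the amalgamation condition $\id_B \circ \varphi = f^D_{C^*} \circ \id_{A^*}$. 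Thus $(\id_{B}, f^D_{C^*}, D)$ is a $\lambda$-amalgamation of $B$ and $C^*$ over $(A^*, \varphi, \id_{A^*})$. Finally, since $\psi[C^*] = C$, we have $f^D_{C^*}[C^*] = f^D_{C}[C]$, so that $\tp(b/f^D_{C^*}[C^*]; D) = \tp(b/f^D_{C}[C]; D)$ does not fork over $A$, as required.

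The only real content is the first step --- producing the isomorphic copy $C$ of $C^*$ sitting properly above $A$ --- and even that is routine given the AEC axioms; everything else is bookkeeping of restrictions and images. The main point to be careful about is verifying that $f^D_{C^*}$ restricts to $\varphi$ (and not to the identity) on $A^*$, which is exactly where the condition $f^D_{C} \restriction A = \id_A$ coming from the ``over $A$'' amalgamation enters.
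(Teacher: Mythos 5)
Your proposal is correct and follows essentially the same route as the paper's own proof: fix an isomorphism $\psi : C^* \cong C$ extending $\varphi$ (the paper calls it $\varphi^+$), invoke Corollary~\ref{nonfork-amalg-extension} for $B$ and $C$ over $A$, and transport back via $f^D_{C^*} := f^D_C \circ \psi$, checking that $f^D_{C^*} \restriction A^* = \varphi$ and that $f^D_{C^*}[C^*] = f^D_C[C]$. The bookkeeping of restrictions and images matches the paper's verification step for step.
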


\begin{remark}
\todo{Refer to this remark where useful.}
In the conclusion of Corollary~\ref{nonfork-isom-extension},
the fact that $(\id_{B}, f^D_{C^*}, D)$ 
is an amalgamation
of $B$ and $C^*$ over $(A^*, \varphi, \id_{A^*})$ implies that
$\varphi \subseteq f^D_{C^*}$,
so that in fact $A = \varphi[A^*] = f^D_{C^*}[A^*] \leqK f^D_{C^*}[C^*]$ and
$\tp(b/f^D_{C^*}[C^*]; D) \restriction A = \tp(b/A; D) = \tp(b/A; B)$,
meaning that $\tp(b/f^D_{C^*}[C^*]; D)$ extends $\tp(b/A; B)$.
\end{remark}

\begin{proof}[Proof of Corollary~\ref{nonfork-isom-extension}]
Fix some isomorphism $\varphi^+ : C^* \cong C$ extending $\varphi$,
so that $C \in \AECK_\lambda$ and $A \leqK C$.
Then by Corollary~\ref{nonfork-amalg-extension}, we fix
a $\lambda$-amalgamation 
$(\id_{B}, f^D_{C}, D)$ 
of $B$ and $C$ over $A$ such that 
$\tp(b/f^D_{C}[C]; D)$ does not fork over $A$.
Define $f^D_{C^*} := f^D_C \circ \varphi^+$,
so that $f^D_{C^*} : C^* \embeds D$ is an embedding.
Since $f^D_C \restriction A = \id_A$ and $\varphi[A^*] = A$,
it follows that $f^D_{C^*} \restriction A^* = f^D_C \circ \varphi^+ \restriction A^* = f^D_C \circ \varphi = \varphi$,
showing that 
$(\id_{B}, f^D_{C^*}, D)$ 
is a $\lambda$-amalgamation 
of $B$ and $C^*$ over $(A^*, \varphi, \id_{A^*})$.
Furthermore, $f^D_{C^*}[C^*] = f^D_C \circ \varphi^+ [C^*] = f^D_C[C]$,
so that in fact 
$\tp(b/f^D_{C^*}[C^*]; D)$ does not fork over $A$,
as sought.
\end{proof}

Lemma~\ref{amalg-ext-with-ambient} and its corollaries prompt the following question:
%This prompts the question:  
Under what circumstances does a non-forking extension uniquely determine the $\lambda$-amalgamation
(up to equivalence of $\lambda$-amalgamations)?

\begin{definition}\label{non-uniqueness}
Suppose that %$\mathfrak s = (\AECK, {\dnf}, S^\bs)$ is a pre-$\lambda$-frame satisfying the amalgamation property,
$(A, B, b) \in \AECK^{3,\bs}$, and
$C \in \AECK_\lambda$ is such that $A \lessK C$.
%and $b \in B \setminus A$.
We say that \emph{$C$ witnesses the non-uniqueness of $(A, B, b)$} if 
there exist two $\lambda$-amalgamations
$(f^D_B, \id_C, D)$ and $(f^F_B, \id_C, F)$ 
of $B$ and $C$ over $A$ that are not equivalent over $A$ and such that 
$\tp(f^D_B(b)/C; D) = \tp(f^F_B(b)/C; F)$
and this type does not fork over $A$.
\end{definition}

\begin{definition}
\todo{Consider strong uniqueness and weak uniqueness,
where strong uniqueness means, in addition, that there aren't two different extending types.}
%Suppose that $\mathfrak s = (\AECK, {\dnf}, S^\bs)$ is a pre-$\lambda$-frame satisfying the amalgamation property.
A triple $(A, B, b) \in \AECK^{3,\bs}$ is a \emph{uniqueness triple} if there is no $C \in \AECK_\lambda$ witnessing the non-uniqueness
of $(A, B, b)$.
The class of uniqueness triples is denoted $\AECK^{3,\uq}$.
\end{definition}

From \cite[\S4.1]{JrSh:875}:  ``The element $b$ represents the extension $B$ over $A$.''
\todo{Should this refer to uniqueness triples or $*$-domination triples?}

\begin{remark}%\label{uniqueness-remarks}
%\item In the special case where $\mathfrak s$ is the trivial $\lambda$-frame of $\AECK$ (see Section~\ref{section:trivial}),
%the uniqueness triples are exactly the $*$-domination triples (see Section~\ref{section:domination});
%that is, $\AECK^{3,\uq} = \AECK^{3,\dom}_\lambda$.

%\item 
If $\mathfrak s$ satisfies the extension and uniqueness properties, 
%\todo{Compare Lemma \ref{nonfork-amalg-exists} and Remark \ref{remark}.}
then by Lemma~\ref{amalg-ext-with-ambient}, our definition of uniqueness triples is equivalent to the one given in~\cite[Definition~4.1.5]{JrSh:875}.
\end{remark}

Intuitively, we may think of $(A,B,b) \in \AECK^{3,\uq}$ as saying that $B = \cl(A \cup \{b\})$.
\todo{Explain this closure concept!
But there are uniqueness triples that are not literal closures; see \cite[Def.\ 2.2.5 and Prop.\ 2.2.6]{JrSh:875} and notes of 28 Shevat.
%Also there was an example involving equivalence classes where it's not the literal closure (see notes from Eilat).
See also 23 Shevat re closure not always existing.}
However, this intuition breaks down in some cases, as we see from the following example:

\begin{example}
Let the vocabulary $\tau$ consist of a single binary-relation symbol $E$.
Let $\AECK$ be the class of all $\tau$-models $M$ such that $E$ is interpreted in $M$ as an equivalence relation,
and let $\leqK$ be the submodel relation $\subseteq$.
It is clear that $\AECK$ is an AEC with $\LST(\AECK) = \aleph_0$
(in fact every subset is a submodel),
and that $\AECK$ satisfies DAP, JEP, and no maximal model.

Fix any infinite cardinal $\lambda$, 
and define a non-forking relation on $\AECK_\lambda$ as follows.
For $A, C, D \in \AECK_\lambda$ with $A \leqK C \lessK D$ and $d \in D \setminus C$,
we say $\tp(d/C; D)$ does not fork over $A$ provided that:
if there is some $c \in C$ such that $c \mathrel{E^D} d$,
then there is some $a \in A$ such that $a \mathrel{E^D} d$.
Define $S^\bs$ as in Example~\ref{examples-pre-frames}((3) or (4)).%
\footnote{The two definitions of $S^\bs$ coincide in this case!}
Then $(\AECK, {\dnf}, S^\bs)$ is a type-full pre-$\lambda$-frame
satisfying density, transitivity, existence, extension, uniqueness, continuity, local character, and symmetry.

Now, let $C := \{ a, b, c\}$ be a model, 
with the equivalence relation $E^C$ dividing $C$ into two equivalence classes, $\{a\}$ and $\{b,c\}$.
Let $A := \{a\}$ and $B := \{a,b\}$ be submodels of $C$.
It is easy to see that both $(A, B, b)$ and $(A, C, b)$ are uniqueness triples, 
\todo{Add some details here?}
even though $B \lessK C$.
In particular, $C$ is not the closure of $A \cup \{b\}$, even though 
$(A,C,b) \in \AECK^{3,\uq}$.
\end{example}

We broaden Definition~\ref{non-uniqueness} in order to include witnesses to non-uniqueness via isomorphism:

\begin{definition}\label{def-non-uniqueness-via}
Suppose that $(A, B, b) \in \AECK^{3,\bs}$, 
$A^*, C^* \in \AECK_\lambda$ are such that $A^* \lessK C^*$,
and $\varphi : A^* \cong A$ is an isomorphism.
We say that \emph{$C^*$ witnesses the non-uniqueness of $(A, B, b)$ via~$\varphi$} if 
there are some $C \in \AECK_\lambda$ and isomorphism $\varphi^+ : C^* \cong C$ extending $\varphi$,
where $C$ witnesses the non-uniqueness of $(A, B, b)$.
\end{definition}

\begin{lemma}\label{unique-equiv-via}
Suppose that $(A, B, b) \in \AECK^{3,\bs}$, 
$A^* \in \AECK_\lambda$,
and $\varphi : A^* \cong A$ is an isomorphism.
Then the following are equivalent:
\begin{enumerate}
\item $(A, B, b) \in \AECK^{3,\uq}$;
\item There is no $C^* \in \AECK_\lambda$ witnessing the non-uniqueness
of $(A, B, b)$ via~$\varphi$.
\end{enumerate}
\end{lemma}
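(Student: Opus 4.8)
The plan is to prove the contrapositive in both directions, i.e.\ to show that some $C \in \AECK_\lambda$ witnesses the non-uniqueness of $(A,B,b)$ (the negation of~(1)) if and only if some $C^* \in \AECK_\lambda$ witnesses the non-uniqueness of $(A,B,b)$ via~$\varphi$ (the negation of~(2)). One direction is immediate: if $C^*$ witnesses non-uniqueness via~$\varphi$, then by Definition~\ref{def-non-uniqueness-via} there is already a model $C \in \AECK_\lambda$ witnessing the non-uniqueness of $(A,B,b)$, so $(A,B,b) \notin \AECK^{3,\uq}$. No work is needed here beyond unwinding the definition.

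For the substantive direction, suppose $C \in \AECK_\lambda$ witnesses the non-uniqueness of $(A,B,b)$; in particular $A \lessK C$. Since $\varphi : A^* \cong A$ is an isomorphism, I would transport the $\tau$-structure of $C$ back along $\varphi^{-1}$: concretely, extend the bijection $\varphi^{-1} : A \to A^*$ to a bijection from the universe of $C$ onto a suitable superset $C^*$ of $A^*$, and pull back the $\tau$-structure so as to obtain a model $C^* \in \AECK_\lambda$ together with an isomorphism $\varphi^+ : C^* \cong C$ extending $\varphi$ (exactly the maneuver used in the proof of Corollary~\ref{nonfork-isom-extension}). Because $\varphi$ is already an isomorphism on $A^*$, the structure on $C^*$ restricts correctly to the given structure of $A^*$, so $A^*$ is genuinely a submodel of $C^*$.

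It then remains to verify $A^* \lessK C^*$. Applying the inverse isomorphism $(\varphi^+)^{-1} : C \cong C^*$ to the relation $A \leqK C$, the invariance-under-isomorphisms axiom (Definition~\ref{AEC-def}(1)) yields $(\varphi^+)^{-1}[A] \leqK C^*$; and since $\varphi^+ \restriction A^* = \varphi$ we have $(\varphi^+)^{-1}[A] = A^*$, whence $A^* \leqK C^*$, with $A^* \neq C^*$ because $A \neq C$. Thus $A^* \lessK C^*$, and $\varphi^+ : C^* \cong C$ is an isomorphism extending $\varphi$ with $C$ witnessing the non-uniqueness of $(A,B,b)$; by Definition~\ref{def-non-uniqueness-via}, $C^*$ witnesses the non-uniqueness of $(A,B,b)$ via~$\varphi$, completing this direction.

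I expect there to be essentially no hard step: the content is entirely the transport-of-structure construction of $C^*$ and $\varphi^+$, which is routine in this setting and already appears elsewhere in the paper, together with a single appeal to isomorphism-invariance to transfer $A \lessK C$ to $A^* \lessK C^*$. The only point requiring care is bookkeeping---ensuring the extended bijection genuinely restricts to $\varphi$ on $A^*$, so that both $\varphi^+$ extends $\varphi$ and $A^*$ sits inside $C^*$ as a submodel---but nothing deeper is involved.
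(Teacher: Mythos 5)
Your proposal is correct and follows essentially the same route as the paper's proof: the easy direction is immediate from Definition~\ref{def-non-uniqueness-via}, and the substantive direction fixes an isomorphism $\psi : C \cong C^*$ extending $\varphi^{-1}$ (equivalently, your $\varphi^+ = \psi^{-1}$ extending $\varphi$) by transport of structure, so that $C^*$ witnesses the non-uniqueness via~$\varphi$. You merely spell out the transport-of-structure and isomorphism-invariance details that the paper leaves implicit.
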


\begin{proof}\hfill
\begin{description}
\item[$(1) \implies (2)$]
Clear from the definitions.

\item[$\neg(1) \implies \neg(2)$]
Suppose $C$ witnesses the non-uniqueness of $(A, B, b)$.
In particular, $C \in \AECK_\lambda$ and $A \lessK C$.
Let $\psi : C \cong C^*$ be some isomorphism extending $\varphi^{-1}$,
so that $C^* \in \AECK_\lambda$ and $A^* \lessK C^*$.
Then $\psi^{-1} : C^* \cong C$ is an isomorphism  extending $\varphi$.
It follows that $C^* \in \AECK_\lambda$ witnesses the non-uniqueness
of $(A, B, b)$ via~$\varphi$.
\qedhere
\end{description}
\end{proof}

\begin{lemma}\label{non-uniqueness-isom}
Suppose that $(A, B, b) \in \AECK^{3,\bs}$, 
$A^*, C^*, C^\bullet \in \AECK_\lambda$ are such that $A^* \lessK C^*$,
and $\varphi : A^* \cong A$ and $\psi : C^* \cong C^\bullet$ are isomorphisms.
If $C^*$ witnesses the non-uniqueness of $(A, B, b)$ via~$\varphi$,
then $C^\bullet$ witnesses the non-uniqueness of $(A, B, b)$ via $\varphi \circ (\psi \restriction A^*)^{-1}$.
\todo{Should we highlight some special cases as corollaries, such as $\psi \restriction A^* = \id_{A^*}$ or $\varphi = \id_A$?
Compare Lemma \ref{non-uniqueness-embedding}.}
\end{lemma}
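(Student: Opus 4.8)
The plan is to unwind Definition~\ref{def-non-uniqueness-via} and reduce the statement to a composition of the given isomorphisms, reusing the very same model that already witnesses the non-uniqueness of $(A, B, b)$.

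Write $A^\bullet := \psi[A^*]$ and $\chi := \psi \restriction A^*$, so that $\chi : A^* \cong A^\bullet$ is an isomorphism and the target map $\varphi \circ (\psi \restriction A^*)^{-1} = \varphi \circ \chi^{-1}$ is an isomorphism from $A^\bullet$ onto $A$. First I would check the domain condition required by Definition~\ref{def-non-uniqueness-via}: since $A^* \lessK C^*$ and $\psi : C^* \cong C^\bullet$ is an isomorphism, invariance of the AEC under isomorphisms (Definition~\ref{AEC-def}(1)(b)) gives $A^\bullet = \psi[A^*] \lessK C^\bullet$.

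Next, since $C^*$ witnesses the non-uniqueness of $(A, B, b)$ via $\varphi$, I would invoke Definition~\ref{def-non-uniqueness-via} to fix some $C \in \AECK_\lambda$ together with an isomorphism $\varphi^+ : C^* \cong C$ extending $\varphi$, where $C$ itself witnesses the non-uniqueness of $(A, B, b)$ in the sense of Definition~\ref{non-uniqueness}. The key idea is then to set $\theta := \varphi^+ \circ \psi^{-1}$, which is an isomorphism $C^\bullet \cong C$, and to verify that $\theta$ extends $\varphi \circ \chi^{-1}$: for $x \in A^\bullet = \psi[A^*]$ we have $\psi^{-1}(x) \in A^*$, hence $\theta(x) = \varphi^+(\psi^{-1}(x)) = \varphi(\psi^{-1}(x))$ because $\varphi^+$ extends $\varphi$, and this equals $(\varphi \circ \chi^{-1})(x)$ since $\chi^{-1}(x) = \psi^{-1}(x)$ for $x \in \psi[A^*]$.

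Applying Definition~\ref{def-non-uniqueness-via} with the data $A^\bullet$, $C^\bullet$, the isomorphism $\varphi \circ \chi^{-1}$, the witnessing model $C$, and the extending isomorphism $\theta$ then yields that $C^\bullet$ witnesses the non-uniqueness of $(A, B, b)$ via $\varphi \circ (\psi \restriction A^*)^{-1}$, as desired. There is no real obstacle here: the argument is a routine diagram chase, and the only point demanding care is keeping track of the domains and codomains of the composed isomorphisms and confirming the extension condition on $A^\bullet$.
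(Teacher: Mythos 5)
Your proposal is correct and follows essentially the same route as the paper's own proof: both fix the witnessing model $C$ and extending isomorphism $\varphi^+$ from the hypothesis, set $A^\bullet := \psi[A^*]$, and observe that $\varphi^+ \circ \psi^{-1} : C^\bullet \cong C$ extends $\varphi \circ (\psi \restriction A^*)^{-1}$. The only difference is that you spell out the verification of the extension condition and the fact that $A^\bullet \lessK C^\bullet$ (via the AEC's invariance under isomorphisms), details the paper leaves implicit.
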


\begin{proof}
Suppose $C^*$ witnesses the non-uniqueness of $(A, B, b)$ via~$\varphi$.
Then we can fix $C \in \AECK_\lambda$ and an isomorphism $\varphi^+ : C^* \cong C$ extending $\varphi$,
where $C$ witnesses the non-uniqueness of $(A, B, b)$.
Let $A^\bullet := \psi[A^*]$.
Then $A^\bullet \lessK C^\bullet$, $\varphi \circ (\psi \restriction A^*)^{-1} : A^\bullet \cong A$ is an isomorphism, 
and $\varphi^+ \circ \psi^{-1} : C^\bullet \cong C$ is an isomorphism extending $\varphi \circ (\psi \restriction A^*)^{-1}$.
Thus $C^\bullet$ witnesses the non-uniqueness of $(A, B, b)$ via $\varphi \circ (\psi \restriction A^*)^{-1}$.
\end{proof}

\begin{lemma}\label{equiv-via}
Suppose that $(A, B, b) \in \AECK^{3,\bs}$, 
$A^*, C^* \in \AECK_\lambda$ are such that $A^* \lessK C^*$,
and $\varphi : A^* \cong A$ is an isomorphism.
Then the following are equivalent:
\begin{enumerate}
\item $C^*$ witnesses the non-uniqueness of $(A, B, b)$ via~$\varphi$;
\item There exist two $\lambda$-amalgamations
$(f^{D^*}_B, \id_{C^*}, D^*)$ and $(f^{F^*}_B, \id_{C^*}, F^*)$ 
of $B$ and $C^*$ over $(A^*, \varphi, \id_{A^*})$ that are not equivalent and such that 
$\tp(f^{D^*}_B(b)/C^*; D^*) = \tp(f^{F^*}_B(b)/C^*; F^*)$
and this type does not fork over $A^*$.
\end{enumerate}
\end{lemma}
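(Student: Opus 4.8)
The plan is to prove both implications by transporting amalgamations along an isomorphism $\varphi^+ : C^* \cong C$ that extends $\varphi$. Unwinding Definition~\ref{def-non-uniqueness-via}, clause~(1) asserts the existence of such a $C$ and $\varphi^+$ for which $C$ witnesses the non-uniqueness of $(A,B,b)$; by Definition~\ref{non-uniqueness} this in turn means there are two non-equivalent $\lambda$-amalgamations $(f^D_B, \id_C, D)$ and $(f^F_B, \id_C, F)$ of $B$ and $C$ over $A$ whose distinguished types $\tp(f^D_B(b)/C; D) = \tp(f^F_B(b)/C; F)$ agree and do not fork over $A$. Since the amalgamations in clause~(2) are taken over $(A^*, \varphi, \id_{A^*})$ rather than over $(A, \id_A, \id_A)$, the whole argument reduces to a single transport principle: $\lambda$-amalgamations of $B$ and $C$ over $A$ correspond, via $\varphi^+$, to $\lambda$-amalgamations of $B$ and $C^*$ over $(A^*, \varphi, \id_{A^*})$, and this correspondence preserves non-equivalence, equality of the distinguished type, and non-forking.

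For $(1) \implies (2)$, I would fix the witness $C$, $\varphi^+$, and the two amalgamations above. Using that $\AECK$ is closed under isomorphism, I extend $\varphi^+$ to isomorphisms $\Psi : D^* \cong D$ and $\Xi : F^* \cong F$ onto freshly chosen copies $D^*, F^* \in \AECK_\lambda$ satisfying $C^* \leqK D^*$, $C^* \leqK F^*$ and $\Psi \restriction C^* = \Xi \restriction C^* = \varphi^+$. Setting $f^{D^*}_B := \Psi^{-1} \circ f^D_B$ and $f^{F^*}_B := \Xi^{-1} \circ f^F_B$, the identity $f^D_B \restriction A = \id_A$ together with $\varphi^+ \restriction A^* = \varphi$ gives $f^{D^*}_B \circ \varphi = \id_{A^*}$ (and likewise for $F^*$), so that $(f^{D^*}_B, \id_{C^*}, D^*)$ and $(f^{F^*}_B, \id_{C^*}, F^*)$ are indeed $\lambda$-amalgamations of $B$ and $C^*$ over $(A^*, \varphi, \id_{A^*})$. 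Since $\Psi$ carries $f^{D^*}_B(b)$ to $f^D_B(b)$ and $C^*$ onto $C$, invariance of Galois types and of the non-forking relation under isomorphisms (Definition~\ref{def-pre-frame}(4)) yields that both distinguished types equal the pullback $(\varphi^+)^{-1}(\tp(f^D_B(b)/C;D))$ and that this common type does not fork over $\varphi^{-1}[A] = A^*$.

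The non-equivalence is the delicate point, and I expect it to be the main obstacle: the relation $E$ is only defined between amalgamations over a common base (Definition~\ref{def-E}), so the isomorphism-invariance facts such as Fact~\ref{isom+equiv} do not apply across the change of base from $(A, \id_A, \id_A)$ to $(A^*, \varphi, \id_{A^*})$. I would therefore argue by hand: if $(h_{D^*}, h_{F^*}, G)$ were to witness the equivalence of $(f^{D^*}_B, \id_{C^*}, D^*)$ and $(f^{F^*}_B, \id_{C^*}, F^*)$, then $(h_{D^*} \circ \Psi^{-1}, h_{F^*} \circ \Xi^{-1}, G)$ would witness the equivalence of the original $(f^D_B, \id_C, D)$ and $(f^F_B, \id_C, F)$, contradicting the hypothesis. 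The verification relies precisely on $\Psi$ and $\Xi$ agreeing on $C^*$ (so that the two pulled-back maps still agree on $C$) and on $\Psi^{-1} \circ f^D_B = f^{D^*}_B$, $\Xi^{-1} \circ f^F_B = f^{F^*}_B$ (so that they still agree on the image of $B$). Hence the transported amalgamations are non-equivalent, completing clause~(2).

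For $(2) \implies (1)$, I would run the same transport in reverse. Given the two non-equivalent amalgamations over $(A^*, \varphi, \id_{A^*})$, I first fix any isomorphism $\varphi^+ : C^* \cong C$ extending $\varphi$ (which exists and forces $A = \varphi^+[A^*] \lessK \varphi^+[C^*] = C$, using $A^* \lessK C^*$), extend it to $\Psi : D^* \cong D$ and $\Xi : F^* \cong F$ onto copies containing $C$, and push the amalgamations forward by $f^D_B := \Psi \circ f^{D^*}_B$ and $f^F_B := \Xi \circ f^{F^*}_B$. The same three verifications—that these are amalgamations of $B$ and $C$ over $A$, that the distinguished types coincide and do not fork over $A$, and that non-equivalence is preserved (again by composing a hypothetical witness with $\Psi, \Xi$)—show that $C$ witnesses the non-uniqueness of $(A,B,b)$, whence $C^*$ witnesses it via $\varphi$ by Definition~\ref{def-non-uniqueness-via}.
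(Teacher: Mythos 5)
Your proposal is correct and follows essentially the same route as the paper's proof: both directions are handled by transporting the amalgamations along isomorphisms extending $\varphi^+$ (your $\Psi,\Xi$ are exactly the paper's $\psi_D^{-1},\psi_F^{-1}$), with non-equivalence preserved by composing a hypothetical equivalence witness with these isomorphisms. The only cosmetic difference is in the type-equality step: you invoke the well-definedness of the image of a Galois type under an isomorphism agreeing with $(\varphi^+)^{-1}$ on the base, whereas the paper makes this concrete by fixing a $\lambda$-amalgamation of $D$ and $F$ over $C$ identifying $f^D_B(b)$ with $f^F_B(b)$ and transporting it along $\psi_D^{-1},\psi_F^{-1}$ --- the same idea, made explicit via the amalgamation property.
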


\begin{proof}\hfill
\begin{description}
\item[$(1) \implies (2)$]
Fix $C \in \AECK_\lambda$ and an isomorphism $\varphi^+ : C^* \cong C$ extending $\varphi$,
where $C$ witnesses the non-uniqueness of $(A, B, b)$.
Then we can fix two $\lambda$-amalgamations
$(f^D_B, \id_C, D)$ and $(f^F_B, \id_C, F)$ 
of $B$ and $C$ over $A$ that are not equivalent over $A$ and such that 
$\tp(f^D_B(b)/C; D) = \tp(f^F_B(b)/C; F)$
and this type does not fork over $A$.
In particular, $D, F \in \AECK_\lambda$, $C \leqK D$, and $C \leqK F$.
Let $\psi_D : D \cong D^*$ and $\psi_F : F \cong F^*$ be isomorphisms each extending $(\varphi^+)^{-1}$,
so that $D^*, F^* \in \AECK_\lambda$, $C^* \leqK D^*$, and $C^* \leqK F^*$.
Then $(\psi_D \circ f^D_B,\allowbreak \id_{C^*}, D^*)$ and $(\psi_F \circ f^F_B, \id_{C^*}, F^*)$ 
are two $\lambda$-amalgamations of $B$ and $C^*$ over $(A^*, \varphi, \id_{A^*})$.

If there exists $(f^G_{D^*}, f^G_{F^*}, G)$ witnessing the equivalence of the two $\lambda$-amalgamations 
$(\psi_D \circ f^D_B, \id_{C^*}, D^*)$ and $(\psi_F \circ f^F_B, \id_{C^*}, F^*)$ 
of $B$ and $C^*$ over $(A^*, \varphi, \id_{A^*})$,
then $(f^G_{D^*} \circ \psi_D, f^G_{F^*} \circ \psi_F, G)$ would witness the equivalence of the two $\lambda$-amalgamations 
$(f^D_B, \id_{C}, D)$ and $(f^F_B, \id_{C}, F)$ 
of $B$ and $C$ over $A$, a contradiction.
Thus, the two $\lambda$-amalgamations of $B$ and $C^*$ over $(A^*, \varphi, \id_{A^*})$ are not equivalent.

Furthermore, since $\tp(f^D_B(b)/C; D) = \tp(f^F_B(b)/C; F)$,
we can fix a $\lambda$-amalgamation $(f^G_{D}, f^G_{F}, G)$ of $D$ and $F$ over $C$
such that $f^G_D(f^D_B(b)) = f^G_F(f^F_B(b))$.
But then $(f^G_{D} \circ \psi_D^{-1}, f^G_{F} \circ \psi_F^{-1}, G)$ is a $\lambda$-amalgamation of $D^*$ and $F^*$ over $C^*$
such that $f^G_D \circ \psi_D^{-1}(\psi_D \circ f^D_B(b)) = f^G_F \circ \psi_F^{-1}(\psi_F \circ f^F_B(b))$,
showing that $\tp(\psi_D \circ f^D_B (b)/C^*; D^*) = \tp(\psi_F \circ f^F_B (b)/C^*; F^*)$.

Finally, since $\tp(f^D_B(b)/C; D)$ does not fork over $A$,
it follows from invariance of the non-forking relation under the isomorphism $\psi_D : D \cong D^*$ that
$\tp(\psi_D \circ f^D_B(b)/C^*; D^*)$ does not fork over $A^*$,
completing the verification of~(2).

\item[$(2) \implies (1)$]
Fix two $\lambda$-amalgamations
$(f^{D^*}_B, \id_{C^*}, D^*)$ and $(f^{F^*}_B, \id_{C^*}, F^*)$ 
of $B$ and $C^*$ over $(A^*, \varphi, \id_{A^*})$ satisfying (2).
In particular, $D^*, F^* \in \AECK_\lambda$, $C^* \leqK D^*$, and $C^* \leqK F^*$.
Let $\varphi^+ : C^* \cong C$ be some isomorphism extending $\varphi$,
so that $C \in \AECK_\lambda$ and $A \lessK C$.
Let $\psi_D : D^* \cong D$ and $\psi_F : F^* \cong F$ be isomorphisms each extending $\varphi^+$,
so that $D, F \in \AECK_\lambda$, $C \leqK D$, and $C \leqK F$.
Then $(\psi_D \circ f^{D^*}_B, \id_{C}, D)$ and $(\psi_F \circ f^{F^*}_B, \id_{C}, F)$ 
are two $\lambda$-amalgamations of $B$ and $C$ over $A$.
By arguments similar to those in the proof of $(1) \implies (2)$,
these two $\lambda$-amalgamations show that $C$ witnesses the non-uniqueness of $(A, B, b)$,
thereby verifying (1).
\qedhere
\end{description}
\end{proof}

The following corollary shows that 
Definition~\ref{non-uniqueness} is the particular case of Definition~\ref{def-non-uniqueness-via} where $\varphi = \id_A$.
That is, if $\varphi = \id_A$ in Definition~\ref{def-non-uniqueness-via},
then we may omit ``via~$\varphi$'':

\begin{corollary}\label{omit-via-phi}
Suppose that $(A, B, b) \in \AECK^{3,\bs}$, and
$C \in \AECK_\lambda$ is such that $A \lessK C$.
Then the following are equivalent:
\begin{enumerate}
\item $C$ witnesses the non-uniqueness of $(A, B, b)$ via~$\id_A$;
\item $C$ witnesses the non-uniqueness of $(A, B, b)$.
\end{enumerate}
\end{corollary}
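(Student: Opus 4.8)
The plan is to recognize that Corollary~\ref{omit-via-phi} is nothing more than the instance of Lemma~\ref{equiv-via} obtained by substituting $A^* := A$, $C^* := C$, and $\varphi := \id_A$, so the entire proof reduces to verifying that this substitution is legitimate and then matching up the two clauses.

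First I would check that the hypotheses of Lemma~\ref{equiv-via} hold under this substitution. We are given $(A, B, b) \in \AECK^{3,\bs}$ and $C \in \AECK_\lambda$ with $A \lessK C$; taking $A^* := A$ and $C^* := C$ yields $A^* \lessK C^*$ with both models in $\AECK_\lambda$, and $\id_A : A^* \cong A$ is plainly an isomorphism. Thus Lemma~\ref{equiv-via} applies.

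Next I would identify the two clauses. Clause~(1) of Lemma~\ref{equiv-via} under this substitution reads ``$C$ witnesses the non-uniqueness of $(A,B,b)$ via~$\id_A$,'' which is precisely clause~(1) of the corollary. For clause~(2), the key bookkeeping is that an amalgamation over $(A^*, \varphi, \id_{A^*}) = (A, \id_A, \id_A)$ is, by the convention recorded in Definition~\ref{def-amalgamation}, exactly an amalgamation \emph{over $A$}; moreover ``not equivalent'' in Lemma~\ref{equiv-via}(2) means ``not equivalent over $A$'' once the common base is $(A, \id_A, \id_A)$, and the non-forking clause ``over $A^*$'' becomes ``over $A$.'' With these identifications, clause~(2) of Lemma~\ref{equiv-via} becomes verbatim the statement of Definition~\ref{non-uniqueness}, i.e.\ clause~(2) of the corollary.

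Since Lemma~\ref{equiv-via} asserts the equivalence of its clauses~(1) and~(2), the equivalence of the two clauses of Corollary~\ref{omit-via-phi} follows immediately, giving both directions at once. I do not anticipate any genuine obstacle; the only point requiring care is the purely notational identification of amalgamations over $(A, \id_A, \id_A)$ with amalgamations over $A$ and of ``not equivalent'' with ``not equivalent over $A$,'' which is exactly the terminology fixed in Definition~\ref{def-amalgamation} and in the definition of equivalent $\lambda$-amalgamations.
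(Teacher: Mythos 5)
Your proposal is correct and takes essentially the same route as the paper, whose entire proof reads ``This is the case $\varphi := \id_A$ of Lemma~\ref{equiv-via}.'' The notational bookkeeping you spell out --- identifying amalgamations over $(A, \id_A, \id_A)$ with amalgamations over $A$, and ``not equivalent'' with ``not equivalent over $A$,'' so that clause~(2) of the Lemma becomes verbatim Definition~\ref{non-uniqueness} --- is exactly what the paper's one-line citation implicitly relies on.
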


\begin{proof}
This is the case $\varphi := \id_A$ of Lemma~\ref{equiv-via}.
\end{proof}

\begin{lemma}\label{uniqueness-saturated}
Suppose that $(A, B, b) \in \AECK^{3,\bs}$,
$A^* \in \AECK_\lambda$ and $N \in \AECK^\sat_{\lambda^+}$ are such that $A^* \lessK N$,
%\todo{Need to define $\AECK^\sat_{\lambda^+}$.}
and $\varphi : A^* \cong A$ is an isomorphism.
Then the following are equivalent:
\begin{enumerate}
\item $(A, B, b) \in \AECK^{3,\uq}$;
\item There is no $C^\bullet \in \AECK_\lambda$ witnessing the non-uniqueness
of $(A, B, b)$ via~$\varphi$, with $A^\bullet \lessK C^\bullet \lessK N$.
\end{enumerate}
\end{lemma}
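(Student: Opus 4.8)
The plan is to prove the two implications separately, reducing everything to the unrestricted characterization of uniqueness triples in Lemma~\ref{unique-equiv-via} and then using the homogeneity of $N$ to confine the witness inside $N$. The direction $(1) \implies (2)$ is immediate: if $(A, B, b) \in \AECK^{3,\uq}$, then by Lemma~\ref{unique-equiv-via} there is no $C^* \in \AECK_\lambda$ at all witnessing the non-uniqueness of $(A, B, b)$ via~$\varphi$, so \emph{a fortiori} there is none satisfying the additional constraint $A^* \lessK C^\bullet \lessK N$.

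For the contrapositive of $(2) \implies (1)$, suppose $(A, B, b) \notin \AECK^{3,\uq}$. By Lemma~\ref{unique-equiv-via}, fix some $C^* \in \AECK_\lambda$ witnessing the non-uniqueness of $(A, B, b)$ via~$\varphi$; in particular $A^* \lessK C^*$. The idea is to transport $C^*$ to an isomorphic copy lying between $A^*$ and $N$. Since the standing assumption of this section gives that $\AECK_\lambda$ satisfies the amalgamation property and $\lambda \geq \LST(\AECK)$, Fact~\ref{saturated=homogeneous} applies, so the saturated model $N \in \AECK^\sat_{\lambda^+}$ is homogeneous in $\lambda^+$ over~$\lambda$. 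Applying homogeneity to $A^* \lessK N$ and $A^* \leqK C^*$ then yields an embedding $f : C^* \embeds N$ with $f \restriction A^* = \id_{A^*}$.

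I would then set $C^\bullet := f[C^*]$ and regard $f$ as an isomorphism $\psi : C^* \cong C^\bullet$ with $\psi \restriction A^* = \id_{A^*}$. Because $f$ is an embedding into $N$, we have $C^\bullet \leqK N$, and since $\left|C^\bullet\right| = \lambda < \lambda^+ = \left|N\right|$ together with $A^* \lessK C^*$ we conclude $A^* \lessK C^\bullet \lessK N$. Finally, Lemma~\ref{non-uniqueness-isom} transfers the witness: from $C^*$ witnessing non-uniqueness via~$\varphi$ and the isomorphism $\psi$, we obtain that $C^\bullet$ witnesses non-uniqueness via $\varphi \circ (\psi \restriction A^*)^{-1}$. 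As $\psi \restriction A^* = \id_{A^*}$, this composite is simply $\varphi$, so $C^\bullet$ is a witness with $A^* \lessK C^\bullet \lessK N$, contradicting~(2).

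The only real delicacy is bookkeeping: the whole argument turns on arranging the embedding of $C^*$ into $N$ to fix $A^*$ pointwise, which is exactly what homogeneity supplies and which makes the translated parameter $\varphi \circ (\psi \restriction A^*)^{-1}$ collapse back to $\varphi$. One should also record the cardinality inequality guaranteeing $C^\bullet \neq N$, so that the containment is strict (i.e.\ $C^\bullet \lessK N$) as demanded, rather than merely $C^\bullet \leqK N$.
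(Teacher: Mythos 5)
Your proof is correct and follows essentially the same route as the paper's: both directions reduce to Lemma~\ref{unique-equiv-via}, and the converse uses Fact~\ref{saturated=homogeneous} to extract an embedding $f : C^* \embeds N$ fixing $A^*$ pointwise, then Lemma~\ref{non-uniqueness-isom} (with the composite parameter collapsing to $\varphi$) to conclude that $C^\bullet := f[C^*]$ is the desired witness inside $N$. Your explicit remarks on the cardinality argument for $C^\bullet \lessK N$ and the computation $\varphi \circ (\psi \restriction A^*)^{-1} = \varphi$ only make explicit what the paper leaves implicit.
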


\begin{proof}\hfill
\begin{description}
\item[$(1) \implies (2)$]
By Lemma~\ref{unique-equiv-via}.

%Clear from the definition of $\AECK^{3,\uq}$.

\item[$\neg(1) \implies \neg(2)$]
Suppose $(A, B, b) \notin \AECK^{3,\uq}$.
By Lemma~\ref{unique-equiv-via},
we fix some $C^* \in \AECK_\lambda$ witnessing the non-uniqueness of $(A, B, b)$ via~$\varphi$.
In particular, %$C \in \AECK_\lambda$ and 
$A^* \lessK C^*$.
By Fact~\ref{saturated=homogeneous},  %\cite[Proposition~1.0.31]{JrSh:875},
since $\LST(\AECK) \leq\lambda$ and $\AECK_\lambda$ satisfies the amalgamation property,
$N \in \AECK^\sat_{\lambda^+}$ is equivalent to the statement that
$N$ is a homogeneous model in $\lambda^+$ over $\lambda$.
%(see~\cite[Definition~1.0.28]{JrSh:875}).
Thus, since $A^* \lessK N$ and $A^* \lessK C^*$,
there is an embedding $f : C^* \embeds N$ with $f \restriction A^* = \id_{A^*}$.
Let $C^\bullet := f[C^*]$, so that $C^\bullet \in \AECK_\lambda$ and $A^* \lessK C^\bullet \lessK N$.
Viewing $f : C^* \cong C^\bullet$ as an isomorphism extending $\id_{A^\bullet}$,
we see by Lemma~\ref{non-uniqueness-isom} that in fact 
$C^\bullet$ witnesses the non-uniqueness of $(A, B, b)$ via~$\varphi$,
as sought.
\qedhere
\end{description}
\end{proof}

\begin{lemma}\label{non-uniqueness-extension}
Suppose that $(A, B, b) \in \AECK^{3,\bs}$, 
$A^*, C^*, C^{**} \in \AECK_\lambda$ are such that $A^* \lessK C^* \lessK C^{**}$,
and $\varphi : A^* \cong A$ is an isomorphism.
Suppose that $\mathfrak s$ satisfies transitivity and extension. % transitivity, as well as extension beyond $\tp(b/A; B)$.
If $C^*$ witnesses the non-uniqueness of $(A, B, b)$ via~$\varphi$,
then so does $C^{**}$.
\end{lemma}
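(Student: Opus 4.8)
The plan is to reduce both the hypothesis and the conclusion to the amalgamation-theoretic criterion of Lemma~\ref{equiv-via}, and then to lift a non-uniqueness witness from $C^*$ up to $C^{**}$ by extending the relevant amalgamations. First I would invoke the $(1) \Rightarrow (2)$ direction of Lemma~\ref{equiv-via} to obtain from the hypothesis two $\lambda$-amalgamations $(f^{D^*}_B, \id_{C^*}, D^*)$ and $(f^{F^*}_B, \id_{C^*}, F^*)$ of $B$ and $C^*$ over $(A^*, \varphi, \id_{A^*})$ that are not equivalent, such that the common type $p := \tp(f^{D^*}_B(b)/C^*; D^*) = \tp(f^{F^*}_B(b)/C^*; F^*)$ does not fork over $A^*$. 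Since $p$ does not fork over $A^*$, monotonicity (Definition~\ref{def-pre-frame}(5)) applied so as to replace the base $A^*$ by $C^*$, followed by Definition~\ref{def-pre-frame}(6), yields $p \in S^\bs(C^*)$; this is the fact that makes the extension property applicable.

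Next I would produce a \emph{single} target type over $C^{**}$ to be realized on both sides. Using the extension property on $p \in S^\bs(C^*)$ with $C^* \leqK C^{**}$, fix some $q \in S^\bs(C^{**})$ extending $p$ and not forking over $C^*$; then transitivity, applied along $A^* \leqK C^* \leqK C^{**}$ with $q \restriction C^* = p$ not forking over $A^*$, gives that $q$ does not fork over $A^*$. I would then realize this one type $q$ in extensions of both original amalgamations: applying Lemma~\ref{amalg-ext-with-ambient} to $C^* \leqK D^*$, $d := f^{D^*}_B(b)$, and $q$ (legitimate since $q \restriction C^* = p = \tp(d/C^*; D^*)$) produces a $\lambda$-amalgamation $(h_{D^*}, \id_{C^{**}}, D^{**})$ of $D^*$ and $C^{**}$ over $C^*$ with $q = \tp(h_{D^*}(d)/C^{**}; D^{**})$, and symmetrically a $\lambda$-amalgamation $(h_{F^*}, \id_{C^{**}}, F^{**})$ on the $F$-side. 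Composing, $(h_{D^*} \circ f^{D^*}_B, \id_{C^{**}}, D^{**})$ and $(h_{F^*} \circ f^{F^*}_B, \id_{C^{**}}, F^{**})$ are two $\lambda$-amalgamations of $B$ and $C^{**}$ over $(A^*, \varphi, \id_{A^*})$ — the ``over'' condition follows from $h_{D^*} \restriction C^* = \id_{C^*}$ (and likewise for $h_{F^*}$) together with the original amalgamations being over $(A^*, \varphi, \id_{A^*})$ — and both realize the common non-forking type $q$ for the image of $b$.

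The main obstacle is to verify that these two extended amalgamations remain \emph{inequivalent}; everything else is bookkeeping. I would argue by contraposition: if $(k_1, k_2, G)$ witnessed their equivalence, so that $k_1 \circ h_{D^*} \circ f^{D^*}_B = k_2 \circ h_{F^*} \circ f^{F^*}_B$ and $k_1 \restriction C^{**} = k_2 \restriction C^{**}$, then setting $\ell_1 := k_1 \circ h_{D^*}$ and $\ell_2 := k_2 \circ h_{F^*}$ would give $\ell_1 \circ f^{D^*}_B = \ell_2 \circ f^{F^*}_B$, while $h_{D^*} \restriction C^* = h_{F^*} \restriction C^* = \id_{C^*}$ together with $k_1 \restriction C^* = k_2 \restriction C^*$ (a consequence of $k_1 \restriction C^{**} = k_2 \restriction C^{**}$, as $C^* \subseteq C^{**}$) would give $\ell_1 \restriction C^* = \ell_2 \restriction C^*$. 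Thus $(\ell_1, \ell_2, G)$ would witness the equivalence of the original amalgamations over $(A^*, \varphi, \id_{A^*})$, contradicting their inequivalence. Hence the two amalgamations over $C^{**}$ are inequivalent, and the $(2) \Rightarrow (1)$ direction of Lemma~\ref{equiv-via} (applied with $C^{**}$ in place of $C^*$) shows that $C^{**}$ witnesses the non-uniqueness of $(A, B, b)$ via $\varphi$, as required.
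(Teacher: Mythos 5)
Your proof is correct and follows essentially the same argument as the paper's: upgrade the common type to a basic type over $C^*$ via monotonicity and Definition~\ref{def-pre-frame}(6), extend it non-forkingly to the larger model and apply transitivity, realize the single extended type on both sides via Lemma~\ref{amalg-ext-with-ambient}, and preserve inequivalence by composing equivalence witnesses. The only difference is bookkeeping: you stay in the ``starred'' world by invoking Lemma~\ref{equiv-via} in both directions, whereas the paper unfolds Definition~\ref{def-non-uniqueness-via} directly, transferring to isomorphic copies $C$ and $C^\bullet$ of $C^*$ and $C^{**}$ and running the identical argument there.
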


\begin{proof}
Suppose $C^*$ witnesses the non-uniqueness of $(A, B, b)$ via~$\varphi$.
Then we can fix $C \in \AECK_\lambda$ and an isomorphism $\varphi^+ : C^* \cong C$ extending $\varphi$,
where $C$ witnesses the non-uniqueness of $(A, B, b)$.
Fix some isomorphism $\varphi^{++} : C^{**} \cong C^\bullet$ extending $\varphi^+$,
so that $C^\bullet \in \AECK_\lambda$ and $C \lessK C^\bullet$.
We will show that $C^\bullet$ witnesses the non-uniqueness of $(A, B, b)$.

Since $C$ witnesses the non-uniqueness of $(A, B, b)$,
in particular, $A \lessK C$, and
we can fix two $\lambda$-amalgamations
$(f^D_B, \id_C, D)$ and $(f^F_B, \id_C, F)$ 
of $B$ and $C$ over $A$ that are not equivalent over $A$ and such that 
$\tp(f^D_B(b)/C; D) = \tp(f^F_B(b)/C; F)$
and this type does not fork over $A$.
Write $p := \tp(b/A; B)$ and $q := \tp(f^D_B(b)/C; D)$, so that $p \in S^\bs(A)$, $q \in S^\bs(C)$, and $q \restriction A = p$.

By the extension property,
%Since $\mathfrak s$ satisfies extension beyond $p$,
we can find some $r \in S^\bs(C^\bullet)$ extending $q$ that does not fork over $C$.
Since %$\mathfrak s$ satisfies transitivity and 
$r \restriction C = q$ does not fork over $A$,
it follows by the transitivity property that $r$ does not fork over $A$.

Since $C \leqK D$, $C \leqK C^\bullet$, and $r \restriction C = \tp(f^D_B(b)/C; D)$, 
Lemma~\ref{amalg-ext-with-ambient} gives us a $\lambda$-amalgamation
$(f^{D^*}_D, \id_{C^\bullet}, D^*)$ of $D$ and $C^\bullet$ over $C$ such that 
$\tp(f^{D^*}_D(f^D_B(b))/C^\bullet; D^*) = r$.
Then $(f^{D^*}_D \circ f^D_B, \id_{C^\bullet}, D^*)$ is a $\lambda$-amalgamation of $B$ and $C^\bullet$ over $A$.

Similarly, since $C \leqK F$, $C \leqK C^\bullet$, and $r \restriction C = \tp(f^F_B(b)/C; F)$, 
Lemma~\ref{amalg-ext-with-ambient} gives us a $\lambda$-amalgamation
$(f^{F^*}_F, \id_{C^\bullet}, F^*)$ of $F$ and $C^\bullet$ over $C$ such that $r = \tp(f^{F^*}_F(f^F_B(b))/C^\bullet; F^*)$,
so that $(f^{F^*}_F \circ f^F_B, \id_{C^\bullet}, F^*)$ is a $\lambda$-amalgamation of $B$ and $C^\bullet$ over $A$.

If there exists $(f^G_{D^*}, f^G_{F^*}, G)$ witnessing the equivalence of the two $\lambda$-amalgamations 
$(f^{D^*}_D \circ f^D_B, \id_{C^\bullet}, D^*)$ and $(f^{F^*}_F \circ f^F_B, \id_{C^\bullet}, F^*)$ 
of $B$ and $C^\bullet$ over $A$,
then $(f^G_{D^*} \circ f^{D^*}_D,\allowbreak f^G_{F^*} \circ f^{F^*}_F, G)$ would witness the equivalence of the two $\lambda$-amalgamations 
$(f^D_B, \id_{C}, D)$ and $(f^F_B, \id_{C}, F)$ 
of $B$ and $C$ over $A$, a contradiction.
Thus, the two $\lambda$-amalgamations of $B$ and $C^\bullet$ over $A$ are not equivalent.
Furthermore, $\tp(f^{D^*}_D \circ f^D_B (b)/C^\bullet; D^*) = r = \tp(f^{F^*}_F \circ f^F_B (b)/C^\bullet; F^*)$,
and we have already seen that $r$ does not fork over $A$.
Altogether, this means that $C^\bullet$ witnesses the non-uniqueness of $(A, B, b)$.
Since $\varphi^{++} : C^{**} \cong C^\bullet$ extends $\varphi$,
it follows that $C^{**}$ witnesses the non-uniqueness of $(A, B, b)$ via~$\varphi$,
as sought.
\end{proof}

%\todo[color=green]{Added on Sept.~6}
While non-uniqueness guarantees a non-forking extension realized in two different amalgamations,
the extension property together with Corollary~\ref{nonfork-isom-extension}
guarantees the existence of at least one amalgamation with a non-forking extension.
Thus we obtain the following result, which will be crucial in our applications
(see the proofs of Theorems \ref{main-thm} and~\ref{repres-uq-extend-triple}).

\begin{lemma}\label{get-desired-amalg}
\todo[color=green]{Added on Sept.~6; Improve this by focusing on the witnessing type.}
Suppose that $(A, B, b) \in \AECK^{3,\bs}$,
$A^*, C^* \in \AECK_\lambda$ and $N^\bullet \in \AECK_{\geq\lambda}$ are such that $A^* \leqK C^* \lessK N^\bullet$,
and $\varphi : A^* \cong A$ is an isomorphism.
Suppose that
$\Omega : B \embeds N^\bullet$ is an embedding %from $B$ into $N^\bullet$ 
such that $\Omega \circ \varphi = \id_{A^*}$.
Suppose either that $\mathfrak s$ satisfies the extension property,
or that $C^*$ witnesses the non-uniqueness of $(A, B, b)$ via~$\varphi$.

Then there exists a $\lambda$-amalgamation
$(\id_B, f^D_{C^*}, D)$ of $B$ and $C^*$ over $(A^*, \varphi, \id_{A^*})$ such that:
\begin{enumerate}
\item $\tp(b/f^D_{C^*}[C^*]; D)$ does not fork over $A$;
and
\item If $C^*$ witnesses the non-uniqueness of $(A, B, b)$ via~$\varphi$, 
then $(\id_B, f^D_{C^*}, D) \centernot{E} (\Omega, \id_{C^*}, N^\bullet)$
%\todo{Consider writing the contrapositive?}
(where $(\Omega, \id_{C^*}, N^\bullet)$ is an
amalgamation of 
$B$ and $C^*$ over $(A^*, \varphi, \id_{A^*})$ by Fact~\ref{predicted-is-amalgamation};
%as in Definition~\ref{def-predicted-amalg-variant};
and $\centernot{E}$ is the negation of the relation $E$ between amalgamations of 
$B$ and $C^*$ over $(A^*, \varphi, \id_{A^*})$, as in Definition~\ref{def-E}).
\end{enumerate}
\end{lemma}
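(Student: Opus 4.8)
The plan is to split the argument according to whether $C^*$ witnesses the non-uniqueness of $(A, B, b)$ via $\varphi$, rather than according to which of the two disjunctive hypotheses holds. If $C^*$ does \emph{not} witness the non-uniqueness of $(A, B, b)$ via $\varphi$, then by hypothesis $\mathfrak s$ must satisfy the extension property, and I would simply invoke Corollary~\ref{nonfork-isom-extension} to produce a $\lambda$-amalgamation $(\id_B, f^D_{C^*}, D)$ of $B$ and $C^*$ over $(A^*, \varphi, \id_{A^*})$ with $\tp(b/f^D_{C^*}[C^*]; D)$ not forking over $A$; this is exactly Clause~(1), while Clause~(2) holds vacuously.

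The substance is in the case where $C^*$ \emph{does} witness the non-uniqueness of $(A, B, b)$ via $\varphi$ (which in particular forces $A^* \lessK C^*$). Here I would first apply Lemma~\ref{equiv-via} to extract two \emph{non-equivalent} $\lambda$-amalgamations $(f^{D^*}_B, \id_{C^*}, D^*)$ and $(f^{F^*}_B, \id_{C^*}, F^*)$ of $B$ and $C^*$ over $(A^*, \varphi, \id_{A^*})$ realizing a common type over $C^*$ that does not fork over $A^*$. Next, Fact~\ref{predicted-is-amalgamation} shows that $(\Omega, \id_{C^*}, N^\bullet)$ is itself an amalgamation of $B$ and $C^*$ over the same base $(A^*, \varphi, \id_{A^*})$, so all three amalgamations live over a single base and can be compared via $E$.

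The key step, which I expect to be the main obstacle, is to argue that at least one of $(f^{D^*}_B, \id_{C^*}, D^*)$, $(f^{F^*}_B, \id_{C^*}, F^*)$ is \emph{not} $E$-related to $(\Omega, \id_{C^*}, N^\bullet)$. I would prove this by contradiction: if both were $E$-related to it, then using the symmetry of $E$ (Fact~\ref{E-reflexive-symmetric}) together with Lemma~\ref{connect-E-big}, taking $N^\bullet$ as the (possibly large) middle model, transitivity would yield $(f^{D^*}_B, \id_{C^*}, D^*) \mathrel{E} (f^{F^*}_B, \id_{C^*}, F^*)$, contradicting non-equivalence. The delicate point is precisely that $N^\bullet$ may have cardinality $>\lambda$, which is why the transitivity lemma must be invoked in its form permitting a middle model in $\AECK_{\geq\lambda}$ with the outer amalgamations in $\AECK_\lambda$; this in turn relies on the running amalgamation hypothesis in $\AECK_\lambda$. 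After relabelling, I may assume $(f^{D^*}_B, \id_{C^*}, D^*) \centernot{E} (\Omega, \id_{C^*}, N^\bullet)$.

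It remains to convert this into an amalgamation of the required shape $(\id_B, f^D_{C^*}, D)$. By Fact~\ref{get-amalg-with-id} I would fix $(\id_B, f^D_{C^*}, D)$ isomorphic to $(f^{D^*}_B, \id_{C^*}, D^*)$, say via $\psi : D^* \cong D$. Since isomorphic amalgamations share the base, this is again a $\lambda$-amalgamation over $(A^*, \varphi, \id_{A^*})$, whence $f^D_{C^*} \restriction A^* = \varphi$ and thus $\psi[A^*] = A$. Transporting the non-forking type along $\psi$ (invariance of $\dnf$ under isomorphism) gives that $\tp(b/f^D_{C^*}[C^*]; D)$ does not fork over $A$, establishing~(1). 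Finally, since $(\id_B, f^D_{C^*}, D)$ is isomorphic to $(f^{D^*}_B, \id_{C^*}, D^*)$, and $(f^{D^*}_B, \id_{C^*}, D^*) \centernot{E} (\Omega, \id_{C^*}, N^\bullet)$, Fact~\ref{isom+equiv} yields $(\id_B, f^D_{C^*}, D) \centernot{E} (\Omega, \id_{C^*}, N^\bullet)$, establishing~(2) and completing the proof.
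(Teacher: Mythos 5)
Your proposal is correct and follows essentially the same route as the paper's own proof: the same case split on whether $C^*$ witnesses non-uniqueness via~$\varphi$, the same use of Corollary~\ref{nonfork-isom-extension} in the extension case, and in the main case the same chain of Lemma~\ref{equiv-via}, Fact~\ref{predicted-is-amalgamation}, Lemma~\ref{connect-E-big} (with $N^\bullet$ as the possibly large middle model), and Fact~\ref{isom+equiv}. The only cosmetic difference is that you invoke Fact~\ref{get-amalg-with-id} to pass to an isomorphic amalgamation of the form $(\id_B, f^D_{C^*}, D)$, whereas the paper constructs this isomorphic copy explicitly via an isomorphism $\psi$ extending $f^{D^*}_B$ before transporting the non-forking type.
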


\begin{proof}
We consider two cases:
\begin{itemize}
\item
Suppose that
$C^*$ witnesses the non-uniqueness of $(A, B, b)$ via~$\varphi$.
(In particular, $A^* \lessK C^*$.)
Then, by Lemma~\ref{equiv-via},
there exist two $\lambda$-amalgamations
$(f^{D^*}_{B}, \id_{C^*}, D^*)$ and $(f^{F^*}_{B}, \id_{C^*}, F^*)$ 
of ${B}$ and $C^*$ over $(A^*, \varphi, \id_{A^*})$ that are not equivalent and such that 
\todo{Investigate:  How is this equality used?}
$\tp(f^{D^*}_{B}(b)/C^*; D^*) = \tp(f^{F^*}_{B}(b)/C^*; F^*)$
and this type does not fork over $A^*$.

By Fact~\ref{predicted-is-amalgamation}
%If the $\alpha$-level predicted amalgamation exists, 
we may consider
the %$\alpha$-level predicted 
amalgamation 
$(\Omega, \id_{C^*}, N^\bullet)$ of 
$B$ and $C^*$ over $(A^*, \varphi, \id_{A^*})$.
Since $(f^{D^*}_{B}, \id_{C^*}, D^*) \centernot{E} (f^{F^*}_{B}, \id_{C^*}, F^*)$, 
by Lemma~\ref{connect-E-big} 
(since $D^*, F^* \in \AECK_\lambda$, even though possibly $\left|N^\bullet\right| > \lambda$)
it cannot be that both
$(f^{D^*}_{B}, \id_{C^*}, D^*) \mathrel{E} (\Omega, \id_{C^*}, N^\bullet)$ and
$(f^{F^*}_{B}, \id_{C^*}, F^*) \mathrel{E} (\Omega, \id_{C^*}, N^\bullet)$.
Thus, without loss of generality,
we may assume that 
$(f^{D^*}_{B}, \id_{C^*}, D^*) \centernot{E} (\Omega, \id_{C^*}, N^\bullet)$.

Considering the embedding $f^{D^*}_{B} : B \embeds D^*$,
we fix a model $D \in \AECK_\lambda$ with $B \leqK D$ 
and an isomorphism $\psi : D \cong D^*$
extending $f^{D^*}_{B}$.
%We can ensure that the universe of $D$ is a subset of $\lambda^+$.
Define $f^D_{C^*} := \psi^{-1} \restriction C^*$,
so that $f^D_{C^*} : C^* \embeds D$ is an embedding.
As $f^{D^*}_{B} \circ \varphi = \id_{A^*}$,
it follows that $f^D_{C^*} \restriction A^* = \psi^{-1} \restriction A^* = \varphi$,
so that in particular, $\varphi \subset f^D_{C^*}$.
Since $\tp(f^{D^*}_{B}(b)/C^*; D^*)$
does not fork over $A^*$,
it follows by invariance of the non-forking relation under the isomorphism $\psi^{-1}$ that
$\tp(b/f^D_{C^*}[C^*]; D) = 
\tp(\psi^{-1}(f^{D^*}_{B}(b))/\psi^{-1}[C^*]; \psi^{-1}[D^*])$
does not fork over $\psi^{-1}[A^*] = A$.
%so that in particular 
%$b \in D \setminus f^D_{C^*}[C^*]$
%and    *********
%$q_{\alpha+1} \in S^\bs(f^D_{C^*}[C^*])$.
%By the induction hypothesis, we know that $q_\alpha$ does not fork over $g_0[A_0]$,
%so that by Lemma~\ref{restriction} and the transitivity property
%it follows that $q_{\alpha+1}$ does not fork over $g_0[A_0]$.
Furthermore,
$(\id_{B}, f^D_{C^*}, D)$ is a $\lambda$-amalgamation of 
$B$ and $C^*$ over $(A^*, \varphi, \id_{A^*})$
that is isomorphic to $(f^{D^*}_{B}, \id_{C^*}, D^*)$,
so that %if the $\alpha$-level predicted amalgamation exists, 
by Fact~\ref{isom+equiv} we obtain 
$(\id_{B}, f^D_{C^*}, D) \centernot{E} (\Omega, \id_{C^*}, N^\bullet)$,
as required in this case.

\item 
Otherwise,
by our hypothesis, $\mathfrak s$ must satisfy the extension property,
and the result follows from Corollary~\ref{nonfork-isom-extension},
as Clause~(2) of our conclusion is irrelevant.
\qedhere
\end{itemize}
\end{proof}

\begin{fact}\label{invariance-uniqueness}
If $(A, B, b) \in \AECK^{3,\uq}$, $B^* \in \AECK_\lambda$,
and $\varphi : B \cong B^*$ is an isomorphism,
then $(\varphi[A], B^*, \varphi(b)) \in \AECK^{3,\uq}$.
%\todo{Ensure this is really true, even though it seems obvious!}
\end{fact}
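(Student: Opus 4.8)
The plan is to show, first, that $(\varphi[A], B^*, \varphi(b))$ is a basic triple, and then to transport any witness to its \emph{non}-uniqueness back — via $\varphi^{-1}$ together with a companion isomorphism on the witnessing model — into a witness to the non-uniqueness of $(A,B,b)$, contradicting the hypothesis $(A,B,b) \in \AECK^{3,\uq}$. Conceptually the statement is simply that non-uniqueness is an isomorphism-invariant of the triple. Writing $A^* := \varphi[A]$ and $b^* := \varphi(b)$, the basic-triple part is routine: since $\AECK$ respects isomorphisms (Definition~\ref{AEC-def}(1)) we get $A^* \in \AECK_\lambda$ and $A^* \lessK B^*$; since $\varphi$ is a bijection, $b^* \in B^* \setminus A^*$; and by invariance of $S^\bs$ under isomorphisms (Definition~\ref{def-pre-frame}(4)(b)) we get $\tp(b^*/A^*; B^*) \in S^\bs(A^*)$, so $(A^*, B^*, b^*) \in \AECK^{3,\bs}$.

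For the main part I would suppose toward a contradiction that some $C \in \AECK_\lambda$ with $A^* \lessK C$ witnesses the non-uniqueness of $(A^*, B^*, b^*)$, via two inequivalent $\lambda$-amalgamations $(f^D_{B^*}, \id_C, D)$ and $(f^F_{B^*}, \id_C, F)$ of $B^*$ and $C$ over $A^*$ whose common type $\tp(f^D_{B^*}(b^*)/C; D) = \tp(f^F_{B^*}(b^*)/C; F)$ does not fork over $A^*$. First I would fix an isomorphism $\sigma : C \cong C'$, with $C' \in \AECK_\lambda$, extending $(\varphi \restriction A)^{-1} : A^* \cong A$; then $A \lessK C'$. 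Next I would fix isomorphisms $\tau_D : D \cong D'$ and $\tau_F : F \cong F'$, each extending $\sigma$ (possible since $C \leqK D$, $C \leqK F$, and $\AECK$ respects isomorphisms, exactly as used repeatedly in the earlier proofs). Setting $g^{D'}_B := \tau_D \circ f^D_{B^*} \circ \varphi : B \embeds D'$ and $g^{F'}_B := \tau_F \circ f^F_{B^*} \circ \varphi : B \embeds F'$, a short computation — using that $f^D_{B^*}, f^F_{B^*}$ restrict to $\id_{A^*}$ and that $\tau_D \restriction C = \tau_F \restriction C = \sigma$ with $\sigma \restriction A^* = (\varphi \restriction A)^{-1}$ — gives $g^{D'}_B \restriction A = g^{F'}_B \restriction A = \id_A$, so that $(g^{D'}_B, \id_{C'}, D')$ and $(g^{F'}_B, \id_{C'}, F')$ are $\lambda$-amalgamations of $B$ and $C'$ over $A$.

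It then remains to verify the three requirements of Definition~\ref{non-uniqueness} for $C'$. For the matching type, invariance of Galois types under $\tau_D$ and $\tau_F$, together with $\tau_D \restriction C = \tau_F \restriction C = \sigma$, converts the equality $\tp(f^D_{B^*}(b^*)/C; D) = \tp(f^F_{B^*}(b^*)/C; F)$ into $\tp(g^{D'}_B(b)/C'; D') = \tp(g^{F'}_B(b)/C'; F')$; and invariance of the non-forking relation under $\tau_D$ (noting $\tau_D[A^*] = A$) converts non-forking over $A^*$ into non-forking over $A$. For inequivalence I would argue contrapositively: any triple $(k_{D'}, k_{F'}, G)$ witnessing $(g^{D'}_B, \id_{C'}, D') \mathrel{E} (g^{F'}_B, \id_{C'}, F')$ yields, after precomposition, a triple $(k_{D'} \circ \tau_D, k_{F'} \circ \tau_F, G)$ witnessing $(f^D_{B^*}, \id_C, D) \mathrel{E} (f^F_{B^*}, \id_C, F)$ — here one uses $\tau_D \restriction C = \tau_F \restriction C = \sigma$ for the $C$-side agreement and the identity $\tau_D \circ f^D_{B^*} = g^{D'}_B \circ \varphi^{-1}$ (and likewise for $F$) for the $B^*$-side agreement — contradicting the assumed inequivalence. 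Hence the two transported amalgamations are inequivalent, and $C'$ witnesses the non-uniqueness of $(A,B,b)$, the desired contradiction. Note this step is a single implication, so no appeal to transitivity of $E$ (Lemma~\ref{connect-E-big}) is needed.

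I expect no step to be genuinely deep: the engine is simply that the relation $E$, equality of Galois types, and the non-forking relation are all invariant under isomorphism, so the entire witnessing configuration transports. The main obstacle is therefore organizational rather than mathematical — keeping the three isomorphisms $\sigma$, $\tau_D$, $\tau_F$ coherent (all restricting to the single $\sigma$ on the copies of $C$ inside $D$ and $F$), and checking the restriction-to-$A$ identities carefully so that the transported amalgamations genuinely sit \emph{over $A$} and so that inequivalence transports in the correct direction.
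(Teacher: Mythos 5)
Your proof is correct. There is nothing in the paper to compare it against line by line: the paper records this statement as a Fact and gives no proof. Measured against the paper's surrounding arguments, however, your route is exactly the expected one --- it is the same transport-by-isomorphism technique used in the proofs of Lemmas \ref{non-uniqueness-isom}, \ref{unique-equiv-via} and \ref{equiv-via}: extend $(\varphi\restriction A)^{-1}$ to compatible isomorphisms $\sigma$, $\tau_D$, $\tau_F$ of the witnessing configuration, push the two amalgamations across, and check the three requirements of Definition~\ref{non-uniqueness} (inequivalence, by composing a hypothetical $E$-witness with $\tau_D$, $\tau_F$; equality of the realized types, by composing an amalgam witnessing the type equality with $\tau_D^{-1}$, $\tau_F^{-1}$; non-forking, by the invariance clause (4)(a) of Definition~\ref{def-pre-frame}). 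Your restriction-to-$A$ computations and composition identities all check out, and you are right that only a single implication between $E$-witnesses is needed, so Lemma~\ref{connect-E-big} plays no role.

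One remark on economy: you could have avoided redoing the transport by quoting the paper's ``via $\varphi$'' machinery. The map $f \mapsto f \circ \varphi$ is a bijection between amalgamations of $B^*$ and $C$ over $A^*$ and amalgamations of $B$ and $C$ over $(A^*, (\varphi\restriction A)^{-1}, \id_{A^*})$, and this correspondence trivially preserves the relation $E$, the realized type, and non-forking; hence $C$ witnesses the non-uniqueness of $(\varphi[A], B^*, \varphi(b))$ iff, by Lemma~\ref{equiv-via}, $C$ witnesses the non-uniqueness of $(A,B,b)$ via $(\varphi\restriction A)^{-1}$, and Lemma~\ref{unique-equiv-via} then yields the Fact at once. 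Both routes rest on the same three invariance observations; yours is self-contained, the alternative reuses lemmas already proved. Finally, it is worth flagging that your type-equality transport implicitly uses the amalgamation property (so that Galois-type equality is witnessed by a single amalgam over $C$, which you can then compose with $\tau_D^{-1}$, $\tau_F^{-1}$); this is harmless here, since AP in $\lambda$ is the standing hypothesis of the section on uniqueness triples, but the step is not purely formal without it.
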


That is,
%However, it is clear that 
uniqueness is preserved by isomorphisms.

Unlike the class of basic triples,
the class of uniqueness triples is not closed under expansion of the ambient model.
However,
the following result ensures that uniqueness is preserved when shrinking the ambient model:

\begin{lemma}
Suppose that $A, B, B^*, C \in \AECK_\lambda$ and $b \in B \setminus A$ are such that
$A \lessK B \leqK B^*$, $A \lessK C$, and $(A, B, b) \in \AECK^{3,\bs}$.
\todo{Optimize the hypotheses here.}
If $C$ witnesses the non-uniqueness of $(A, B, b)$, then it also witnesses the non-uniqueness of $(A, B^*, b)$.
\end{lemma}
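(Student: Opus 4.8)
The plan is as follows. First I would observe that $(A, B^*, b)$ is itself a basic triple, so that the desired conclusion is even meaningful. Indeed, since $A \lessK B \leqK B^*$ we have $A \lessK B^*$ and $b \in B \setminus A \subseteq B^* \setminus A$, while $\tp(b/A; B^*) = \tp(b/A; B) \in S^\bs(A)$ because $B \leqK B^*$ preserves the Galois-type of $b$ over $A$. Thus $(A, B^*, b) \in \AECK^{3,\bs}$, and as $A \lessK C$ with $C \in \AECK_\lambda$, Definition~\ref{non-uniqueness} applies to $(A, B^*, b)$.

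Since $C$ witnesses the non-uniqueness of $(A, B, b)$, fix two $\lambda$-amalgamations $(f^D_B, \id_C, D)$ and $(f^F_B, \id_C, F)$ of $B$ and $C$ over $A$ that are not equivalent over $A$ and such that $\tp(f^D_B(b)/C; D) = \tp(f^F_B(b)/C; F)$, a type that does not fork over $A$. The idea is to enlarge each of these amalgamations so as to accommodate $B^*$ in place of $B$, without disturbing the relevant type or creating an equivalence. Concretely, using the amalgamation property in $\lambda$ (and Fact~\ref{get-amalg-with-id} to normalize the $D$-leg to an inclusion), I would amalgamate $B^*$ and $D$ over $B$ with respect to the maps $\id_B : B \embeds B^*$ and $f^D_B : B \embeds D$, obtaining $D' \in \AECK_\lambda$ with $D \leqK D'$ together with an embedding $f^{D'}_{B^*} : B^* \embeds D'$ extending $f^D_B$. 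Since $f^{D'}_{B^*} \restriction A = f^D_B \restriction A = \id_A$ and $C \leqK D \leqK D'$, the triple $(f^{D'}_{B^*}, \id_C, D')$ is a $\lambda$-amalgamation of $B^*$ and $C$ over $A$. Repeating the construction with $F$ in place of $D$ yields $(f^{F'}_{B^*}, \id_C, F')$.

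It then remains to verify the two defining requirements of Definition~\ref{non-uniqueness} for the pair $(f^{D'}_{B^*}, \id_C, D')$ and $(f^{F'}_{B^*}, \id_C, F')$. The type condition is immediate: because $b \in B$ and $f^{D'}_{B^*}$ extends $f^D_B$, we have $f^{D'}_{B^*}(b) = f^D_B(b)$, and since $D \leqK D'$ the type over $C$ is unchanged, so $\tp(f^{D'}_{B^*}(b)/C; D') = \tp(f^D_B(b)/C; D)$; the same holds on the $F$-side, whence the two types agree and do not fork over $A$, exactly as before. The heart of the argument is the non-equivalence. Suppose toward a contradiction that $(f^G_{D'}, f^G_{F'}, G)$ witnesses the equivalence of the enlarged amalgamations, so that $f^G_{D'} \circ f^{D'}_{B^*} = f^G_{F'} \circ f^{F'}_{B^*}$ and $f^G_{D'} \restriction C = f^G_{F'} \restriction C$. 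I would then claim that $(f^G_{D'} \restriction D, f^G_{F'} \restriction F, G)$ witnesses the equivalence of the original amalgamations $(f^D_B, \id_C, D)$ and $(f^F_B, \id_C, F)$: restricting the first equation to $B \leqK B^*$ gives $f^G_{D'} \circ f^D_B = f^G_{F'} \circ f^F_B$ (since $f^{D'}_{B^*}$, $f^{F'}_{B^*}$ agree with $f^D_B$, $f^F_B$ on $B$), while the second equation yields agreement on $C$ directly. This contradicts the non-equivalence over $A$ of the original amalgamations, so the enlarged ones are not equivalent over $A$ either, completing the proof that $C$ witnesses the non-uniqueness of $(A, B^*, b)$.

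The only genuinely delicate point is this last step, and the care required there is purely bookkeeping: one must confirm that restricting the witnessing embeddings $f^G_{D'}$ and $f^G_{F'}$ along the inclusions $D \leqK D'$ and $F \leqK F'$ produces maps into $G$ that still agree on $C$ and that intertwine $f^D_B$ with $f^F_B$. Everything else reduces to the amalgamation property in $\lambda$ and the invariance of Galois-types under enlarging the ambient model.
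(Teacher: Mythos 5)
Your proposal is correct and follows essentially the same route as the paper's own proof: both amalgamate $B^*$ with each of $D$ and $F$ over $B$ (keeping the $D$- and $F$-legs as inclusions), observe that since $b \in B$ the relevant type over $C$ and its non-forking over $A$ are unchanged, and derive non-equivalence of the enlarged amalgamations by restricting any would-be equivalence witness along $D \leqK D'$ and $F \leqK F'$ to contradict the non-equivalence of the originals. The only cosmetic difference is notation ($D'$, $F'$ versus the paper's $D^*$, $F^*$) and your explicit remark that $(A,B^*,b) \in \AECK^{3,\bs}$ requires invariance of $\tp(b/A;\cdot)$ under enlarging the ambient model, which the paper states without comment.
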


\begin{proof}
First, notice that $\tp(b/A; B) = \tp(b/A; B^*)$,
so that also $(A, B^*, b) \in \AECK^{3,\bs}$.

Suppose that $C$ witnesses the non-uniqueness of $(A, B, b)$.
Then we can fix two $\lambda$-amalgamations
$(f^D_B, \id_C, D)$ and $(f^F_B, \id_C, F)$ 
of $B$ and $C$ over $A$ that are not equivalent over $A$ and such that 
$\tp(f^D_B(b)/C; D) = \tp(f^F_B(b)/C; F)$
and this type does not fork over $A$.
In particular, $D, F \in \AECK_\lambda$, $C \leqK D$, and $C \leqK F$.
By the amalgamation property, we can fix $\lambda$-amalgamations
$(f^{D^*}_{B^*}, \id_D, D^*)$ of $B^*$ and $D$ over $(B, \id_B, f^D_B)$, and
$(f^{F^*}_{B^*}, \id_F, F^*)$ of $B^*$ and $F$ over $(B, \id_B, f^F_B)$.
Then $(f^{D^*}_{B^*}, \id_C, D^*)$ and $(f^{F^*}_{B^*}, \id_C, F^*)$ 
are two $\lambda$-amalgamations of $B^*$ and $C$ over $A$.

If there exists $(f^G_{D^*}, f^G_{F^*}, G)$ witnessing the equivalence of the two $\lambda$-amalgamations 
$(f^{D^*}_{B^*}, \id_C, D^*)$ and $(f^{F^*}_{B^*}, \id_C, F^*)$ 
of $B^*$ and $C$ over $A$,
then $(f^G_{D^*} \circ \id_D,\allowbreak f^G_{F^*} \circ \id_F,\allowbreak G)$ 
would witness the equivalence of the two $\lambda$-amalgamations 
$(f^D_B, \id_{C}, D)$ and $(f^F_B, \id_{C}, F)$ 
of $B$ and $C$ over $A$, a contradiction.
Thus, the two $\lambda$-amalgamations of $B^*$ and $C$ over $A$ are not equivalent.

Furthermore, since $b \in B$, $f^{D^*}_{B^*} \restriction B = f^D_B$, and $f^{F^*}_{B^*} \restriction B = f^F_B$, 
it follows that
$\tp(f^{D^*}_{B^*}(b)/C; D^*) = \tp(f^{D}_{B}(b)/C; D^*) = \tp(f^{D}_{B}(b)/C; D) = 
\tp(f^F_B(b)/C; F) = \tp(f^F_B(b)/C; F^*) = \tp(f^{F^*}_{B^*}(b)/C; F^*)$,
and by hypothesis, this type does not fork over $A$.
Altogether, this means that $C$ witnesses the non-uniqueness of $(A, B^*, b)$,
as sought.
\end{proof}

\begin{corollary}
Suppose that $A, B, B^* \in \AECK_\lambda$ are such that
$A \lessK B \leqK B^*$, and $b \in B \setminus A$. 
If $(A, B^*, b) \in \AECK^{3,\uq}$, then $(A, B, b) \in \AECK^{3,\uq}$.
\end{corollary}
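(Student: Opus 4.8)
The plan is to obtain this Corollary as the contrapositive of the preceding Lemma. That Lemma asserts that any $C$ witnessing the non-uniqueness of $(A,B,b)$ automatically witnesses the non-uniqueness of $(A,B^*,b)$; the Corollary merely reads this implication backwards at the level of uniqueness triples, so the entire substance is already contained in the Lemma.

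First I would secure the standing membership $(A,B,b) \in \AECK^{3,\bs}$, which is needed only so that the phrase ``witnesses the non-uniqueness of $(A,B,b)$'' is meaningful. Since $(A,B^*,b) \in \AECK^{3,\uq} \subseteq \AECK^{3,\bs}$, we have $\tp(b/A; B^*) \in S^\bs(A)$. As $A \lessK B \leqK B^*$ and $b \in B$, the Galois-type is unchanged when the ambient model is shrunk from $B^*$ to its submodel $B$ (types are best-behaved under the amalgamation property in $\lambda$), so $\tp(b/A; B) = \tp(b/A; B^*) \in S^\bs(A)$; together with $b \in B \setminus A$ and $A \lessK B$ this yields $(A,B,b) \in \AECK^{3,\bs}$. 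This is exactly the observation used at the start of the Lemma's proof, now applied in the reverse direction.

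Next I would argue by contradiction. Assume $(A,B,b) \notin \AECK^{3,\uq}$. By the definition of a uniqueness triple there is then some $C \in \AECK_\lambda$ witnessing the non-uniqueness of $(A,B,b)$, and in particular $A \lessK C$ (by Definition~\ref{non-uniqueness}). At this point all hypotheses of the preceding Lemma are in place, namely $A, B, B^*, C \in \AECK_\lambda$, $b \in B \setminus A$, $A \lessK B \leqK B^*$, $A \lessK C$, and $(A,B,b) \in \AECK^{3,\bs}$. The Lemma therefore gives that $C$ also witnesses the non-uniqueness of $(A,B^*,b)$, which contradicts the hypothesis $(A,B^*,b) \in \AECK^{3,\uq}$. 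Hence no such $C$ exists, so $(A,B,b) \in \AECK^{3,\uq}$, as desired.

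I expect no serious obstacle here: the geometric content is entirely carried by the Lemma, and the only care required is the bookkeeping that $(A,B,b)$ is basic (so that non-uniqueness is defined for it) and that the relevant Galois-type is preserved when passing from the ambient model $B^*$ down to $B$.
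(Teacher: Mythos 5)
Your proof is correct and is exactly the argument the paper intends: the Corollary is stated without proof immediately after the Lemma, as its contrapositive. Your extra bookkeeping step — using $\tp(b/A;B)=\tp(b/A;B^*)$ to get $(A,B,b)\in\AECK^{3,\bs}$ so that non-uniqueness witnesses are even defined and the Lemma's hypotheses are met — is precisely the detail the paper leaves implicit, and you handle it correctly.
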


\begin{lemma}
Suppose that $\mathfrak s$ satisfies transitivity.
Suppose that $A^*, A, B, C \in \AECK_\lambda$ and $b \in B \setminus A$ are such that
$A^* \leqK A \lessK B$, $A \lessK C$, and $\dnf(A^*, A, b, B)$. 
If $C$ witnesses the non-uniqueness of $(A, B, b)$, then it also witnesses the non-uniqueness of $(A^*, B, b)$.
\end{lemma}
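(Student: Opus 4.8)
The plan is to reuse the very same pair of witnessing amalgamations, reinterpreted over the smaller base $A^*$. First I would record the two preliminary facts needed merely to make sense of the conclusion: that $A^* \lessK C$ and that $(A^*, B, b) \in \AECK^{3,\bs}$. The containments $A^* \leqK A \lessK B$ and $A \lessK C$ give $A^* \leqK B$ and $A^* \leqK C$ by transitivity of $\leqK$, and since $A^* \subseteq A$ while $A \neq B$ and $A \neq C$ we obtain the strict relations $A^* \lessK B$ and $A^* \lessK C$; moreover $b \in B \setminus A \subseteq B \setminus A^*$. To see that $\tp(b/A^*; B) \in S^\bs(A^*)$, I would feed $\dnf(A^*, A, b, B)$ into monotonicity (Definition~\ref{def-pre-frame}(5)), choosing the parameters so as to collapse the second coordinate down to $A^*$ (taking the intermediate ambient model to be $B$ itself), thereby obtaining $\dnf(A^*, A^*, b, B)$; then clause~(6) of Definition~\ref{def-pre-frame}, that non-forking types are basic, delivers $\tp(b/A^*; B) \in S^\bs(A^*)$.

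Assume now that $C$ witnesses the non-uniqueness of $(A,B,b)$, and fix the two non-equivalent $\lambda$-amalgamations $(f^D_B, \id_C, D)$ and $(f^F_B, \id_C, F)$ of $B$ and $C$ over $A$ with $\tp(f^D_B(b)/C; D) = \tp(f^F_B(b)/C; F)$ a type that does not fork over $A$. I would verify that these same two triples witness the non-uniqueness of $(A^*, B, b)$. Because the second coordinate of each amalgamation is $\id_C$, each map $f_B$ fixes $A$ pointwise, hence a fortiori fixes $A^* \subseteq A$, so both triples are $\lambda$-amalgamations of $B$ and $C$ over $A^*$ as well. The crucial observation for non-equivalence is that the witnessing condition for the relation $E$ (Definition~\ref{def-E}) refers only to the maps $f^D_B, \id_C, f^F_B, \id_C$ and a common amalgam $G$, and never to the base itself; consequently non-equivalence over $A$ and non-equivalence over $A^*$ coincide for this pair, and the type-equality condition is literally unchanged.

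It remains only to upgrade ``does not fork over $A$'' to ``does not fork over $A^*$'', and this is the single place where the hypotheses $\dnf(A^*, A, b, B)$ and transitivity are used. Writing $p := \tp(f^D_B(b)/C; D)$, the identity $f^D_B \restriction A = \id_A$ together with invariance of Galois-types under the embedding $f^D_B$ gives $p \restriction A = \tp(f^D_B(b)/A; D) = \tp(b/A; B)$, which does not fork over $A^*$ precisely by $\dnf(A^*, A, b, B)$. Since $A^* \leqK A \leqK C$, while $p$ does not fork over $A$ and $p \restriction A$ does not fork over $A^*$, the transitivity property yields that $p$ does not fork over $A^*$. Assembling the three verified conditions shows that $C$ witnesses the non-uniqueness of $(A^*, B, b)$.

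The main obstacle, or at least the only genuinely non-formal step, is this last non-forking transfer: one must correctly identify the restriction $p \restriction A$ with $\tp(b/A; B)$ so that the frame-theoretic hypothesis $\dnf(A^*, A, b, B)$ becomes applicable, and then invoke transitivity in the right configuration $A^* \leqK A \leqK C$. Everything else is bookkeeping: the two amalgamations and their type-equality carry over verbatim, and the base-independence of the witnessing condition for $E$ makes the preservation of non-equivalence immediate.
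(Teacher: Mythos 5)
Your proof is correct and follows essentially the same route as the paper's: you reuse the very same two non-equivalent $\lambda$-amalgamations (observing that the witnessing condition for $E$ never mentions the base, so non-equivalence persists over $A^*$), and you transfer non-forking from $A$ to $A^*$ via the identification $\tp(f^D_B(b)/C;D) \restriction A = \tp(b/A;B)$ and the transitivity property, exactly as in the paper. The additional details you supply (the monotonicity-plus-clause~(6) argument showing $\tp(b/A^*;B) \in S^\bs(A^*)$, and the strictness checks giving $A^* \lessK B$ and $A^* \lessK C$) are correct and merely make explicit what the paper leaves implicit.
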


\begin{proof}
First, since $\dnf(A^*, A, b, B)$,
it follows that $\tp(b/A; B) \in S^\bs(A)$ and also $\tp(b/A^*; B) \in S^\bs(A^*)$.

Suppose that $C$ witnesses the non-uniqueness of $(A, B, b)$.
Then we can fix two $\lambda$-amalgamations
$(f^D_B, \id_C, D)$ and $(f^F_B, \id_C, F)$ 
of $B$ and $C$ over $A$ that are not equivalent over $A$ and such that 
$\tp(f^D_B(b)/C; D) = \tp(f^F_B(b)/C; F)$
and this type does not fork over $A$.
As $A^* \leqK A$, the two $\lambda$-amalgamations are also over $A^*$,
and remain non-equivalent over $A^*$.
Letting $r := \tp(f^D_B(b)/C; D)$,
we have $r \restriction A = \tp(f^D_B(b)/A; D) = \tp(b/A; B)$.
Since $r$ does not fork over $A$ and by hypothesis $r \restriction A$ does not fork over $A^*$,
we conclude by transitivity that $r$ does not fork over $A^*$.
Altogether, $C$ witnesses the non-uniqueness of $(A^*, B, b)$.
\end{proof}

\begin{corollary}
Suppose that $\mathfrak s$ satisfies transitivity,
$A^*, A, B \in \AECK_\lambda$ are such that
$A^*  \leqK A \lessK B$, and $b \in B \setminus A$. 
If $(A^*, B, b) \in \AECK^{3,\uq}$ and $\dnf(A^*, A, b, B)$, then $(A, B, b) \in \AECK^{3,\uq}$.
\end{corollary}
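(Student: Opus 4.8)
The plan is to prove this Corollary as the contrapositive of the preceding Lemma, after first checking that $(A, B, b)$ is a legitimate basic triple.

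First I would verify that $(A, B, b) \in \AECK^{3,\bs}$. The hypotheses already supply $A \lessK B$ and $b \in B \setminus A$, so it remains to see that $\tp(b/A; B) \in S^\bs(A)$. Since $\dnf(A^*, A, b, B)$ holds and $A^* \leqK A$, an application of monotonicity (Definition~\ref{def-pre-frame}(5), increasing the base from $A^*$ to $A$ while leaving the domain $A$, the element $b$, and the ambient $B$ unchanged) yields $\dnf(A, A, b, B)$; then the ``non-forking types are basic'' clause (Definition~\ref{def-pre-frame}(6)) gives $\tp(b/A; B) \in S^\bs(A)$, exactly as recorded in the opening line of the previous Lemma's proof. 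Thus $(A, B, b) \in \AECK^{3,\bs}$, and it makes sense to ask whether it is a uniqueness triple.

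Next, to show $(A, B, b) \in \AECK^{3,\uq}$, I would argue by contradiction. Suppose some $C \in \AECK_\lambda$ witnesses the non-uniqueness of $(A, B, b)$; by Definition~\ref{non-uniqueness} this forces in particular $A \lessK C$. Now all the hypotheses of the preceding Lemma are met: $\mathfrak s$ satisfies transitivity, $A^* \leqK A \lessK B$, $A \lessK C$, $b \in B \setminus A$, and $\dnf(A^*, A, b, B)$. That Lemma therefore guarantees that $C$ also witnesses the non-uniqueness of $(A^*, B, b)$, contradicting the hypothesis $(A^*, B, b) \in \AECK^{3,\uq}$. Hence no such $C$ can exist, and $(A, B, b)$ is a uniqueness triple.

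I do not anticipate any serious obstacle, since the entire combinatorial content has already been isolated in the preceding Lemma, leaving the Corollary as its contrapositive packaged with the routine verification that $(A, B, b)$ is basic. The only point requiring minor care is confirming that the statement ``$C$ witnesses the non-uniqueness of $(A, B, b)$'' supplies the relation $A \lessK C$ that is needed to invoke the Lemma, which it does directly from Definition~\ref{non-uniqueness}.
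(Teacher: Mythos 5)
Your proof is correct and is precisely the argument the paper intends: the Corollary is stated without proof because it is the contrapositive of the preceding Lemma, combined with the observation (already made at the start of that Lemma's proof, via monotonicity and Definition~\ref{def-pre-frame}(6)) that $\dnf(A^*, A, b, B)$ makes $(A, B, b)$ a basic triple. Your care in checking that a non-uniqueness witness $C$ satisfies $A \lessK C$, so the Lemma applies, is exactly the right (and only) point needing verification.
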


%\section{Uniqueness triples (old)}

\section{A more useful form of \texorpdfstring{$\diamondsuit$}{the diamond axiom}}
\label{section:diamond-H-kappa}

Throughout this section, $\kappa$ denotes a regular uncountable cardinal.
(In our intended applications in this paper, $\kappa$ will be $\lambda^+$.)

Jensen~\cite{MR0309729} introduced the diamond axiom $\diamondsuit(\kappa)$ axiom to predict subsets of $\kappa$.
But usually what we want to guess are subsets of some structure of size $\kappa$,
not necessarily sets of ordinals.
\todo{In our application, we will want to predict a function $\Omega : \kappa \to \kappa$
using approximating functions $\Omega_\beta$.}
Encoding the desired sets as sets of ordinals is cumbersome, 
and distracts us from properly applying the guessing/predicting power of $\diamondsuit$.
\todo{Expand this explanation.}
Thus, we appeal to a more versatile formulation of $\diamondsuit$, introduced by Assaf Rinot and the first author:

\begin{definition}[{\cite[Definition~2]{Souslin-note}, cf.~\cite[Definition~2.1]{paper22}}]
$\diamondsuit^-(\mathbf H_\kappa)$
asserts the existence of
a sequence $\langle \Omega_\beta \mid \beta < \kappa \rangle$ of elements of $\mathbf H_\kappa$
such that for every parameter $z\in \mathbf H_{\kappa^+}$ and every subset $\Omega \subseteq \mathbf H_\kappa$,
there exists an elementary submodel $\mathcal M \esm \mathbf H_{\kappa^+}$ with $z \in \mathcal M$, such that
$\kappa^{\mathcal M} := \mathcal M\cap\kappa$ is an ordinal $<\kappa$ and
$\mathcal M\cap \Omega=\Omega_{\kappa^{\mathcal M}}$.
%\todo{Say that there are in fact stationarily many, as this is how it's used.}
\end{definition}

Here, $\mathbf H_\theta$ denotes the collection of all sets of hereditary cardinality less than $\theta$,
and $\esm$ denotes the first-order elementary-submodel relation between standard models of the vocabulary $\{ {\in} \}$.

\begin{fact}[{\cite[Lemma~2.2]{paper22}}]\label{diamond-H_kappa-equiv}
For any regular uncountable cardinal $\kappa$:
\[
\diamondsuit(\kappa) \iff \diamondsuit^-(\mathbf H_\kappa).
\]
\end{fact}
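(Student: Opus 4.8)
The plan is to prove the two implications separately, reserving the substantive work for the forward direction $\diamondsuit(\kappa) \implies \diamondsuit^-(\mathbf H_\kappa)$.

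For the easier direction $\diamondsuit^-(\mathbf H_\kappa) \implies \diamondsuit(\kappa)$, I would start from a sequence $\langle \Omega_\beta \mid \beta < \kappa \rangle$ as in the definition and manufacture a candidate $\diamondsuit(\kappa)$-sequence by setting $S_\beta := \Omega_\beta$ whenever $\Omega_\beta \subseteq \beta$ (i.e.\ $\Omega_\beta$ is a set of ordinals below $\beta$), and $S_\beta := \emptyset$ otherwise, so that $S_\beta \subseteq \beta$ in all cases. To verify the guessing property, fix $X \subseteq \kappa$ and a club $C \subseteq \kappa$; viewing $X$ as a subset of $\mathbf H_\kappa$ (every ordinal below $\kappa$ lies in $\mathbf H_\kappa$) and taking the parameter $z := C \in \mathbf H_{\kappa^+}$, I would apply $\diamondsuit^-(\mathbf H_\kappa)$ to obtain $\mathcal M \esm \mathbf H_{\kappa^+}$ with $C \in \mathcal M$, $\delta := \kappa^{\mathcal M} < \kappa$, and $\mathcal M \cap X = \Omega_\delta$. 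Since $X \subseteq \kappa$, one checks that $\mathcal M \cap X = X \cap \delta \subseteq \delta$, whence $\Omega_\delta = X \cap \delta \subseteq \delta$ and therefore $S_\delta = X \cap \delta$ by construction. Finally, $C \in \mathcal M$ together with $\mathcal M \cap \kappa = \delta$ forces $\delta$ to be a limit point of $C$, so $\delta \in C$; as $C$ was an arbitrary club, the set of correct guesses meets every club and hence is stationary.

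For the forward direction I would first extract from $\diamondsuit(\kappa)$ the cardinal arithmetic $2^{<\kappa} = \kappa$ (every bounded subset of $\kappa$ appears on the $\diamondsuit$-sequence, so $2^\alpha \leq \kappa$ for each $\alpha < \kappa$), which gives $\left|\mathbf H_\kappa\right| = \kappa$ and lets me fix a bijection $c : \kappa \leftrightarrow \mathbf H_\kappa$. Given a $\diamondsuit(\kappa)$-sequence $\langle S_\beta \mid \beta < \kappa \rangle$, I would define $\Omega_\beta := c[S_\beta]$, a subset of $\mathbf H_\kappa$ of size $<\kappa$ and hence, since $\kappa$ is regular, itself a member of $\mathbf H_\kappa$. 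Now, given arbitrary $\Omega \subseteq \mathbf H_\kappa$ and $z \in \mathbf H_{\kappa^+}$, set $X := c^{-1}[\Omega] \subseteq \kappa$ and build an increasing continuous chain $\langle \mathcal M_\beta \mid \beta < \kappa \rangle$ of elementary submodels of an expansion of $\mathbf H_{\kappa^+}$ containing $c$, $\Omega$, $z$, and $\kappa$ among its parameters, each of size $<\kappa$; these are in particular $\esm$-submodels of $\mathbf H_{\kappa^+}$. The set $E := \Set{ \beta < \kappa | \mathcal M_\beta \cap \kappa = \beta }$ is a club, and for $\beta \in E$ closure under $c$ and $c^{-1}$ yields the key identity $\mathcal M_\beta \cap \mathbf H_\kappa = c[\beta]$, whence $\mathcal M_\beta \cap \Omega = c[X \cap \beta]$.

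It then remains to intersect with the guessing. Applying $\diamondsuit(\kappa)$ to $X$ makes $G := \Set{ \delta < \kappa | X \cap \delta = S_\delta }$ stationary, so I can choose $\delta \in E \cap G$ and set $\mathcal M := \mathcal M_\delta$; this gives $\mathcal M \cap \Omega = c[X \cap \delta] = c[S_\delta] = \Omega_\delta$, with $z \in \mathcal M$ and $\kappa^{\mathcal M} = \delta < \kappa$, exactly as required. I expect the main obstacle to be the bookkeeping of this forward direction: justifying $\left|\mathbf H_\kappa\right| = \kappa$, arranging the parameters so that the identity $\mathcal M_\beta \cap \mathbf H_\kappa = c[\beta]$ holds on a club (this is where elementarity and closure under the coding function are essential), and confirming that the guessed object $\Omega_\delta = c[S_\delta]$ genuinely lands in $\mathbf H_\kappa$. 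The reverse direction, by contrast, should reduce to a routine unwinding of the definitions together with the standard club-reflection argument.
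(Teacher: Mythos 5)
The paper itself gives no proof of this statement: it is imported as a Fact, cited to \cite[Lemma~2.2]{paper22}, so there is no in-paper argument to compare against. Your proof is correct and follows the standard route that the cited reference takes: for $\diamondsuit^-(\mathbf H_\kappa)\implies\diamondsuit(\kappa)$, restrict the guessing to subsets of $\kappa$ and use elementarity plus the club $C$ as the parameter $z$ to see that $\mathcal M\cap\kappa$ lands in $C$; for the converse, extract $2^{<\kappa}=\kappa$ from $\diamondsuit(\kappa)$, code $\mathbf H_\kappa$ by a bijection $c:\kappa\leftrightarrow\mathbf H_\kappa$, transfer the $\diamondsuit$-sequence along $c$, and intersect the stationary guessing set with the club of $\beta$ satisfying $\mathcal M_\beta\cap\kappa=\beta$.

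One point you should make explicit rather than assert: an arbitrary increasing continuous chain $\langle\mathcal M_\beta\mid\beta<\kappa\rangle$ of small elementary submodels does not by itself make $E=\{\beta<\kappa\mid\mathcal M_\beta\cap\kappa=\beta\}$ a club (a constant chain is increasing and continuous, yet $E$ can fail to be unbounded). You must build the chain so that the models capture the ordinals, e.g.\ demanding $\beta\subseteq\mathcal M_{\beta+1}$ (possible since $|\beta|<\kappa$ and, by regularity of $\kappa$, the L\"owenheim--Skolem closure can be kept of size $<\kappa$); with that clause, closedness of $E$ follows from continuity of the chain and unboundedness from a standard catch-up iteration. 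You flagged this as "bookkeeping," and indeed it is, but it is the one place where the written argument as stated would not literally compile; everything else, including the verification that $\Omega_\beta=c[S_\beta]$ lies in $\mathbf H_\kappa$ and the identity $\mathcal M_\beta\cap\Omega=c[X\cap\beta]$ on $E$, is accurate.
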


The proof of \cite[Lemma~2.2]{paper22} \todo{Decide whether this is the best reference.} also shows the following:

\begin{fact}\label{diamond-H_kappa-stat}
If $\langle \Omega_\beta \mid \beta < \kappa \rangle$ is a sequence witnessing $\diamondsuit^-(\mathbf H_\kappa)$,
then for every parameter $z\in \mathbf H_{\kappa^+}$ and every subset $\Omega \subseteq \mathbf H_\kappa$,
the following set is stationary in~$\kappa$:
\[
\Set{ \beta<\kappa | \exists \mathcal M \esm \mathbf H_{\kappa^+} \left[ z \in \mathcal M, 
\beta = \mathcal M \cap \kappa, \mathcal M \cap \Omega = \Omega_\beta \right] }
\]
\end{fact}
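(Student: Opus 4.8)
The plan is to upgrade the ``there exists one guessing model'' conclusion built into $\diamondsuit^-(\mathbf H_\kappa)$ to the stronger ``the guessing points meet every club'' by the standard device of coding a club into the parameter. Write $S$ for the displayed set. To show $S$ is stationary, I would fix an arbitrary club $C \subseteq \kappa$ and aim to produce some $\beta \in S \cap C$. The first step is the bookkeeping observation that $C \in \mathbf H_{\kappa^+}$: since $C$ is cofinal in $\kappa$, its transitive closure is $\kappa$, so $C$ has hereditary cardinality $\kappa < \kappa^+$. Consequently the pair $z^\ast := (z, C)$ is again a legitimate parameter in $\mathbf H_{\kappa^+}$.

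Next I would apply the defining property of the witnessing sequence $\langle \Omega_\beta \mid \beta<\kappa \rangle$ to this enlarged parameter $z^\ast$ together with the same subset $\Omega$. This produces an elementary submodel $\mathcal M \esm \mathbf H_{\kappa^+}$ with $z^\ast \in \mathcal M$, such that $\beta := \mathcal M \cap \kappa$ is an ordinal $<\kappa$ and $\mathcal M \cap \Omega = \Omega_\beta$. Since both $z$ and $C$ are first-order definable from $z^\ast$, elementarity gives $z, C \in \mathcal M$; likewise $\kappa \in \mathcal M$, because $\kappa$ is definable in $\mathbf H_{\kappa^+}$ as its largest cardinal (alternatively, one simply folds $\kappa$ into the parameter as well). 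In particular $\mathcal M$ already witnesses $z \in \mathcal M$, $\beta = \mathcal M \cap \kappa$, and $\mathcal M \cap \Omega = \Omega_\beta$, so $\beta \in S$; it remains only to check $\beta \in C$.

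The crux is therefore showing $\beta = \mathcal M \cap \kappa \in C$. Here I would argue that $C \cap \beta$ is cofinal in $\beta$: given any $\gamma < \beta$, we have $\gamma \in \mathcal M$ (as $\beta = \mathcal M \cap \kappa$), and since $\mathcal M$ models ``$C$ is unbounded in $\kappa$'' with the parameters $C$, $\kappa$, $\gamma$ all available, elementarity yields some $\delta \in C \cap \mathcal M$ with $\gamma < \delta$; but $\delta < \kappa$ and $\delta \in \mathcal M$ force $\delta \in \mathcal M \cap \kappa = \beta$, i.e.\ $\gamma < \delta < \beta$. Thus $\beta$ is a limit point of $C$, and closedness of $C$ gives $\beta \in C$. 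Hence $\beta \in S \cap C$, and since $C$ was an arbitrary club, $S$ is stationary.

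I expect the main obstacle to be precisely this last step — confirming that $\beta = \mathcal M \cap \kappa$ genuinely lands inside $C$ rather than merely being bounded by it; everything else is routine bookkeeping. Its correctness hinges on two things being arranged before the diamond is invoked: that $C$ (and $\kappa$) actually belong to $\mathcal M$, which is the entire reason for folding $C$ into $z^\ast$, and that the closedness of $C$ combines with elementarity to pin $\beta$ down as a genuine limit point of $C$.
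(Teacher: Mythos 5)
Your proof is correct, and it is essentially the argument the paper relies on: the paper gives no in-line proof of this Fact, instead noting that the proof of \cite[Lemma~2.2]{paper22} ``also shows'' it, and that proof obtains stationarity by exactly your device --- folding an arbitrary club $C$ (together with $\kappa$) into the parameter $z$, and then using elementarity of $\mathcal M$ plus closedness of $C$ to conclude that $\beta = \mathcal M \cap \kappa$ is a limit point of $C$, hence in $C$. A minor virtue of writing the argument out as you do is that it shows the stationary conclusion follows formally from the one-model formulation of $\diamondsuit^-(\mathbf H_\kappa)$, for an arbitrary witnessing sequence, with no detour through the equivalence with $\diamondsuit(\kappa)$.
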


%\newpage

\section{Density of uniqueness triples}
\label{section:density}

Here we state and prove the Main Theorem, that assuming $\diamondsuit(\lambda^+)$ and reasonably minimal hypotheses on the pre-$\lambda$-frame,
the class of uniqueness triples is dense among the basic triples.
Significantly, we do not require the pre-$\lambda$-frame to satisfy any of the extension, uniqueness, stability, symmetry, or local character properties.
As we shall see,
\todo{Are there other such examples?
Look for more applications of the Main Theorem.}
the \emph{trivial $\lambda$-frame} to be introduced in Section~\ref{section:trivial+*domination}
is an important example of a pre-$\lambda$-frame in which the extension and uniqueness axioms may fail.
%such as the trivial $\lambda$-frame introduced in Section~\ref{section:trivial}.
The weak hypotheses in our Main Theorem expand its versatility to apply to such examples.

%as we shall see
%(see Corollary~\ref{*domination-triples-dense}).

%In order to obtain saturated and universal models in the course of the proof,
%we assume amalgamation and stability in $\AECK_{\lambda^+}$.

%\section{Density theorem and its proof, going forward, WITHOUT EXTENSION!}

%THIS IS THE MOST IMPORTANT VERSION,
%because there are applications where extension fails, whereas it's no big deal to assume AP on $\lambda^+$.

%LOOK FOR APPLICATIONS, especially the trivial frame!

\begin{definition}[cf.~{\cite[Definition~3.2.1(1)]{JrSh:875}}]
Suppose $\mathfrak s = (\AECK, {\dnf}, S^\bs)$ is a pre-$\lambda$-frame.
%and $\AECK^{3,*}$ is some subclass of $\AECK^{3,\bs}$.
We say $\AECK^{3,\uq}$ is \emph{dense with respect to $\leqbs$}
\todo{Apply this definition throughout.}
(or \emph{the class of uniqueness triples is dense among the basic triples}) if
for every $(A, B, b) \in \AECK^{3,\bs}$ there is some $(C, D, b) \in \AECK^{3,\uq}$
such that $(A, B, b) \leqbs (C, D, b)$.
\end{definition}

\begin{theorem}[Main Theorem]\label{main-thm}
Suppose that:
\todo{Add comments about:  
it's no big deal to assume AP in $\AECK_{\lambda^+}$, which allows us to avoid extension;
getting the saturated and universal models from the axioms on $\AECK_\lambda$ and $\AECK_{\lambda^+}$ rather than positing their existence.}
\begin{enumerate}
\item $\lambda$ is an infinite cardinal such that $\diamondsuit(\lambda^+)$ holds;
\item $\mathfrak s = (\AECK, {\dnf}, S^\bs)$ is a pre-$\lambda$-frame satisfying 
%\todo{Emphasize that we don't need extension or uniqueness.} 
amalgamation, no maximal model, %basic almost stability, 
density, transitivity,
existence, and continuity;
\todo{Can we remove ``no maximal model''?}
%\dots;
\item $\AECK_{\lambda^+}$ satisfies amalgamation and stability.
%\item %$A, B \in \AECK_\lambda$ and $b \in B \setminus A$ are such that 
%$(A, B, b) \in \AECK^{3,\bs}$.
%\item $A \in \AECK_\lambda$;
%\item $N, N' \in \AECK_{\lambda^+}$ satisfy $A \lessK N$, $N \in \AECK^\sat_{\lambda^+}$, 
%and $N \lessKuniv N'$.
%\todo{Add comment about hypotheses on $\AECK_{\lambda^+}$ that lead to the existence of these models.}
%\item $q \in S^\bs(A)$.
\end{enumerate}
Then $\AECK^{3,\uq}$ is dense with respect to $\leqbs$.

%Then there exist models $C, D \in \AECK_\lambda$ such that:
%% $A \leqK C \lessK D$ and $b \in D \setminus C$
%%such that:
%\begin{enumerate}
%\item $(C, D, b) \in \AECK^{3,\uq}$; % is a uniqueness triple;
%\item $A \leqK C$ and $B \leqK D$ (so that, in particular, $\tp(b/C; D)$ extends $\tp(b/A; B)$);
%and
%\item $\tp(b/C; D)$ does not fork over $A$.
%%\item $\tp(b/A; D) = q$; and
%%\item $\dnf(A, C, b, D)$.
%%\todo{Maybe we get a whole sequence here as in the Main Theorem!
%%(Probably needs extension.)}
%\end{enumerate}
\end{theorem}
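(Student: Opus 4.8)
The plan is to argue by contradiction. Suppose $\AECK^{3,\uq}$ is not dense with respect to $\leqbs$, and fix a basic triple $(A,B,b) \in \AECK^{3,\bs}$ admitting no non-forking extension that is a uniqueness triple. First I would secure the ambient saturated model: since $\AECK_\lambda$ has no maximal model I can embed $A$ into some $N^- \in \AECK_{\lambda^+}$ with $A \lessK N^-$, and then, using that $\AECK_{\lambda^+}$ satisfies amalgamation and stability while $\AECK_\lambda$ satisfies amalgamation, Fact~\ref{universal-saturated} produces $N \in \AECK^\sat_{\lambda^+}$ with $A \lessK N$. By Lemma~\ref{uniqueness-saturated}, the non-uniqueness of any non-forking extension of $(A,B,b)$ may be witnessed by a submodel of $N$, which is exactly what lets the construction stay inside a single representation of $N$. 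Finally, via Fact~\ref{diamond-H_kappa-equiv} I fix a sequence $\langle \Omega_\beta \mid \beta<\lambda^+\rangle$ witnessing $\diamondsuit^-(\mathbf H_{\lambda^+})$.

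Next I would run a recursion of length $\lambda^+$, building a representation $\langle C_\alpha \mid \alpha<\lambda^+\rangle$ of $N$ together with a $\subseteq$-increasing, continuous system of amalgamations of $B$ with the $C_\alpha$: models $\langle D_\alpha\rangle$, embeddings $g_\alpha : C_\alpha \embeds D_\alpha$, and a point $b \in D_0$, arranged so that at every stage $\tp(b/g_\alpha[C_\alpha]; D_\alpha)$ is a non-forking extension of $\tp(b/A;B)$ over $A$. A bookkeeping function handles three tasks at once: (i) realizing all basic types so that $N = \bigcup_\alpha C_\alpha$ comes out saturated, exactly as in Theorem~\ref{get-saturated}; (ii) absorbing into later $C_\alpha$ a witness to the non-uniqueness of the current triple (one exists by the contradiction hypothesis and Lemma~\ref{uniqueness-saturated}), so that the relevant $C^*$ always witnesses non-uniqueness via the appropriate isomorphism; and (iii) at each successor stage, reading the guess $\Omega_\alpha$ as a code for an embedding $\Omega$ of $B$ into the model built so far and invoking Lemma~\ref{get-desired-amalg} in its non-uniqueness case to choose the next amalgamation $(\id_B, f^{D_{\alpha+1}}_{C_{\alpha+1}}, D_{\alpha+1})$ so that it is $\centernot{E}$ to the predicted amalgamation $(\Omega,\id,\cdot)$, while still keeping $\tp(b/\cdot;D_{\alpha+1})$ non-forking over $A$. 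Crucially, since the extension property is unavailable, it is precisely clause~(2) of Lemma~\ref{get-desired-amalg}, fed by the non-uniqueness witnesses from task~(ii), that drives every successor step; continuity (Lemma~\ref{continuous-union}) carries the non-forking of the type of $b$ through the limit stages.

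At the end I set $D := \bigcup_\alpha D_\alpha$ and $g := \bigcup_\alpha g_\alpha : N \embeds D$, so that $\tp(b/g[N]; D)$ is basic and does not fork over $A$. This packages a single global amalgamation of $B$ with $N$, i.e.\ an embedding $\Omega^\infty : B \embeds D$ compatible with $b$, which I encode as a subset of $\mathbf H_{\lambda^+}$. By Fact~\ref{diamond-H_kappa-stat} there are stationarily many $\delta<\lambda^+$ carrying an elementary submodel $\mathcal M \esm \mathbf H_{\lambda^+}$ with $\delta = \mathcal M \cap \lambda^+$ and $\mathcal M \cap \Omega^\infty = \Omega_\delta$; for such $\delta$ the guess $\Omega_\delta$ really is the stage-$\delta$ restriction of the global amalgamation. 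But that restriction is, by coherence of the construction (Lemma~\ref{connect-E-big} together with Theorem~\ref{image-equal-on-club}), $E$-equivalent to the amalgamation I deliberately built at stage $\delta+1$ to be $\centernot{E}$ to $(\Omega_\delta,\id,\cdot)$ --- a contradiction. Hence some non-forking extension of $(A,B,b)$ is a uniqueness triple, as required.

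The main obstacle I expect is aligning the diamond prediction with the construction: I must fix, once and for all, how an amalgamation (an embedding together with its target model and the distinguished realization of $b$) is coded as an element or subset of $\mathbf H_{\lambda^+}$, and verify that on a club the stage-$\delta$ approximations genuinely cohere with the global object $\Omega^\infty$ (this is where Theorem~\ref{image-equal-on-club} and Fact~\ref{equal-on-club} are needed), so that the locally engineered $\centernot{E}$ collides with the globally forced $E$. Everything else --- maintaining non-forking, achieving saturation, and the supply of non-uniqueness witnesses --- is routine bookkeeping assembled from the lemmas already established, but this encoding-and-reflection step is the delicate core and must be set up so that $\diamondsuit^-(\mathbf H_{\lambda^+})$ applies verbatim.
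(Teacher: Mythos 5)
Your overall architecture --- a saturated ambient model, non-uniqueness witnesses supplied by Lemma~\ref{uniqueness-saturated}, diagonalization at successor stages via Lemma~\ref{get-desired-amalg}(2), continuity at limits via Lemma~\ref{continuous-union}, and a diamond-driven collision at the end --- matches the paper's proof in outline, but the collision step, which is the heart of the argument, does not work as you have set it up. You have no \emph{fixed} prediction target: the object you ask $\diamondsuit^-(\mathbf H_{\lambda^+})$ to guess is the construction itself (the union $D = \bigcup_\alpha D_\alpha$ together with $g = \bigcup_\alpha g_\alpha$), and this is self-referential in a way that kills the contradiction. At a stage $\delta$ where the guess is correct, $\Omega_\delta = \mathcal M \cap \Omega^\infty$ encodes only the construction \emph{below} $\delta$ (essentially $g_\delta$ and $D_\delta$); it contains no information about $C_{\delta+1}$, so it determines no amalgamation of $D_\delta$ and $C_{\delta+1}$ over $(C_\delta, g_\delta, \id_{C_\delta})$ --- and $E$ is only a relation between amalgamations of the same two models over the same base. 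There is therefore nothing of the right shape for your stage-$\delta$ choice to be $\centernot{E}$ to, and no contradiction can be extracted: a construction cannot diagonalize against its own past. What the paper does instead is fix, \emph{before} the construction, a model $N'$ with $N \lessKuniv N'$ (obtained from $\lambda^+$-amalgamation and stability via Theorem~\ref{stab+amalg<->universal}), and read each $\Omega_\alpha$ as a potential embedding of $B_\alpha$ into $N'$. The abstract models $B_{\alpha+1}$ chosen during the construction do not live inside $N'$, so diagonalizing against embeddings into $N'$ is consistent; and after the construction completes, $\lambda^+$-amalgamation makes $A_{\lambda^+}$ an amalgamation base, whence $A_{\lambda^+} \lessKuniv N'$ (Corollary~\ref{smaller-univ-from-AB}), and Lemma~\ref{univ-embedding} produces an embedding $h : B_{\lambda^+} \embeds N'$ with $h \circ g_{\lambda^+} = \id_{A_{\lambda^+}}$. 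It is this post-hoc $h$ --- an object the diamond sequence can catch, because its target $N'$ was fixed in advance --- whose restrictions $h_\beta$ are $E$-equivalent (witnessed by $h_{\beta+1}$) to the amalgamations chosen at stage $\beta$, producing the collision. Your proposal never performs this step; indeed, in your argument $\AECK_{\lambda^+}$-amalgamation and stability are used only to manufacture the saturated $N$, which is not where they do their real work.

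A second, independent problem: your bookkeeping tasks (i) and (ii) conflict in the absence of the extension property. To make $\langle C_\alpha \mid \alpha<\lambda^+ \rangle$ a representation of $N$ (or to realize prescribed types along the way), at successor stages you need $C_{\alpha+1}$ both to witness non-uniqueness of the current triple \emph{and} to contain prescribed points; but Lemma~\ref{uniqueness-saturated} only gives you \emph{some} witness inside $N$, and enlarging a witness while keeping it a witness is exactly Lemma~\ref{non-uniqueness-extension}, which requires transitivity \emph{and extension}. This is precisely why the paper's Theorem~\ref{repres-uq-extend-triple}, which does deliver representations containing stationarily many uniqueness triples, assumes the extension property, while the Main Theorem drops the representation requirement altogether: its chain $\langle A_\alpha \mid \alpha\leq\lambda^+ \rangle$ is merely a $\lessK$-increasing chain of submodels of $N$, with no demand that it exhaust $N$.
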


Before giving a formal proof, we describe its argument.
Given a basic triple $(A, B, b)$,
we begin by fixing a model $N$ extending $A$ that is saturated in $\lambda^+$ over $\lambda$,
and a model $N'$ that is universal over $N$.
Then we attempt to construct a $\lambda^+$-length sequence of basic triples $(C_\alpha, B_\alpha, b)$,
each non-forking over $A$,
with the goal of encountering a uniqueness triple somewhere along the way, 
at which point we are done.
We consider $\Omega_\alpha$ (provided by the $\diamondsuit$ sequence)
as a prediction of an embedding of $B_\alpha$ into $N'$,
which determines an amalgamation of $B_\alpha$ and $N$ over
(some submodel $A_\alpha$ of $N$ that is isomorphic to) $C_\alpha$.
At any stage where we have not encountered a uniqueness triple,
we arrange things so that the amalgamation of $B_\alpha$ and $N$ over $A_\alpha$ that we choose 
will be different from the predicted amalgamation.
If we manage to complete the construction up to $\lambda^+$,
then by the diamond principle, the prediction will be realized on a stationary subset $W$ of $\lambda^+$,
so that we get a uniqueness triple for each $\alpha \in W$.

\begin{proof}[Proof of Theorem~\ref{main-thm}]
Consider arbitrary $(A, B, b) \in \AECK^{3,\bs}$.
Our goal is to find $C, D \in \AECK_\lambda$ such that
$(C, D, b) \in \AECK^{3,\uq}$ and $(A, B, b) \leqbs (C, D, b)$.

Step 1:

By $\diamondsuit(\lambda^+)$ we have $2^\lambda = \lambda^+$, 
so that by Corollary~\ref{almost-stability-from-CH}, $\AECK_\lambda$ satisfies almost stability.
Then, as the pre-$\lambda$-frame $\mathfrak s$ satisfies amalgamation, no maximal model, %basic almost stability, 
and density, 
and $A \in \AECK_\lambda$,
we apply Corllary~\ref{equiv-basic-AS} %Theorem~\ref{get-saturated} 
to obtain
$N \in \AECK^\sat_{\lambda^+}$ satisfying $A \lessK N$.

As $\AECK_{\lambda^+}$ satisfies amalgamation and stability,
by Theorem~\ref{stab+amalg<->universal} we obtain 
$N' \in \AECK_{\lambda^+}$ such that $N \lessKuniv N'$.

%\begin{proof}
%Step 1:

\todo{Here is where the proof of Theorem \ref{no-extension-old} would start.}
Without loss of generality we may assume that
the respective universes of $A$, $B$, $N$, and $N'$ are all subsets of $\lambda^+$.

%\begin{itemize}
%\item  
%The universe of $N'$ is $\lambda^+$
%(so that, in particular, the respective universes of $N$ and $A$ are subsets of $\lambda^+$); and
%\item 
%The universe of $B$ is a subset of $\lambda^+$
%(and therefore $b$ is an ordinal $<\lambda^+$).
%\end{itemize}

By $\diamondsuit(\lambda^+)$ and Fact~\ref{diamond-H_kappa-equiv}, 
fix a sequence $\left< \Omega_\alpha \mid \alpha<\lambda^+ \right>$ witnessing $\diamondsuit^-(\mathbf H_{\lambda^+})$.

%The \emph{$\alpha$-level predicted amalgamation} is defined exactly as in Definition~\ref{def-predicted-amalg}.
%\todo{Is it?  Check variable names!}

The following definition is justified by Fact~\ref{predicted-is-amalgamation}:

\begin{definition}[$\alpha$-level predicted amalgamation]\label{def-predicted-amalg-variant}
Given an ordinal $\alpha<\lambda^+$,
models $A_\alpha, B_\alpha \in \AECK_\lambda$ such that $A_\alpha \lessK N$,
and an embedding $g_\alpha : A_\alpha \embeds B_\alpha$,
if $\Omega_\alpha$ is an embedding from $B_\alpha$ into $N'$ such that $\Omega_\alpha \circ g_\alpha = \id_{A_\alpha}$,
then:
\begin{itemize}
\item We say ``\emph{the $\alpha$-level predicted amalgamation exists}'';
\item We refer to $(\Omega_\alpha, \id_N, N')$ as the 
\emph{$\alpha$-level predicted amalgamation};
and
\item For every $M \in \AECK_\lambda$ such that
$A_\alpha \lessK M \lessK N$,
\todo{Should we define it this way at all?  Maybe restrict this to $A_{\alpha+1}$ instead of arbitrary $M$?}
we refer to $(\Omega_\alpha, \id_{M}, N')$ as the
\emph{$\alpha$-level predicted amalgamation of $B_\alpha$ and $M$ over
$(A_\alpha, g_\alpha, \id_{A_\alpha})$}.
\end{itemize}
\end{definition}

Step 2:

We shall attempt to build, %\todo{Will we always succeed?}
recursively over $\alpha\leq\lambda^+$,
a sequence
\[
\Sequence{ (A_\alpha, B_\alpha, g_\alpha) | \alpha\leq\lambda^+ }
\]
such that:
\begin{enumerate}
\item $\left< A_\alpha \mid \alpha\leq\lambda^+ \right>$ 
is a $\lessK$-increasing, continuous sequence of models,
with $A_0 = A$;

\item $\left< B_\alpha \mid \alpha\leq\lambda^+ \right>$
is a $\leqK$-increasing, continuous sequence of models,
with $B_0 = B$;

\item $\langle g_\alpha : A_\alpha \embeds B_\alpha \mid\allowbreak \alpha\leq\lambda^+ \rangle$
is a $\subseteq$-increasing, continuous sequence of embeddings,
with $g_0 = \id_A$;
\setcounter{condition}{\value{enumi}}
\end{enumerate}
and satisfying the following for all $\alpha \leq\lambda^+$:
\begin{enumerate}
\setcounter{enumi}{\value{condition}}
\item $A_\alpha, B_\alpha \in \AECK_\lambda$ if $\alpha<\lambda^+$; % and
\item The universes of $A_\alpha$ and $B_\alpha$ are subsets of $\lambda^+$;
\item $A_\alpha \leqK N$;
\todo{NOTE  WE ARE NOT ENSURING 
$N \cap \alpha$ is a subset of $A_\alpha$,
so we will not obtain $A_{\lambda^+} = N$ at the end.}
\item \label{b-in-difference-variant} 
\todo{Ensure references to here are correct.}
$b \in B_\alpha \setminus g_\alpha[A_\alpha]$; %, so that we define $q_\alpha := \tp(b/g_\alpha[A_\alpha]; B_\alpha)$;
\item $%q_\alpha 
\tp(b/g_\alpha[A_\alpha]; B_\alpha)$
%\in S^\bs(g_\alpha[A_\alpha])$ and
does not fork over $g_0[A_0]$; %, if $\alpha<\lambda^+$;
%\todo{Check what happens with $\alpha=\lambda^+$.
%Also does not fork over $g_\gamma[A_\gamma]$ for all $\gamma\leq\alpha$,
%but this follows by monotonicity and we don't really need it.}

\item If $\alpha<\lambda^+$, $(g_\alpha[A_\alpha], B_\alpha, b) \notin \AECK^{3,\uq}$,
and the $\alpha$-level predicted amalgamation exists,
then $(\id_{B_\alpha}, g_{\alpha+1}, B_{\alpha+1}) \centernot{E} (\Omega_\alpha, \id_{A_{\alpha+1}}, N')$
\todo{Consider writing the contrapositive?}
(where $(\Omega_\alpha, \id_{A_{\alpha+1}}, N')$ is the 
$\alpha$-level predicted amalgamation of 
$B_\alpha$ and $A_{\alpha+1}$ over $(A_\alpha, g_\alpha, \id_{A_\alpha})$, 
as in Definition~\ref{def-predicted-amalg-variant};
and $\centernot{E}$ is the negation of the relation $E$ between amalgamations of 
$B_\alpha$ and $A_{\alpha+1}$ over $(A_\alpha, g_\alpha, \id_{A_\alpha})$, as in Definition~\ref{def-E}).
%\item If $\alpha=0$, then $p$ does not fork over $A_0$, 
%$g_0 = \id_{A_0}$, and $p \restriction A_0 = \tp(b/A_0; B_0)$.

\end{enumerate}

\begin{notation}
Given an ordinal $\alpha\leq\lambda^+$ and $(A_\alpha, B_\alpha, g_\alpha)$ satisfying
Clauses (1), (2), and~(\ref{b-in-difference-variant}) above,
we write $q_\alpha := \tp(b/g_\alpha[A_\alpha]; B_\alpha)$.
\end{notation}

We now carry out the recursive construction:
\begin{itemize}
\item For $\alpha=0$:
Set $A_0 := A$, $B_0 := B$, and $g_0 := \id_A$.
As $(A, B, b) \in \AECK^{3,\bs}$,
the existence property guarantees that $\tp(b/A; B)$ does not fork over $A$,
and all of the requirements are satisfied.

\item For a nonzero limit ordinal $\alpha\leq\lambda^+$:
Define
\[
A_\alpha := \bigcup_{\gamma<\alpha} A_\gamma; \quad
B_\alpha := \bigcup_{\gamma<\alpha} B_\gamma; \quad
g_\alpha := \bigcup_{\gamma<\alpha} g_\gamma.
\]
Using the continuity property, Lemma~\ref{continuous-union} guarantees that all of the requirements are satisfied.

\item For $\beta = \alpha+1$, we consider two possibilities:
\begin{itemize}
\item
First, suppose $(g_\alpha[A_\alpha], B_\alpha, b) \in \AECK^{3,\uq}$.
In this case, we terminate the recursive construction here and
set $C := g_\alpha[A_\alpha]$ and $D := B_\alpha$.
From the induction hypothesis, we obtain $A = A_0 = g_0[A_0] \leqK g_\alpha[A_\alpha] = C$, $B = B_0 \leqK B_\alpha = D$,
and $\tp(b/C; D) = \tp(b/g_\alpha[A_\alpha]; B_\alpha)$ does not fork over $g_0[A_0] = A$,
so that $C$ and $D$ satisfy the conclusion of the Theorem in this case.

\item
Thus, from now on, we assume that
$(g_\alpha[A_\alpha], B_\alpha, b) \notin \AECK^{3,\uq}$.
%\todo{Use Claim 1.7 from 2015 version.}
By the induction hypothesis, $q_\alpha \in S^\bs(g_\alpha[A_\alpha])$,
so that $(g_\alpha[A_\alpha], B_\alpha, b) \in \AECK^{3,\bs} \setminus \AECK^{3,\uq}$.
Viewing $g_\alpha : A_\alpha \cong g_\alpha[A_\alpha]$ as an isomorphism,
since $N \in \AECK^\sat_{\lambda^+}$,
we apply Lemma~\ref{uniqueness-saturated} to obtain
$A_{\alpha+1} \in \AECK_\lambda$ witnessing the non-uniqueness
of $(g_\alpha[A_\alpha], B_\alpha, b)$ via~$g_\alpha$, with $A_\alpha \lessK A_{\alpha+1} \lessK N$.
\todo[color=green]{Updated on Sept.~6}
We now apply Lemma~\ref{get-desired-amalg}
with 
\[
\left( A, B, A^*, C^*, N^\bullet, \varphi \right) := 
\left( g_\alpha[A_\alpha], B_\alpha, A_\alpha, A_{\alpha+1}, N', g_\alpha \right).
\]
If the $\alpha$-level predicted amalgamation exists, then let $\Omega := \Omega_\alpha$;
otherwise $\Omega$ can be chosen arbitrarily (since $A_\alpha \lessK N \lessK N'$ and $N \in \AECK^\sat_{\lambda^+}$)
but is actually irrelevant.
We thus obtain
a $\lambda$-amalgamation
$(\id_{B_\alpha}, g_{\alpha+1}, B_{\alpha+1})$ of $B_\alpha$ and $A_{\alpha+1}$ over $(A_\alpha, g_\alpha, \id_{A_\alpha})$ such that
%\begin{enumerate}
%\item 
$q_{\alpha+1} = \tp(b/g_{\alpha+1}[A_{\alpha+1}]; B_{\alpha+1})$ does not fork over $g_\alpha[A_\alpha]$,
and 
such that Clause~(9) of the recursive construction is satisfied.
In particular, 
$B_{\alpha+1} \in \AECK_\lambda$, $B_\alpha \leqK B_{\alpha+1}$, 
$g_{\alpha+1} : A_{\alpha+1} \embeds B_{\alpha+1}$ is an embedding such that $g_\alpha \subseteq g_{\alpha+1}$,
$b \in B_{\alpha+1} \setminus g_{\alpha+1}[A_{\alpha+1}]$,
and $q_{\alpha+1} \in S^\bs(g_{\alpha+1}[A_{\alpha+1}])$.
By the induction hypothesis, we know that $q_\alpha$ does not fork over $g_0[A_0]$,
so that by Lemma~\ref{restriction} and the transitivity property
it follows that $q_{\alpha+1}$ does not fork over $g_0[A_0]$.
%Of course,
%we can ensure that the universe of $B_{\alpha+1}$ is a subset of $\lambda^+$.
We can easily ensure that the universe of $B_{\alpha+1}$ is a subset of $\lambda^+$.

%Furthermore,
%$(\id_{B_\alpha}, g_{\alpha+1}, B_{\alpha+1})$ is a $\lambda$-amalgamation of 
%$B_\alpha$ and $A_{\alpha+1}$ over $(A_\alpha, g_\alpha, \id_{A_\alpha})$
%that is isomorphic to $(f^D_{B_\alpha}, \id_{A_{\alpha+1}}, D)$,
%so that if the $\alpha$-level predicted amalgamation exists, 
%we obtain from Fact~\ref{isom+equiv} that
%$(\id_{B_\alpha}, g_{\alpha+1}, B_{\alpha+1}) \centernot{E} (\Omega_\alpha, \id_{A_{\alpha+1}}, N')$.
\end{itemize}
\end{itemize}

%This completes the recursive construction.

Step 3:
Suppose that we have managed to complete the recursive construction as described above.

Let $N^- := A_{\lambda^+}$ and 
$N^\bullet := B_{\lambda^+}$.
Since $N^-$ is the union of a $\lambda^+$-length $\lessK$-increasing sequence of models each of cardinality $\lambda$,
it follows that $N^- \in \AECK_{\lambda^+}$.
As $\AECK_{\lambda^+}$ satisfies the amalgamation property, 
in particular $N^- \in \AECK_{\lambda^+}$ is an amalgamation base (Fact~\ref{AP=AB}).
Since $N^- \lessK N$ by Clause~(6) of the recursive construction, 
we thus obtain from Corollary~\ref{smaller-univ-from-AB} that $N^- \lessKuniv N'$.

Since $N^\bullet = \bigcup_{\alpha<\lambda^+} B_\alpha$, 
where $\left|B_\alpha\right| = \lambda$ for every $\alpha<\lambda^+$,
it follows that $\left| N^\bullet \right| \leq \lambda^+$.
But $g_{\lambda^+} : N^- \embeds N^\bullet$ is an embedding, where $\left|N^-\right| = \lambda^+$,
so that in fact $N^\bullet \in \AECK_{\lambda^+}$.

%IT LOOKS LIKE WE NEED $N'$ TO BE UNIVERSAL OVER $N^-$, NOT JUST OVER $N$.
%AMALGAMATION IN $\lambda^+$ will solve this.

%\todo{From here on needs work.}
Then, since $N^- \lessKuniv N'$, % and $g_{\lambda^+} : N \embeds N^\bullet$ is an embedding, 
apply Lemma~\ref{univ-embedding} to obtain
an embedding $h : N^\bullet \embeds N'$ such that $h \circ g_{\lambda^+} = \id_{N^-}$.
\todo{Do we need to define $c := h(b)$ and $C_\alpha := h[B_\alpha]$?}
%Let $c := h(b)$, and
%It follows that $h \circ g_\alpha = \id_{A_\alpha}$ for every $\alpha\leq\lambda^+$.
For every $\alpha\leq\lambda^+$, let %$C_\alpha := h[B_\alpha]$ and 
$h_\alpha := h \restriction B_\alpha$,
so that $h = h_{\lambda^+} = \bigcup_{\alpha<\lambda^+} h_\alpha$.

\begin{claim}\label{claim-after-recursion-variant}
For every $\alpha<\lambda^+$:
\begin{enumerate}
%\item $h_\alpha : B_\alpha \cong C_\alpha$ is an isomorphism.
%\item $C_\alpha \in \AECK_\lambda$.
\item $h_\alpha \circ g_\alpha = \id_{A_\alpha}$.
\item $A_\alpha \leqK h_\alpha[B_\alpha] \lessK N'$.
\todo{Do we need this?}
%\item $c \in C_\alpha \setminus A_\alpha$.
%\item $\tp(c/A_\alpha; C_\alpha) \in S^\bs(A_\alpha)$ and
%does not fork over $A_0$.
%\item $(A_\alpha, C_\alpha, c) \in \AECK^{3,\uq} \iff (g_\alpha[A_\alpha], B_\alpha, b) \in \AECK^{3,\uq}$.
\item
$(\id_{B_\alpha}, g_{\alpha+1}, B_{\alpha+1})$ and $(h_\alpha, \id_{A_{\alpha+1}}, N')$
are two amalgamations of $B_\alpha$ and $A_{\alpha+1}$ over $(A_\alpha, g_\alpha, \id_{A_\alpha})$.
\item
$(\id_{B_\alpha}, g_{\alpha+1}, B_{\alpha+1}) \mathrel{E} (h_\alpha, \id_{A_{\alpha+1}}, N')$.
\item If $\Omega_\alpha = h_\alpha$ then $(g_\alpha[A_\alpha], B_\alpha, b) \in \AECK^{3,\uq}$.
\end{enumerate}
\end{claim}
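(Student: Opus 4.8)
The plan is to treat the five assertions as a cascade of diagram-chases anchored on the single identity $h \circ g_{\lambda^+} = \id_{N^-}$ supplied just before the Claim, feeding each clause into the next and reserving clause~(5) as the payoff that combines clause~(4) with Clause~(9) of the recursion. First I would record that, because $h = h_{\lambda^+} = \bigcup_{\alpha<\lambda^+} h_\alpha$ and each $g_\alpha \subseteq g_{\lambda^+}$, the identity $h \circ g_{\lambda^+} = \id_{N^-}$ restricts cleanly: for any index $\alpha \leq \lambda^+$ and any $a \in A_\alpha$ we have $g_\alpha(a) = g_{\lambda^+}(a) \in B_\alpha$, so $h_\alpha(g_\alpha(a)) = h(g_{\lambda^+}(a)) = a$. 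This proves clause~(1) not only for $\alpha<\lambda^+$ but for every $\alpha \leq \lambda^+$, which matters because clause~(4) will need clause~(1) at the successor index $\alpha+1$.

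For clause~(2), I would note $h_\alpha = h \restriction B_\alpha$ is an $\AECK$-embedding, so $h_\alpha[B_\alpha] \leqK N'$; since $\left|h_\alpha[B_\alpha]\right| = \lambda < \lambda^+ = \left|N'\right|$ the inclusion is proper, giving $h_\alpha[B_\alpha] \lessK N'$. Using clause~(1), $A_\alpha = h_\alpha[g_\alpha[A_\alpha]]$, and since $g_\alpha[A_\alpha] \leqK B_\alpha$ applying the isomorphism $h_\alpha$ onto its image (AEC invariance) yields $A_\alpha \leqK h_\alpha[B_\alpha]$. For clause~(3), the amalgamation $(\id_{B_\alpha}, g_{\alpha+1}, B_{\alpha+1})$ is exactly what the recursion produced via Lemma~\ref{get-desired-amalg}, so nothing is left to check there; for $(h_\alpha, \id_{A_{\alpha+1}}, N')$ I would invoke Fact~\ref{predicted-is-amalgamation} with $(A,B,N^\bullet,g,\Omega,M) := (A_\alpha, B_\alpha, N', g_\alpha, h_\alpha, A_{\alpha+1})$, whose hypotheses hold: $A_\alpha \leqK N \lessK N'$ gives $A_\alpha \lessK N'$, the construction gives $A_\alpha \lessK A_{\alpha+1} \lessK N \lessK N'$, and $h_\alpha \circ g_\alpha = \id_{A_\alpha}$ is clause~(1).

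Clause~(4) is the crux, and I expect it to be the main obstacle since it requires guessing the right witnessing triple. The plan is to take $G := N'$ with witnessing maps $(h_{\alpha+1}, \id_{N'}, N')$; that is, $f^G_{B_{\alpha+1}} := h_{\alpha+1}$ and $f^G_{N'} := \id_{N'}$. The two compatibility equations of Definition~\ref{def-E} then read $h_{\alpha+1}\restriction B_\alpha = \id_{N'} \circ h_\alpha$, which holds because both sides equal $h_\alpha$, and $h_{\alpha+1}\circ g_{\alpha+1} = \id_{N'}\restriction A_{\alpha+1}$, which holds because the left side is $\id_{A_{\alpha+1}}$ by clause~(1) at $\alpha+1$ and the right side is $\id_{A_{\alpha+1}}$ since $A_{\alpha+1} \leqK N'$. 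Thus $(\id_{B_\alpha}, g_{\alpha+1}, B_{\alpha+1}) \mathrel{E} (h_\alpha, \id_{A_{\alpha+1}}, N')$. Finally, for clause~(5) I would argue by contradiction: assume $\Omega_\alpha = h_\alpha$ but $(g_\alpha[A_\alpha], B_\alpha, b) \notin \AECK^{3,\uq}$. Since $h_\alpha : B_\alpha \embeds N'$ and $h_\alpha \circ g_\alpha = \id_{A_\alpha}$, the $\alpha$-level predicted amalgamation exists, so Clause~(9) of the recursion forces $(\id_{B_\alpha}, g_{\alpha+1}, B_{\alpha+1}) \centernot{E} (\Omega_\alpha, \id_{A_{\alpha+1}}, N') = (h_\alpha, \id_{A_{\alpha+1}}, N')$, directly contradicting clause~(4); hence $(g_\alpha[A_\alpha], B_\alpha, b) \in \AECK^{3,\uq}$. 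This last clause is what will later clash with the $\diamondsuit$-prediction catching $h$ on a stationary set, forcing the recursion to terminate early.
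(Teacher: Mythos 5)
Your proposal is correct and follows essentially the same route as the paper's own proof: clause (1) by restricting the identity $h \circ g_{\lambda^+} = \id_{N^-}$, clause (2) from clause (1) plus the fact that $h_\alpha$ is an embedding, clause (3) via the construction and Fact~\ref{predicted-is-amalgamation}, clause (4) with the identical witnessing triple $(h_{\alpha+1}, \id_{N'}, N')$, and clause (5) by combining clause (4) with Clause~(9) of the recursion (the paper states this as a contrapositive rather than a contradiction, which is equivalent). No gaps.
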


\begin{proof}\hfill
\begin{enumerate}
%\item $h_\alpha = h \restriction B_\alpha$ is an embedding from $B_\alpha$ into $N'$, and $C_\alpha$ is its image.
%\item By the closure of $\AECK_\lambda$ 
%under the isomorphism $h_\alpha$.
\item Since $g_\alpha \subseteq g_{\lambda^+}$, $h_\alpha \subseteq h$, and $h \circ g_{\lambda^+} = \id_{A_{\lambda^+}}$.
\item $h_\alpha = h \restriction B_\alpha$ is an embedding from $B_\alpha$ into $N'$, and by the previous clause,
$A_\alpha = h_\alpha \circ g_\alpha[A_\alpha] \leqK h_\alpha[B_\alpha] \lessK N'$.
%\item
%As $b \in B_\alpha \setminus g_\alpha[A_\alpha]$ by Clause~(7) of the recursive construction,
%it follows that $c \in h[B_\alpha] \setminus h \circ g_\alpha[A_\alpha] = C_\alpha \setminus A_\alpha$.

%\item By Clause~(8) of the recursive construction and invariance %of $S^\bs$ and 
%of the non-forking relation
%under the isomorphism $h_\alpha$.

%\item By the invariance of the class 
%of uniqueness triples under the isomorphism $h_\alpha$
%(Fact~\ref{invariance-uniqueness}).

\item
The fact that 
$(\id_{B_\alpha}, g_{\alpha+1}, B_{\alpha+1})$ 
is an amalgamation of $B_\alpha$ and $A_{\alpha+1}$ over $(A_\alpha, g_\alpha, \id_{A_\alpha})$ 
follows from $g_\alpha \subseteq g_{\alpha+1}$.
The fact that 
$(h_\alpha, \id_{A_{\alpha+1}}, N')$
is an amalgamation of $B_\alpha$ and $A_{\alpha+1}$ over $(A_\alpha, g_\alpha, \id_{A_\alpha})$ 
follows from $h_\alpha \circ g_\alpha = \id_{A_\alpha}$.

\item $h_{\alpha+1} : B_{\alpha+1} \embeds N'$ is an embedding satisfying
$h_{\alpha+1} \circ \id_{B_\alpha} = h_\alpha$ and
$h_{\alpha+1} \circ g_{\alpha+1} = \id_{A_{\alpha+1}}$,
so that $(h_{\alpha+1}, \id_{N'}, N')$ witnesses that
$(\id_{B_\alpha}, g_{\alpha+1}, B_{\alpha+1}) \mathrel{E} (h_\alpha, \id_{A_{\alpha+1}}, N')$,
as sought.

\item
Suppose $\Omega_\alpha = h_\alpha$.
In particular, $\Omega_\alpha : B_\alpha \embeds N'$ is an embedding satisfying
$\Omega_\alpha \circ g_\alpha = h_\alpha \circ g_\alpha = \id_{A_\alpha}$,
meaning that the $\alpha$-level predicted amalgamation exists, and
$(\Omega_\alpha, \id_{A_{\alpha+1}}, N')$
is the $\alpha$-level predicted amalgamation of 
$B_\alpha$ and $A_{\alpha+1}$ over $(A_\alpha, g_\alpha, \id_{A_\alpha})$.
Furthermore, by Clause~(4) above we obtain
$(\id_{B_\alpha}, g_{\alpha+1}, B_{\alpha+1}) \mathrel{E} (\Omega_\alpha, \id_{A_{\alpha+1}}, N')$.
Thus, by Clause~(9) of the recursion, it must be that $(g_\alpha[A_\alpha], B_\alpha, b) \in \AECK^{3,\uq}$.
%The desired result then follows from Clause~(7) above.
\qedhere
\end{enumerate}
\end{proof}

\begin{claim}\label{get-stationary-beta}
There are stationarily many $\beta<\lambda^+$ such that $\Omega_\beta = h_\beta$.
\todo{We really only need one!}
\end{claim}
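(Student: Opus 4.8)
The plan is to apply Fact~\ref{diamond-H_kappa-stat} (with $\kappa := \lambda^+$), taking the guessed object to be the embedding $h$ itself. Since the universes of all the models $A_\alpha, B_\alpha$ (Clause~(5) of the recursion) as well as those of $N$ and $N'$ are subsets of $\lambda^+$, the embedding $h : N^\bullet \embeds N'$ is a set of pairs of ordinals below $\lambda^+$, so $h \subseteq \mathbf H_{\lambda^+}$; moreover the entire sequence $\langle (A_\alpha, B_\alpha, g_\alpha) \mid \alpha\leq\lambda^+ \rangle$ together with $h$, $N'$, and the ordinal $\lambda$ can be coded by a single parameter $z \in \mathbf H_{\lambda^{++}}$. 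Applying Fact~\ref{diamond-H_kappa-stat} with $\Omega := h$ and this $z$ yields a stationary set of ordinals $\beta<\lambda^+$ admitting an elementary submodel $\mathcal M \esm \mathbf H_{\lambda^{++}}$ with $z \in \mathcal M$, $\beta = \mathcal M \cap \lambda^+$, and $\mathcal M \cap h = \Omega_\beta$. For every such $\beta$ it then suffices to show that $\mathcal M \cap h = h_\beta$, for then $\Omega_\beta = \mathcal M \cap h = h_\beta$, as desired.

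The heart of the argument is the identity $\mathcal M \cap B_{\lambda^+} = B_\beta$ (as sets of ordinals). First, since $\lambda \in \mathcal M$ and $\mathcal M \cap \lambda^+ = \beta$ is an ordinal, we get $\lambda < \beta$ and $\lambda \subseteq \mathcal M$; in fact $\beta$ is a limit ordinal, since $\mathcal M$ is closed under the successor function. Next, for each $\alpha<\beta$ we have $\alpha \in \mathcal M$ and $\langle B_\gamma \mid \gamma\leq\lambda^+ \rangle \in \mathcal M$, so $B_\alpha \in \mathcal M$; because $\left|B_\alpha\right| = \lambda$ and $\lambda \subseteq \mathcal M$, elementarity gives $B_\alpha \subseteq \mathcal M$ (enumerate $B_\alpha$ by a surjection from $\lambda$ lying in $\mathcal M$). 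By continuity of the sequence at the limit $\beta$, it follows that $B_\beta = \bigcup_{\alpha<\beta} B_\alpha \subseteq \mathcal M$, hence $B_\beta \subseteq \mathcal M \cap B_{\lambda^+}$. Conversely, any $x \in \mathcal M \cap B_{\lambda^+}$ satisfies $x \in B_\alpha$ for some $\alpha<\lambda^+$; by elementarity such an $\alpha$ can be found in $\mathcal M \cap \lambda^+ = \beta$, so $x \in B_\beta$. This establishes $\mathcal M \cap B_{\lambda^+} = B_\beta$.

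Finally I would deduce $\mathcal M \cap h = h_\beta$ from this. Recall $h_\beta = h \restriction B_\beta$ and $\dom(h) = N^\bullet = B_{\lambda^+}$. For the inclusion $h_\beta \subseteq \mathcal M \cap h$: each $x \in B_\beta \subseteq \mathcal M$ has $h(x) \in \mathcal M$ (as $h \in \mathcal M$), so $(x, h(x)) \in \mathcal M \cap h$. For the reverse inclusion: any pair $(x, h(x)) \in \mathcal M \cap h$ has $x \in \mathcal M \cap \dom(h) = \mathcal M \cap B_{\lambda^+} = B_\beta$, so $(x, h(x)) \in h \restriction B_\beta = h_\beta$. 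Thus $\mathcal M \cap h = h_\beta$, completing the argument. The main obstacle is the equality $\mathcal M \cap B_{\lambda^+} = B_\beta$: one must verify carefully that each $B_\alpha$ (of size $\lambda$) is genuinely contained in $\mathcal M$, which is exactly why the ordinal $\lambda$ must be included in the parameter $z$ so as to force $\lambda \subseteq \mathcal M$.
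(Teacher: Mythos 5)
Your proof is correct and takes essentially the same route as the paper's: both apply Fact~\ref{diamond-H_kappa-stat} with $\Omega := h$ and a parameter $z$ encoding the construction, and then, for a guessed $\beta$ with witness $\mathcal M$, establish $\mathcal M \cap h = h_\beta$ by the standard tracing argument (sets of size $\lambda$ lying in $\mathcal M$ are subsets of $\mathcal M$, giving $h_\beta \subseteq \mathcal M \cap h$ at the limit $\beta$; a least-index argument giving the converse). The only difference is cosmetic: where you prove the intermediate identity $\mathcal M \cap B_{\lambda^+} = B_\beta$ and recover the least $\alpha$ with $x \in B_\alpha$ by elementarity, the paper instead puts an explicit rank function $\rank_h$ (assigning to each pair of $h$ the least $\alpha$ with $(x,y) \in h_\alpha$) into the parameter $z$ and applies it directly to pairs -- the two devices play exactly the same role in the reverse inclusion.
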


\begin{proof}
Define the function $\rank_h$ as follows:
For any ordered pair $(x,y) \in h$,
let $\rank_h(x,y)$ be the smallest ordinal $\alpha$ such that $(x,y) \in h_\alpha$.

Let $\Omega := h$ and $z := \{ \langle h_\alpha \mid \alpha\leq\lambda^+ \rangle, {\rank_h} \}$.
\todo{Insert more parameters into $z$ if necessary.}
The respective universes of $N^\bullet$ and $N'$ are subsets of $\lambda^+$, so that
clearly $\Omega \subseteq \mathbf H_{\lambda^+}$ and $z \in \mathbf H_{\lambda^{++}}$.
Thus, by Fact~\ref{diamond-H_kappa-stat} and our choice of the sequence $\left< \Omega_\alpha \mid \alpha<\lambda^+ \right>$,
the following set is stationary in $\lambda^+$:
\[
W := \Set{ \beta<\lambda^+ | \exists \mathcal M \esm \mathbf H_{\lambda^{++}} [ z \in \mathcal M, 
\beta = \mathcal M \cap \lambda^+, \mathcal M \cap \Omega = \Omega_\beta ] }
\]
Consider any $\beta \in W$, and we must show that %$(A_\beta, C_\beta, c)$ is a uniqueness triple.
$\Omega_\beta = h_\beta$.

Let $\mathcal M$ witness that $\beta \in W$.
That is, $\mathcal M \esm \mathbf H_{\lambda^{++}}$, $z \in \mathcal M$,
$\beta = \mathcal M \cap \lambda^+$, and $\mathcal M \cap \Omega = \Omega_\beta$.
In particular, $\beta$ is a nonzero limit ordinal $<\lambda^+$.

For all $\alpha<\beta$, by $\alpha, \langle h_\alpha \mid \alpha\leq\lambda^+ \rangle \in \mathcal M$,
it follows that $h_\alpha \in \mathcal M$,
and by $\mathcal M \models |h_\alpha| < \lambda^+$ we have $h_\alpha \subseteq \mathcal M$.
Since $\beta$ is a nonzero limit ordinal, it follows that
$h_\beta = \bigcup_{\alpha<\beta} h_\alpha \subseteq \mathcal M \cap \Omega = \Omega_\beta$.

Conversely, for any ordered pair $(x,y) \in \Omega_\beta = \mathcal M \cap \Omega$,
we have $\rank_h(x,y) \in \mathcal M$, so that $\rank_h(x,y)$ is an ordinal $<\beta$,
and it follows that $(x,y) \in h_\beta$.
\end{proof}

Fix some $\beta<\lambda^+$ such that $\Omega_\beta = h_\beta$,
and set $C := g_\beta[A_\beta]$ and $D := B_\beta$.
By Clause~(4) of the recursive construction we have $C, D \in \AECK_\lambda$.
Since $\Omega_\beta = h_\beta$,
Claim~\ref{claim-after-recursion-variant}(5) gives
$(C, D, b) = (g_\beta[A_\beta], B_\beta, b) \in \AECK^{3,\uq}$.
By Clauses (1) and~(3) of the recursive construction, we have $A = A_0 = g_0[A_0] \leqK g_\beta[A_\beta] = C$.
By Clause~(2) we have $B = B_0 \leqK B_\beta = D$.
Finally, by Clause~(8) we obtain
$%q_\beta 
\tp(b/C; D) = \tp(b/g_\beta[A_\beta]; B_\beta)$
does not fork over $g_0[A_0] = A$, 
completing the proof of the Theorem.
\end{proof}

\section{*domination triples and the trivial frame}
\label{section:trivial+*domination}

Uniqueness triples were defined with respect to a given pre-$\lambda$-frame.
However, given any AEC $\AECK$, 
there is a natural form of \emph{domination} by which,
given two models $A \lessK B$ both in $\AECK$,
we can think of an element $b \in B \setminus A$ as representing $B$ as an extension over $A$.
\todo{From 30 Shevat.
Is this intuition correct?
Formalize this?  Does it help here?  
In \cite[\S4.1]{JrSh:875} we find:  ``The element $b$ represents the extension $B$ over $A$.''
But that is referring to uniqueness triples, not $*$-domination triples.
Which one is correct?}
We explore this concept here.

%\section{Domination triples}
%\label{section:domination}

Throughout this section, we assume that $\AECK_\lambda$ satisfies the amalgamation property,
so that Galois-types are ``best-behaved'' \cite[p.~64]{Baldwin-Categoricity} and equivalence of $\lambda$-amalgamations is well-defined (see Section~\ref{section:equivalence}).

\begin{definition}
Suppose that $\AECK$ is an AEC, $A, B, C \in \AECK_\lambda$ with $A \lessK B$ and $A \leqK C$,
and $b \in B \setminus A$.
We say that \emph{$C$ witnesses the non-$*$-domination of $(A, B, b)$} if there exist two $\lambda$-amalgamations
$(f^D_B, \id_C, D)$ and $(f^F_B, \id_C, F)$ of $B$ and $C$ over $A$ that are not equivalent over $A$, 
but such that 
$\tp(f^D_B(b)/C; D) = \tp(f^F_B(b)/C; F) \in S^\na(C)$.

%and such that
%$f^D_B(b) \notin f^D_2[C]$ and $f^F_B(b) \notin f^F_2[C]$ 
%(meaning that both $\tp(f^D_B(b)/f^D_2[C]; D) \in S^\na(f^D_2[C])$ and $\tp(f^F_B(b)/f^F_2[C]; F) \in S^\na(f^F_2[C])$).
\end{definition}

\begin{definition}
\todo{Show that relevant classes of triples are downward closed.}
Suppose $A, B \in \AECK_\lambda$ with $A \lessK B$, and $b \in B \setminus A$.
We say that \emph{$b$ $*$-dominates $B$ over $A$} and that $(A, B, b)$ is a \emph{$*$-domination triple in $\AECK_\lambda$} if
there is no $C \in \AECK_\lambda$ witnessing the non-$*$-domination of $(A, B, b)$.
The class of $*$-domination triples in $\AECK_\lambda$ is denoted $\AECK^{3, \dom}_\lambda$.
\todo{$\AECK^{3, \dom}_\lambda$ doesn't seem to conflict with any prior use.}
\end{definition}

\begin{remark}
We use the * symbol in $*$-dominates and $*$-domination in order to avoid conflict
%This definition conflicts 
with the definition of \emph{$a$ dominates $N$ over $M$} given in~\cite[Definition~4.1.6]{JrSh:875} and~\cite[Definition~6.8]{JrSitton},
as well as with the definition of \emph{domination triple} given in \cite[Definition~3.4]{Vasey14}.
%\todo{Decide how to address this conflict!}
\end{remark}

\begin{remark}
If $(A, B, b) \in \AECK^{3, \dom}_\lambda$ and $C \in \AECK_\lambda$ with $A \leqK C$,
there may or may not exist any $\lambda$-amalgamation $(f^D_B, \id_C, D)$ of $B$ and $C$ over $A$ such that
$f^D_B(b) \notin C$.
\end{remark}

In order to obtain the density of $*$-domination triples from our main result involving uniqueness triples, 
we introduce the trivial $\lambda$-frame:

%\section{The trivial frame}
%\label{section:trivial}

%An example of a pre-$\lambda$-frame that will be important later on is the \emph{trivial $\lambda$-frame},
%which we explore here.
%\todo{Maybe it's not so important any more if we can't get a relevant corollary, 
%so put this as an example in the previous section.
%NO, WE DO HAVE Corollary \ref{domination-triples-dense}!}

\begin{definition}[{\cite[Definition~2.2.2]{JrSh:875}}]\label{trivial-frame-def}
Consider a given AEC $\AECK$ and any $\lambda \geq \LST(\AECK)$ such that $\AECK_\lambda \neq\emptyset$.
Define 
\[
{\dnf} := \Set{ (A, B, c, C) | A, B, C \in \AECK_\lambda, A \leqK B \lessK C, c \in C \setminus B }
\]
(that is, \emph{every non-algebraic extension does not fork})
and $S^\bs := S^\na \restriction \AECK_\lambda$.
Then $(\AECK, {\dnf}, S^\bs)$ is called the \emph{trivial $\lambda$-frame of $\AECK$}.
\end{definition}

\begin{fact}[cf.~{\cite[Proposition~2.2.3]{JrSh:875}}]\label{properties-of-trivial}
Suppose $\AECK$ is an AEC and $\lambda \geq \LST(\AECK)$ is such that $\AECK_\lambda \neq\emptyset$.
Then:
\todo{Verify all properties here.}
\begin{enumerate}
\item The trivial $\lambda$-frame of $\AECK$ is a type-full pre-$\lambda$-frame
satisfying density, transitivity, existence, local character, and continuity.
\item
%Furthermore, 
If $\AECK_\lambda$ 
satisfies the \emph{disjoint amalgamation property}%
\footnote{This property is known in Fra\"\i ss\'e theory as the \emph{strong amalgamation property},
but of course the name \emph{disjoint amalgamation property} used in abstract elementary classes is more descriptive.
Notice that \cite[Exercise~5.7]{Sh:838} provides an example of a good $\lambda$-frame satisfying AP but not DAP.}
(DAP),
then the trivial $\lambda$-frame satisfies extension.
\todo{Is the converse true?}
\item
%Finally, 
The trivial $\lambda$-frame satisfies basic stability (respectively, basic almost stability) iff the class $\AECK_\lambda$ satisfies stability (respectively, almost stability).
\todo{Move definitions of stability and almost stability to be before this.}
\end{enumerate}
\end{fact}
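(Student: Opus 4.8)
The plan is to exploit the defining feature of the trivial $\lambda$-frame: since $\dnf$ contains \emph{every} admissible quadruple, the predicate ``$p$ does not fork over $A$'' collapses. Precisely, for any $A \leqK B$ and $p \in S(B)$, Definition~\ref{def-dnf} says $p$ does not fork over $A$ iff $\dnf(A,B,c,C)$ for $p=\tp(c/B;C)$, which for this frame reads $c \in C \setminus B$, i.e.\ $p \in S^\na(B)$. Thus forking is \emph{independent of the base} and equivalent to non-algebraicity. Once this reduction is recorded, nearly every clause becomes a bare statement about non-algebraicity of types. First I would verify the six clauses of Definition~\ref{def-pre-frame}: (1)--(3) are immediate; invariance (4) follows because both ``$A \leqK B \lessK C$ with $c \in C \setminus B$'' and ``$\tp(c/B;C) \in S^\na(B)$'' are preserved by any isomorphism $\varphi : C \cong C'$ (AEC invariance and invariance of Galois-types); for monotonicity (5), given $A \leqK A' \leqK B' \leqK B \lessK C' \leqK C \leqK C''$ and $c \in C' \setminus B$, one checks directly that $B' \leqK C'$ and $B' \leqK C''$, and that $c \in C' \setminus B'$ (as $B' \subseteq B$), forcing $B' \lessK C'$ and $B' \lessK C''$, so $\dnf(A',B',c,C')$ and $\dnf(A',B',c,C'')$ hold; clause (6) is immediate since $\dnf(A,A,c,C)$ forces $c \in C \setminus A$, whence $\tp(c/A;C) \in S^\na(A) = S^\bs(A)$. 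Type-fullness holds by the very definition $S^\bs = S^\na \restriction \AECK_\lambda$.

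For part~(1) I would dispatch the five named properties via the reduction. \emph{Density}: for $A \lessK B$, any $b \in B \setminus A$ gives $\tp(b/A;B) \in S^\na(A) = S^\bs(A)$. \emph{Existence}: $p \in S^\bs(A) = S^\na(A)$ does not fork over $A$ by the reduction. \emph{Transitivity} and \emph{local character} hold because ``does not fork over the base'' is the single condition ``$p \in S^\na$'', independent of the base, so the desired conclusions already appear among the hypotheses. The one clause needing a genuine (if short) argument is \emph{continuity}: given a $\leqK$-increasing continuous chain $\Sequence{ A_\alpha | \alpha\leq\delta }$ and $p = \tp(c/A_\delta; C)$ with each $p \restriction A_\alpha$ non-algebraic, I would argue by contradiction. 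If $c \in A_\delta = \bigcup_{\alpha<\delta} A_\alpha$, then $c \in A_\alpha$ for some $\alpha<\delta$, making $p \restriction A_\alpha$ algebraic, contrary to hypothesis; hence $c \notin A_\delta$, so $p \in S^\na(A_\delta) = S^\bs(A_\delta)$ and does not fork over $A_0$.

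The only part carrying real content is part~(2), and this is where I expect the main (still modest) obstacle. The reduction turns the extension property into: every non-algebraic $p \in S^\na(B)$ admits a non-algebraic extension $q \in S^\na(C)$ whenever $B \leqK C$. Realizing $p$ by some $b \in B' \setminus B$ and amalgamating $B'$ with $C$ over $B$, the danger is that the realization of $b$ gets absorbed into $C$, which would make $q$ \emph{algebraic}; ordinary amalgamation cannot prevent this. This is exactly what DAP rules out: taking a \emph{disjoint} amalgam, which we may present in the form $(f_{B'}, \id_C, D)$ of $B'$ and $C$ over $B$, we have $f_{B'}[B'] \cap C = B$ and $f_{B'} \restriction B = \id_B$. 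Then $d := f_{B'}(b)$ satisfies $d \notin B$ (injectivity, $b \notin B$), so $d \notin C$; hence $q := \tp(d/C; D) \in S^\na(C) = S^\bs(C)$ extends $p$ and (vacuously) does not fork over $B$. Finally, part~(3) is immediate: $\left|S^\bs(A)\right| = \left|S^\na(A)\right|$ for every $A \in \AECK_\lambda$, so basic stability (respectively basic almost stability) of the frame is \emph{literally} stability (respectively almost stability) of $\AECK_\lambda$.
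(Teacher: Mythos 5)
Your proof is correct. Note that the paper states this result as a \emph{Fact} without giving any proof, deferring to the citation of~\cite[Proposition~2.2.3]{JrSh:875}, so there is no in-paper argument to compare against; your write-up supplies exactly the verification that the citation is meant to cover, and it does so by the natural route: observing that in the trivial $\lambda$-frame the predicate ``$p$ does not fork over $A$'' collapses to ``$p \in S^\na$,'' independently of the base, after which clauses (1) and (3) are immediate and only continuity and the DAP clause need actual arguments.

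Two remarks. First, your treatment of part~(2) is the right one, and it matches the paper's own intuition: the ``absorption'' danger you describe is precisely the failure exhibited in Example~\ref{trivial-no-extension} (where AP holds but extension fails for the trivial frame), and disjointness of the amalgam is exactly what forces $f_{B'}(b) \notin C$. Second, your continuity argument (and implicitly the density and existence clauses) uses the identification ``$c \notin A$ iff $\tp(c/A;C) \in S^\na(A)$.'' This is valid --- an embedding fixing $A$ pointwise is injective, so it cannot send an element outside $A$ to an element of $A$, and hence a type realized outside the base cannot equal a type realized inside it --- and the paper uses the same identification without comment (e.g.\ in the proof of Lemma~\ref{restriction}), but since it is the one place where a general-AEC subtlety could hide, it is worth stating explicitly.
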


\begin{remark}\label{trivial-no-uniqueness}
The trivial $\lambda$-frame usually does not satisfy uniqueness.
\end{remark}

\begin{remark}
The trivial $\lambda$-frame is a special case of Examples~\ref{examples-pre-frames}((3)\&(4)) above.
\end{remark}

The following example (inspired by~\cite[Exercise~5.7]{Sh:838}) shows that
%To see that 
the amalgamation property is not sufficient to guarantee that the trivial $\lambda$-frame satisfies the extension property:
%\todo{No, it doesn't!!!  What to do about this?}

\begin{example}\label{trivial-no-extension}
Let the vocabulary $\tau$ consist of a single unary-relation symbol $U$.
For any $\tau$-model $M$, %let $\varphi(M)$ denote the set $\Set{ x \in M | U^M(x) }$.
we consider $U^M$ (the interpretation of $U$ in $M$) as a subset of $M$.
Let $\AECK$ be the class of all $\tau$-models $M$ such that $\left|U^M\right| \leq1$,
and let $\leqK$ be the submodel relation $\subseteq$.

It is clear that $\AECK$ is an AEC with $\LST(\AECK) = \aleph_0$
(in fact every subset is a submodel),
and that $\AECK$ satisfies amalgamation (cf.\ the hint in~\cite[Exercise~5.7]{Sh:838}).
Fix any infinite cardinal $\lambda$, 
and we show that the trivial $\lambda$-frame of $\AECK$ does not satisfy the extension property
(so that $\AECK_\lambda$ does not satisfy the DAP),
as follows:

Fix models $A, B, C \in \AECK_\lambda$ such that $A \lessK B$, $A \lessK C$,
$U^A = \emptyset$, $U^B = \{b\}$, and $U^C = \{c\}$.
Consider the type $p := (b/A; B)$, which is in $S^\bs(A)$.
Suppose $q \in S(C)$ extends $p$.
Then we can fix an amalgamation $(f, \id_C, D)$ of $B$ and $C$ over $A$ such that
$q = \tp(f(b)/C; D)$ (see Lemma~\ref{amalg-ext-with-ambient}).
Since $f : B \embeds D$ is an embedding, $U^B(b)$ implies $U^D(f(b))$.
Since $C \leqK D$, $U^C(c)$ implies $U^D(c)$.
But $D \in \AECK$, so that $\left|U^D\right| \leq1$,
meaning that $f(b) = c \in C$,
and it follows that $q = \tp(c/C; D) \notin S^\na(C)$.
\end{example}

%we adapt an example from~\cite[Exercise~5.7]{Sh:838}, as follows:
%\todo{Fill in here.}

The trivial $\lambda$-frame provides the crucial link between $*$-domination triples and uniqueness triples:

%Having explored the trivial $\lambda$-frame,

\begin{fact}\label{dom=uq-in-trivial}
Suppose $\AECK$ is an AEC and $\lambda \geq \LST(\AECK)$ is such that $\AECK_\lambda \neq\emptyset$.
Let $\mathfrak s = (\AECK, {\dnf}, S^\bs)$ be the trivial $\lambda$-frame of $\AECK$.
%In the special case where $\mathfrak s$ is the trivial $\lambda$-frame of $\AECK$ (see Section~\ref{section:trivial}),
Then the $*$-domination triples in $\AECK_\lambda$ %(see Section~\ref{section:domination}) 
are exactly the uniqueness triples in $\mathfrak s$;
that is, $\AECK^{3,\uq} = \AECK^{3,\dom}_\lambda$.
\end{fact}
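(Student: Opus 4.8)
The plan is to read off that both classes are defined by \emph{excluding} a witnessing model, and then to show that in the trivial $\lambda$-frame a witness to non-uniqueness and a witness to non-$*$-domination are one and the same thing. Comparing Definition~\ref{non-uniqueness} with the definition of witnessing non-$*$-domination, the two agree word for word: in each case a witness $C$ is a model admitting two $\lambda$-amalgamations $(f^D_B, \id_C, D)$ and $(f^F_B, \id_C, F)$ of $B$ and $C$ over $A$ that are inequivalent over $A$ and induce a common type $r := \tp(f^D_B(b)/C; D) = \tp(f^F_B(b)/C; F)$. The \emph{only} discrepancy is the side condition on $r$: witnessing non-uniqueness demands that $r$ does not fork over $A$, while witnessing non-$*$-domination demands $r \in S^\na(C)$. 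Since by Fact~\ref{properties-of-trivial} the trivial $\lambda$-frame is a genuine pre-$\lambda$-frame, and we are assuming amalgamation in $\AECK_\lambda$ (so that equivalence of $\lambda$-amalgamations is well-defined and types are best-behaved), it therefore suffices to prove that these two conditions on $r$ are equivalent in this frame.

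The crux is the following elementary lemma: in the trivial $\lambda$-frame, for any $A, C \in \AECK_\lambda$ with $A \leqK C$ and any $r \in S(C)$, the type $r$ does not fork over $A$ if and only if $r \in S^\na(C)$. To establish it I would unwind Definition~\ref{def-dnf} by way of Definition~\ref{trivial-frame-def}. Fixing a realization $r = \tp(c^*/C; C^*)$ with $C \lessK C^*$, obtained from $\LST(\AECK) \leq \lambda$ and Lemma~\ref{get-ambient-lambda}, the type $r$ does not fork over $A$ precisely when $\dnf(A, C, c^*, C^*)$ holds, which in the trivial frame means exactly $c^* \in C^* \setminus C$. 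The remaining ingredient is that, under the amalgamation property, a Galois-type $\tp(c^*/C; C^*)$ is algebraic if and only if $c^* \in C$; this is the one-to-one correspondence between algebraic types and points of $C$ underlying the proof of Lemma~\ref{count-algebraic}, and it is the single place where best-behavedness of types is used. Consequently $c^* \notin C$ iff $r \in S^\na(C)$, yielding the desired equivalence. Note in passing that the characterization is independent of $A$, which simply reflects that the trivial frame declares every non-algebraic extension to be non-forking.

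Finally I would reconcile the domains and dispose of a harmless strictness mismatch. Since the trivial $\lambda$-frame has $S^\bs = S^\na \restriction \AECK_\lambda$, and since $b \in B \setminus A$ forces $\tp(b/A; B) \in S^\na(A)$ by the algebraic-iff-internal characterization above, the basic triples $\AECK^{3,\bs}$ are exactly the triples $(A, B, b)$ with $A \lessK B$ and $b \in B \setminus A$, which is precisely the family of triples for which $*$-domination is defined. The non-$*$-domination witness definition permits $A \leqK C$ whereas Definition~\ref{non-uniqueness} requires $A \lessK C$, but the case $C = A$ can never yield a witness: any two $\lambda$-amalgamations of $B$ and $A$ over $A$ are equivalent, as one sees by amalgamating the two target models over $B$ and observing that the witnessing maps then automatically agree on $A \leqK B$, so no inequivalent pair exists. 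Hence for every triple $(A, B, b)$ in the common domain, a model $C$ witnesses non-uniqueness iff it witnesses non-$*$-domination; the two sets of witnessed triples coincide, and therefore so do their complements, giving $\AECK^{3,\uq} = \AECK^{3,\dom}_\lambda$. I expect the main obstacle to be the crux lemma—specifically, pinning down the algebraic-iff-internal characterization of Galois-types under amalgamation and threading it correctly through the trivial frame's non-forking relation together with the ``some (equivalently every)'' clause of Definition~\ref{def-dnf}.
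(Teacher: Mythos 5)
Your proposal is correct, and in fact the paper offers no proof of this statement at all: it is recorded as a Fact, the authors evidently regarding it as immediate from Definition~\ref{non-uniqueness}, the definition of non-$*$-domination, and Definition~\ref{trivial-frame-def}. Your write-up supplies exactly the definitional unwinding this presupposes, and it handles correctly the only two points requiring genuine care: the identification, in the trivial $\lambda$-frame, of ``does not fork over $A$'' with membership in $S^\na(C)$ (which, as you say, rests on the fact that under amalgamation the realizing element lies in $C$ if and only if the Galois-type is algebraic, so the identification is independent of $A$), and the $A \leqK C$ versus $A \lessK C$ mismatch, disposed of by noting that $C = A$ can never witness non-$*$-domination since amalgamating the two target models over $B$ produces maps automatically agreeing on $A$.
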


We are now able to prove the density of $*$-domination triples from the diamond axiom.
In doing so, we see the importance of the weak set of hypotheses on the pre-$\lambda$-frame
in the statement of the Main Theorem (Theorem~\ref{main-thm}).
In particular, the fact that the Main Theorem does not require the extension or uniqueness properties
is crucial, in light of Example~\ref{trivial-no-extension} and Remark~\ref{trivial-no-uniqueness} above.

\begin{corollary}\label{*domination-triples-dense}
%\todo{Connect this to Section \ref{section:domination}.}
Suppose that:
\begin{enumerate}
\item $\lambda$ is an infinite cardinal such that $\diamondsuit(\lambda^+)$ holds;
\item $\LST(\AECK) \leq\lambda$, and $\AECK_\lambda$ satisfies 
amalgamation and no maximal model; %, and almost stability;
%\dots;
\item $\AECK_{\lambda^+}$ satisfies amalgamation and stability;
\item $A, B \in \AECK_\lambda$ satisfy $A \lessK B$, and $b \in B \setminus A$.  % are such that 
\todo{Should we define $(A, B, b) \in \AECK^{3,\na}_\lambda$?}
%\item $A \in \AECK_\lambda$;
%\item $N, N' \in \AECK_{\lambda^+}$ satisfy $A \lessK N$, $N \in \AECK^\sat_{\lambda^+}$, 
%and $N \lessKuniv N'$.
%\todo{Add comment about hypotheses on $\AECK_{\lambda^+}$ that lead to the existence of these models.}
%\item $q \in S^\bs(A)$.
\end{enumerate}
Then there exist models $C, D \in \AECK_\lambda$ such that:
% $A \leqK C \lessK D$ and $b \in D \setminus C$
%such that:
\begin{enumerate}
\item $(C, D, b) \in \AECK^{3,\dom}_\lambda$; % is a dominating triple;
and
\item $A \leqK C$ and $B \leqK D$ (so that, in particular, $\tp(b/C; D)$ extends $\tp(b/A; B)$).
%\item $\tp(b/C; D)$ does not fork over $A$.
%\item $\tp(b/A; D) = q$; and
%\item $\dnf(A, C, b, D)$.
\end{enumerate}
\end{corollary}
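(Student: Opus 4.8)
The plan is to deduce this directly from the Main Theorem (Theorem~\ref{main-thm}), applied to the \emph{trivial $\lambda$-frame} of $\AECK$, using the identification of $*$-domination triples with uniqueness triples in that frame.

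First I would set up the frame. Since $A, B \in \AECK_\lambda$, we have $\AECK_\lambda \neq \emptyset$, so by Definition~\ref{trivial-frame-def} the trivial $\lambda$-frame $\mathfrak s = (\AECK, {\dnf}, S^\bs)$ is well-defined, and by Fact~\ref{properties-of-trivial}(1) it is a type-full pre-$\lambda$-frame satisfying density, transitivity, existence, and continuity. Adjoining the hypotheses that $\AECK_\lambda$ satisfies amalgamation and no maximal model, I conclude that $\mathfrak s$ satisfies all six frame properties demanded in hypothesis~(2) of Theorem~\ref{main-thm}: amalgamation, no maximal model, density, transitivity, existence, and continuity. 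The remaining hypotheses of the Main Theorem are copied verbatim from the present corollary: $\diamondsuit(\lambda^+)$ holds (hypothesis~(1)), and $\AECK_{\lambda^+}$ satisfies amalgamation and stability (hypothesis~(3)). Thus Theorem~\ref{main-thm} applies and yields that $\AECK^{3,\uq}$ is dense with respect to $\leqbs$.

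Next I would locate the given data as a basic triple. Because $\mathfrak s$ is type-full, $S^\bs(A) = S^\na(A)$; since $A \lessK B$ and $b \in B \setminus A$, the type $\tp(b/A; B)$ lies in $S^\na(A) = S^\bs(A)$, so $(A, B, b) \in \AECK^{3,\bs}$. Density then furnishes a triple $(C, D, b) \in \AECK^{3,\uq}$ with $(A, B, b) \leqbs (C, D, b)$. Unwinding the relation $\leqbs$ gives precisely $A \leqK C$, $B \leqK D$, and that $\tp(b/C; D)$ does not fork over $A$ (hence extends $\tp(b/A; B)$), which is exactly conclusion~(2). Finally, invoking Fact~\ref{dom=uq-in-trivial}, which asserts $\AECK^{3,\uq} = \AECK^{3,\dom}_\lambda$ for the trivial $\lambda$-frame, I obtain $(C, D, b) \in \AECK^{3,\dom}_\lambda$, which is conclusion~(1).

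I do not expect a genuine obstacle here, since the argument is essentially a bookkeeping match between the hypotheses available on $\AECK_\lambda$ and $\AECK_{\lambda^+}$ and those required by Theorem~\ref{main-thm}. The only point requiring care is confirming that the trivial frame supplies each of the six frame-theoretic hypotheses of the Main Theorem --- in particular \emph{existence} and \emph{continuity}, which come from Fact~\ref{properties-of-trivial}(1) rather than from the class-level hypotheses --- and that type-fullness is what promotes the arbitrary non-algebraic triple $(A, B, b)$ to a basic triple so that density can be invoked. This last step is exactly where the weak hypotheses of the Main Theorem pay off: as emphasized before the corollary (Example~\ref{trivial-no-extension} and Remark~\ref{trivial-no-uniqueness}), the trivial frame need satisfy neither extension nor uniqueness, so a version of the density theorem requiring those properties would not apply.
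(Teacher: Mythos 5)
Your proposal is correct and matches the paper's own proof essentially step for step: both define the trivial $\lambda$-frame, invoke Fact~\ref{properties-of-trivial}(1) together with the class-level hypotheses to satisfy Theorem~\ref{main-thm}, use type-fullness to see $\tp(b/A;B) \in S^\bs(A)$, and finish with Fact~\ref{dom=uq-in-trivial}. No gaps; nothing further is needed.
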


\begin{proof}
As $\LST(\AECK) \leq\lambda$ and clearly $\AECK_\lambda \neq\emptyset$,
we may define the trivial $\lambda$-frame $\mathfrak s = (\AECK, {\dnf}, S^\bs)$ of $\AECK$ (Definition~\ref{trivial-frame-def}).
By Fact~\ref{properties-of-trivial}(1), the trivial $\lambda$-frame is a pre-$\lambda$-frame satisfying
density, transitivity, existence, and continuity.
Furthermore, we have assumed that $\AECK_\lambda$ satisfies 
amalgamation and no maximal model, %and almost stability (so that $\mathfrak s$ satisfies basic almost stability),
and $\AECK_{\lambda^+}$ satisfies amalgamation and stability.
Finally, $\tp(b/A;B) \in S^\na(A) = S^\bs(A)$.
Thus we may apply Theorem~\ref{main-thm} to obtain $C, D \in \AECK_\lambda$ such that
$(C, D, b) \in \AECK^{3,\uq}$,
$A \leqK C$, and $B \leqK D$.
But $\AECK^{3,\uq} = \AECK^{3,\dom}_\lambda$ in the trivial $\lambda$-frame 
by Fact~\ref{dom=uq-in-trivial},
%(see Remark~\ref{uniqueness-remarks}(1)),
and we are done.
\todo{Possibly add more corollaries assuming DAP so that we get extension and can apply the other theorems?}
\end{proof}

\section{Non-splitting}
\label{section:splitting}

In this section,
we apply our Main Theorem to %two particular pre-$\lambda$-frames derived from 
the \emph{non-splitting relation},
%one directly and one indirectly. 
%The first uses non-splitting directly and the second uses non-splitting implicitly. 
%In both cases we 
obtaining the density of uniqueness triples assuming the transitivity and continuity properties only.
%\todo{Fix this!  And remember to update Introduction.}

Recall:

\begin{definition}[cf.~{\cite[Definition~2.4]{MR2225893}}, {\cite[Definition~3.2]{MR3471143}}]
For $\lambda \geq\LST(\AECK)$, $A, B \in \AECK$ with $A \leqK B$, and $p \in S^\na(B)$, 
we say that \emph{$p$ does not $\lambda$-split over $A$} if:
For all $M_1, M_2 \in \AECK_\lambda$ and every isomorphism $f:M_1 \cong M_2$, 
if $A \leqK M_1 \leqK B$, $A \leqK M_2 \leqK B$, and $f \restriction A = \id_A$, %fixing $A$ pointwise 
then $f(p \restriction M_1)=p \restriction M_2$.
\end{definition}

%For our first application,
%we 
Let $\dnsplit$ denote the non-$\lambda$-splitting relation.

\begin{fact}[cf.~{\cite[Remark~3.3]{MR3471143}}]\label{non-splitting-pre-frame}
Suppose $\lambda \geq \LST(\AECK)$ is such that $\AECK_\lambda \neq\emptyset$.
Then $\mathfrak s_{\ns}=(\AECK, {\dnsplit}, {S^{\na} \restriction \AECK_\lambda})$ is a type-full pre-$\lambda$-frame
satisfying density and existence.
\end{fact}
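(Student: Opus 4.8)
The plan is to verify directly that $\mathfrak{s}_{\ns} = (\AECK, {\dnsplit}, S^\na \restriction \AECK_\lambda)$ satisfies the six clauses of Definition~\ref{def-pre-frame}, and then establish the density and existence properties. Clause~(1) holds by hypothesis. Clause~(2) is immediate since $S^\bs := S^\na \restriction \AECK_\lambda$ trivially satisfies $S^\bs(A) \subseteq S^\na(A)$. For clause~(3), I would unwind the definition of $\dnsplit$ restricted to quadruples $(A, B, c, C)$ with $A \leqK B \lessK C$ and $c \in C \setminus B$, reading off that $\dnf(A, B, c, C)$ means $\tp(c/B; C)$ does not $\lambda$-split over $A$. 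The reflexive-transitivity of $\leqK$ and the requirement $c \notin B$ confirm that $\dnsplit$ is indeed a relation on the correct quadruples.

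First I would handle the invariance clause~(4): part~(a) follows because applying an isomorphism $\varphi : C \cong C'$ transports each splitting-witness triple $(M_1, M_2, f)$ to $(\varphi[M_1], \varphi[M_2], \varphi f \varphi^{-1})$, preserving the non-splitting condition; part~(b) is vacuous here since $S^\bs = S^\na \restriction \AECK_\lambda$ is manifestly invariant under isomorphisms (the image of a non-algebraic type is non-algebraic). For monotonicity, clause~(5), the key point is that restricting the ambient models shrinks the family of pairs $(M_1, M_2)$ over which the splitting condition is tested, so non-$\lambda$-splitting over $A$ is preserved when passing to smaller $B'$ and larger/smaller $C'$; I would check that the quantifier ``for all $M_1, M_2 \in \AECK_\lambda$ with $A \leqK M_i \leqK B$'' behaves correctly under the monotonicity hypotheses $A \leqK A' \leqK B' \leqK B \lessK C' \leqK C$. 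Clause~(6), that non-forking types are basic, is immediate because $S^\bs = S^\na$, so every non-algebraic type is basic by definition.

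For the two additional properties: \emph{density} holds trivially because the frame is type-full, so every type in $S^\na(A)$ is basic; given $A \lessK B$, any $b \in B \setminus A$ yields $\tp(b/A; B) \in S^\na(A) = S^\bs(A)$, as in Examples~\ref{examples-pre-frames}(4). The \emph{existence} property asserts that every $p \in S^\bs(A)$ does not fork over $A$; unwinding, this says every non-algebraic $p \in S^\na(A)$ does not $\lambda$-split over $A$ (where the relevant quadruples have $B = A$). This is the one clause requiring genuine attention: with $B = A$, the condition $A \leqK M_1, M_2 \leqK A$ forces $M_1 = M_2 = A$, and the isomorphism $f$ with $f \restriction A = \id_A$ must be the identity, so the splitting condition $f(p \restriction M_1) = p \restriction M_2$ reduces to $p = p$, which holds automatically.

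The main obstacle, I expect, is the precise bookkeeping in the monotonicity clause~(5): one must confirm that non-$\lambda$-splitting over $A$ implies non-$\lambda$-splitting over the enlarged base $A'$, which is the direction where the quantifier over witnessing pairs does \emph{not} obviously cooperate, since enlarging the base from $A$ to $A'$ also enlarges the set of models $M_i$ that must contain $A'$. I would resolve this by observing that any pair $(M_1, M_2)$ testing splitting over $A'$ (with $A' \leqK M_i$) is also a valid pair testing splitting over $A$ (since $A \leqK A' \leqK M_i$), and an isomorphism fixing $A'$ pointwise in particular fixes $A$ pointwise; thus non-splitting over $A$ yields non-splitting over $A'$ on precisely these pairs. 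I would cite \cite[Remark~3.3]{MR3471143} for the standard verification, filling in only the steps needed for the type-full formulation here.
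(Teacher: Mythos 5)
Your verification is correct: all six clauses of Definition~\ref{def-pre-frame} hold for the reasons you give, the one delicate direction of monotonicity that you flag (enlarging the base from $A$ to $A'$ shrinks the family of test pairs, and any isomorphism fixing $A'$ pointwise fixes $A$ pointwise) is resolved exactly as you say, and existence degenerates to the trivial case $M_1 = M_2 = A$, $f = \id_A$. The paper itself offers no proof of this statement — it records it as a Fact with a citation to \cite[Remark~3.3]{MR3471143} — so your write-up simply supplies the routine checks that the paper leaves to the reader, and there is nothing further to compare.
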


The extension, uniqueness and transitivity properties for $\mathfrak s_{\ns}$ are very problematic. 
This was a central difficulty addressed, for example, by Grossberg and VanDieren in~\cite{MR2225893}. 
Vasey proved a limited version of extension for non-splitting in~\cite[Lemma~4.11]{Vasey33}. 
Since our Main Theorem does not require the extension and uniqueness properties, we avoid the need to deal with those two properties. 
However we do assume transitivity and continuity.

Let $\AECK^{3,\uq}_{\ns}$ denote the classs of uniqueness triples in the pre-$\lambda$-frame $\mathfrak s_{\ns}$.

\begin{corollary}\label{non-splitting-cor}
Suppose that:
\begin{enumerate}
\item $\lambda \geq \LST(\AECK)$ is such that $\AECK_\lambda \neq\emptyset$, and $\diamondsuit(\lambda^+)$ holds;
\item $\mathfrak s_{\ns}$ satisfies the transitivity and continuity properties;
\item $\AECK_\lambda$ satisfies the amalgamation property and has no maximal models;
\item $\AECK_{\lambda^+}$ satisfies amalgamation and stability.
\end{enumerate}
Then $\AECK^{3,\uq}_{\ns}$ is dense with respect to non-splitting. 
\end{corollary}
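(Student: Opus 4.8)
The plan is to apply the Main Theorem (Theorem~\ref{main-thm}) directly to the non-splitting pre-$\lambda$-frame $\mathfrak s_{\ns} = (\AECK, {\dnsplit}, S^\na \restriction \AECK_\lambda)$, so that essentially all the work reduces to verifying that $\mathfrak s_{\ns}$ meets the hypotheses listed there. Since this is a corollary of the Main Theorem applied to a specific frame, there is no genuinely hard step; the content lies entirely in matching the hypotheses and in recognizing which frame properties come for free.

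First I would invoke Fact~\ref{non-splitting-pre-frame}: since hypothesis~(1) guarantees $\lambda \geq \LST(\AECK)$ and $\AECK_\lambda \neq \emptyset$, that Fact tells us that $\mathfrak s_{\ns}$ is indeed a type-full pre-$\lambda$-frame, and moreover that it automatically satisfies the density and existence properties. These are exactly the two frame properties required by the Main Theorem that do not already appear among our hypotheses, so obtaining them for free from Fact~\ref{non-splitting-pre-frame} is the crucial point of the argument.

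Next I would match the remaining hypotheses of Theorem~\ref{main-thm} one by one: the assumption $\diamondsuit(\lambda^+)$ in clause~(1) of the Main Theorem is precisely our hypothesis~(1); the amalgamation and no-maximal-model properties of the pre-$\lambda$-frame are, by definition, the corresponding properties of $\AECK_\lambda$ (Definition~\ref{frame-properties}(1)), which we have assumed in hypothesis~(3); the transitivity and continuity properties of $\mathfrak s_{\ns}$ are exactly our hypothesis~(2); and the amalgamation and stability of $\AECK_{\lambda^+}$ in clause~(3) of the Main Theorem is our hypothesis~(4). Thus all the hypotheses of Theorem~\ref{main-thm} are met for $\mathfrak s = \mathfrak s_{\ns}$.

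Applying the Main Theorem then yields that $\AECK^{3,\uq}$—which for the frame $\mathfrak s_{\ns}$ is exactly $\AECK^{3,\uq}_{\ns}$—is dense with respect to $\leqbs$. Since the non-forking relation of $\mathfrak s_{\ns}$ is non-$\lambda$-splitting, the relation $\leqbs$ for this frame is precisely the one defined via non-splitting, so ``dense with respect to $\leqbs$'' here means ``dense with respect to non-splitting'', giving the desired conclusion. The only point requiring any care is that the extension and uniqueness properties of $\mathfrak s_{\ns}$—which are notoriously problematic, as the discussion preceding the corollary notes—are never invoked, precisely because the Main Theorem was engineered to avoid them; this is exactly why the corollary can be stated assuming only transitivity and continuity among the frame properties.
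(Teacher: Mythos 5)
Your proposal is correct and follows exactly the paper's own argument: invoke Fact~\ref{non-splitting-pre-frame} to get that $\mathfrak s_{\ns}$ is a pre-$\lambda$-frame satisfying density and existence, then apply Theorem~\ref{main-thm} with the remaining hypotheses supplied directly by the corollary's assumptions. Your version merely spells out the hypothesis-matching that the paper leaves implicit, which is fine.
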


\begin{proof}
We have already seen (Fact~\ref{non-splitting-pre-frame}) that
$\mathfrak s_{\ns}$ is a pre-$\lambda$-frame
satisfying density and existence.
Thus the result follows immediately from Theorem~\ref{main-thm}.
\end{proof}

We now compare our approach toward proving Corollary~\ref{non-splitting-cor}
with Vasey's approach toward obtaining the existence of uniqueness triples in~\cite[Corollary~6.5]{Vasey31}.
Vasey assumes superstability and shifts the non-splitting relation by a universal extension, 
in order to obtain a good $\lambda$-frame 
(in particular, satisfying the stability and extension axioms used in the proof of \cite[Fact~6.4]{Vasey31})
and thereby derive that it is weakly successful, that is, satisfies the existence of uniqueness triples.
In contrast, because our Main Theorem (Theorem~\ref{main-thm}) does not require extension and stability in the pre-$\lambda$-frame,
we are able to obtain the density of uniqueness triples with respect to the non-splitting relation itself,
and without assuming superstability.

%\todo{FIX from here on}
%Now we discuss the second application. 
%Recall the non-forking relation defined by shifting non-splitting by a universal extension:
%
%\begin{definition}[cf.~{\cite[Definition~3.8]{MR3471143}}]
%For $\lambda \geq\LST(\AECK)$, $B, C \in \AECK_\lambda$ with $B \leqK C$, and $p \in S^\na(C)$, 
%we say that \emph{$p$ does not $\lambda$-fork over $B$} if
%there exists $A \in \AECK_\lambda$ such that $A \lessKuniv B$ and $p$ does not $\lambda$-split over $A$.
%
%We denote this non-forking relation by $\dnf$.
%\end{definition}
%
%
%that a pre-$\lambda$-frame that is based on non-splitting. 
%
%\begin{definition}
%We say that $p$ does not fork over and we denote by $\dnf$ this non-forking relation. complete...
%\end{definition}
%
%Consider the pre-$\lambda$-frame $\mathfrak s=(\AECK, {\dnf}, S^{\na})$.
%
%\begin{corollary}
%Suppose that:
%\begin{enumerate}
%\item $\lambda \geq \LST(\AECK)$ is such that $\AECK_\lambda \neq\emptyset$, and $\diamondsuit(\lambda^+)$ holds;
%\item $\mathfrak s$ satisfies the transitivity and continuity properties;
%\item $\AECK_\lambda$ satisfies the amalgamation property and has no maximal models;
%\item $\AECK_{\lambda^+}$ satisfies amalgamation and stability.
%\end{enumerate}
%Then $\AECK^{3,\uq}$ is dense with respect to $\dnf$. 
%\end{corollary}
%
%\begin{proof}
%Similar to complete...
%\end{proof}

\begin{remark}
The \emph{coheir relation} was generalized to the context of AECs by Boney and Grossberg in~\cite{MR3650352} 
and further examined by Vasey in~\cite[Theorem~5.15]{MR3490921}.
In both papers, the extension property of coheir is not proven.
Therefore we believe the coheir relation to be another appropriate application of our Main Theorem.
\end{remark}

%\newpage

\section{Obtaining representations with many uniqueness triples extending a given triple}
\label{section:representations}

In the proof of Theorem~\ref{main-thm},
we attempted to construct $\lambda^+$-length sequences of models,
but we aborted the recursive construction upon encountering the first uniqueness triple along the way.
This raises the question:
Is there some way we can ensure the existence of an increasing $\lambda^+$-length sequence of basic triples
that contains many uniqueness triples?

The essential difficulty is that without assuming the extension property, we cannot necessarily obtain a non-forking extension at the successor stages.
By contrast, in the following Theorem, at the cost of requiring the extension property as an additional hypothesis,
we obtain representations extending the given basic triple and containing stationarily many uniqueness triples.
The extension property guarantees that we can complete the construction to obtain a sequence of length $\lambda^+$,
with non-forking extensions at every successor stage.
As in the proof of Theorem~\ref{main-thm},
the $\diamondsuit(\lambda^+)$ axiom ensures that
we obtain stationarily many uniqueness triples once the construction is complete,
but unlike in that proof, here the completed construction is actually realized rather than considered hypothetically.

As an added bonus resulting from the extension axiom, 
if we begin with a given saturated model $N^\circ$ that extends $A$,
then by tweaking the construction slightly 
(see Clause~\ref{tweak-from-extension} of the recursive construction in the proof of Theorem~\ref{repres-uq-extend-triple} below),
we can ensure that the sequence $\langle A_\alpha \mid \alpha<\lambda^+ \rangle$ of models that we construct converges to some model $N$ isomorphic to $N^\circ$.

\begin{theorem}\label{repres-uq-extend-triple}
Suppose that:
\begin{enumerate}
\item $\lambda$ is an infinite cardinal such that $\diamondsuit(\lambda^+)$ holds;
\item $\mathfrak s = (\AECK, {\dnf}, S^\bs)$ is a pre-$\lambda$-frame satisfying amalgamation, transitivity, 
extension, and continuity;
%\todo{Hopefully not uniqueness; emphasize this.
%CAN WE REMOVE EXTENSION?}
%\dots;
\item %$A, B \in \AECK_\lambda$ and $b \in B \setminus A$ are such that 
$(A, B, b) \in \AECK^{3,\bs}$;
%\item $A \in \AECK_\lambda$;
\item $N^\circ, N^\bullet \in \AECK_{\lambda^+}$ satisfy $N^\circ \in \AECK^\sat_{\lambda^+}$ 
and $A \lessK N^\circ \lessKuniv N^\bullet$.
%\todo{Add comment about hypotheses on $\AECK_{\lambda^+}$ that lead to the existence of these models.}
%\item $q \in S^\bs(A)$.
\end{enumerate}

Then there 
exist models $N, N^+ \in \AECK_{\lambda^+}$, 
representations $\langle A_\alpha \mid \alpha<\lambda^+ \rangle$ of $N$
and $\langle B_\alpha \mid \alpha<\lambda^+ \rangle$ of %models in $\AECK_\lambda$
$N^+$,
and an embedding $h : N^+ \embeds N^\bullet$ 
\todo{Should we bother mentioning $h$ at all in the statement?
Yes, it's used in Corollary \ref{start-from-top}.}
such that:
\begin{enumerate}
\item $N \lessK N^+$ and $b \in N^+ \setminus N$;
\item $N \in \AECK^\sat_{\lambda^+}$;
\item $A_0 = A$ and $B_0 = B$;
\item $\tp(b/N; N^+)$ does not fork over $A$;
\item $(A_\alpha, B_\alpha, b) \in \AECK^{3,\bs}$
for every $\alpha<\lambda^+$;
\item the set
\[
\Set{ \alpha<\lambda^+ | (A_\alpha, B_\alpha, b) \in \AECK^{3,\uq} }
\]
is stationary in $\lambda^+$;
\todo{Can we express this conclusion in terms of weakly primeness triples?
We need to specify the representation in order to connect it to $A$ and $B$.}
and
\item $h \restriction N : N \cong N^\circ$ is an isomorphism and $h \restriction A = \id_A$.
\end{enumerate}

\end{theorem}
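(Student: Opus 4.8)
The plan is to mirror the proof of the Main Theorem (Theorem~\ref{main-thm}), exploiting the two hypotheses that are genuinely new here---the extension property (which by the footnote to Definition~\ref{frame-properties}(\ref{extension-def}) entails existence) and the given saturated model $N^\circ$ together with its universal extension $N^\bullet$---to make two changes. First, because $\mathfrak s$ now satisfies extension, I can invoke the extension branch of Lemma~\ref{get-desired-amalg} at \emph{every} successor stage, so the recursion never aborts and runs all the way to $\lambda^+$. Second, I interleave a bookkeeping ``tweak'' forcing the base models $A_\alpha$ to exhaust $N^\circ$, so that $N := A_{\lambda^+} = N^\circ$ and hence $N \in \AECK^\sat_{\lambda^+}$ automatically. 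As in the Main Theorem, I first assume without loss of generality that the universes of $A$, $B$, $N^\circ$, $N^\bullet$ are subsets of $\lambda^+$, fix a sequence $\langle \Omega_\alpha \mid \alpha<\lambda^+ \rangle$ witnessing $\diamondsuit^-(\mathbf H_{\lambda^+})$ via Fact~\ref{diamond-H_kappa-equiv}, and fix an enumeration $\langle n_\xi \mid \xi<\lambda^+ \rangle$ of the universe of $N^\circ$.

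I would then build recursively a sequence $\Sequence{ (A_\alpha, B_\alpha) | \alpha\leq\lambda^+ }$ with $\langle A_\alpha \rangle$ a $\lessK$-increasing, continuous chain with $A_0 = A$ and $A_\alpha \lessK N^\circ$; with $\langle B_\alpha \rangle$ a $\leqK$-increasing, continuous chain with $B_0 = B$, $A_\alpha \lessK B_\alpha$, and $b \in B_\alpha \setminus A_\alpha$; with $\tp(b/A_\alpha; B_\alpha)$ not forking over $A$; and with the diamond clause that if $(A_\alpha, B_\alpha, b) \notin \AECK^{3,\uq}$ and $\Omega_\alpha$ is an embedding of $B_\alpha$ into $N^\bullet$ fixing $A_\alpha$, then $(\id_{B_\alpha}, \id_{A_{\alpha+1}}, B_{\alpha+1}) \centernot{E} (\Omega_\alpha, \id_{A_{\alpha+1}}, N^\bullet)$. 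Stage $0$ is given (non-forking over $A$ holds by existence), and limit stages are handled by unions, where Lemma~\ref{continuous-union} supplies the non-forking and basic-type requirements. At a successor stage $\alpha+1$ I first choose the base extension $A_{\alpha+1} \lessK N^\circ$: if $(A_\alpha, B_\alpha, b)$ is not a uniqueness triple, Lemma~\ref{uniqueness-saturated} (applied with $N^\circ$, using Corollary~\ref{omit-via-phi}) produces a witness to its non-uniqueness lying strictly between $A_\alpha$ and $N^\circ$, which I enlarge inside $N^\circ$---still a witness, by Lemma~\ref{non-uniqueness-extension}---so as to also contain $n_\alpha$; if it is a uniqueness triple I simply take any $A_{\alpha+1} \lessK N^\circ$ above $A_\alpha$ containing $n_\alpha$. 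Then I apply Lemma~\ref{get-desired-amalg} with base isomorphism $\id_{A_\alpha}$, with $C^* := A_{\alpha+1}$ and $N^\bullet$ as given, and with $\Omega := \Omega_\alpha$ when the prediction is a legitimate base-fixing embedding (otherwise an arbitrary one), obtaining $B_\alpha \leqK B_{\alpha+1}$ realizing a non-forking extension of $\tp(b/A_\alpha; B_\alpha)$ and satisfying the required $\centernot{E}$ in the non-uniqueness case; transitivity then upgrades ``does not fork over $A_\alpha$'' to ``does not fork over $A$.''

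Once the construction is complete I set $N := A_{\lambda^+}$ and $N^+ := B_{\lambda^+}$. Smoothness and coherence (Definition~\ref{AEC-def}(4),(5)) together with the bookkeeping give $N = N^\circ$, hence $N \in \AECK^\sat_{\lambda^+}$, and $N \lessK N^+$ with $b \in N^+ \setminus N$; the non-forking of $\tp(b/N; N^+)$ over $A$ follows from Lemma~\ref{big-dnf-equiv} applied to the representation $\langle A_\alpha \rangle$. Since $N^\circ \lessKuniv N^\bullet$ and $N = N^\circ \leqK N^+ \in \AECK_{\lambda^+}$, Lemma~\ref{univ-embedding} (with $g = \id_{N^\circ} : N^\circ \embeds N^+$) yields an embedding $h : N^+ \embeds N^\bullet$ with $h \restriction N = \id_{N^\circ}$, so $h \restriction N : N \cong N^\circ$ is an isomorphism and $h \restriction A = \id_A$. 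Finally, setting $h_\beta := h \restriction B_\beta$, Fact~\ref{diamond-H_kappa-stat} gives stationarily many $\beta$ with $\Omega_\beta = h_\beta$; for each such $\beta$ the triple $(h_{\beta+1}, \id_{N^\bullet}, N^\bullet)$ witnesses $(\id_{B_\beta}, \id_{A_{\beta+1}}, B_{\beta+1}) \mathrel{E} (\Omega_\beta, \id_{A_{\beta+1}}, N^\bullet)$ (since $h_{\beta+1} \restriction B_\beta = \Omega_\beta$ and $h_{\beta+1} \restriction A_{\beta+1} = \id_{A_{\beta+1}}$), which contradicts the diamond clause unless $(A_\beta, B_\beta, b) \in \AECK^{3,\uq}$. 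Hence the uniqueness triples occur stationarily often, as required.

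The main obstacle I anticipate is the tweak: reconciling the demand that $A_{\alpha+1}$ witness the non-uniqueness of $(A_\alpha, B_\alpha, b)$ with the demand that the chain $\langle A_\alpha \rangle$ climb up to fill all of $N^\circ$. The resolution is that Lemma~\ref{non-uniqueness-extension} permits freely enlarging a non-uniqueness witness within $N^\circ$ without destroying the witnessing property---and this is exactly where the extra transitivity and extension hypotheses of this Theorem (both absent from the Main Theorem) are spent, the latter also being what lets the recursion continue past uniqueness triples. Everything else is a routine transcription of the Main Theorem's argument, in fact simplified here because the universal model $N^\bullet$ over $N = N^\circ$ is handed to us directly, rather than being manufactured from an amalgamation base via Corollary~\ref{smaller-univ-from-AB}.
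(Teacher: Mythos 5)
Your skeleton --- the $\diamondsuit^-(\mathbf H_{\lambda^+})$ prediction of embeddings into $N^\bullet$, Lemma~\ref{get-desired-amalg} at successors, Lemma~\ref{continuous-union} at limits, the bookkeeping clause forcing the base chain to exhaust $N^\circ$, and the final stationarity argument --- is the same as the paper's. But there is a genuine gap in how you set up the chains: you demand simultaneously that $A_\alpha \lessK N^\circ$, that $A_\alpha \lessK B_\alpha$ with $B_0 = B$, and that the successor step produce an amalgamation of the literal form $(\id_{B_\alpha}, \id_{A_{\alpha+1}}, B_{\alpha+1})$, so that at the end $N := A_{\lambda^+} = N^\circ$ sits inside $N^+ := B_{\lambda^+}$. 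Lemma~\ref{get-desired-amalg} does not deliver this: applied over $(A_\alpha, \id_{A_\alpha}, \id_{A_\alpha})$ it yields an amalgamation $(\id_{B_\alpha}, f, D)$ in which $f : A_{\alpha+1} \embeds D$ is only guaranteed to satisfy $f \restriction A_\alpha = \id_{A_\alpha}$, and one cannot in general rename $D$ so as to make $f$ the identity on $A_{\alpha+1}$ while keeping $B_\alpha$ pointwise fixed. The obstruction is that $A_{\alpha+1} \subseteq N^\circ$ and $B_\alpha$ may share points outside $A_\alpha$ (already at stage $0$, $B \cap N^\circ$ may properly contain $A$), and a shared point $x$ may satisfy $\tp(x/A;B) \neq \tp(x/A;N^\circ)$, in which case every amalgamation must send the two copies of $x$ to distinct points; moreover the amalgam produced by Lemma~\ref{get-desired-amalg} may itself identify points of $A_{\alpha+1} \setminus A_\alpha$ with points of $B_\alpha \setminus A_\alpha$, and such identifications cannot be undone by renaming. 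For the same reason your target configuration is outright impossible in general: $N^\circ = N \lessK N^+$ together with $B = B_0 \leqK N^+$ would force all points of $B \cap N^\circ$ to be identified across the two models. This is exactly why Clause~(7) of the Theorem asserts only that $h \restriction N : N \cong N^\circ$ is an isomorphism, not that $N = N^\circ$.

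The paper's proof avoids this by keeping two parallel chains in different ``coordinate systems'': models $C_\alpha$ with $C_\alpha \leqK N^\circ$ (these carry the bookkeeping $N^\circ \cap \alpha \subseteq C_\alpha$), and models $B_\alpha$ containing $b$, linked by a $\subseteq$-increasing continuous chain of embeddings $g_\alpha : C_\alpha \embeds B_\alpha$ with $g_0 = \id_A$. The triples considered are $(g_\alpha[C_\alpha], B_\alpha, b)$; Lemma~\ref{uniqueness-saturated} is invoked via the isomorphism $g_\alpha$ (not via $\id$ and Corollary~\ref{omit-via-phi}); Lemma~\ref{get-desired-amalg} is applied to amalgamations over $(C_\alpha, g_\alpha, \id_{C_\alpha})$, whose output $(\id_{B_\alpha}, g_{\alpha+1}, B_{\alpha+1})$ extends $g_\alpha$ but is not the identity on $C_{\alpha+1}$. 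At the end one sets $A_\alpha := g_\alpha[C_\alpha]$, $N := g_{\lambda^+}[N^\circ]$, $N^+ := B_{\lambda^+}$, and obtains $h$ from Lemma~\ref{univ-embedding} with $h \circ g_{\lambda^+} = \id_{N^\circ}$, whence $h \restriction N : N \cong N^\circ$ and saturation of $N$ follows by invariance (Fact~\ref{invariance-big}), not ``automatically'' from $N = N^\circ$. If you reinstate the embeddings $g_\alpha$ --- equivalently, track an isomorphism between your $A_\alpha$ and a submodel of $N^\circ$ rather than insisting on equality --- the remainder of your argument goes through essentially as you wrote it.
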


\begin{remarks}\hfill
\begin{enumerate}
\item The existence of $N^\circ$ and $N^\bullet$ as in Clause~(4) of the hypotheses of Theorem~\ref{repres-uq-extend-triple}
follows (respectively) from Clauses (2) and~(3) of the hypotheses of Theorem~\ref{main-thm}.
Thus the extension property is the only new requirement in the hypotheses of Theorem~\ref{repres-uq-extend-triple}
that was not assumed in Theorem~\ref{main-thm}.

\item In particular, it is significant that the uniqueness property is still not assumed.

\item By Lemma~\ref{big-dnf-equiv},
Clause~(4) of the conclusion of Theorem~\ref{repres-uq-extend-triple} implies that
for every $\alpha<\lambda^+$:
$\tp(b/A_\alpha; B_\alpha)$ does not fork over $A$,
so that $(A, B, b) \leqbs (A_\alpha, B_\alpha, b)$ .
Thus, the conclusion of Theorem~\ref{repres-uq-extend-triple} is clearly stronger than that of Theorem~\ref{main-thm},
in that we obtain stationarily many uniqueness triples extending the given basic triple rather than just one
(at the cost of requiring the extension axiom in the hypotheses).
%
%\item Comment about hypotheses on $\AECK_{\lambda^+}$ that lead to the existence of $N^\circ$ and $N^\bullet$ in Clause~(4) of the hypotheses
%\item State observations about the resulting triples, better version than Corollary~\ref{uniqueness-dense} or the one right before it
\todo{Do these remarks deserve to become a corollary?}
\end{enumerate}
\end{remarks}

\begin{proof}[Proof of Theorem~\ref{repres-uq-extend-triple}]

%REMEMBER TO ADD:
%Let $A_\alpha := g_\alpha[C_\alpha]$.
%Notice $g_{\lambda^+}[C_{\lambda^+}] = A_{\lambda^+} = N$

Step 1:

Without loss of generality we may assume that
the respective universes of $A$, $B$, $N^\circ$, and $N^\bullet$ are all subsets of $\lambda^+$.
%(and therefore $b$ is an ordinal $<\lambda^+$). \todo{Does this comment in parentheses matter?}

By $\diamondsuit(\lambda^+)$ and Fact~\ref{diamond-H_kappa-equiv}, 
fix a sequence $\left< \Omega_\alpha \mid \alpha<\lambda^+ \right>$ witnessing $\diamondsuit^-(\mathbf H_{\lambda^+})$.

The following definition is justified by Fact~\ref{predicted-is-amalgamation}:

\begin{definition}[$\alpha$-level predicted amalgamation]\label{def-predicted-amalg}
\todo{Decide whether this definition is needed here at all, or a reference to Definition \ref{def-predicted-amalg-variant} is good enough.}
Given an ordinal $\alpha<\lambda^+$,
models $C_\alpha, B_\alpha \in \AECK_\lambda$ such that $C_\alpha \lessK N^\circ$,
and an embedding $g_\alpha : C_\alpha \embeds B_\alpha$,
if $\Omega_\alpha$ is an embedding from $B_\alpha$ into $N^\bullet$ such that $\Omega_\alpha \circ g_\alpha = \id_{C_\alpha}$,
then:
\begin{itemize}
\item We say ``\emph{the $\alpha$-level predicted amalgamation exists}'';
\item We refer to $(\Omega_\alpha, \id_{N^\circ}, N^\bullet)$ as the 
\emph{$\alpha$-level predicted amalgamation};
and
\item For every $M \in \AECK_\lambda$ such that
$C_\alpha \lessK M \lessK N^\circ$,
\todo{Should we define it this way at all?  Maybe restrict this to $C_{\alpha+1}$ instead of arbitrary $M$?}
we refer to $(\Omega_\alpha, \id_{M}, N^\bullet)$ as the
\emph{$\alpha$-level predicted amalgamation of $B_\alpha$ and $M$ over
$(C_\alpha, g_\alpha, \id_{C_\alpha})$}.
\end{itemize}
\end{definition}

Step 2:

We shall build, recursively over $\alpha\leq\lambda^+$,
a sequence
\[
\Sequence{ (C_\alpha, B_\alpha, g_\alpha) | \alpha\leq\lambda^+ }
\]
such that:
\todo{Consider mentioning $\left< A_\alpha \mid \alpha\leq\lambda^+ \right>$ here to simplify subsequent statements,
rather than introducing it later.}
\begin{enumerate}
\item $\left< C_\alpha \mid \alpha\leq\lambda^+ \right>$ and $\left< B_\alpha \mid \alpha\leq\lambda^+ \right>$
are $\leqK$-increasing, continuous sequences of models,
with $C_0 = A$ and $B_0 = B$;
%\item $\left< C_\alpha \mid \alpha<\lambda^+ \right>$ is a representation of $N^\circ$ \cite[Definition~1.0.29]{JrSh:875};

\item $\langle g_\alpha : C_\alpha \embeds B_\alpha \mid\allowbreak \alpha\leq\lambda^+ \rangle$
is a $\subseteq$-increasing, continuous sequence of embeddings,
with $g_0 = \id_A$;
\setcounter{condition}{\value{enumi}}
\end{enumerate}
and satisfying the following for all $\alpha \leq\lambda^+$:
\begin{enumerate}
\setcounter{enumi}{\value{condition}}
\item $C_\alpha, B_\alpha \in \AECK_\lambda$ if $\alpha<\lambda^+$; % and
\item The universes of $C_\alpha$ and $B_\alpha$ are subsets of $\lambda^+$;
\item $C_\alpha \leqK N^\circ$;
%\item $n_\alpha \in C_{\alpha+1}$ if $\alpha<\lambda^+$;
\item \label{tweak-from-extension}
$N^\circ \cap \alpha$ is a subset of $C_\alpha$; 
%\todo{Can we get away without this, by not requiring $C_{\lambda^+} = N^\circ$ at the end?}
%$g_\alpha : C_\alpha \embeds B_\alpha$ is an embedding;
%\item $C_\alpha \lessK N^\circ$;
%\item for $\alpha=0$:
%$p$ does not fork over $C_0$, $g_0 = \id_{C_0}$,
%and there is some $b \in B_0$ realizing $p \restriction C_0$
%(that is, $p \restriction C_0 = \tp(b, C_0, B_0)$);
%\item for limit $\alpha$, choose $C_\alpha$, $B_\alpha$, and $g_\alpha$ as limits;
%\item $B_\alpha \leqK B_\beta$ and $g_\alpha \subseteq g_\beta$;
\item \label{b-in-difference} 
\todo{Ensure references to here are correct.}
$b \in B_\alpha \setminus g_\alpha[C_\alpha]$; %, so that we define $q_\alpha := \tp(b/g_\alpha[C_\alpha]; B_\alpha)$;
%\item\label{automatic} $q_\beta \restriction g_\alpha[C_\alpha] = q_\alpha$;
%\todo{This is automatic! Make it a Claim and delete!}
\item $%q_\alpha 
\tp(b/g_\alpha[C_\alpha]; B_\alpha)$
%\in S^\bs(g_\alpha[C_\alpha])$ and
does not fork over $A$; %, if $\alpha<\lambda^+$;
%\todo{Should we mention that $g_0 = \id_{C_0}$?
%(Probably not necessary, since $h \circ g_\alpha = \id_{C_\alpha}$ anyway.)}
%\todo{Check what happens with $\alpha=\lambda^+$.
%Also does not fork over $g_\gamma[C_\gamma]$ for all $\gamma\leq\alpha$,
%but this follows by monotonicity and we don't really need it.}

\item If $\alpha<\lambda^+$, $(g_\alpha[C_\alpha], B_\alpha, b) \notin \AECK^{3,\uq}$,
%and $\Omega_\alpha$ predicts an embedding of $B_\alpha$ into $N^\bullet$ compatible with $g_\alpha$,
and the $\alpha$-level predicted amalgamation exists,
%the predicted amalgamation of $B_\alpha$ and $N^\circ$ over $C_\alpha$ exists ($f_\alpha$),
%
%$(\Omega_\alpha, \id_{C_{\alpha+1}}, F_\alpha)$ is the
%$\alpha$-level predicted amalgamation of $B_\alpha$ and $C_{\alpha+1}$ over
%$(C_\alpha, g_\alpha, \id_{C_\alpha})$,
then $(\id_{B_\alpha}, g_{\alpha+1}, B_{\alpha+1}) \centernot{E} (\Omega_\alpha, \id_{C_{\alpha+1}}, N^\bullet)$
\todo{Consider writing the contrapositive?}
(where $(\Omega_\alpha, \id_{C_{\alpha+1}}, N^\bullet)$ is the 
$\alpha$-level predicted amalgamation of 
$B_\alpha$ and $C_{\alpha+1}$ over $(C_\alpha, g_\alpha, \id_{C_\alpha})$, 
as in Definition~\ref{def-predicted-amalg};
and $\centernot{E}$ is the negation of the relation $E$ between amalgamations of 
$B_\alpha$ and $C_{\alpha+1}$ over $(C_\alpha, g_\alpha, \id_{C_\alpha})$, as in Definition~\ref{def-E}).
%\item If $\alpha=0$, then $p$ does not fork over $C_0$, 
%$g_0 = \id_{C_0}$, and $p \restriction C_0 = \tp(b/C_0; B_0)$.

%see also Remark~\ref{pred-embed-gives-amalg}).

%then there does \emph{not} exist any embedding $h : B_{\alpha+1} \embeds N_2$ such that
%$f_\alpha = h \restriction B_\alpha$ and 
\end{enumerate}

\begin{notation}
Given an ordinal $\alpha\leq\lambda^+$ and $(C_\alpha, B_\alpha, g_\alpha)$ satisfying
Clauses (1), (2), and~(\ref{b-in-difference}) above,
we write $q_\alpha := \tp(b/g_\alpha[C_\alpha]; B_\alpha)$.
\end{notation}

%\begin{claim}%\label{restriction}
%\todo{This is Remark \ref{restriction}.  Delete from here.}
%Suppose $\beta\leq\lambda^+$ is an ordinal, and a partial sequence
%\[
%\Sequence{ (C_\alpha, B_\alpha, g_\alpha) | \alpha\leq\beta }
%\]
%has been constructed satisfying Clauses (1), (2), and~(\ref{b-in-difference}) for all $\alpha\leq\beta$.
%Then $q_\beta \in S^\na(g_\beta[C_\beta])$ and
%$q_\beta \restriction g_\alpha[C_\alpha] = q_\alpha$.
%\end{claim}

We now carry out the recursive construction:
%\todo{Decide later where to insert $\mathbf H_{\lambda^+}$.}
\begin{itemize}
\item For $\alpha=0$:
Set $C_0 := A$, $B_0 := B$, and $g_0 := \id_A$.
As $(A, B, b) \in \AECK^{3,\bs}$,
the existence property 
(which is a consequence of the extension property; see Definition~\ref{frame-properties} and footnote~\ref{extension-footnote} there)
guarantees that $q_0 = \tp(b/A; B)$ does not fork over $A$,
and all of the requirements are satisfied.

\item For a nonzero limit ordinal $\alpha\leq\lambda^+$:
Define
\[
C_\alpha := \bigcup_{\gamma<\alpha} C_\gamma; \quad
B_\alpha := \bigcup_{\gamma<\alpha} B_\gamma; \quad
g_\alpha := \bigcup_{\gamma<\alpha} g_\gamma.
\]
Using the continuity property, Lemma~\ref{continuous-union} guarantees that all of the requirements are satisfied,
recalling that $g_0[C_0] = \id_A[A] = A$.

\item For $\beta = \alpha+1$, we consider two cases:
\todo[color=green]{Updated on Sept.~6}
\begin{itemize}
\item
Suppose $(g_\alpha[C_\alpha], B_\alpha, b) \in \AECK^{3,\uq}$.
Since $C_\alpha \lessK N^\circ$, 
begin by using $\LST(\AECK) \leq\lambda$ and Remark~\ref{LST-remark} to fix a model $C_{\alpha+1} \in \AECK_\lambda$
that includes $C_\alpha \cup (N^\circ \cap (\alpha+1))$ and such that $C_\alpha \leqK C_{\alpha+1} \lessK N^\circ$.

\item
Otherwise:
%Suppose $(g_\alpha[C_\alpha], B_\alpha, b) \notin \AECK^{3,\uq}$
%and the $\alpha$-level predicted amalgamation exists.
%\todo{Use Claim 1.7 from 2015 version.}
By the induction hypothesis, $q_\alpha \in S^\bs(g_\alpha[C_\alpha])$,
so that $(g_\alpha[C_\alpha], B_\alpha, b) \in \AECK^{3,\bs} \setminus \AECK^{3,\uq}$.
Viewing $g_\alpha : C_\alpha \cong g_\alpha[C_\alpha]$ as an isomorphism,
since $N^\circ \in \AECK^\sat_{\lambda^+}$,
we apply Lemma~\ref{uniqueness-saturated} to obtain
$M \in \AECK_\lambda$ witnessing the non-uniqueness
of $(g_\alpha[C_\alpha], B_\alpha, b)$ via~$g_\alpha$, with $C_\alpha \lessK M \lessK N^\circ$.
Then, by $\LST(\AECK) \leq\lambda$ and Remark~\ref{LST-remark},
we can fix a model $C_{\alpha+1} \in \AECK_\lambda$ that includes
$M \cup (N^\circ \cap (\alpha+1))$ and such that $M \leqK C_{\alpha+1} \lessK N^\circ$.
%Since $M \lessK N^\circ$, 
%it follows by the coherence property of the AEC 
%\cite[Definition~4.1(A4)]{Baldwin-Categoricity}, \cite[Definition~1.0.3(1)(e)]{JrSh:875}
%that $M \leqK C_{\alpha+1}$.
Since $\mathfrak s$ satisfies transitivity and extension,
by Lemma~\ref{non-uniqueness-extension},
$C_{\alpha+1}$ also witnesses the the non-uniqueness
of $(g_\alpha[C_\alpha], B_\alpha, b)$ via~$g_\alpha$.
\end{itemize}

In both cases, 
since $\mathfrak s$ satisfies the extension property,
we continue by applying 
Lemma~\ref{get-desired-amalg}
with 
\[
\left( A, B, A^*, C^*, N^\bullet, \varphi \right) := 
\left( g_\alpha[C_\alpha], B_\alpha, C_\alpha, C_{\alpha+1}, N^\bullet, g_\alpha \right).
\]
If the $\alpha$-level predicted amalgamation exists, then let $\Omega := \Omega_\alpha$;
otherwise $\Omega$ can be chosen arbitrarily (since $C_\alpha \lessK N^\circ \lessK N^\bullet$ and $N^\circ \in \AECK^\sat_{\lambda^+}$)
but is actually irrelevant.
We thus obtain
a $\lambda$-amalgamation
$(\id_{B_\alpha}, g_{\alpha+1}, B_{\alpha+1})$ of $B_\alpha$ and $C_{\alpha+1}$ over $(C_\alpha, g_\alpha, \id_{C_\alpha})$ such that
$q_{\alpha+1} = \tp(b/g_{\alpha+1}[C_{\alpha+1}]; B_{\alpha+1})$ does not fork over $g_\alpha[C_\alpha]$,
and 
such that Clause~(9) of the recursive construction is satisfied.
In particular, 
$B_{\alpha+1} \in \AECK_\lambda$, $B_\alpha \leqK B_{\alpha+1}$, 
$g_{\alpha+1} : C_{\alpha+1} \embeds B_{\alpha+1}$ is an embedding such that $g_\alpha \subseteq g_{\alpha+1}$,
$b \in B_{\alpha+1} \setminus g_{\alpha+1}[C_{\alpha+1}]$,
and $q_{\alpha+1} \in S^\bs(g_{\alpha+1}[C_{\alpha+1}])$.
By the induction hypothesis, we know that $q_\alpha$ does not fork over $A$,
so that by Lemma~\ref{restriction} and the transitivity property
it follows that $q_{\alpha+1}$ does not fork over $A$.
Of course,
we can ensure that the universe of $B_{\alpha+1}$ is a subset of $\lambda^+$.

\end{itemize}

This completes the recursive construction.

Step 3:

It is clear from Clauses (5) and~(6) of the recursive construction that $C_{\lambda^+} = N^\circ$,
so that $\langle C_\alpha \mid \alpha<\lambda^+ \rangle$ is a representation of $N^\circ$.

For every $\alpha\leq\lambda^+$, let $A_\alpha := g_\alpha[C_\alpha]$.
Let $N := A_{\lambda^+}$.
Then $N = g_{\lambda^+}[N^\circ] \in \AECK^\sat_{\lambda^+}$ and
$\left< A_\alpha \mid \alpha<\lambda^+ \right>$ is a representation of $N$.

Let $N^+ := B_{\lambda^+}$.
Since $N^+ = \bigcup_{\alpha<\lambda^+} B_\alpha$, 
where $\left|B_\alpha\right| = \lambda$ for every $\alpha<\lambda^+$,
it follows that $\left| N^+ \right| \leq \lambda^+$.
But $N \lessK N^+$ (since $g_{\lambda^+} : N^\circ \embeds N^+$ is an embedding), where $\left|N\right| = \lambda^+$,
so that in fact $N^+ \in \AECK_{\lambda^+}$
and $\left< B_\alpha \mid \alpha<\lambda^+ \right>$ is a representation of $N^+$.

Since $N^\circ \lessKuniv N^\bullet$, % and $g_{\lambda^+} : N^\circ \embeds N^+$ is an embedding, 
apply Lemma~\ref{univ-embedding} to obtain
an embedding $h : N^+ \embeds N^\bullet$ such that $h \circ g_{\lambda^+} = \id_{N^\circ}$.
Since $N = g_{\lambda^+}[N^\circ]$, it follows that $h \restriction N : N \cong N^\circ$ is an isomorphism.

%Let $UNUSED := h(b)$, and
%It follows that $h \circ g_\alpha = \id_{C_\alpha}$ for every $\alpha\leq\lambda^+$.
For every $\alpha\leq\lambda^+$, let %$D_\alpha := h[B_\alpha]$ and 
$h_\alpha := h \restriction B_\alpha$,
%\todo{Should we bother defining $D_\alpha$?}
so that $h = h_{\lambda^+} = \bigcup_{\alpha<\lambda^+} h_\alpha$.

\begin{claim}\label{claim-after-recursion}
For every $\alpha<\lambda^+$:
%\todo{Fix to use $h_\alpha$.}
\begin{enumerate}
\item $g_\alpha : C_\alpha \cong A_\alpha$ is an isomorphism and $A_\alpha \leqK B_\alpha$.
\item $A_\alpha \in \AECK_\lambda$.
\item $h_\alpha \circ g_\alpha = \id_{C_\alpha}$.
\item
$(\id_{B_\alpha}, g_{\alpha+1}, B_{\alpha+1})$ and $(h_\alpha, \id_{C_{\alpha+1}}, N^\bullet)$
are two amalgamations of $B_\alpha$ and $C_{\alpha+1}$ over $(C_\alpha, g_\alpha, \id_{C_\alpha})$.
\item
$(\id_{B_\alpha}, g_{\alpha+1}, B_{\alpha+1}) \mathrel{E} (h_\alpha, \id_{C_{\alpha+1}}, N^\bullet)$.
\item If $\Omega_\alpha = h_\alpha$ then $(A_\alpha, B_\alpha, b) \in \AECK^{3,\uq}$.
\end{enumerate}
\end{claim}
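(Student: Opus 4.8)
The plan is to prove all six clauses by following exactly the template of Claim~\ref{claim-after-recursion-variant} from the proof of Theorem~\ref{main-thm}, under the dictionary in which $C_\alpha$ here plays the role of $A_\alpha$ there, the image $A_\alpha = g_\alpha[C_\alpha]$ plays the role of $g_\alpha[A_\alpha]$ there, and the pair $(N^\circ, N^\bullet)$ replaces $(N, N')$. Since $h \circ g_{\lambda^+} = \id_{N^\circ}$ and $h_\alpha = h \restriction B_\alpha$, the same formal manipulations of restrictions go through, so no genuinely new idea is required; the entire claim is bookkeeping about restrictions of the globally defined embedding $h$.

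For Clauses (1) and~(2): the construction guarantees that $g_\alpha : C_\alpha \embeds B_\alpha$ is an embedding, hence an isomorphism onto its image $A_\alpha = g_\alpha[C_\alpha]$, and since images of embeddings are strong submodels we obtain $A_\alpha \leqK B_\alpha$. As $C_\alpha \in \AECK_\lambda$ by Clause~(3) of the recursive construction and $g_\alpha$ is an isomorphism onto $A_\alpha$, invariance under isomorphisms yields $A_\alpha \in \AECK_\lambda$. For Clause~(3), I would restrict the identity $h \circ g_{\lambda^+} = \id_{N^\circ}$ to $C_\alpha$, using $g_\alpha \subseteq g_{\lambda^+}$ and $h_\alpha \subseteq h$, exactly as in the variant.

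For Clause~(4), I would verify the two amalgamation conditions directly against Definition~\ref{def-amalgamation}: for $(\id_{B_\alpha}, g_{\alpha+1}, B_{\alpha+1})$ the requirement reduces to $g_\alpha = g_{\alpha+1} \restriction C_\alpha$, which is $g_\alpha \subseteq g_{\alpha+1}$; for $(h_\alpha, \id_{C_{\alpha+1}}, N^\bullet)$ it reduces to $h_\alpha \circ g_\alpha = \id_{C_\alpha}$, which is Clause~(3). For Clause~(5), the witness is $(h_{\alpha+1}, \id_{N^\bullet}, N^\bullet)$: since $h_\alpha \subseteq h_{\alpha+1}$ we have $h_{\alpha+1} \circ \id_{B_\alpha} = h_\alpha$, and Clause~(3) applied at $\alpha+1$ gives $h_{\alpha+1} \circ g_{\alpha+1} = \id_{C_{\alpha+1}}$, so both compatibility equations of Definition~\ref{def-E} hold.

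The only clause carrying real content is Clause~(6), and it is here that the design of the recursion (Clause~(9)) is cashed in; I expect this to be the one step deserving care, though it is still routine. Assuming $\Omega_\alpha = h_\alpha$, the equation $\Omega_\alpha \circ g_\alpha = \id_{C_\alpha}$ from Clause~(3) shows that the $\alpha$-level predicted amalgamation exists and equals $(\Omega_\alpha, \id_{C_{\alpha+1}}, N^\bullet)$; Clause~(5) then yields $(\id_{B_\alpha}, g_{\alpha+1}, B_{\alpha+1}) \mathrel{E} (\Omega_\alpha, \id_{C_{\alpha+1}}, N^\bullet)$, which contradicts Clause~(9) of the construction unless $(A_\alpha, B_\alpha, b) \in \AECK^{3,\uq}$. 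In other words, Clause~(6) is precisely the contrapositive of the way the successor stages were rigged, so there is no substantive obstacle beyond unwinding the definitions.
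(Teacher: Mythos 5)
Your proposal is correct and follows essentially the same argument as the paper's own proof: clauses (1)--(5) are verified by the same bookkeeping on restrictions of $h$ and inclusions $g_\alpha \subseteq g_{\alpha+1}$, with $(h_{\alpha+1}, \id_{N^\bullet}, N^\bullet)$ as the witness for clause (5), and clause (6) is obtained exactly as in the paper by combining clause (5) with the contrapositive of Clause~(9) of the recursive construction. The paper likewise treats this claim as the direct analogue of Claim~\ref{claim-after-recursion-variant} from Theorem~\ref{main-thm}, so your dictionary $(C_\alpha, g_\alpha[C_\alpha], N^\circ, N^\bullet) \leftrightarrow (A_\alpha, g_\alpha[A_\alpha], N, N')$ is precisely the intended one.
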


\begin{proof}\hfill
\begin{enumerate}
\item $g_\alpha$ is an embedding from $C_\alpha$ into $B_\alpha$, and $A_\alpha$ is its image.
\item By the closure of $\AECK_\lambda$ 
under the isomorphism $g_\alpha$.
\item Since $g_\alpha \subseteq g_{\lambda^+}$, $h_\alpha \subseteq h$, and $h \circ g_{\lambda^+} = \id_{C_{\lambda^+}}$.
\item
The fact that 
$(\id_{B_\alpha}, g_{\alpha+1}, B_{\alpha+1})$ 
is an amalgamation of $B_\alpha$ and $C_{\alpha+1}$ over $(C_\alpha, g_\alpha, \id_{C_\alpha})$ 
follows from $g_\alpha \subseteq g_{\alpha+1}$.
The fact that 
$(h_\alpha, \id_{C_{\alpha+1}}, N^\bullet)$
is an amalgamation of $B_\alpha$ and $C_{\alpha+1}$ over $(C_\alpha, g_\alpha, \id_{C_\alpha})$ 
follows from $h_\alpha \circ g_\alpha = \id_{C_\alpha}$.

\item $h_{\alpha+1} : B_{\alpha+1} \embeds N^\bullet$ is an embedding satisfying
$h_{\alpha+1} \circ \id_{B_\alpha} = h_\alpha$ and
$h_{\alpha+1} \circ g_{\alpha+1} = \id_{C_{\alpha+1}}$,
so that $(h_{\alpha+1}, \id_{N^\bullet}, N^\bullet)$ witnesses that
$(\id_{B_\alpha}, g_{\alpha+1}, B_{\alpha+1}) \mathrel{E} (h_\alpha, \id_{C_{\alpha+1}}, N^\bullet)$,
as sought.

\item
Suppose $\Omega_\alpha = h_\alpha$.
In particular, $\Omega_\alpha : B_\alpha \embeds N^\bullet$ is an embedding satisfying
$\Omega_\alpha \circ g_\alpha = h_\alpha \circ g_\alpha = \id_{C_\alpha}$,
meaning that the $\alpha$-level predicted amalgamation exists, and
$(\Omega_\alpha, \id_{C_{\alpha+1}}, N^\bullet)$
is the $\alpha$-level predicted amalgamation of 
$B_\alpha$ and $C_{\alpha+1}$ over $(C_\alpha, g_\alpha, \id_{C_\alpha})$.
Furthermore, by Clause~(5) above we obtain
$(\id_{B_\alpha}, g_{\alpha+1}, B_{\alpha+1}) \mathrel{E} (\Omega_\alpha, \id_{C_{\alpha+1}}, N^\bullet)$.
Thus, by Clause~(9) of the recursion, it must be that $(A_\alpha, B_\alpha, b) \in \AECK^{3,\uq}$.
%The desired result then follows from Clause~(7) above.
\qedhere
\end{enumerate}
\end{proof}

For every $\alpha<\lambda^+$, we see from Clause~(8) of the recursive construction that
\[
\tp(b/N; N^+) \restriction A_\alpha = \tp(b/A_\alpha; N^+) = \tp(b/A_\alpha; B_\alpha) \in S^\bs(A_\alpha)
\]
and this type does not fork over $A = A_0$,
so that by Lemma~\ref{big-dnf-equiv} %and Corollary~\ref{big-basic-equiv}
we obtain that
$\tp(b/N; N^+)  %\in S^\bs_{>\lambda}(N^\circ)
$
does not fork over $A$.

As $g_0 = \id_A$ and $h \circ g_0 = h \circ g_{\lambda^+} \restriction A = \id_A$,
it follows that $h \restriction A = \id_A$.

It remains to show that the set
\[
\Set{ \alpha<\lambda^+ | (A_\alpha, B_\alpha, b) \in \AECK^{3,\uq} }
\]
is stationary in $\lambda^+$.
In light of Claim~\ref{claim-after-recursion}(6), the following Claim suffices,
proven exactly like Claim~\ref{get-stationary-beta} in the proof of Theorem~\ref{main-thm}:

\begin{claim}
There are stationarily many $\beta<\lambda^+$ such that $\Omega_\beta = h_\beta$.
\end{claim}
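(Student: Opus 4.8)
The plan is to imitate the proof of Claim~\ref{get-stationary-beta} line for line, with $N^+$ and $N^\bullet$ now playing the roles of $N^\bullet$ and $N'$ from the proof of Theorem~\ref{main-thm}. The one structural fact that makes the argument run is that, since $\langle B_\alpha \mid \alpha\leq\lambda^+ \rangle$ is $\leqK$-increasing and continuous and $h = \bigcup_{\alpha<\lambda^+} h_\alpha$, the sequence $\langle h_\alpha \mid \alpha\leq\lambda^+ \rangle$ is $\subseteq$-increasing and continuous; in particular $h_\beta = \bigcup_{\alpha<\beta} h_\alpha$ for every nonzero limit $\beta<\lambda^+$. To recover $h_\beta$ from the single object $h$, I would first introduce the ranking function $\rank_h$ that assigns to each ordered pair $(x,y)\in h$ the least ordinal $\alpha$ with $(x,y)\in h_\alpha$.

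Next I would invoke the diamond principle fixed in Step~1. Set $\Omega := h$ and $z := \{ \langle h_\alpha \mid \alpha\leq\lambda^+ \rangle, {\rank_h} \}$. Because Step~1 arranged the universes of $A$, $B$, $N^\circ$, and $N^\bullet$ to be subsets of $\lambda^+$ --- and hence the universe of $N^+ = B_{\lambda^+}$ and of each $B_\alpha$ as well --- we have $\Omega = h \subseteq \mathbf H_{\lambda^+}$ and $z \in \mathbf H_{\lambda^{++}}$. Then Fact~\ref{diamond-H_kappa-stat}, applied to the witnessing sequence $\langle \Omega_\alpha \mid \alpha<\lambda^+ \rangle$, guarantees that the set
\[
W := \{ \beta<\lambda^+ \mid \exists \mathcal M \esm \mathbf H_{\lambda^{++}}\ [\, z \in \mathcal M,\ \beta = \mathcal M \cap \lambda^+,\ \mathcal M \cap \Omega = \Omega_\beta \,] \}
\]
is stationary in $\lambda^+$.

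It then remains to show that $\Omega_\beta = h_\beta$ for every $\beta\in W$, which together with Claim~\ref{claim-after-recursion}(6) yields stationarily many $\alpha$ with $(A_\alpha, B_\alpha, b)\in\AECK^{3,\uq}$, as required. Fixing a witness $\mathcal M$ for $\beta\in W$, note first that $\beta = \mathcal M\cap\lambda^+$ is a nonzero limit ordinal $<\lambda^+$. For the inclusion $h_\beta\subseteq\Omega_\beta$: for each $\alpha<\beta$, elementarity together with $\langle h_\alpha \mid \alpha\leq\lambda^+\rangle\in\mathcal M$ gives $h_\alpha\in\mathcal M$, and since $\mathcal M\models |h_\alpha|<\lambda^+$ we obtain $h_\alpha\subseteq\mathcal M$; taking the union over $\alpha<\beta$ and using continuity of $\langle h_\alpha\rangle$ yields $h_\beta = \bigcup_{\alpha<\beta} h_\alpha \subseteq \mathcal M\cap\Omega = \Omega_\beta$. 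For the reverse inclusion: any $(x,y)\in\Omega_\beta = \mathcal M\cap h$ satisfies $\rank_h(x,y)\in\mathcal M$ (as $\rank_h\in\mathcal M$), so $\rank_h(x,y)$ is an ordinal $<\beta$, whence $(x,y)\in h_{\rank_h(x,y)}\subseteq h_\beta$.

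I do not anticipate any genuine obstacle here, precisely because the argument is identical to that of Claim~\ref{get-stationary-beta}; the only delicate points are the two uses of elementarity --- combining $h_\alpha\in\mathcal M$ with the cardinality bound $|h_\alpha|\leq\lambda$ to conclude $h_\alpha\subseteq\mathcal M$, and combining $\rank_h\in\mathcal M$ with a membership in $\mathcal M\cap h$ to push a pair into $h_\beta$ --- together with the bookkeeping from Step~1 ensuring $z\in\mathbf H_{\lambda^{++}}$ and $\Omega\subseteq\mathbf H_{\lambda^+}$.
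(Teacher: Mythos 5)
Your proposal is correct and is essentially identical to the paper's own argument: the paper proves this claim by declaring it ``proven exactly like Claim~\ref{get-stationary-beta} in the proof of Theorem~\ref{main-thm},'' and your write-up is precisely that proof transported to the new setting (with $N^+$, $N^\bullet$, the $B_\alpha$'s, and Clause~(4) of the recursive construction supplying the bookkeeping that puts $\Omega = h$ inside $\mathbf H_{\lambda^+}$ and $z$ inside $\mathbf H_{\lambda^{++}}$). Both the use of $\rank_h$ to recover $h_\beta$ from $h$ inside $\mathcal M$ and the two elementarity steps match the paper's proof exactly.
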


This completes the proof of the Theorem.
\end{proof}

\section{Starting from the top}
\label{section:from-top}

\begin{corollary}\label{start-from-top}
\todo{Theorem or Corollary?}
Suppose that:
\begin{enumerate}
\item $\lambda$ is an infinite cardinal such that $\diamondsuit(\lambda^+)$ holds;
\item $\mathfrak s = (\AECK, {\dnf}, S^\bs)$ is a pre-$\lambda$-frame satisfying amalgamation, transitivity, 
extension, and continuity;
\todo{%Add other properties; hopefully not uniqueness; 
CAN WE REMOVE EXTENSION?}
%\dots;
%\item $\AECK_{\lambda^+}$ satisfies amalgamation \cite[Definition~1.0.20(1)]{JrSh:875} and stability;
\item $N^\circ, N^\bullet \in \AECK_{\lambda^+}$ are two models such that $N^\circ \in \AECK^\sat_{\lambda^+}$ and
$N^\circ \lessKuniv N^\bullet$;
%\item $N^\circ, N^\bullet \in \AECK^\sat_{\lambda^+}$ are two models such that
%$N^\circ \lessKuniv N^\bullet$;
\todo{%Define saturated \cite[Definition 1.0.25]{JrSh:875}; 
Notice we don't need $N^\bullet \in \AECK^\sat_{\lambda^+}$.
Also check whether this hypothesis is strictly weaker than assuming amalgamation and stability on $\lambda^+$.}
\item $p \in S^\bs_{>\lambda}(N^\circ)$.

%\item $N' \in \AECK_{\lambda^+}$ and $d \in N' \setminus N^\circ$ satisfy $N^\circ \lessK N'$ and
%$\tp(d, N^\circ, N') \in S^\bs_{>\lambda}(N^\circ)$
%\cite[Definition~2.6.4; see also definitions in Section~\ref{Section-basic-larger} below]{JrSh:875}.
%\item $M \in \AECK_\lambda$ and $M \leqK N^\circ$.
\end{enumerate}
Then there %are $N' \succeq_\AECK N^\circ$ in $\AECK_{\lambda^+}$ and $d \in N' \setminus N^\circ$ such that:
%\begin{enumerate}
%\item For every $M \in \AECK_\lambda$ such that $M \lessK N^\circ$,
%$d$ and $N'$ together realize $p \restriction M$
%(that is, $p \restriction M = \tp(d, M, N')$); and
%\item There 
exist a model $N' \in \AECK_{\lambda^+}$ such that $N^\circ \lessK N' \leqK N^\bullet$,
an element $d \in N' \setminus N^\circ$, and % as well as
representations $\langle C_\alpha \mid \alpha<\lambda^+ \rangle$ of $N^\circ$
and $\langle D_\alpha \mid \alpha<\lambda^+ \rangle$ of %models in $\AECK_\lambda$
$N'$ 
such that:
\begin{enumerate}
%\item $N^\circ \lessK N'$ and $d \in N' \setminus N^\circ$;
\item $p \restriction C_0 = \tp(d/C_0; N')$;
\item both $p$ and $\tp(d/N^\circ; N')$ do not fork over $C_0$;
\item $(C_\alpha, D_\alpha, d) \in \AECK^{3,\bs}$
%$C_\alpha \lessK D_\alpha$ and $d \in D_\alpha \setminus C_\alpha$ 
for every $\alpha<\lambda^+$;
%\todo{This follows from (2) (almost).}
%\todo{Maybe we don't get this always?  Does it matter?}
%and $p \restriction C_\alpha %= \tp(d/C_\alpha; D_\alpha) 
%\in S^\bs(C_\alpha)$ does not fork over $C_0$; 
and
%\todo{Check this; maybe it's automatic? Should we add a non-forking condition here?}
\item the set
\[
\Set{ \alpha<\lambda^+ | (C_\alpha, D_\alpha, d) \in \AECK^{3,\uq} }
\]
is stationary in $\lambda^+$.
%\item there are stationarily many $\alpha<\lambda^+$ such that $(C_\alpha, D_\alpha, d) \in \AECK^{3,\uq}$. % is a uniqueness triple
%\cite[Definition~4.1.5]{JrSh:875}.
\todo{Can we express this conclusion in terms of weakly primeness triples?
We need to specify the representation in order to say something about $p \restriction C_0$.}
\end{enumerate}

%Then there %are $N' \succeq_\AECK N^\circ$ in $\AECK_{\lambda^+}$ and $d \in N' \setminus N^\circ$ such that:
%%\begin{enumerate}
%%\item For every $M \in \AECK_\lambda$ such that $M \lessK N^\circ$,
%%$d$ and $N'$ together realize $p \restriction M$
%%(that is, $p \restriction M = \tp(d, M, N')$); and
%%\item There 
%exist a model $N' \in \AECK_{\lambda^+}$ such that $N^\circ \lessK N'$ and
%an element $d \in N' \setminus N^\circ$ %, and % as well as
%%representations $\langle C_\alpha \mid \alpha<\lambda^+ \rangle$ of $N^\circ$
%%and $\langle D_\alpha \mid \alpha<\lambda^+ \rangle$ of %models in $\AECK_\lambda$
%%$N'$ 
%such that
%%\begin{enumerate}
%%\item $N^\circ \lessK N'$ and $d \in N' \setminus N^\circ$;
%%\item 
%$p \restriction M = \tp(d/M; N')$ for every $M \in \AECK_\lambda$ with $M \lessK N^\circ$,
%%$p = \tp(d/N^\circ; N')$
%\todo{MIGHT NEED UNIQUENESS FOR PART OF T HIS CONCLUSION! 
%Would like to get $p = \tp(d/N^\circ; N')$, but this would require tameness!}
%and $(N^\circ, N', d)$ is a weakly primeness triple.  \todo{in $\AECK^\sat_{\lambda^+}$?}

%\item $p$ does not fork over $C_0$;

%and $p \restriction C_\alpha %= \tp(d/C_\alpha; D_\alpha) 
%\in S^\bs(C_\alpha)$ does not fork over $C_0$; 
%\todo{Check this; maybe it's automatic? Should we add a non-forking condition here?}
%\end{enumerate}
%\end{enumerate}
\end{corollary}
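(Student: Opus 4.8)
The plan is to reduce the ``big'' basic type $p$ over $N^\circ$ to an ordinary basic triple, feed that triple into Theorem~\ref{repres-uq-extend-triple}, and then transport the resulting data back into $N^\bullet$ along the embedding that theorem supplies. First I would apply Lemma~\ref{basic-triple-from-big} to $p \in S^\bs_{>\lambda}(N^\circ)$ to obtain a basic triple $(A, B, b) \in \AECK^{3,\bs}$ with $A \lessK N^\circ$, such that $p$ does not fork over $A$ and $\tp(b/A; B) = p \restriction A$. Since by hypothesis $A \lessK N^\circ \lessKuniv N^\bullet$ with $N^\circ \in \AECK^\sat_{\lambda^+}$, and $\mathfrak s$ satisfies amalgamation, transitivity, extension, and continuity, all hypotheses of Theorem~\ref{repres-uq-extend-triple} are met. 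Applying it yields models $N, N^+ \in \AECK_{\lambda^+}$, representations $\langle A_\alpha \mid \alpha<\lambda^+ \rangle$ of $N$ and $\langle B_\alpha \mid \alpha<\lambda^+ \rangle$ of $N^+$, and an embedding $h : N^+ \embeds N^\bullet$ with $N \lessK N^+$, $b \in N^+ \setminus N$, $A_0 = A$, $B_0 = B$, with $\tp(b/N; N^+)$ non-forking over $A$, with $(A_\alpha, B_\alpha, b) \in \AECK^{3,\bs}$ for all $\alpha$, with stationarily many $\alpha$ satisfying $(A_\alpha, B_\alpha, b) \in \AECK^{3,\uq}$, and with $h \restriction N : N \cong N^\circ$ an isomorphism such that $h \restriction A = \id_A$.

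Next I would push everything through $h$. As $h$ is a $\AECK$-embedding, it is an isomorphism onto its image, so I set $N' := h[N^+]$, $d := h(b)$, and for each $\alpha<\lambda^+$ put $C_\alpha := h[A_\alpha]$ and $D_\alpha := h[B_\alpha]$. Because $h \restriction N : N \cong N^\circ$ we have $h[N] = N^\circ$, whence $N^\circ = h[N] \lessK h[N^+] = N'$ (using $N \lessK N^+$) and $N' = h[N^+] \leqK N^\bullet$; moreover $d = h(b) \in N' \setminus N^\circ$ since $b \in N^+ \setminus N$ and $h$ is injective. Since $h \restriction N$ carries the representation $\langle A_\alpha \rangle$ to $\langle C_\alpha \rangle$, the latter is a representation of $N^\circ$, and likewise $\langle D_\alpha \rangle$ is a representation of $N'$. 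Finally, $h \restriction A = \id_A$ together with $A_0 = A$ gives $C_0 = h[A] = A$.

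It then remains to verify the four numbered conclusions, each by invariance. For~(1), I would note $\tp(d/C_0; N') = \tp(h(b)/A; N')$; since $h \restriction A = \id_A$ this equals $\tp(b/A; N^+) = \tp(b/A; B) = p \restriction A = p \restriction C_0$. For~(2), $p$ does not fork over $A = C_0$ by the choice of $(A,B,b)$, while the image type $\tp(d/N^\circ; N')$ is, via the isomorphism $h : N^+ \cong N'$ and Fact~\ref{invariance-big}, the image of $\tp(b/N; N^+)$, which does not fork over $A = h[A]$; hence $\tp(d/N^\circ; N')$ does not fork over $C_0$. For~(3), each $h \restriction B_\alpha : B_\alpha \cong D_\alpha$ is an isomorphism sending $(A_\alpha, B_\alpha, b)$ to $(C_\alpha, D_\alpha, d)$, so invariance of $S^\bs$ (Definition~\ref{def-pre-frame}(4)) transfers membership in $\AECK^{3,\bs}$. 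For~(4), the same isomorphisms together with Fact~\ref{invariance-uniqueness} show that $(A_\alpha, B_\alpha, b) \in \AECK^{3,\uq}$ implies $(C_\alpha, D_\alpha, d) \in \AECK^{3,\uq}$, so the target set contains the stationary set from Theorem~\ref{repres-uq-extend-triple}(6) and is itself stationary.

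The main obstacle is bookkeeping rather than conceptual: one must keep straight that $h$ fixes $A$ pointwise but not $B$, so that $C_0 = A$ while $D_0 = h[B]$ need not equal $B$, and one must invoke the correct invariance statement in each case --- Fact~\ref{invariance-big} for the big-model non-forking of $\tp(d/N^\circ; N')$, versus the small-model invariance of $S^\bs$ for the triples $(C_\alpha, D_\alpha, d)$. I expect no genuine difficulty beyond ensuring these transports are justified and that the identifications $h[N] = N^\circ$ and $C_0 = A$ are used consistently throughout.
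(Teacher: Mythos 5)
Your proposal is correct and follows essentially the same route as the paper's own proof: reduce $p$ to a basic triple via Lemma~\ref{basic-triple-from-big}, apply Theorem~\ref{repres-uq-extend-triple}, and transport the output along $h$ by setting $N' := h[N^+]$, $d := h(b)$, $C_\alpha := h[A_\alpha]$, $D_\alpha := h[B_\alpha]$. The only difference is that you spell out the invariance checks (Facts \ref{invariance-big} and~\ref{invariance-uniqueness}) that the paper compresses into a single closing sentence, which is a harmless elaboration.
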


\begin{proof}
By Lemma~\ref{basic-triple-from-big}, fix 
a triple $(A, B, b) \in \AECK^{3,\bs}$ such that 
$A \lessK N^\circ$, 
$p$ does not fork over $A$,
and $\tp(b/A; B) = p \restriction A$.

We then apply Theorem~\ref{repres-uq-extend-triple} to obtain
models $N, N^+ \in \AECK_{\lambda^+}$, 
representations $\langle A_\alpha \mid \alpha<\lambda^+ \rangle$ of $N$
and $\langle B_\alpha \mid \alpha<\lambda^+ \rangle$ of $N^+$,
and an embedding $h : N^+ \embeds N^\bullet$, 
altogether satisfying the conclusion of that Theorem.

Let $N' := h[N^+]$, let $d := h(b)$, 
and for every $\alpha<\lambda^+$, let $C_\alpha := h[A_\alpha]$ and $D_\alpha := h[B_\alpha]$.

Notice that $C_0 = h[A] = A$ and $A \lessK B = B_0 \lessK N^+$,
so that $\tp(d/C_0; N') = \tp(h(b)/A; h[N^+]) = \tp(b/A; N^+) = \tp(b/A; B) = p \restriction A = p \restriction C_0$.
All of the required properties follow from their invariance under the isomorphism $h : N^+ \cong N'$.
\end{proof}

\begin{lemma}\label{apply-uniqueness}
Suppose $\mathfrak s = (\AECK, {\dnf}, S^\bs)$ is a pre-$\lambda$-frame satisfying the uniqueness property,
$A \in \AECK_\lambda$ and $N \in \AECK_{>\lambda}$ %\todo{Or do we need $N \in \AECK_{\lambda^+}$?}
are models that satisfy $A \lessK N$,
and $p, q \in S(N)$ are two types that both do not fork over $A$,
and such that $p \restriction A = q \restriction A$.
Then $p \restriction M = q \restriction M$ for every $M \in \AECK_\lambda$ with $M \lessK N$.
\end{lemma}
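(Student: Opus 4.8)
The plan is to reduce to the uniqueness property of $\mathfrak s$, which speaks only about types over models in $\AECK_\lambda$. The one subtlety is that the given $M$ need not contain $A$: both are merely strong submodels of the large model $N$, with no a~priori comparison between them. So first I would manufacture a single $\lambda$-sized strong submodel $B$ of $N$ that contains both $A$ and $M$, and then apply uniqueness over $A$ to the restrictions $p \restriction B$ and $q \restriction B$.

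Concretely, fix an arbitrary $M \in \AECK_\lambda$ with $M \lessK N$. Since $N \in \AECK_{>\lambda}$ while $A \in \AECK_\lambda$, I can choose a point $e \in N \setminus A$. As $\LST(\AECK) \leq\lambda$ (from Definition~\ref{def-pre-frame}(1)), I apply Remark~\ref{LST-remark} to fix $B \in \AECK_\lambda$ containing every point of $A \cup M \cup \{e\}$ and satisfying $B \leqK N$. Because $A \leqK N$ and $M \leqK N$, and because $A$ and $M$ are submodels of $B$ with $B \leqK N$, the coherence property (Definition~\ref{AEC-def}(5)) gives $A \leqK B$ and $M \leqK B$; moreover $e \in B \setminus A$ forces $A \lessK B$, matching the strict-submodel hypothesis of the uniqueness property.

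Next I would check that $p \restriction B$ and $q \restriction B$ satisfy the hypotheses of uniqueness. They belong to $S(B)$ since $B \leqK N$. By Definition~\ref{big-dnf} of non-forking over models in $\AECK_{>\lambda}$, the assumption that $p$ does not fork over $A$ applied to the intermediate model $A \leqK B \leqK N$ (with $B \in \AECK_\lambda$) yields that $p \restriction B$ does not fork over $A$; the same reasoning gives that $q \restriction B$ does not fork over $A$. Furthermore $(p \restriction B) \restriction A = p \restriction A = q \restriction A = (q \restriction B) \restriction A$ by hypothesis. Since $A \lessK B$, the uniqueness property delivers $p \restriction B = q \restriction B$.

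Finally, restricting along $M \leqK B$ gives $p \restriction M = (p \restriction B) \restriction M = (q \restriction B) \restriction M = q \restriction M$, which is the desired conclusion. The only real obstacle is the potential incomparability of $A$ and $M$, resolved by the common-extension construction above; once $B$ is in hand, the argument is a direct invocation of Definition~\ref{big-dnf} together with the uniqueness property.
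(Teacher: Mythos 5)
Your proof is correct and follows essentially the same route as the paper's: construct a common model $B \in \AECK_\lambda$ with $A \leqK B \leqK N$ and $M \leqK B$ via $\LST(\AECK)\leq\lambda$ and Remark~\ref{LST-remark}, pull the non-forking of $p$ and $q$ down to $B$ via Definition~\ref{big-dnf}, apply the uniqueness property over $A$, and then restrict to $M$. Your one extra touch---adding a point $e \in N \setminus A$ so that $A \lessK B$ holds strictly, as the statement of the uniqueness property formally requires---is a refinement the paper's proof glosses over (it only arranges $A \leqK B$), though the degenerate case $B = A$ is harmless there since $p \restriction B = q \restriction B$ is then immediate from the hypothesis.
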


\begin{proof}
Consider any $M \in \AECK_\lambda$ with $M \lessK N$.
Apply $\LST(\AECK) \leq\lambda$ and Remark~\ref{LST-remark} to fix
$B \in \AECK_\lambda$ containing every point of $A \cup M$, and such that $A \leqK B \lessK N$ and $M \leqK B$.
By Definition~\ref{big-dnf} of non-forking for types over larger models,
the fact that $p$ and $q$ do not fork over $A$ implies, in particular,
that both $p \restriction B$ and $q \restriction B$ do not fork over $A$.
But $p \restriction B, q \restriction B \in S(B)$ where $B \in \AECK_\lambda$,
and $(p \restriction B) \restriction A = p \restriction A = q \restriction A = (q \restriction B) \restriction A$,
so that by the uniqueness property of $\mathfrak s$ we obtain $p \restriction B = q \restriction B$.
Then $p \restriction M = (p \restriction B) \restriction M = (q \restriction B) \restriction M = q \restriction M$,
as sought.
\end{proof}

\begin{corollary}\label{from-top-with-uniqueness}
Suppose that
$\mathfrak s = (\AECK, {\dnf}, S^\bs)$ is a pre-$\lambda$-frame satisfying the uniqueness property.
If $N^\circ, N' \in \AECK_{\lambda^+}$, $p$, and $d$ satisfy the conclusion of Corollary~\ref{start-from-top},
then in fact they satisfy the following property, stronger than Clause~(1) there: % that
\begin{enumerate}
\item[($1^*$)] $p \restriction M = \tp(d/M; N')$ for every $M \in \AECK_\lambda$ with $M \lessK N^\circ$.
\end{enumerate}
\end{corollary}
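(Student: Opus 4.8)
The plan is to read off the conclusion of Corollary~\ref{start-from-top} and feed it directly into Lemma~\ref{apply-uniqueness}, taking the base model to be $C_0$ and the ambient (large) model to be $N^\circ$. First I would set $q := \tp(d/N^\circ; N')$ and record that both $p$ and $q$ belong to $S(N^\circ)$, where $N^\circ \in \AECK_{\lambda^+} \subseteq \AECK_{>\lambda}$; here $p \in S^\bs_{>\lambda}(N^\circ) \subseteq S(N^\circ)$ by the hypotheses of Corollary~\ref{start-from-top}, and $q \in S(N^\circ)$ by construction. I would also note that $C_0 \lessK N^\circ$, since $\langle C_\alpha \mid \alpha<\lambda^+ \rangle$ is a representation of $N^\circ$, so that $C_0 \in \AECK_\lambda$ while $\left|N^\circ\right| = \lambda^+$.

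Next I would verify the two remaining hypotheses of Lemma~\ref{apply-uniqueness}. The requirement that both $p$ and $q$ do not fork over $C_0$ is exactly Clause~(2) of the conclusion of Corollary~\ref{start-from-top} (recalling $q = \tp(d/N^\circ; N')$). For the agreement over $C_0$, Clause~(1) there gives $p \restriction C_0 = \tp(d/C_0; N')$, while by the definition of restriction we have $q \restriction C_0 = \tp(d/N^\circ; N') \restriction C_0 = \tp(d/C_0; N')$; hence $p \restriction C_0 = q \restriction C_0$.

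Then I would invoke Lemma~\ref{apply-uniqueness}, using the uniqueness property of $\mathfrak s$ assumed in this Corollary, to conclude that $p \restriction M = q \restriction M$ for every $M \in \AECK_\lambda$ with $M \lessK N^\circ$. Since $q \restriction M = \tp(d/N^\circ; N') \restriction M = \tp(d/M; N')$ for any such $M \leqK N^\circ$, this is precisely the desired Clause~($1^*$).

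I do not expect a substantive obstacle here: the entire content is matching the data produced by Corollary~\ref{start-from-top} against the hypotheses of Lemma~\ref{apply-uniqueness}. The only point requiring a moment's care is that the uniqueness property is being applied to types over the $\lambda^+$-sized model $N^\circ$ rather than to types over models in $\AECK_\lambda$; this is already handled inside Lemma~\ref{apply-uniqueness}, which passes through Definition~\ref{big-dnf} and reduces everything to the uniqueness property over a suitable intermediate $B \in \AECK_\lambda$, so no additional argument is needed on my part.
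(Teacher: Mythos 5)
Your proposal is correct and is essentially identical to the paper's proof, which simply applies Lemma~\ref{apply-uniqueness} with $A := C_0$, $N := N^\circ$, and $q := \tp(d/N^\circ; N')$; you have merely spelled out the routine verification of the hypotheses that the paper leaves implicit.
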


\begin{proof}
Apply Lemma~\ref{apply-uniqueness} with $A := C_0$, $N := N^\circ$, and $q := \tp(d/N^\circ; N')$.
\end{proof}

\begin{corollary}
Suppose that, in addition to all of the hypotheses of Corollaries \ref{start-from-top} and~\ref{from-top-with-uniqueness},
$\AECK$ is $(\lambda, \lambda^+)$-weakly tame
\cite[Definition~11.6(1)]{Baldwin-Categoricity}.

Then any triple $(N^\circ, N', d)$ satisfying the conclusion of Corollary~\ref{start-from-top} satisfies the following property,
even stronger than Clause~($1^*$) above:
\begin{enumerate}
\item[($1^{**}$)] $p = \tp(d/N^\circ; N')$.
\end{enumerate}
\end{corollary}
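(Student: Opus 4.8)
The plan is to treat this as a direct application of weak tameness to the saturated model $N^\circ$, using the restriction-agreement already furnished by Clause~($1^*$). The entire content lives in the definition of $(\lambda,\lambda^+)$-weak tameness, and Clause~($1^*$) supplies exactly the hypothesis that weak tameness consumes; so the proof should be very short.

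First I would abbreviate $q := \tp(d/N^\circ; N')$ and note that $p$ and $q$ are both members of $S(N^\circ)$: indeed $p \in S^\bs_{>\lambda}(N^\circ) \subseteq S(N^\circ)$ by Definition~\ref{big-basic}, and $q \in S(N^\circ)$ since $d \in N' \setminus N^\circ$ with $N^\circ \lessK N'$. Next, invoking Corollary~\ref{from-top-with-uniqueness}, Clause~($1^*$) yields $p \restriction M = \tp(d/M; N') = q \restriction M$ for every $M \in \AECK_\lambda$ with $M \lessK N^\circ$; thus $p$ and $q$ agree on every $\lambda$-sized strong submodel of $N^\circ$.

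Finally, since $N^\circ \in \AECK^\sat_{\lambda^+}$ is saturated and $\AECK$ is $(\lambda,\lambda^+)$-weakly tame, any two types in $S(N^\circ)$ that agree on all strong submodels of cardinality $\lambda$ must coincide; hence $p = q = \tp(d/N^\circ; N')$, which is precisely Clause~($1^{**}$). Note that the saturation hypothesis on $N^\circ$ is exactly what allows us to appeal to \emph{weak} (rather than full) tameness here.

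I do not foresee a genuine obstacle. The only point demanding mild care is matching the exact quantifier in Baldwin's Definition~11.6(1): should weak tameness refer to submodels of cardinality ${\leq}\lambda$ rather than exactly $\lambda$, the agreement still propagates downward, for by $\LST(\AECK)\leq\lambda$ any strong submodel of size $<\lambda$ embeds into some $M\in\AECK_\lambda$ with $M\leqK N^\circ$, and restriction to the smaller model is merely a further restriction of the already-agreeing types $p\restriction M$ and $q\restriction M$.
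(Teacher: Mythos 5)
Your proof is correct and follows essentially the same route as the paper's (much terser) proof: use Clause~($1^*$) to get agreement of $p$ and $\tp(d/N^\circ;N')$ on all strong submodels of size $\lambda$, then apply $(\lambda,\lambda^+)$-weak tameness, which is applicable precisely because $N^\circ \in \AECK^\sat_{\lambda^+}$. Your additional care about the quantifier over submodels of cardinality $<\lambda$ is a harmless refinement of the same argument.
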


\begin{proof}
Property~($1^{**}$) follows from ($1^*$) and $(\lambda, \lambda^+)$-weak tameness,
recalling that we have assumed $N^\circ \in \AECK^\sat_{\lambda^+}$.
\end{proof}

%\newpage

\section{Acknowledgements}

The main results of this paper were presented by the first author at the
\emph{Set Theory, Model Theory and Applications} workshop (in memory of Mati Rubin),
Eilat, April 2018,
and by the second author at the
\emph{Logic Colloquium 2018},
Udine, July 2018.
%\todo{Delete if it hasn't happened yet!}
We thank the organizers of the respective meetings for the invitations.

We also thank John~T. Baldwin, Rami Grossberg, and Sebastien Vasey for some useful communication.
\todo[color=red]{Is this correct?}

%\cite{Sh:h, Sh:i, Sh:E53, Sh:88r, Sh:600, Sh:705, Sh:734, Sh:E46, Sh:838}
%\cite{JrSh:966, Sh:h, Vasey14}

\bibliographystyle{alpha}
%\bibliography{aec,bib,lista,listb,rinot,preprints}
%\bibliography{lista,listb,aec}
%\bibliography{aec}

% Bibliography

% This workaround is required for \listoftodos to work with an AMS document class,
% according to subsubsection 1.6.3 (p. 8--9) of todonotes.pdf:
\makeatletter
\providecommand\@dotsep{5}
\makeatother
%\todototoc		% with AMS classes this seems to happen automatically
%\listoftodos\relax

%\todo{Look for 7-digit postal code}
\todo{Which e-mail addresses to include?}
\end{document}